\theoremstyle{plain}
\newtheorem{theorem}{Theorem}[section]
\newtheorem{lemma}[theorem]{Lemma}
\newtheorem{proposition}[theorem]{Proposition}
\newtheorem{fact}[theorem]{Fact}
\newtheorem{corollary}[theorem]{Corollary}
\theoremstyle{definition}
\newtheorem{definition}[theorem]{Definition}
\newtheorem*{definition*}{Definition}
\newtheorem*{notation*}{Notation}
\theoremstyle{remark}
\newtheorem{remark}[theorem]{Remark}
\newtheorem{example}[theorem]{Example}
\newtheorem{question}[theorem]{Question}
\newtheorem{claim}[theorem]{Claim}
\newtheorem{assumption}[theorem]{Assumption}
\newtheorem{parag}[subsubsection]{}
\begin{document}
\providecommand{\eq}{\operatorname{eq}}
\providecommand{\st}{\operatorname{st}}
\providecommand{\loc}{\operatorname{loc}}
\providecommand{\cl}{\operatorname{cl}}
\providecommand{\trd}{\operatorname{trd}}
\providecommand{\dim}{\operatorname{dim}}
\providecommand{\locus}{\operatorname{locus}}
\providecommand{\ccl}{\operatorname{ccl}}
\providecommand{\tp}{\operatorname{tp}}
\providecommand{\dcl}{\operatorname{dcl}}
\providecommand{\acl}{\operatorname{acl}}
\providecommand{\Cb}{\operatorname{Cb}}
\providecommand{\Aut}{\operatorname{Aut}}
\providecommand{\Mat}{\operatorname{Mat}}
\providecommand{\End}{\operatorname{End}}
\providecommand{\Lie}{\operatorname{Lie}}
\providecommand{\alg}{\operatorname{alg}}
\providecommand{\im}{\operatorname{im}}
\providecommand{\U}{\mathcal{U}}
\providecommand{\G}{\mathcal{G}}
\providecommand{\R}{\mathbb{R}}
\providecommand{\C}{\mathbb{C}}
\providecommand{\Z}{\mathbb{Z}}
\providecommand{\Q}{\mathbb{Q}}
\providecommand{\N}{\mathbb{N}}

\providecommand{\stR}{{ }^*\R}
\providecommand{\dl}{\delta}
\providecommand{\bdl}{\boldsymbol\dl}
\newcommand{\tuple}[1]{\overline{#1}}
\renewcommand{\a}{\tuple{a}}
\renewcommand{\b}{\tuple{b}}
\renewcommand{\c}{\tuple{c}}
\renewcommand{\d}{\tuple{d}}
\renewcommand{\k}{\tuple{k}}
\newcommand{\h}{\tuple{h}}
\newcommand{\g}{\tuple{g}}
\newcommand{\q}{\tuple{q}}
\newcommand{\x}{\tuple{x}}
\newcommand{\y}{\tuple{y}}
\newcommand{\z}{\tuple{z}}
\newcommand{\alphatup}{\tuple{\alpha}}
\newcommand{\gammatup}{\tuple{\gamma}}
\newcommand{\psitup}{\tuple{\psi}}
\newcommand{\pp}{\mathfrak{p}}
\newcommand{\qq}{\mathfrak{q}}
\renewcommand{\O}{\mathcal{O}}
\providecommand{\AA}{\mathbb{A}}
\let\strokeL\L
\newcommand{\Los}{\strokeL o\'s}
\renewcommand{\L}{\mathcal{L}}
\renewcommand{\P}{\mathbb{P}}
\newcommand{\opint}{{\operatorname{int}}}
\providecommand{\dimo}{\operatorname{d^0}}
\providecommand{\eps}{\epsilon}

\providecommand{\invlim}{\mathop{\varprojlim}\limits}

\newcommand{\powerset}{\mathcal{P}}

\newcommand{\Keq}{K^{\eq}}

\newcommand{\acleq}{\acl^{\eq}}

\providecommand{\defnstyle}[1]{\textbf{\emph{#1}}}

\newcommand{\Disjointunion}{\mathbin{\dot{\bigcup}}}

\def\Ind#1#2{#1\setbox0=\hbox{$#1x$}\kern\wd0\hbox to 0pt{\hss$#1\mid$\hss}
\lower.9\ht0\hbox to 0pt{\hss$#1\smile$\hss}\kern\wd0}
\def\ind{\mathop{\mathpalette\Ind\emptyset }}
\def\notind#1#2{#1\setbox0=\hbox{$#1x$}\kern\wd0\hbox to 0pt{\mathchardef
\nn=12854\hss$#1\nn$\kern1.4\wd0\hss}\hbox to
0pt{\hss$#1\mid$\hss}\lower.9\ht0 \hbox to
0pt{\hss$#1\smile$\hss}\kern\wd0}
\def\nind{\mathop{\mathpalette\notind\emptyset }}

\title
[Projective geometries arising from Elekes-Szab\'o problems]
{Projective geometries arising from Elekes-Szab\'o problems}

\author{Martin Bays}
\address{Martin Bays\\
Institut f\"{u}r Logik und Grundlagenforschung\\
Fachbereich Mathematik und Informatik\\
Universit\"{a}t M\"{u}nster\\
Einsteinstrasse 62\\
48149 M\"{u}nster\\
Germany}
\email{mbays@sdf.org}

\author{Emmanuel Breuillard}
\address{Emmanuel Breuillard\\
DPMMS\\
Centre for Mathematical Sciences\\
Wilberforce Road\\
Cambridge CB3 0WB\\
England}
\email{efjb2@dpmms.cam.ac.uk}

\date{\today}

\begin{abstract}We generalise the Elekes-Szab\'o theorem to arbitrary arity 
  and dimension and characterise the complex algebraic varieties without power 
  saving. The characterisation involves certain algebraic subgroups of 
  commutative algebraic groups endowed with an extra structure arising from a 
  skew field of endomorphisms. We also extend the Erd\H{o}s-Szemer\'edi 
  sum-product phenomenon to elliptic curves. Our approach is based on 
  Hrushovski's framework of pseudo-finite dimensions and the abelian group 
  configuration theorem.
\end{abstract}

\maketitle

\section{Introduction}


Let $V \subset \C^n$ be an irreducible algebraic set over $\C$,
let $N \in \N$,
and let $X_i \subset \C$ with $|X_i| \leq N$, $i=1\ldots n$.
Then it is easy to see that
\[ |V \cap \prod_{i=1}^n X_i| \leq O_V(N^{\dim V}) .\]
Indeed, this follows inductively from the observation that there exists an
algebraic subset $W \subset V$ of lesser dimension and a co-ordinate 
projection of the complement $V \setminus W \rightarrow \C^{\dim V}$ with 
fibres of finite size bounded by a constant.

Say $V$ \defnstyle{admits no power-saving} if the exponent $\dim V$ is 
optimal, i.e.\ if for no $\epsilon>0$ do we have a bound $|V \cap 
\prod_{i=1}^n X_i| \leq O_{V,\epsilon}(N^{{\dim V}-\epsilon})$ as the $X_i$ 
vary among finite subsets of $\C$ of size $\leq N$.


In an influential paper Elekes and Szab\'o \cite{ES-groups} classified the 
varieties which admit no power-saving in the case $n=3$. In order to state 
their main theorem, we first need the following definition:

\begin{definition}
  A \defnstyle{generically finite algebraic correspondence} between 
  irreducible algebraic varieties $V$ and $V'$ is a closed irreducible 
  subvariety of the product $\Gamma \subset V \times V'$ such that the 
  projections $\pi_V(\Gamma) \subset V$ and $\pi_{V'}(\Gamma) \subset V'$ are 
  Zariski dense, and $\dim(\Gamma) = \dim(V) = \dim(V')$.
\end{definition}

Suppose $W_1,\ldots ,W_n$ and $W_1',\ldots ,W_n'$ are irreducible algebraic 
varieties, and $V \subset \prod_{i=1}^n W_i$ and $V' \subset \prod_{i=1}^n 
W_i'$ are irreducible subvarieties.
Then we say $V$ and $V'$ are in \defnstyle{co-ordinatewise correspondence} 
if there is a generically finite algebraic correspondence $\Gamma \subset V 
\times V'$ and a permutation $\sigma \in \operatorname{Sym}(n)$ such that 
for each $i$, the closure of the
projection $(\pi_i \times \pi'_{\sigma i})(\Gamma) \subset W_i \times 
W_{\sigma i}'$ is a generically finite algebraic correspondence (between the 
closure of $\pi_i(V)$ and the closure of $\pi'_{\sigma i}(V')$).

\begin{theorem}[Elekes-Szab\'o \cite{ES-groups}] \label{thm:ES}
  An irreducible surface $V \subset \C^3$ admits no power-saving if and only 
  if either
  \begin{enumerate}[(i)]\item $V \subset \C^3$ is in co-ordinatewise correspondence 
      with the graph $\Gamma = \{ (g,h,g+h) : g,h \in G \} \subset G^3$ of the 
      group operation of a 1-dimensional connected complex algebraic group 
      $G$,
    \item \label{ES1triv} or $V$ projects to a curve, i.e.\ 
      $\dim(\pi_{ij}(V))=1$ for some $i\neq j \in \{1,2,3\}$.
  \end{enumerate}
\end{theorem}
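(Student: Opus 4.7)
The plan is to split the theorem into the easy forward direction (either condition implies no power-saving) and the hard reverse direction, with the latter being the substantive part.

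\textbf{Easy direction.} In case (ii), after relabelling take $\pi_{12}(V) = C$ an irreducible curve; since $V$ is irreducible of dimension $2$, the generic $1$-dimensional fibre of $\pi_{12}|_V$ in $\C$ must be all of $\C$, so $V = C \times \C$. Taking $X_1, X_2$ to be the first-two coordinates of $N$ points on $C$ with distinct first components, and $X_3$ any $N$-element subset of $\C$, yields $|V \cap X_1 \times X_2 \times X_3| \geq N^2$. In case (i), transfer via the correspondence to choose sets $Y_i \subset G$ of size $\sim N$ exhibiting $\sim N^2$ solutions of $g+h=k$ in $G$ (arithmetic progressions for $G$ additive, geometric progressions for $G$ multiplicative, a large finite subgroup for elliptic $G$), then push back along the coordinate correspondences to obtain $X_i \subset \C$ with the same asymptotic size and incidence count.

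\textbf{Hard direction.} Assume $V$ admits no power-saving and excludes case (ii), so every $\pi_{ij}(V)$ is Zariski dense in $\C^2$. Pass to a sufficiently saturated elementary extension of a two-sorted structure (field plus counting sort) to extract pseudo-finite sets $X_i$ of size $\leq N$ (with $N$ nonstandard) satisfying $|V \cap \prod_i X_i| \geq N^{2-\eps}$ for every standard $\eps > 0$. Equip the extension with Hrushovski's coarse pseudo-finite dimension $\bdl$, normalised so $\bdl(X_i) = 1$. A $\bdl$-generic point $(a, b, c) \in V \cap \prod_i X_i$ then satisfies $\bdl(a) = \bdl(b) = \bdl(c) = 1$, pairwise $\bdl$-independence $\bdl(a,b) = \bdl(a,c) = \bdl(b,c) = 2$ (from Zariski density of the $\pi_{ij}(V)$ combined with the counting bound), and $\bdl(a,b,c) = 2$ (because $(a,b,c) \in V$). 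Each pair algebraically determines the third up to finite choice: this is precisely the data of a planar abelian group configuration.

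\textbf{Recovering the algebraic group.} Apply Hrushovski's abelian group configuration theorem to the pre-independence relation supplied by $\bdl$; this yields a type-definable connected abelian group $(H, +)$ of $\bdl$-dimension $1$ together with $a', b', c' \in H$ with $a' + b' + c' = 0$, each interalgebraic over suitable side parameters with the corresponding one of $a, b, c$. The ambient theory being $\ACF_0$, the group $H$ is of Morley rank $1$, so by Weil's group chunk theorem it is definably isomorphic to a $1$-dimensional connected commutative algebraic group $G$ over the algebraic locus of the parameters. Unwinding the chain of interalgebraicities $a \sim a'$, $b \sim b'$, $c \sim c'$ produces the coordinatewise generically finite correspondences between $V$ and the graph $\Gamma \subset G^3$ demanded in case (i).

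\textbf{Main obstacle.} The critical technical step is verifying that $\bdl$ satisfies enough of the axioms of a stable independence relation---symmetry, transitivity, and exchange with algebraic closure on $\bdl$-generic tuples---for the abelian group configuration theorem to apply, and then descending the resulting type-definable group from the nonstandard extension to a complex algebraic group over $\C$. The first requires care because $\bdl$ is only a coarse dimension defined via nonstandard counting and is not a priori stable; the second is ultimately enabled by the finite Morley rank and elimination of imaginaries of $\ACF_0$.
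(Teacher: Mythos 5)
Your easy direction is fine in outline (in case (ii) one indeed gets $V=C\times\C$ up to permuting coordinates, and in case (i) progressions/torsion pushed through the generically finite correspondences work, modulo routine care about the bad locus of the correspondence). The problem is in the hard direction, at the point where you say that the triple $(a,b,c)$ with $\bdl(a)=\bdl(b)=\bdl(c)=1$, pairwise dimension $2$ and $\bdl(a,b,c)=2$ ``is precisely the data of a planar abelian group configuration'' and that one can ``apply Hrushovski's abelian group configuration theorem to the pre-independence relation supplied by $\bdl$.'' Neither half of this is right. A group configuration is a six-point datum (four lines in a dual-affine-plane pattern), not a triple; and the group configuration theorem is a theorem about $\acl$-independence in a stable (here: $\ACF_0$) reduct, not about the coarse pseudo-finite dimension $\bdl$, which lives in an unstable pseudofinite expansion and supplies no stable independence relation to which the theorem could be applied. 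In particular, the purely algebraic data of your triple (each coordinate algebraic over the other two, pairwise independent, total dimension $2$) is possessed by the generic point of \emph{every} irreducible surface in $\C^3$ with dominant finite pairwise projections, so no configuration-theoretic argument can run on that data alone.

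What is missing is exactly the step where the no-power-saving hypothesis does real work: after choosing an independent copy $(a_4,a_5)$ of $(a_2,a_3)$ over $a_1$ (independent in the $\bdl$ sense, via the existence of independent realisations), one must produce the sixth point $a_6\in\acl^0(a_3,a_4)\cap\acl^0(a_2,a_5)$ of transcendence degree $1$ completing the configuration. This is the content of the paper's Lemma~\ref{1base}: the canonical base of $\loc^0(a_2,a_5/a_3,a_4)$, a priori of dimension $2$, has dimension $1$, i.e.\ the relevant $2$-parameter family of plane curves collapses to a $1$-parameter family. That collapse is proved by a Szemer\'edi--Trotter-type incidence bound (Lemma~\ref{lem:SzT}) applied to the pseudofinite sets, translated through the coherence $\bdl=\dimo$; it is the only place the hypothesis enters beyond trivial counting, and without it the theorem is simply false (generic surfaces do admit power-saving). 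Once the six points are in place, the \emph{ordinary} group configuration theorem in $\ACF_0$ (not the abelian variant, which is only needed in higher dimension; a connected $1$-dimensional algebraic group is automatically commutative) plus the van den Dries--Hrushovski--Weil theorem gives the algebraic group and the correspondences, as in your final step. So your skeleton is recognisable, but the incidence-bound/canonical-base argument that bridges the pseudofinite counting data to the six-point configuration is absent, and the proposed shortcut of feeding $\bdl$ directly into the group configuration theorem would not compile into a proof.
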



Here we generalise these results to arbitrary $n$ and $V \subset \C^n$.

\begin{definition}\label{special-one-dim}
  An irreducible algebraic set $V \subset \C^n$ is \defnstyle{special} if it 
  is in co-ordinatewise correspondence with a product $\prod_i H_i \leq 
  \prod_i G_i^{n_i}$ of connected subgroups $H_i$ of powers $G_i^{n_i}$ of 
  1-dimensional complex algebraic groups, where $\sum_i n_i = n$.
\end{definition}

We prove:

\begin{theorem} \label{thm:main1}
  An irreducible algebraic set $V \subset \C^n$ admits no power-saving if and 
  only if it is special.
\end{theorem}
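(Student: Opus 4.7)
My plan is to prove the equivalence through the pseudo-finite dimension and group configuration machinery announced in the abstract. The easy direction, that special varieties admit no power-saving, is a direct counting argument: if $V$ is in co-ordinatewise correspondence with $\prod_i H_i \leq \prod_i G_i^{n_i}$, I pick large finite approximate subgroups of each $G_i$ (roots of unity in $\mathbb{G}_m$, arithmetic progressions in $\mathbb{G}_a$, torsion in an elliptic curve), intersect their products with $H_i$, and push through the generically finite correspondence $\Gamma$. Since $\Gamma$ and each coordinate correspondence have finite generic fibres, the incidence count is preserved up to a constant, giving $\gtrsim N^{\dim V}$ points.

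For the hard direction, suppose $V$ admits no power-saving. I extract a sequence of witnessing tuples $(X_1^{(k)}, \ldots, X_n^{(k)})$ with $|X_i^{(k)}| \leq N_k$ and $|V \cap \prod_i X_i^{(k)}| \geq N_k^{\dim V - 1/k}$, and take an ultralimit along a non-principal ultrafilter $\U$. Working inside the resulting non-standard algebraically closed field, Hrushovski's coarse pseudo-finite dimension $\bdl$ assigns a positive value to the internal set $Y = V \cap \prod_i [X_i]$, and the failure of power-saving translates directly into the additivity identity $\bdl(a_1\ldots a_n) = \dim(V)\cdot \bdl(a_1)$ for a $\bdl$-generic point $\a = (a_1,\ldots,a_n)$ of $Y$. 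This additivity is exactly what is needed for the relation $\dind$ to satisfy the axioms of a stable independence notion on generic tuples: symmetry, transitivity, and stationarity over algebraically closed bases.

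With $\dind$ in hand I would then locate an abelian group configuration among the coordinates of $\a$ --- a $6$-tuple whose $\dind$-dependencies match those in the generic configuration of a commutative group --- and invoke Hrushovski's abelian group configuration theorem to obtain a type-definable commutative group $A$ interpretable over $\mathbf{K}^{\eq}$. Since $\mathbf{K}$ is an algebraically closed field, by the van den Dries--Hrushovski--Pillay theorem $A$ is a commutative algebraic group. The coordinate structure forces $A$ to project onto $1$-dimensional algebraic groups $G_i$ along each coordinate. Composing group configurations across different coordinates yields additional definable endomorphisms; after quotienting by torsion this endomorphism ring becomes a skew field $F$ acting on each isogeny component, and Poincar\'e reducibility together with the skew-field action decomposes $A$ up to isogeny as $\prod_i G_i^{n_i}$. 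Transporting $V$ through the correspondence with $A$ gives the co-ordinatewise correspondence with a subgroup $H \leq \prod_i G_i^{n_i}$.

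The principal obstacle I anticipate is the step from the abstract group obtained by the configuration theorem to the concrete description of $V$ as being in co-ordinatewise correspondence with a subgroup of a product of powers of $1$-dimensional groups, particularly when $n$ and $\dim V$ are large. Producing a single coherent group configuration from the multi-ary data requires running an induction on $n$ and simultaneously controlling the base over which configurations are analysed, as several partial configurations on different subsets of coordinates must be reconciled. Identifying the skew field $F$ of endomorphisms and proving that the resulting algebraic group really decomposes as $\prod_i G_i^{n_i}$ compatibly with the coordinate structure of $V$, rather than as some twisted or non-split extension, will be the most delicate part; this is where I expect the full strength of the co-ordinatewise correspondence notion, and the abelian group configuration theorem with parameters, to be essential.
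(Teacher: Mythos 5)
There are two genuine gaps. First, your ``easy'' direction does not work as stated. For a $1$-dimensional group the relevant case that breaks is $G_i=\mathbb{G}_a$: a connected subgroup of $\mathbb{G}_a^{n_i}$ is an arbitrary $\C$-linear subspace, e.g.\ $\{(x,\pi x)\}\subset\C^2$, and arithmetic progressions (or any set built from roots of unity/torsion) meet such a subgroup in $O(1)$ points, so they witness nothing. The paper has to work for an arbitrary division subring $F\leq\End^0(G)$, and its converse (Proposition~\ref{prop:converse}) builds ``approximate $\O$-modules'' from a constrainedly filtered subring $\O\subset F$ (using Kaplansky--Amitsur to bound $F$ over its centre) applied to generic points via $\exp_G$, together with a separate argument that the resulting sets are in coarse general position; torsion/progression constructions only cover the torus and elliptic-curve cases where a Mordell--Lang-type statement is available.

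Second, and more seriously, your ``hard'' direction is missing the key input. Coherence of the generic tuple --- the identity $\bdl(\a)=\dimo(\a)$ with each $\bdl(a_i)\leq 1$ --- is indeed what the ultraproduct gives you (this is Lemma~\ref{lem:powersavingCoherent}), but additivity of $\bdl$ alone does not yield a group configuration: it holds for the generic of \emph{any} variety and carries no information distinguishing special from non-special $V$. The step you cannot skip is Proposition~\ref{lem:cgpCoherentLinearity}, which uses the Szemer\'edi--Trotter-type incidence bound (Lemma~\ref{lem:SzT}, hence the restriction to characteristic $0$) to show that canonical bases of plane-curve-like families over coherent data have dimension $d$; this is what makes the geometry $(P,\acl^0\restriction_P)$ modular (the Veblen axiom), and only then can one coordinatise it by Veblen--Young and, via the appendix's generalisation of Evans--Hrushovski (abelian group configuration plus Faure's fundamental theorem of projective geometry over rings), realise each non-orthogonality class inside $\P_F(G)$ for a commutative algebraic group $G$ and a division subring $F\leq\End^0(G)$. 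In particular the product decomposition $\prod_i H_i\leq\prod_i G_i^{n_i}$ comes from the splitting of the modular geometry into non-orthogonality classes, not from Poincar\'e reducibility of a single group, and no induction on $n$ reconciling partial configurations is needed. Without the incidence-bound/modularity step your argument would ``prove'' the theorem for varieties that do admit power-saving, so it cannot be repaired by bookkeeping alone.
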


The case of Theorem~\ref{thm:main1} with $V \subset \C^3$ and $\dim(V)=2$ is 
precisely Theorem~\ref{thm:ES}. Indeed it is easy to verify that $V$ is special if and only if it is either of the form $(i)$ or of the form $(ii)$. The latter occurs exactly when the special subgroup $H \leq G^3$ can be taken to be a diagonal subgroup $\{x_i = x_j\}$, while the curve $\pi_{ij}(V)$ gives the correspondence.

The case $V \subset \C^4$ with $\dim(V)=3$ is a consequence of the results of
\cite{RSZ-ES4}.

A slightly stonger version of the case $V \subset \C^n$ with $\dim(V) = n-1$, 
asking also for some uniformity in the power-saving (c.f.\ 
Remark~\ref{rem:gaps}), was conjectured by de Zeeuw in \cite[Conjecture~4.3]{deZeeuw-surveyER}. 

The case $V \subset \C^4$ with $\dim(V) = 2$ solves \cite[Problem~4.4]{deZeeuw-surveyER}. 

\begin{example}
  $V := \{ (x,y,z,w) \in \C^4 : xzw=1=yz^2w^2 \}$ is special because it is a 
  subgroup of $(\C^*)^4$, and geometric progressions witness that it admits no 
  power-saving: setting $X = \{ 2^k : -M \leq k \leq M \}$, we find $|V \cap 
  X^4| \geq \Omega(M^2) \geq \Omega(|X|^2)$.
\end{example}

\begin{example}
  Let $E \subset \P^2(\C)$ be an elliptic curve, say defined by $\{ y^2 = 
  x(x-1)(x-\lambda) \}$. Then taking $x$ co-ordinates yields a surface $V 
  \subset \C^3$ in co-ordinatewise correspondence with the graph $\Gamma_+ 
  \subset E^3$ of the elliptic curve group law, and arithmetic progressions in 
  $E$ witness that $V$ admits no power-saving. This demonstrates the necessity 
  of taking correspondences in the definition of special.

  To demonstrate the necessity of taking products, suppose
  $E' \subset \P^2(\C)$ is another elliptic curve. Then taking $x$ 
  co-ordinates
  yields a 4-dimensional subvariety $W \subset \C^6$ in co-ordinatewise 
  correspondence with the product $\Gamma_+ \times \Gamma_{+'} \subset 
  E^3\times E'^3$ of the graphs of the two group laws, and again arithmetic 
  progressions witness that $W$ admits no power-saving. But if $E'$ is not 
  isogenous to $E$, then $W$ is not in co-ordinatewise correspondence with a 
  subgroup of a power of a single elliptic curve (see 
  Fact~\ref{fact:corrIsog}).
\end{example}

In fact we obtain a more general result, with arbitrary varieties in place of 
the complex co-ordinates. Again, this generalises the corresponding result of 
\cite{ES-groups}, who considered the case of a subvariety $V$ of $\C^d \times 
\C^d \times \C^d$ of dimension $2d$ and with dominant projections to pairs of 
co-ordinates, and showed that $V$ must be in correspondence with the graph of 
multiplication of some algebraic group $G$. In \cite{breuillard-wang} it was 
noted that this group must be commutative. Theorem~\ref{thm:main} below gives 
a complete classification of the subvarieties without power saving, showing in 
particular that the groups involved must be commutative. To state the result, 
we first introduce the following definition.

\begin{definition} \label{defn:taucgp}
  Let $W$ be a complex variety.
  Let $C,\tau \in \N$ with $C\ge \tau$.
  A finite subset $X \subset W$ is in \defnstyle{coarse $(C,\tau)$-general 
  position} in $W$ if for any proper irreducible complex closed subvariety $W' 
  \subsetneq W$ of complexity at most $C$,
  we have $|W' \cap X| \leq |X|^{\frac 1\tau}$. When $C=\tau$ we will simply 
  say that $X$ is \defnstyle{$\tau$-cgp} in $W$.
\end{definition}

The notion of the complexity of a subvariety of a fixed variety is defined in 
full generality in \ref{absVars} below. In the case that $W$ is affine, $W' 
\subset W$ has complexity at most $C$ if it can be defined as the zero set of 
polynomials of degree at most $C$.

Let $W_i$, $i=1,\ldots ,n$, be irreducible complex varieties each of dimension 
$d$, and let $V \subset \prod_{i=1}^n W_i$ be an irreducible subvariety.

Now let $C,\tau \in \N$ and consider finite subsets $X_i \subset W_i$ with 
$|X_i|\leq N^d$, $N \in \N$, and with each $X_i$ in coarse $(C,\tau)$-general 
position in $W_i$.
As a straightforward consequence of coarse general position, if $\tau>d$ and 
$C$ is sufficiently large depending on $V$ only, we will see in 
Lemma~\ref{lem:power-saving} that we have a trivial bound $$|V \cap 
\prod_{i=1}^n X_i| \leq O_V(N^{\dim(V)}).$$
We say that $V \subset \prod_i W_i$ \defnstyle{admits a power-saving by 
$\epsilon>0$} if for some $C,\tau \in \N$ depending on $V$ only, this bound 
can be improved to
$|V \cap \prod_{i=1}^n X_i| \leq O_{V,\epsilon}(N^{\dim(V)- \epsilon})$.
We say $V$ \defnstyle{admits no power-saving} if it does not admit a 
power-saving by $\epsilon$ for any $\epsilon>0$.

It is easy to see that if $V$ admits no power-saving, then $\dim(V)$ must be 
an integral multiple of $d$ (see Lemma~\ref{lem:power-saving}). In Theorem 
\ref{thm:main} below we give a complete classification of the varieties with 
no power-saving. To this end we introduce as earlier a notion of special 
varieties, which generalises the previous definition and is slightly more 
involved.

Let $G$ be a connected commutative complex algebraic group, and let $\End(G)$ 
be the ring of algebraic endomorphisms of $G$. We will denote by $\End^0(G)$ 
the $\Q$-algebra $\End^0(G):= \Q \otimes_{\Z} \End(G)$. For example, if $G$ 
is a torus $G=\mathbb{G}_m^r$, then $\End(G)=\Mat_r(\Z)$ and 
$\End_0(G)=\Mat_r(\Q)$, and if $G=\mathbb{G}_a^r$ is a vector group, then 
$\End(G)=\End^0(G)=\Mat_r(\C)$. In any case $\End(G)$ is a subring of 
$\End^0(G)$. 

\begin{definition} An algebraic subgroup of $G^n$ is called a 
  \defnstyle{special subgroup} if it has an ``$F$-structure'' for some 
  division subring $F$ of $\End^0(G)$, by which we mean that it is the 
  connected component of the kernel $\ker A \leq G^n$ of a matrix $A \in 
  \Mat_n(F \cap \End(G))$.
\end{definition}

For example $F$ could be trivial and equal to $\Q$, in which case the 
corresponding special subgroups will be the connected components of subgroups 
defined by arbitrary linear equations with integer coefficients in the $n$ 
co-ordinates of $G^n$.

\begin{remark} \label{remk:lieSpecial}
  It will be convenient for us to express this condition in terms of the Lie 
  algebra $\Lie(H)$ of the subgroup $H \leq G^n$, which is defined as the 
  tangent space at the identity as a $\C$-vector space.
  An algebraic endomorphism $\eta \in \End(G)$ induces by differentiation a 
  linear map $d\eta : \Lie(G) \rightarrow \Lie(G)$, making $\Lie(G)$ into an 
  $\End^0(G)$-module.
  Then a subgroup $H \leq G^n$ is a special subgroup if and only if $\Lie(H) = 
  \Lie(G) \otimes_F J \leq \Lie(G)^n$
  for some division subring $F \subset \End^0(G)$
  and some $F$-subspace $J \leq F^n$
  (where we make the obvious identifications between $\Lie(G)^n$, $\Lie(G^n)$ 
  and $\Lie(G)
  \otimes_F F^n$).
\end{remark}

\begin{definition}
  An irreducible closed subvariety $V \subset \prod_{i=1}^n W_i$ of a product 
  of irreducible varieties is \defnstyle{special}
  if it is in co-ordinatewise correspondence with a product $\prod_i H_i \leq 
  \prod_i G_i^{n_i}$ of special subgroups $H_i$ of powers $G_i^{n_i}$ of 
  commutative complex algebraic groups, where $\sum_i n_i = n$.
\end{definition}

This is consistent with Definition \ref{special-one-dim}, because when $G$ is one-dimensional, every connected algebraic subgroup of $G^n$ has an $F$-structure, where $F=End^0(G)$. See  Lemma~\ref{lem:1dimSpecial} below. 

We are now in a position to state our main theorem:

\begin{theorem} \label{thm:main}
  Suppose $W_1,...,W_n$ are irreducible complex algebraic varieties of the 
  same dimension.
  Then an irreducible subvariety $V \subset \prod_{i=1}^n W_i$ admits no 
  power-saving if and only if it is special.
\end{theorem}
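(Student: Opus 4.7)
\emph{Proof proposal.}
The plan is to establish the two implications separately. For the easier direction (special $\Rightarrow$ no power-saving), I would use the correspondence to reduce to a product $\prod_i H_i \leq \prod_i G_i^{n_i}$ of special subgroups and work factor by factor. Given the $F$-structure $\Lie(H) = \Lie(G) \otimes_F J$ on a single factor, I would construct large finite ``$F$-progressions'' $X \subset G(\C)$: an arithmetic progression in $\mathbb{G}_a$, a geometric progression in $\mathbb{G}_m$, or a box in a finitely generated subgroup of positive rank in an abelian variety, each chosen so that elements of $F \cap \End(G)$ approximately preserve $X$. Then $|H \cap X^n| \gtrsim |X|^{\dim(H)/\dim(G)}$ by the $F$-linearity of $H$, while coarse general position of $X$ in $G$ against subvarieties of bounded complexity follows from Bezout-type intersection bounds.

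For the converse, I would work in Hrushovski's framework of pseudofinite dimensions. Starting from sequences $(X_i^{(N)})_N \subset W_i(\C)$ witnessing the failure of power-saving and satisfying coarse $(C,\tau)$-general position for $\tau \to \infty$, I would pass to a non-principal ultraproduct and obtain internal subsets $X_i^*$ in a saturated elementary extension $\U$ of $\C$, each carrying the pseudofinite counting dimension $\bdl$. The coarse general position hypothesis forces $\bdl$ on each $X_i^*$ to match $\dim(W_i)$ on generic points, and the no-power-saving hypothesis provides a tuple $\a \in V(\U) \cap \prod_i X_i^*$ with $\bdl(\a) = \dim(V)$. Define the associated independence relation $\dind$ by $\b \dind_A \c \iff \bdl(\b/A\c) = \bdl(\b/A)$; this refines Zariski-dimension independence, is symmetric on generics, and satisfies the pregeometry axioms required for the stability-theoretic arguments to follow. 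Using the no-power-saving hypothesis applied to auxiliary varieties built from $V$, one checks that $\dind$ is locally modular on the coordinates of $\a$, and non-triviality of the dependence then yields an abelian group configuration. Applying the abelian group configuration theorem extracts a connected, type-definable, commutative group $H$ acting generically on (pieces of) the $a_i$; via Hrushovski's group chunk construction and elimination of imaginaries in $\ACF_0$, $H$ is identified with the generic locus of a connected commutative complex algebraic group. Structure theory of commutative algebraic groups then decomposes this group (up to isogeny) as a product $\prod_i G_i^{n_i}$, and pushing the generic tuple through the correspondence exhibits $V$ as in co-ordinatewise correspondence with a subgroup of $\prod_i G_i^{n_i}$.

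The main obstacle, I expect, is upgrading the subgroup obtained above to a \emph{special} subgroup by producing the division subring $F \subseteq \End^0(G)$ with $\Lie(H) = \Lie(G) \otimes_F J$. Here I would exploit that any generic-to-generic correspondence on a factor $G$ that is compatible with $\dind$ must, by rigidity of commutative algebraic groups, be the graph of an element of $\End^0(G)$; the collection $F_0 \subseteq \End^0(G)$ of all such ``$\dind$-compatible'' endomorphisms is then a $\Q$-subalgebra. Closure of $F_0$ under inverses, hence its being a division ring, should come from the symmetry of $\dind$: an endomorphism recording the dependence of one coordinate on another must, by interchanging the two coordinates, yield (up to isogeny) its inverse. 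Finally, Remark~\ref{remk:lieSpecial} translates this $F_0$-linear description of the configuration into the equality $\Lie(H) = \Lie(G) \otimes_{F_0} J$, completing the identification of $V$ as special.
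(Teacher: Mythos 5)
Your overall architecture (ultraproduct plus coarse pseudo-finite dimension, coherence, modularity from incidence bounds, abelian group configuration) is the same as the paper's, but both directions of your sketch have genuine gaps at exactly the points where the real work lies.

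For ``special $\Rightarrow$ no power-saving'': your claim that coarse general position of the constructed sets ``follows from Bezout-type intersection bounds'' fails in general. For $G=\mathbb{G}_a^d$ a progression-type set built from one (or boundedly many) generators concentrates on proper \emph{linear} subvarieties of bounded complexity, and no degree bound rescues this; the paper instead takes, at stage $s$, a tuple of $s$ algebraically independent generic generators and shows that any proper subvariety defined over a field of bounded transcendence degree meets the approximate module in a set of bounded $F$-rank, hence of size $|\O_s|^{O(1)}$ against $|\O_s|^s$. Moreover, producing finite sets approximately invariant under an \emph{arbitrary} division subring $F\leq\End^0(G)$ is itself nontrivial: your three model cases ($\mathbb{G}_a$, $\mathbb{G}_m$, a box in a finitely generated subgroup of an abelian variety) do not cover, say, quaternionic or other noncommutative $F$ acting on a higher-dimensional $G$. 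The paper needs the Kaplansky--Amitsur theorem (so that $F$ is finite over its centre) together with the machinery of constrainedly filtered subrings to build $\O$-module-like sets $X_k$ with $\bdl(\bigcap_k X_k)=\dim G$; Mordell--Lang-type arguments only help for semiabelian $G$ and are unavailable for vector groups.

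For the converse, the crux you defer to the end --- producing the division ring $F\leq\End^0(G)$ with $\Lie(H)=\Lie(G)\otimes_F J$ --- is not obtained in the paper (and, I believe, cannot easily be obtained) by your proposed route of ``$\dind$-compatible endomorphisms closed under inverses by symmetry''. Two specific problems: first, the modularity that is actually proved and used is modularity of the \emph{algebraic-closure geometry} $\acl^0$ restricted to a cgp-coherent set (Proposition~\ref{lem:cgpCoherentLinearity} and Proposition~\ref{thm:cgpCoherentModularity}, where Szemer\'edi--Trotter bounds the dimension of a canonical base), not ``local modularity of $\dind$''; conflating the pseudo-finite independence with the geometry being coordinatised obscures where the incidence bound enters. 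Second, the division ring does not come from structure theory of commutative groups or from a rigidity argument about correspondences: it comes from Veblen--Young coordinatisation of the modular geometry (Theorem~\ref{cor:coherentGeom}), after which one must embed the abstract coordinatising skew field into $\End^0(G)$; the paper does this in Appendix~\ref{appx:EHeq} via the abelian group configuration theorem combined with Faure's fundamental theorem of projective geometry for modules over rings, with extra care (the ``$P$-aligned'' lemmas and Fact~\ref{fact:corrIsog}) precisely because in dimension $>1$ the ring $\End^0(G)$ contains non-invertible endomorphisms that must be excluded from $F$. Your symmetry heuristic does not address this, nor does ``decomposing the group as $\prod_i G_i^{n_i}$ by structure theory'': the product decomposition comes from the orthogonality decomposition of the modular geometry into projective summands, one group per nontrivial summand, not from decomposing a single group.
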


\begin{example}
  \newcommand{\HQ}{\mathcal{H}_\Q}
  \newcommand{\HZ}{\mathcal{H}_\Z}
  Let $G := (\C^\times)^4$. Then $\End^0(G) = \Q \otimes_{\Z} \End(G) \cong \Q 
  \otimes_{\Z} \Mat_4(\Z) \cong \Mat_4(\Q)$, the ring of $4\times 4$ rational 
  matrices. This is certainly not a division ring, but for example the 
  quaternion algebra
  \[ \HQ = ( \Q[i,j,k] : i^2=j^2=k^2=-1;\; ij=k;\; jk=i;\; ki=j )\]
  embeds in $\Mat_4(\Q)$ via the left multiplication representation.
  This defines in particular an action of $\HZ = \Z[i,j,k] \subset \HQ$ on $G$ 
  by endomorphisms given by a ring homomorphism $\alpha: \HZ \to \End(G), x 
  \mapsto \alpha_x$ defined by:
  \begin{align*}
    \alpha_i(a,b,c,d) &= (b^{-1},a,d^{-1},c); \\
    \alpha_j(a,b,c,d) &= (c^{-1},d,a,b^{-1}); \\
    \alpha_k(a,b,c,d) &= (d^{-1},c^{-1},b,a).
  \end{align*}
  Then for instance $V := \{ (x,y,z_1,z_2,z_3) \in G^5 : z_1=x\cdot y,\; z_2=x 
  \cdot \alpha_i(y),\; z_3=x\cdot \alpha_j(y) \}$ is a special subgroup of 
  $G^5$.

  To see that $V$ admits no power-saving, we can consider ``approximate 
  $\HZ$-submodules'': let $H_N := \{ n + mi + pj + qk : n,m,p,q \in 
  \{-N,\ldots ,N\} \}$, let $g \in G$ be generic (i.e.\ $\Q(g)$ has 
  transcendence degree $4$), and let $X_N := \alpha_{H_N}(g) = \{ \alpha_h(g) 
  : h \in H_N \} \subset G$. Since $\alpha_{\HZ}(g)$ is a finitely generated 
  subgroup of $G$, one can show (it is a consequence of Laurent's Mordell-Lang 
  theorem for tori, see Remark~\ref{remk:converseGp}) that for $W \subsetneq 
  G$ a proper Zariski-closed subvariety, $|W \cap \alpha_{\HZ}(g)|$ is finite 
  and bounded by a function of the complexity of $W$. Hence for all $\tau$, 
  for all sufficiently large $N$, we have that $X_N$ is $\tau$-$cgp$ in $G$. 
  But $\alpha_i(X_N) = X_N = \alpha_j(X_N)$, and so $|X_N^5 \cap V| \geq 
  \Omega(|X_N|^2)$.

  We show in Subsection~\ref{subsect:sharp} that any special subgroup admits 
  no power-saving.
  The argument of this example goes through for many groups $G$ (see 
  Remark~\ref{remk:converseGp}),
  but extra complications arise with other groups - in particular in the case 
  of a power of the additive group $G=\mathbb{G}_a(\C)^d=(\C^d,+)$, where many 
  more division rings can arise and no Mordell-Lang type result holds.
\end{example}

\begin{remark} \label{rmk:mainCodim1}
  In the situation of \cite[Theorem~27]{ES-groups} that $V \subset \C^d \times 
  \C^d \times \C^d $ projects dominantly with generically finite fibres to 
  each pair of co-ordinates, if $V$ is special then it is in co-ordinatewise 
  correspondence with the special subgroup $H_0 := \{x_1+x_2+x_3=0\}$ of a 
  $d$-dimensional commutative algebraic group $G$.
  Indeed, we first obtain from Theorem~\ref{thm:main} that it is in 
  co-ordinatewise correspondence with the connected component of $\{\alpha_1 
  y_1+\alpha_2 y_2+\alpha_3 y_3=0\}$ for some self-isogenies $\alpha_i \in 
  \End(G)$; then, setting $x_i := \alpha_i y_i$, we see that this is in 
  co-ordinatewise correspondence with $H_0$.

  Similarly for general $n$, if $\dim(V) = (n-1)d$ (as in \cite{RSZ-ES4} for 
  instance) then $\{\sum_ix_i=0\}$ is the only kind of special subgroup which 
  needs to be considered.
  But if $V$ has higher codimension, endomorphisms are indispensable.
\end{remark}

\begin{remark}[Explicit power-saving] \label{rem:gaps}
  We can consider strengthening Theorem~\ref{thm:main} by replacing the 
  condition that $V$ admits no power-saving with the condition that $V$ does 
  not admit a power-saving by $\eta$, where the ``gap'' $\eta$ is a constant 
  $\eta=\eta(d,n) > 0$.

  The existence of such a gap in the case $n=3$ is part of
  \cite[Main Theorem]{ES-groups}, and for $n=3$ and $d=1$ an explicit value of 
  $\eta=\frac16$ for this gap was found independently by Wang
  \cite{Wang-gap} and Raz, Sharir, and de Zeeuw \cite{RSZ-ESR};
  furthermore, \cite{RSZ-ES4} finds a gap of $\eta=\frac13$ for the case of 
  $n=4$ and $d=1$ (under a non-degeneracy assumption). For $n=3$ and $d$ 
  arbitrary some explicit gaps were obtained by Wang and the second author 
  (see \cite{breuillard-wang}). None of the gaps are known to be optimal.

  Our techniques for the general situation go via the abstraction of 
  combinatorial geometries and are not adapted to even proving the existence 
  of a gap, still less calculating one. However, in Section \ref{sec:warmup} 
  we work out the case of $n=3$, which does not require the full power of this 
  abstraction, and we obtain there \emph{an explicit gap $\eta=\frac{1}{16}$ 
  for all $d$ (see Theorem \ref{cor:ES} below)} and also recover the 
  above-mentioned $\frac{1}{6}$ gap when $n=3$ and $d=1$.
\end{remark}

We draw as a corollary of Theorem~\ref{thm:main1} the following generalised 
sum-product phenomenon.

\begin{corollary}[Generalised sum-product phenomenon]\label{gen-sum-product}
  Let $(G_1,+_1)$ and $(G_2,+_2)$ be one-dimensional non-isogenous connected 
  complex algebraic groups,
  and for $i=1,2$ let $f_i : G_i(\C) \rightarrow \C$ be a rational map.
  Then there are $\epsilon,c>0$ such that if $A \subset \C$ is a finite set 
  lying in the range of each $f_i$, then setting $A_i=f_i^{-1}(A) \subset 
  G_i(\C)$ we have
  \[ \max( |A_1 +_1 A_1|, |A_2 +_2 A_2| ) \geq c|A|^{1+\epsilon} .\]

\end{corollary}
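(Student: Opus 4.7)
The plan is a contradiction argument: encode the hypothesized simultaneous small doublings in $(G_1,+_1)$ and $(G_2,+_2)$ as incidences on a single algebraic variety $U\subset\C^4$, apply Theorem~\ref{thm:main1} to force $U$ to be special, and derive a contradiction from the non-isogeny hypothesis.

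For the setup, let $V,W\subset\C^3$ be the Zariski closures of $\{(f_1(g),f_1(h),f_1(g+_1h)):g,h\in G_1\}$ and its analogue for $G_2$. Each is a $2$-dimensional irreducible surface, and the restriction of $f_i^{\times 3}$ puts $V$ (resp.\ $W$) in coordinatewise correspondence with the graph $\Gamma_i\leq G_i^3$ of $+_i$. Let $U\subset\C^4$ with coordinates $(x,y,z_1,z_2)$ be an irreducible $2$-dimensional subvariety of the fibre product $V\times_{\C^2}W$ over the first two coordinates, chosen (e.g.\ via the image of an appropriate component of $G_1^2\times_{\C^2}G_2^2$) so that $\pi_{123}:U\to V$ and $\pi_{124}:U\to W$ are both dominant; then $\pi_{12}:U\to\C^2$ is generically finite.

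For the counting, assume for contradiction the conclusion fails, so for every $\epsilon,c>0$ there exist arbitrarily large finite $A\subset\C$ in the range of both $f_i$ with $|A_i+_iA_i|\leq c|A|^{1+\epsilon}$ for $i=1,2$. Set $N:=|A|$, $X_1=X_2:=A$ and $X_{2+i}:=f_i(A_i+_iA_i)$, so $|X_j|\leq cN^{1+\epsilon}$ for all $j$. For each $(x,y)\in A\times A$, choosing preimages $g_i,h_i\in A_i$ of $x,y$ under $f_i$ produces a point $(x,y,f_1(g_1+_1h_1),f_2(g_2+_2h_2))\in U\cap\prod_j X_j$, and since $\pi_{12}$ is generically finite this yields $|U\cap\prod_j X_j|\geq\Omega(N^2)$. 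If $U$ admitted a power-saving by some $\epsilon_0>0$ depending only on $U$, the bound from Theorem~\ref{thm:main1} would give $N^2\leq O(N^{(1+\epsilon)(2-\epsilon_0)})$, which fails once $\epsilon<\epsilon_0/3$. So $U$ admits no power-saving and, by Theorem~\ref{thm:main1}, is special: in coordinatewise correspondence with a product $\prod_j H_j\leq\prod_j(G_j')^{n_j}$ of connected subgroups of powers of one-dimensional complex algebraic groups, with $\sum_j n_j=4$.

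Pushing the correspondence on $U$ forward through $\pi_{123}$ and $\pi_{124}$ yields coordinatewise correspondences of $V$ (resp.\ $W$) with the associated three-coordinate projections of $\prod_j H_j$, which are again products of subgroups of powers of one-dimensional groups. Combined with the original correspondences $V\sim\Gamma_1\leq G_1^3$ and $W\sim\Gamma_2\leq G_2^3$, Fact~\ref{fact:corrIsog} then forces the one-dimensional groups appearing in the projection for $V$ to be isogenous to $G_1$ and those in the projection for $W$ to be isogenous to $G_2$. But these two three-coordinate projections share coordinates $1$ and $2$ of $U$, so the one-dimensional group attached to coordinate $1$ must be isogenous to both $G_1$ and $G_2$, contradicting the non-isogeny hypothesis. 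The main obstacle is this last step: verifying that the specialness of $U$ really induces compatible special structures on the projections $V$ and $W$, and that Fact~\ref{fact:corrIsog} applies in the product setting to pin down the isogeny class at each coordinate; once this is clean, the sharing of coordinates $1,2$ yields the contradiction and the power-saving in the counting step translates directly into the sum-product estimate with $\epsilon\approx\epsilon_0/3$.
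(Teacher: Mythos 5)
Your overall strategy is the same as the paper's: encode both doublings into a single variety, invoke Theorem~\ref{thm:main1} to force it to be special, and contradict non-isogeny via Fact~\ref{fact:corrIsog}. The paper does this with the six-coordinate variety $\{(x_1,y_1,x_1+_1y_1,x_2,y_2,x_2+_2y_2)\}$ (pulled along the correspondence induced by $f_1,f_2$), whereas you compress to a four-coordinate fibre product over the shared $(x,y)$-plane. That compression is legitimate in principle, but it is exactly what creates the two gaps below.

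First, the step you yourself flag as ``the main obstacle'' is genuinely missing, and it is where the mathematical content lies. Knowing that $U$ is special gives a coordinatewise correspondence with a product $\prod_j H_j\leq\prod_j (G_j')^{n_j}$, and projecting to coordinates $(1,2,3)$ does give (via Remark~\ref{rmk:corrGeneric}) a correspondence of $V$ with a projection $H'$ of that product; but Fact~\ref{fact:corrIsog} does not apply to $H'$ as it stands, since $H'$ is merely a $2$-dimensional connected subgroup of a product of three one-dimensional groups, not the graph of a group law. To conclude you must (a) rule out that the three coordinates split across distinct factors $H_j$ (this uses that all pairwise coordinate projections of $V$ are dominant onto $\C^2$ with generically finite fibres, so $H'$ cannot be a nontrivial product across these coordinates), and (b) show that a $2$-dimensional connected $H'\leq G_1'\times G_2'\times G_3'$ whose pairwise projections are isogenies onto is itself in coordinatewise correspondence with the graph of the group law of one of the $G_i'$, with all three $G_i'$ isogenous; only then does Fact~\ref{fact:corrIsog} yield that $G_1$ (resp.\ $G_2$) is isogenous to the group attached to coordinate $1$, giving the contradiction. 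This is the reduction that Remark~\ref{rmk:mainCodim1} performs in the codimension-one setting, and the paper's six-coordinate encoding is chosen precisely so that this reduction applies directly to each three-coordinate projection of the special subgroup; in your four-coordinate, codimension-two encoding it has to be redone by hand.

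Second, the counting step is not quite right as written. Generic finiteness of $\pi_{12}$ is neither what gives the lower bound nor automatic for an arbitrarily ``chosen'' component: the constructed points $(x,y,f_1(g_1+_1h_1),f_2(g_2+_2h_2))$ lie on the fibre product $V\times_{\C^2}W$, which has boundedly many irreducible components, and nothing guarantees they lie on your fixed $U$. You need to (i) discard the components whose $\pi_{12}$-image is a point or a curve, by bounding the number of constructed points on them (distinct pairs $(x,y)$ give distinct points, and a fixed curve meets $A\times A$ in $O(|A|)$ points), and (ii) pigeonhole the remaining $N^2-O(N)$ points over the finitely many $\pi_{12}$-dominant components, passing to a subsequence of the counterexample sets so that a single fixed $2$-dimensional component $U$ carries $\Omega(N^2)$ of them; dominance of $\pi_{123}$ and $\pi_{124}$ onto $V$ and $W$ then comes for free. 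With these two repairs the argument goes through and recovers the statement, essentially as in the paper but with $n=4$ instead of $n=6$.
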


\begin{remark} The usual sum-product phenomenon is the case $(G_1,+_1) = 
  (\C,+)$ and $(G_2,+_2) = (\C \setminus \{0\}, \cdot)$,
  with $f_1$ and $f_2$ being the identity maps.

  If instead $G_2 = E \subset \P^2(\C)$ is an elliptic curve defined by $\{ 
  y^2 = x(x-1)(x-\lambda) \}$, then we may take $f_2$ to be the rational map 
  $[x:y:1] \mapsto x$. This case of the additive group and an elliptic curve 
  was previously considered for finite fields in 
  \cite{shparlinski-sumProdElliptic}.

  The constant $c$ (which must depend on the $f_i$'s) is necessary as both 
  $A_i$'s could be finite subgroups of bounded order. We believe however that 
  the power-saving $\epsilon>0$ above is uniform over all group laws (and also 
  independent of the $f_i$'s); proving this would require establishing an 
  explicit gap in Theorem \ref{thm:main1} for $d=1$ and $n=6$. We do not tackle 
  this issue here.
\end{remark}

We also obtain the following result on intersections of subvarieties with 
powers of an approximate subgroup, or just of a set with small doubling.

\begin{theorem} \label{thm:coherentApproxSubgroup}
  Let $G$ be a commutative complex algebraic group.
  Suppose $V$ is a subvariety of $G^n$ which is not a coset of a subgroup.
  Then there are $N,\tau, \eps, \eta>0$ depending only on $G$ and the 
  complexity of $V$ such that if $A \subset G$ is a finite subset such that 
  $A-A$ is $\tau$-$cgp$ and $|A+A| \leq |A|^{1+\eps}$ and $|A| \geq N$, then
  $|A^n \cap V| < |A|^{\frac{\dim(V)}{\dim(G)} - \eta}$.
\end{theorem}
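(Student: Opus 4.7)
The plan is to argue by contradiction, using Theorem~\ref{thm:main} as the main input and then exploiting the small-doubling hypothesis to upgrade ``special'' to ``coset of a subgroup''. Suppose the bound fails for some subvariety $V \subset G^n$ that is not a coset of a subgroup, so there exists a sequence of finite sets $A_k \subset G$ with $|A_k| \to \infty$, $A_k - A_k$ being $\tau_k$-cgp with $\tau_k \to \infty$, $|A_k + A_k| \le |A_k|^{1+\eps_k}$ with $\eps_k \to 0$, and $|A_k^n \cap V| \ge |A_k|^{\dim(V)/d - \eta_k}$ with $\eta_k \to 0$, where $d = \dim(G)$.

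First I would upgrade $A_k$ itself to $\tau_k'$-cgp in $G$ with $\tau_k' \to \infty$. The Pl\"unnecke--Ruzsa inequality gives $|A_k - A_k| \le |A_k|^{1 + O(\eps_k)}$, and for any proper subvariety $W' \subsetneq G$ of bounded complexity and any fixed $a_0 \in A_k$, the translate $W' - a_0$ has comparable complexity while $A_k - a_0 \subseteq A_k - A_k$, so $|A_k \cap W'| \le |A_k - A_k|^{1/\tau_k}$ transfers the cgp condition. Then applying Theorem~\ref{thm:main} with $W_i = G$ and $X_i = A_k$, the assumed failure of power-saving by every fixed $\eta > 0$ forces $V$ to be special, i.e., in coordinatewise correspondence with a product $\prod_j H_j \le \prod_j G_j^{n_j}$ of special subgroups. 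Each $W_i = G$ being $d$-dimensional and in generically finite correspondence with its partner $G_{\sigma(i)}$, each $G_j$ is $d$-dimensional and isogenous to $G$; pulling back via these isogenies, we may assume $V$ is in coordinatewise correspondence with a special subgroup $H \le G^n$ through generically finite correspondences $\rho_i \subset G \times G$, one in each coordinate.

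The main obstacle is the final step: to show that each $\rho_i$ must itself be (up to translation) a group homomorphism, so that $V$ is the image of $H$ under a coordinatewise isogeny composed with a translation and hence a coset of an algebraic subgroup of $G^n$ --- contradicting our hypothesis. Writing $B_i := \rho_i^{-1}(A_k)$, one has $|A_k^n \cap V| \asymp |H \cap \prod_i B_i|$ up to bounded correspondence degrees, and $|B_i| = \Theta(|A_k|)$. The trivial upper bound on $|H \cap \prod_i B_i|$ imposed by the special structure of $H$ saturates only when each $B_i$ inherits the small-doubling behaviour of $A_k$; by a Freiman-type structure theorem in the commutative algebraic group $G$, this forces $\rho_i$ to be (up to translation) a group homomorphism, since a genuinely nonlinear correspondence would blow up the doubling. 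I expect the cleanest execution to be within the pseudo-finite dimension framework of the paper: in a nonstandard limit, $\rho_i$ preserves $\bdl$-dimension and the small-doubling configuration of $A_k$, and an application of the abelian group configuration theorem then identifies it as a morphism of algebraic groups, delivering the desired contradiction.
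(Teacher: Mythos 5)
Your proposal diverges from the paper's route at the crucial step, and the divergence opens a genuine gap. The paper does \emph{not} invoke Theorem~\ref{thm:main} directly on $V$; instead it uses Pl\"unnecke--Ruzsa to pass from $A$ to an approximate subgroup $H=A-A$, then (after taking an ultraproduct) builds an honest $\bigwedge$-definable subgroup $\Gamma := \bigcap_i \prod_{s\to\U}\sum^{2^{s-i}}H_s$ out of the iterated sumsets, and applies a separate result (Theorem~\ref{thm:coherentSubgroup}) to conclude that the locus of any coherent $\gammatup\in\Gamma^n$ is a coset. The engine of Theorem~\ref{thm:coherentSubgroup} is not a Freiman-type step on the correspondences: it is the observation that one may find $\alphatup\in\Gamma^n$ independent over $\gammatup$ with $\bdl(\alphatup/\gammatup) = n\dim G$, so that the \emph{extended} tuple $(\gammatup,\alphatup,\gammatup+\alphatup)$ is coherent in $\Gamma^{3n}$. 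Feeding this $3n$-tuple, rather than $\gammatup$ alone, into Proposition~\ref{prop:coherentSpecial} produces a special subgroup $H\le G'^{3n}$ that encodes the addition of $G'$ via $\psitup'=\gammatup'+\alphatup'$, which is exactly the shape required by Fact~\ref{fact:corrIsog} to produce isogenies $\eta_i:G'\to G$ and conclude $\locus^{G^n}(\gammatup)$ is a coset.

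Two specific problems with your sketch. First, after applying Theorem~\ref{thm:main} you assert that each $G_j$ is isogenous to $G$ because there is a generically finite correspondence between the $i$-th coordinate $W_i=G$ and a subvariety of $G_j$. A generically finite algebraic correspondence between two abelian varieties (or, worse, between $G$ and a mere subvariety of $G_j$) does not by itself yield an isogeny; Fact~\ref{fact:corrIsog} requires a correspondence between the \emph{graphs of the group laws}, which your coherent tuple $\gammatup$ alone does not furnish. Second, the decisive step --- that the correspondences $\rho_i$ must be (affine) homomorphisms --- is attributed to ``a Freiman-type structure theorem in the commutative algebraic group $G$'', but no such black box exists in the form you need, and the heuristic ``$B_i=\rho_i^{-1}(A_k)$ inherits small doubling'' is precisely what would have to be established, not assumed: a non-linear correspondence does not transport approximate-subgroup structure. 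Your closing remark that ``an application of the abelian group configuration theorem then identifies it as a morphism'' points in the right direction, but to set that configuration up you need the extra independent tuple $\alphatup$ from a genuine subgroup containing $A$; this is exactly the ingredient the paper manufactures via the countable-intersection construction of $\Gamma$, and without it the argument does not close.
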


Note that Theorem \ref{thm:main} yields right away that if no such $\eta>0$ 
exists, then $V$ must be special. So the point here is to show that under the 
small doubling assumption the special $V$'s are in fact cosets of algebraic 
subgroups. The result is reminiscent of the Larsen-Pink type estimates for 
approximate groups (see \cite{hrushovski-wagner} \cite[Prop. 5.5]{hrushovski}, 
\cite[Thm 4.1.]{bgt}), with a stronger conclusion (the power-saving $\eta>0$) 
and stronger hypothesis (coarse general position). This conclusion is also 
reminiscent of results in Diophantine geometry of Manin-Mumford or 
Mordell-Lang type, although our methods are completely unrelated; see Example 
\ref{MM} for further comments in this direction.

\subsection{Method of proof}
The proof of our main results, Theorems \ref{thm:main1} and \ref{thm:main}, 
relies on an initial ultraproduct construction starting from a sequence of 
finite subsets witnessing the absence of power-saving. This yields 
pseudo-finite cartesian products. The field-theoretic algebraic closure 
relation then induces an abstract projective geometry at the level of the 
ultraproduct and we show, as a consequence of known incidence bounds 
generalising the Szemer\'edi-Trotter theorem, that this geometry is modular, 
i.e. satisfies the Veblen axiom of abstract projective geometries and can 
therefore be co-ordinatised. The division rings appearing in Theorem 
\ref{thm:main} arise that way. In the one-dimensional ($d=1$) case, the 
projective geometries which embed in the geometry of algebraic closure in an 
algebraically closed field were characterised in \cite{EH-projACF}, and in an 
appendix we use similar techniques (primarily the abelian group configuration 
theorem) to characterise them in the higher dimensional case. The main 
combinatorial results above then follow.

Much of the strategy is an implementation of ideas due to Hrushovski appearing 
in \cite{Hr-psfDims}, where he introduced the formalism of coarse 
pseudo-finite dimension and outlined a proof of the original Elekes-Szab\'o 
theorem in those terms. 

More generally, our results are a consequence of specialising ideas of model 
theory to this combinatorial setting. We use the conventions and language of 
model theory throughout.
Nonetheless, our treatment requires very little model-theoretic background and 
everything we need is described and recalled in Section \ref{sec:setup}. It is 
also mostly self-contained, except for the use of the group configuration 
theorem, recalled in Section \ref{sec:warmup}, and the Szemer\'edi-Trotter 
type incidence bounds recalled in \S \ref{subsec:incidence}.

\subsection{Related work}
We remark here on how this paper relates to other recent works on applications 
of model theory to similar problems.

In an unreleased work in progress, Hrushovski, Bukh, and Tsimmerman consider 
expansion phenomena in pseudo-finite subsets of pseudo-finite fields of size 
comparable to that of the non-standard prime field. This context is quite 
different from that we consider, in particular because of the failure of 
Szemer\'edi-Trotter in this regime, but there may be some overlap in 
techniques; in particular, their analysis also proceeds via modularity and the 
abelian group configuration theorem.

Meanwhile, Chernikov and Starchenko \cite{CS-ES} recently proved a version of 
Theorem~\ref{thm:ES} in strongly minimal structures which are reducts of 
distal structures. This direction of generalisation is orthogonal to the one 
we consider here, where we restrict to the case of $ACF_0$ (this restriction 
is used in Lemma~\ref{lem:SzT} and in Proposition~\ref{prop:converse}).

\subsection{Organisation of the paper} In Section \ref{sec:setup} we set up 
our notation for the rest of the paper and present Hrushovski's notion of 
pseudo-finite dimension of internal sets and its basic properties. This 
section is entirely self-contained. We also recall the 
Szemer\'edi-Trotter-type bounds for arbitrary varieties and recast them in 
this language. In Section \ref{sec:warmup} we reprove the original 
Elekes-Szab\'o theorem using the group configuration theorem and the formalism 
of pseudo-finite dimensions. In higher dimensions we also recover the 
commutativity of the ambient group and obtain an explicit power saving of 
$\frac{1}{16}$. This section is not used in the proof of our main theorem, but 
can be read as an example of the method, worked out in a special case. In 
Section \ref{gpNecessity} we give a counter-example to the original 
Elekes-Szab\'o theorem when the assumption of general position is removed. 
Section \ref{sec:proj} contains the proof of the key point: the modularity of 
the projective geometry associated to a variety without power-saving. In 
Sections \ref{sec:varieties} and \ref{sec:asymptotic} we complete the proof of 
Theorems \ref{thm:main1} and \ref{thm:main} modulo the result proven in the 
Appendix. In particular we prove the converse (the ``if'' direction of the 
theorems), which requires some information regarding division subrings of 
matrices. We also derive Corollary \ref{gen-sum-product}. In Section 
\ref{sec:subgroups} we prove Theorem \ref{thm:coherentApproxSubgroup} and draw 
some connections with Diophantine geometry. Finally the appendix is devoted to 
the higher-dimensional version of \cite{EH-projACF}.

\subsection{Acknowledgements}
Thanks to Mohammad Bardestani, Elisabeth Bouscaren, Ben Green, Martin Hils, 
Udi Hrushovski, Jonathan Kirby, Oriol Serra, Pierre Simon and Hong Wang for 
helpful conversations.

We would also like to thank the Institut Henri Poincar\'e and the organisers 
of the
trimester ``Model theory, combinatorics and valued fields'', where some of the
work was done. The second author acknowledges support from ERC grant GeTeMo 
no. 617129.

\setcounter{tocdepth}{1}
\tableofcontents

\section{The non-standard setup}\label{sec:setup}

In this preliminary section, we set up our notation and introduce the key 
concepts, which will be used in the proof of the main results. We assume some 
familiarity with the notions of first order languages, formulas and 
ultraproducts as expounded for example in the first two chapters of 
\cite{Marker-MT}. No other more sophisticated model-theoretical concepts will 
be assumed. 

\subsection{Coarse pseudo-finite dimension}
\label{subsect:psfDim}

We begin with a self-contained presentation of Hrushovski's formalism of 
coarse pseudo-finite dimensions from
\cite{Hr-psfDims}, slightly adapted to our purposes.

\begin{parag}{\bf{Ultraproducts and internal sets.}} We will fix a 
  non-principal ultrafilter $\U$ on the set of natural numbers. We say that a 
  property of natural numbers holds for \defnstyle{$\U$-almost every} $s$ if 
  the set of natural numbers $s$ for which the property holds is an element of 
  $\U$.

  We form the ultraproduct $K = \prod_{s \rightarrow \U} K_s$ of countably many 
  algebraically closed fields $K_s$, $s\ge 0$, which by definition is the 
  cartesian product $\prod_{s \ge 0} K_s$ quotiented by the equivalence relation 
  $(x_s)_s \sim (y_s)_s$ if and only if $x_s=y_s$ for $\U$-almost every $s \ge 
  0$. The field $K$ is also algebraically closed.

  We will assume throughout {\bf internal characteristic zero}; namely, we 
  assume $\operatorname{char}(K_s) = 0$ for all $s$. This is required for the 
  incidence bounds used in Lemma~\ref{lem:SzT} below.
  (See \cite[Corollary~5.6]{Hr-psfDims} for discussion on how it ought to be
  possible to weaken this assumption).

  In fact for our purposes it makes no difference to simply make the following 

  \begin{assumption}
    We assume that $K_s=\C$ for all $s$.
  \end{assumption}

  We denote by $\stR := \R^\U$ the corresponding ultrapower of $\R$, and call 
  its elements \defnstyle{non-standard reals}. The real field $\R$ embeds 
  diagonally in $\stR$ and its elements are called standard reals. The order on 
  $\R$ extends to an order on $\stR$ by saying that $x<y$ if and only if $x_s 
  <y_s$ for $\U$-almost every $s \ge 0$.

  We let $\st : \stR \rightarrow \R\cup\{-\infty,\infty\}$ be the 
  \defnstyle{standard part map}, namely $\st(\xi)$ is $\infty$ (resp. $-\infty$) 
  if $\xi=(\xi_s)_{s \ge 0} \in \stR$ is larger (resp. smaller) than any 
  standard real, and otherwise it is the ultralimit along $\U$ of the sequence 
  $(\xi_s)_s$, namely the unique $z \in \R$ such that for each $\epsilon>0$, 
  $|z-\xi_s|<\epsilon$ holds for $\U$-almost every $s$.

  Let $n$ be a positive integer. We say that a subset $X \subset K^n$ is 
  \defnstyle{internal} if $X = \prod_{s \rightarrow \U} X^{K_s}$ for some
  subsets $X^{K_s} \subset K_s^n$.
\end{parag}

\begin{parag}{\bf{Saturation and compactness.}} \label{sat} A standard 
  property of ultraproducts over a countable index set is their 
  $\aleph_1$-compactness. Namely countable families of internal sets have the 
  finite intersection property. This means that for each positive integer $n$, 
  if $X_0 \supset X_1 \supset \ldots $ is a countable chain of internal 
  subsets of $K^n$ such that $\bigcap_{i \ge 0} X_i = \emptyset $, then $X_i = 
  \emptyset $ for some $i \ge 0$. Equivalently if an internal set $X \subset 
  K^n$ lies in the union of countably many internal sets, then it already lies 
  in the union of finitely many of them.
\end{parag}

\begin{parag}{\bf{Coarse pseudo-finite dimension.}}\label{basic-prop} 
  Throughout we will fix once and for all some infinite non-standard real $\xi 
  \in \stR$ with $\xi > \R$, which we call the \defnstyle{scaling constant}. 
  This choice corresponds to a choice of calibration for the large finite sets 
  involved in our main results. Given an internal set $X = \prod_{s 
  \rightarrow \U} X^{K_s} \subset K^n$, we define the non-standard cardinality 
  of $X$ by $|X| :=
  \prod_{s \rightarrow \U} |X^{K_s}| \in \stR \cup \{\infty\}$ and its 
  \defnstyle{coarse pseudo-finite
  dimension} $\bdl(X)$ by
  \[ \bdl(X) := \st\left({\frac{\log |X|}{\log \xi}}\right) \in \R_{\geq 0} \cup
  \{-\infty,\infty\} \]
  (for the empty set we adopt the convention $\bdl(\emptyset)=\log(0) = 
  -\infty$).

  \begin{example} Let $X^{K_s}:=\{(p,q) \in \N^2 : p+q<s\}$, 
    $Y^{K_s}:=\{1,\ldots,s^s\}$ and $\xi_s:=s$ for all $s \ge 1$. Then 
    $\bdl(X)=2$ and $\bdl(Y)=\infty$.
  \end{example}

  We note here the following immediate properties of the coarse dimension, for 
  internal sets $A,B \subset K^n$:

  \begin{enumerate}
    \item (non-negativity) $\bdl(A) \ge 0$ if $A$ is non-empty,
    \item (monotonicity) If $A \subset B$, then $\bdl(A) \le \bdl(B)$,
    \item (ultrametricity) $\bdl(A \cup B)=\max \{ \bdl(A), \bdl(B)\}$.
  \end{enumerate}

\end{parag}

\begin{parag}{\bf{Definable sets.}} In order to talk about definable subsets 
  of $K^n$ we fix a language $\L$, which extends the language of rings 
  $\L_{\operatorname{ring}}=(+,-,\cdot,0,1)$ by \emph{only countably many} 
  symbols. We assume each $K_s$ is an $\L$-structure, and equip the 
  ultraproduct $K$ with the corresponding $\L$-structure. If $C \subset K$ is 
  a countable set, we write $\L_C$ for the language with new constant symbols 
  for the elements of $C$.

  To every first order formula $\phi=\phi(\x)$ in the language $\L_C$ with free 
  variables $\x:=(x_1,\ldots,x_n)$ there corresponds a \defnstyle{definable set}
  $\phi(K): = \{ \k \in K^{n} : \phi(\k) \textnormal{ holds in } K\}$. We say
  that the set $\phi(K)$ is $C$-definable or definable \defnstyle{over} $C$; in 
  other words, $\phi(K)$ is definable by a formula with parameters from $C$. 
  When $C=\emptyset$, we say that $\phi(K)$ is definable over $\emptyset$, or 
  definable without parameters. 

  We set $$K^{< \infty} := \bigcup_{n \geq 1} K^n$$
  the set of all finite tuples of elements of $K$. The family 
  $\mathcal{D}_{n,C}$ of $C$-definable subsets of $K^{n}$ forms a boolean 
  algebra, which contains all algebraic sets defined over $C$ (i.e.\ solutions 
  of polynomial equations whose coefficients are elements of $C$) as well as a 
  countable number of prescribed subsets (the graphs of functions from $\L$ and 
  sets of tuples satisfying the relations whose symbols belong to $\L$) and 
  $\cup_{n \ge 1} \mathcal{D}_{n,C}$ is stable under co-ordinate projections 
  (image and pre-image). Equivalently, instead of starting with the language 
  $\L$ and considering the associated definable sets, we may begin by giving 
  ourselves for each $n$ a countable number of prescribed internal subsets of 
  $K^n$ and consider the smallest family of subsets of $K^{< \infty}$ which 
  contains them as well as all algebraic sets defined over $C$ and is stable 
  under union, complement and co-ordinate projections (image and pre-image).

  Clearly every definable set is internal. The converse is not true, however any 
  single internal set in $K^n$ (or a countable family of such) can be made 
  definable by expanding the language by adding an $n$-ary relation symbol for 
  that internal set. This will be done below in order to make $\bdl$ continuous 
  and further down in the paper when, in our combinatorial applications, we will 
  always add the ultraproduct of the finite sets $X_i$ to the class of definable 
  sets.

  \begin{remark}[{\bf Notation for tuples}] We will often write a couple $(a,b) 
    \in K^2$ as $ab$, or given two tuples $\a \in K^n$ and $\b \in K^m$, we will 
    denote the tuple $(\a,\b) \in K^{n+m}$ simply by $\a\b$, concatenating the 
    two tuples.
  \end{remark}

\end{parag}

\begin{parag}{\bf{Types, $\bigwedge$-definable sets and coarse dimension of a 
  tuple.}}
  The \defnstyle{type} $\tp(\a)$ of a tuple $\a \in K^n$ is the family of all 
  formulae in $n$ variables in the language $\L$ (that is, without parameters) 
  satisfied by $\a$. The intersection of all $\emptyset$-definable subsets 
  containing $\a$ will be denoted by $\tp(\a)(K)$. Similarly if $C$ is a 
  \emph{countable} subset of $K$, we denote by $\tp(\a/C)$ the (countable) 
  family of all formulae in $\L_C$ with $n$ variables satisfied by $\a$ and we 
  set:

  $$\tp(\a/C)(K):= \bigcap_{\phi \in \tp(\a/C)} \phi(K) \;=\; \bigcap_{\a \in 
  Y,\; Y \textnormal{ } C\textnormal{-definable} }Y.$$

  By a  \defnstyle{$\bigwedge$-definable set} over $C$ (say ``type-definable'', 
  or ``wedge-definable''), we mean a subset $X$ of $K^n$, for some $n$, which is 
  the intersection of countably many $C$-definable sets. Such sets need not be 
  internal. We say a set is \defnstyle{$\bigwedge$-internal} if it is the 
  intersection of a countable collection of internal sets; so 
  $\bigwedge$-definable sets are $\bigwedge$-internal.

  For a $\bigwedge$-internal set $X \subset K^n$, we define
  \begin{equation}
    \bdl(X):=\inf \{ \bdl(Y) : Y \supset X,\; Y  \textnormal{ internal}
    \}.
  \end{equation}
  
  It is an immediate consequence of $\aleph_1$-compactness that if $X \subset 
  K^n$ is the intersection of a countable decreasing chain $X_0 \supset X_1 
  \supset \ldots $ of internal subsets of $K^n$ then $\bdl(X)=\inf_i 
  \bdl(X_i)$. In particular if $X \subset K^n$ is a $\bigwedge$-definable set 
  over a countable set $C \subset K$, then 
  \begin{equation}\label{typedefdel}\bdl(X)=\inf \{ \bdl(Y) : Y \supset X,\; Y  
    \textnormal{ definable over }C \}.
  \end{equation}

  The set $\tp(\a/C)(K)$ is $\bigwedge$-definable, so this allows to define 
  $\bdl(\a/C)$, the \defnstyle{coarse dimension of the tuple} $\a$ over $C$, as 
  $\bdl(\tp(\a/C)(K))$. Namely for $\a \in K^n$:
  \begin{equation}\label{defdel} \bdl(\a/C)=\inf\{\bdl(Y) : \a \in Y \subset 
    K^n,\; Y  \textnormal{ definable over }C \}.
  \end{equation}
  Abusing notation we will write $\bdl(\a)$ for $\bdl(\a/\emptyset)$, and 
  similarly if $C\subset K^{< \infty}$ (as opposed to just $K$) we will denote 
  by $\tp(\a/C)$ and $\bdl(\a/C)$ the type and coarse dimension of $\a$ over 
  the subset $C'\subset K$ of all co-ordinates of tuples from $C$. Further note 
  that if $C_1 \subset C_2 \subset K^{<\infty}$, then 
  \begin{equation}\label{mono2}\bdl(\a/C_2) \leq \bdl(\a/C_1).
  \end{equation}

  Given a $\bigwedge$-definable set $X \subset K^n$ over a countable set $C 
  \subset K$, we clearly have $\bdl(\a/C) \leq \bdl(X)$ for every $\a \in X$. An 
  important consequence of $\aleph_1$-compactness, which will be used several 
  times in the proofs, is the existence of some tuple realising the dimension:

  \begin{fact}[``existence of an independent realisation''] \label{fact:ideal}
    If $X \subset K^n$ is a $\bigwedge$-definable set over a countable set $C 
    \subset K$, then $X$ contains some $\a \in X$ with $$\bdl(\a/C) = \bdl(X).$$
  \end{fact}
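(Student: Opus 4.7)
The plan is a standard $\aleph_1$-compactness argument, using crucially that $C$ is countable and the language $\L$ has only countably many symbols, so the collection of $C$-definable subsets of $K^n$ is countable.

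Set $\delta := \bdl(X)$, and assume $X \neq \emptyset$ (otherwise the statement is vacuous). I want to exhibit some $\a \in X$ with $\bdl(\a/C) \geq \delta$, since the reverse inequality $\bdl(\a/C) \leq \bdl(X)$ is immediate: every $C$-definable $Y \supset X$ contains $\a$. To find such $\a$, I would enumerate the (countably many) $C$-definable sets $Y$ with $\bdl(Y) < \delta$ (resp.\ $\bdl(Y) < \infty$ in the case $\delta = \infty$) as $Y_0, Y_1, \ldots$, write $X = \bigcap_j X_j$ with each $X_j$ a $C$-definable superset of $X$, and consider the $\bigwedge$-internal set
\[ \Sigma \;:=\; \bigcap_j X_j \;\cap\; \bigcap_i (K^n \setminus Y_i) . \]
Any element of $\Sigma$ lies in $X$ and avoids every $C$-definable set of coarse dimension $<\delta$, so satisfies $\bdl(\a/C) \geq \delta$ by \eqref{defdel}.

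Next I would show $\Sigma \neq \emptyset$ by $\aleph_1$-compactness (\ref{sat}). If $\Sigma$ were empty, some finite subintersection would be empty, i.e.\ there would be indices with
\[ X_{j_1} \cap \cdots \cap X_{j_r} \;\subset\; Y_{i_1} \cup \cdots \cup Y_{i_s} . \]
Set $X' := X_{j_1} \cap \cdots \cap X_{j_r}$, a $C$-definable set containing $X$, so $\bdl(X') \geq \delta$ by \eqref{typedefdel}. On the other hand the ultrametric property of $\bdl$ yields $\bdl(X') \leq \max_l \bdl(Y_{i_l}) < \delta$, a contradiction. Hence $\Sigma$ is nonempty, and any $\a \in \Sigma$ satisfies $\bdl(\a/C) = \delta = \bdl(X)$.

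There is no serious obstacle: the only points to be careful about are (i) using that countability of $C$ and of $\L$ ensures countably many $C$-definable sets, so the union defining the complement in $\Sigma$ is indeed countable, (ii) handling the case $\delta = \infty$ by replacing the threshold $\delta$ with $\infty$ in the enumeration (the same compactness argument applies verbatim), and (iii) noting that for $X = \emptyset$ the statement is trivial and $\delta = -\infty$ does not arise for nonempty $X$ since every nonempty internal superset has $\bdl \geq 0$.
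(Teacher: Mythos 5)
Your proof is correct and follows essentially the same route as the paper: both arguments show via $\aleph_1$-compactness that $X$ cannot be covered by the (countably many) $C$-definable sets of coarse dimension $<\bdl(X)$, deriving the contradiction from monotonicity together with ultrametricity of $\bdl$ on finite unions. Your formulation via the $\bigwedge$-internal set $\Sigma$ and the explicit treatment of the $\delta=\infty$ and $X=\emptyset$ cases are only cosmetic differences from the paper's proof.
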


  It is for this that we require countability of the language.

  \begin{proof} Note that for any $\a \in X$, we have $\bdl(\a/C) < \bdl(X)$ if 
    and only if there is a $C$-definable subset $Z \subset K^n$ such that $\a 
    \in Z$ and $\bdl(Z)<\bdl(X)$. Consider the family of all $C$-definable 
    subsets $Z$ with $\bdl(Z)< \bdl(X)$. It is enough to show that their union 
    does not contain $X$. But if this were the case, by $\aleph_1$-compactness 
    (see \ref{sat}), $X$ would be contained in the union of finitely many of 
    them, say $Z_1,\ldots,Z_m$. So $\bdl(\bigcup_1^m Z_i) \geq \bdl(X)$ by 
    monotonicity. However $\bdl(\bigcup_1^m Z_i) = \max \bdl(Z_i)$ by the 
    ultrametricity property (see \ref{basic-prop} above), and hence is 
    $<\bdl(X)$, a contradiction.
  \end{proof}

\end{parag}




Finally we record the following straightforward observation:

\begin{fact}\label{set-dep} For a tuple $\a=(a_1,\ldots,a_n) \in K^n$ and a 
  countable set $C \subset K$, the coarse dimension $\bdl(\a/C)$ depends only 
  on the set of co-ordinates $\{a_1,\ldots,a_n\} \subset K.$
\end{fact}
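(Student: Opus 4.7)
The plan rests on a single elementary observation: coarse dimension is non-increasing under $\emptyset$-definable functions. Concretely, if $f : K^n \to K^m$ is definable over $\emptyset$, then $f$ is represented by a coherent family $(f_s : K_s^n \to K_s^m)_s$, and for any internal $Y = \prod_{s \to \U} Y_s \subset K^n$ one has $f(Y) = \prod_{s \to \U} f_s(Y_s)$, which is internal with $|f_s(Y_s)| \leq |Y_s|$ for all $s$. By \Los\ this gives $|f(Y)| \leq |Y|$ in $\stR$, and so, applying $\st$ to $\log|\cdot|/\log\xi$, we conclude $\bdl(f(Y)) \leq \bdl(Y)$. Moreover, if $Y$ is $C$-definable then so is $f(Y)$.

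Given this, the fact reduces to showing that permuting, duplicating, or deleting coordinates of $\a$ leaves $\bdl(\a/C)$ unchanged. Suppose $\a = (a_1,\ldots,a_n)$ and $\b = (b_1,\ldots,b_m)$ satisfy $\{a_1,\ldots,a_n\} = \{b_1,\ldots,b_m\}$. Pick any function $\sigma : \{1,\ldots,m\} \to \{1,\ldots,n\}$ with $b_j = a_{\sigma(j)}$ and define
\[
f_\sigma : K^n \to K^m,\qquad f_\sigma(x_1,\ldots,x_n) = (x_{\sigma(1)},\ldots,x_{\sigma(m)}).
\]
This is $\emptyset$-definable and $f_\sigma(\a) = \b$. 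For any $C$-definable $Y \ni \a$, the image $f_\sigma(Y)$ is $C$-definable, contains $\b$, and satisfies $\bdl(f_\sigma(Y)) \leq \bdl(Y)$; taking the infimum in \eqref{defdel} yields $\bdl(\b/C) \leq \bdl(\a/C)$. The hypothesis $\{a_i\} = \{b_j\}$ supplies a similar function $\tau : \{1,\ldots,n\} \to \{1,\ldots,m\}$ with $a_i = b_{\tau(i)}$, and the symmetric argument gives the reverse inequality.

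There is no real obstacle here; the statement is essentially a well-definedness check. The only point that deserves care is keeping track of the distinction between internal and definable sets, and verifying that coordinate projections (and duplications) remain definable over the same parameter set $C$, which is immediate since they use no parameters at all.
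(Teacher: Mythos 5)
Your proof is correct. The paper's argument splits the claim into two special cases — invariance under permutation of coordinates (which induce bijections, hence preserve cardinality exactly) and invariance under deletion of a duplicate coordinate (the projection being a bijection on the diagonal) — leaving implicit that these two operations generate all passages between tuples with the same coordinate set. You instead extract a single monotonicity principle, namely that $\emptyset$-definable functions are non-increasing for $\bdl$, and apply it symmetrically: any two tuples with the same coordinate set are connected by coordinate maps $f_\sigma$, $f_\tau$ in both directions, giving the two inequalities whose conjunction is the desired equality. This is a cleaner organization of essentially the same cardinality-counting argument, and it dispenses with the need to argue that permutations and duplication-collapses suffice. The one point you flag — that $C$-definability is preserved because $f_\sigma$ needs no parameters — is indeed the place where one must be slightly careful, and you handle it correctly.
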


\begin{proof} Indeed it is invariant under any permutation of the 
  co-ordinates, because these induce bijections of $K^n$ and thus preserve 
  cardinality. And furthermore if $X$ is an internal set in $K^n$ such that 
  the last two co-ordinates $x_{n-1}$ and $x_n$ coincide for all $x \in X$, 
  then $\bdl(X)=\bdl(\pi(X))$, where $\pi(X)$ is the projection to the first 
  $n-1$ co-ordinates.
\end{proof}

\begin{parag}{\bf Continuity, additivity and invariance of coarse 
  dimension.}\label{continuity} We now come to two crucial properties of 
  $\bdl$: its additivity and its invariance. This will turn $\bdl$ into a 
  dimension-like quantity with properties very similar to those say of the 
  transcendence degree of the field extension generated by a tuple. To get 
  these properties it is enough to prove that $\bdl$ has the continuity 
  property we will now define. This property essentially amounts to requiring 
  that for each definable set the subset of fibers of a given size under a 
  co-ordinate projection is itself a definable set or is at least well 
  approximated by one. However continuity is not automatic and to get it we 
  will need to enrich our language $\L$ somewhat artificially, by adding a 
  (still countable) family of definable subsets.

  \begin{definition}\label{cont-def}
    We say that $\bdl$ has the continuity property (or is \defnstyle{continuous}) 
    if given $n,m\ge 1$, $\alpha \in \R$, $\epsilon>0$ and a  
    $\emptyset$-definable set $Y \subset K^{n} \times K^m$ there is a 
    $\emptyset$-definable set $W \subset K^m$ such that $$ \{\b \in K^m : 
    \bdl(Y_{\b}) \geq \alpha+\epsilon\} \subset W \subset \{\b \in K^m : 
    \bdl(Y_{\b}) \geq \alpha\},$$ where $Y_{\b}$ is the fiber $\{\x \in K^n; 
    (\x,\b) \in Y\}$.
  \end{definition}

  It is always possible to force the continuity of $\bdl$ by enlarging the 
  language $\L$ to a new language $\L'$, which is still countable and for which 
  $\bdl$ becomes continuous. Indeed for each $q \in \Q$ we may add a predicate 
  to simulate the quantifier $\exists_{\ge \xi^q}$ of having ``at least $\xi^q$ 
  solutions''. Explicitly, if $\xi=\lim_{s \to \U} \xi_s$ is as in the 
  definition of $\bdl$, let $\L_0 := \L$ and define $\L_{i+1}$ by adding to 
  $\L_i$ a new predicate $\psi_{\phi(\x,\y),q}(\y)$ for each formula 
  $\phi(\x,\y) \in \L_i$ and each $q \in \Q$, interpreted in $K_s$ by 
  $$\psi_{\phi(\x,\y),q}(K_s) := \{ \y : |\phi(K_s,\y)| \geq \xi_s^q\},$$ where 
  we have written $\phi(K_s,\y)$ for $\{\x : \phi(\x,\y) \textnormal{ holds in } 
  K_s\}$. So in the ultraproduct $K$ we have $\psi_{\phi(\x,\y),q}(K) = \{\y :   
  |\phi(K,\y)| \geq \xi^q\}$. Then we set $\L' := \cup_{i<\infty} \L_i$.

  It is then clear that $\bdl$ is continuous once we replace $\L$ with $\L'$. 
  Indeed if $\alpha \in \R$ and $\epsilon>0$ we may pick a rational $q \in 
  (\alpha,\alpha+\epsilon)$. Then if $\b \in \psi_{\phi(\x,\y),q}(K)$ then 
  $|\phi(K,\b)| \geq \xi^q$ so $\bdl(\phi(K,\b)) \geq q > \alpha$, while if 
  $\bdl(\phi(K,\b)) \geq \alpha+\epsilon$ then $\bdl(\phi(K,\b)) \geq q$ and $\b 
  \in \psi_{\phi(\x,\y),q}(K)$.

  \begin{remark}\label{C-def-cont} Note that the continuity property 
    automatically extends to definable sets with parameters. Namely if $Y$ is 
    assumed $C$-definable for some $C \subset K$, and $\bdl$ is continuous, we 
    may find a $C$-definable $W$ as in Definition \ref{cont-def}. Indeed there 
    is a finite tuple $\c_0 \in K^\ell$ for some $\ell \ge1$ with co-ordinates 
    in $C$ such that $Y=Y^0_{\c_0}=\{(\x,\y) : (\x,\y,\c_0) \in Y^0\}$ for some 
    $\emptyset$-definable set $Y^0 \in K^{n+m+\ell}$, so by continuity there is 
    $W^0$ a $\emptyset$-definable subset of $K^m$ such that $$ \{(\b,\c) \in 
    K^{m+\ell} : \bdl(Y_{\b,\c}) \leq \alpha\} \subset W^0 \subset \{(\b,\c) \in 
    K^{m+\ell} : \bdl(Y_{\b,\c}) \leq \alpha+\epsilon\}.$$ But now 
    $W:=W^0_{\c_0}$ is the desired $C$-definable set.
  \end{remark}

  Continuity yields the following crucial properties, which are characteristic
  of a dimension function; in particular, they are shared by transcendence
  degree.

  \begin{fact} \label{fact:dlContProps}
    Let $\a,\b \in K^{< \infty}$ and let $C \subset K$ be countable and 
    $\phi(\x,\y)$ a formula in the language $\L$. If $\bdl$ is continuous (for 
    $\L$) then it is
    \begin{enumerate}[(i)]\item \defnstyle{invariant}: if $\tp(\a)=\tp(\b)$, 
        then $\bdl(\phi(K,\a)) =
        \bdl(\phi(K,\b))$,
      \item \defnstyle{additive}:
        \[ \bdl(\a\b/C)=\bdl(\b/C) + \bdl(\a/\b C).\]
    \end{enumerate}
  \end{fact}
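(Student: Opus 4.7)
The common device is to use continuity to replace the a-priori non-definable condition ``$\bdl(\phi(K,\y)) \geq \alpha$'' by a nearby definable set $W$, so that membership in $W$ can be transferred via type-equality (for invariance) or read off from the $C$-definable sets witnessing the dimensions (for additivity).

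For invariance I would fix $\alpha < \bdl(\phi(K,\a))$ and $\epsilon > 0$ with $\alpha + \epsilon < \bdl(\phi(K,\a))$, and apply continuity to the $\emptyset$-definable set $\phi(K^n,K^m) \subset K^{n+m}$ to obtain a $\emptyset$-definable $W \subset K^m$ with $\{\y : \bdl(\phi(K,\y)) \geq \alpha+\epsilon\} \subset W \subset \{\y : \bdl(\phi(K,\y)) \geq \alpha\}$. The left inclusion and the choice of $\epsilon$ put $\a \in W$; since $W$ is $\emptyset$-definable and $\tp(\a)=\tp(\b)$, also $\b \in W$; the right inclusion then gives $\bdl(\phi(K,\b)) \geq \alpha$. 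Letting $\alpha \nearrow \bdl(\phi(K,\a))$ and symmetrising will yield the equality.

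For additivity I write $\alpha := \bdl(\a/\b C)$, $\beta := \bdl(\b/C)$, fix $\epsilon > 0$, and prove the two inequalities separately. For $\bdl(\a\b/C) \leq \alpha+\beta$ I would pick a $\b C$-definable $Z = \phi(K,\b) \ni \a$ with $\bdl(Z) < \alpha+\epsilon$ and a $C$-definable $Y_1 \ni \b$ with $\bdl(Y_1) < \beta+\epsilon$, then apply continuity to $\phi$ over $C$ (Remark~\ref{C-def-cont}) with a suitable margin to obtain a $C$-definable $W \subset K^m$ that excludes $\b$ yet contains every $\y$ with $\bdl(\phi(K,\y)) \geq \alpha+3\epsilon$. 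Intersecting $W^c$ with a $\psi_{\phi,q}$-predicate set (available in the continuity-enriched language) yields a uniform fiber bound $|\phi(K,\y)| \leq \xi^{\alpha+O(\epsilon)}$ for $\y \notin W$, so that $Y'' := \phi(K^{n+m}) \cap (K^n \times (Y_1 \setminus W))$ is a $C$-definable set containing $(\a,\b)$ with $|Y''| \leq |Y_1| \cdot \xi^{\alpha+O(\epsilon)}$, giving $\bdl(Y'') \leq \alpha+\beta+O(\epsilon)$. For $\bdl(\a\b/C) \geq \alpha+\beta$, given any $C$-definable $Z \ni (\a,\b)$, I would use continuity to sandwich $\{\y : \bdl(Z(\y)) \geq \alpha\}$ by a $C$-definable $W$; since $\bdl(Z(\b)) \geq \alpha$, we get $\b \in W$, whence $\bdl(W) \geq \beta$. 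Refining to $W'' := W \cap \{\y : |Z(\y)| \geq \xi^{\alpha-2\epsilon}\}$ (another $\psi$-predicate set) keeps $\b$ inside and provides a uniform lower bound on fiber size, so $|Z| \geq |W''| \cdot \xi^{\alpha-2\epsilon}$ gives $\bdl(Z) \geq \alpha+\beta-O(\epsilon)$. Taking $\epsilon \to 0$ in both cases closes the argument.

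The main technical nuisance will be shuttling between dimension statements $\bdl(X) \geq c$ (a standard-part inequality) and internal cardinality statements $|X| \geq \xi^{c'}$, while arranging that $\b$ ends up on the appropriate side of each $C$-definable set produced by continuity. This is handled by keeping a small $\epsilon$-margin on each invocation of continuity and, where uniformity across fibers is needed, by intersecting with $\psi$-predicate sets to convert pointwise $\bdl$-bounds into uniform internal size bounds amenable to summing.
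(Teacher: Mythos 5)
Your proposal is correct and follows essentially the same approach as the paper's proof: use the continuity property to produce definable sets on which fibre dimensions are uniformly controlled, convert these $\bdl$-bounds to internal cardinality bounds via the $\psi$-predicates, and then count fibres. The only difference is presentational: the paper establishes both inequalities of additivity simultaneously by intersecting all the relevant definable sets into a single $Y$ and reading off the estimate, whereas you prove the two inequalities separately; you are also more explicit than the paper about where the $\psi$-predicates enter (the paper uses them tacitly in the final counting step).
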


  Here as above $\phi(K,\a)$ denotes the definable set $\{\x : \phi(\x,\a) 
  \textnormal{ holds}\}$. We have used the convention $\alpha+\infty = 
  \infty+\alpha = \infty$, and $\a\b$ is a shorthand for $(\a,\b)$, the 
  concatenation of the tuples $\a$ and $\b$. Also we wrote $\b C$ for the union 
  of $C$ and the co-ordinates of $\b$. 

  \begin{proof} When $\a$ and $\b$ have the same type they belong to the same 
    definable sets, so $(i)$ is immediate from the continuity of $\bdl$. The 
    proof of $(ii)$ is given in \cite[Lemma~2.10]{Hr-psfDims}. We give it again 
    here for the reader's convenience. The idea is the following: if $Y$ is a 
    $C$-definable set in $K^n \times K^m$ containing $(\a,\b)$ and such that all 
    fibers $Y_{\b'}=Y \cap \pi_2^{-1}(\b')$ above the points $\b' \in \pi_2(Y)$ 
    (where $\pi_2$ is the co-ordinate projection to $K^m$) have the same size, 
    then clearly $\bdl(Y) = \bdl(\pi_2(Y)) + \bdl(Y_{\b'})$. Now the continuity 
    property of $\bdl$ ensures that we can find a $Y$ with $\bdl(Y)$ close to 
    $\bdl(\a\b/C)$ and with all fibers of almost the same size. This shows 
    additivity.

    We now give more details: by definition of the coarse dimension as an 
    infimum (see $(\ref{defdel})$), given $\epsilon>0$ we may find $C$-definable 
    sets $Y,Y' \subset K^n \times K^m$ such that $\a\b \in Y,Y'$ and 
    $\bdl(\a\b/C) \leq \bdl(Y) \leq \bdl(\a\b/C) +\epsilon$, $\bdl(\a/\b C) \leq 
    \bdl(Y'_{\b}) \leq \bdl(\a/\b C)+\epsilon$ and a $C$-definable set $Z\subset 
    K^m$ with $\b \in Z$ and $\bdl(\b/C) \leq \bdl(Z) \leq \bdl(\b/C) 
    +\epsilon$. Replacing $Y,Y'$ by $Y\cap Y' \cap \pi_2^{-1}(Z)$, we may assume 
    that $Y=Y'$ and $Z=\pi_2(Y)$. Now by continuity of $\bdl$ there is a 
    $C$-definable set $W \ni \b$ such that $|\bdl(Y_{\b'}) - 
    \bdl(Y_{\b})|<\epsilon$ for all $\b' \in W$. We may then further replace $Y$ 
    by $Y \cap \pi_2^{-1}(W)$ and get to a situation where $\bdl(\a\b/C) \leq 
    \bdl(Y) \leq \bdl(\a\b/C) +\epsilon$, $\bdl(\b/C) \leq \bdl(\pi_2(Y)) \leq 
    \bdl(\b/C) +\epsilon$ and all fibers $Y_{\b'}$ for $\b'\in \pi_2(Y)$ have 
    $\bdl(\a/\b C)-\epsilon \leq \bdl(Y_{\b'}) \leq \bdl(\a/\b C)+\epsilon$. We 
    thus conclude that $|\bdl(\a\b/C) - \bdl(\b/C) -\bdl(\a/\b C)| \leq 
    3\epsilon$ as desired.
  \end{proof}

  \begin{remark}
    \newcommand{\Km}{\mathbb{K}}
    We briefly remark in passing for the model-theoretically inclined reader that 
    a more sophisticated setup is also available, which in some ways is more 
    satisfactory than that described above.
    Working directly in a countable ultrapower with only $\aleph_1$-compactness, 
    as we have in this section, has the consequence that we must pick a 
    countable language to work with. In our applications we will have no real 
    control over the definable sets and can expect no tameness, so having to 
    make this choice is something of a distraction. An alternative would be to 
    define $K$ as above but in a language $\L_\opint$ which includes all 
    internal sets as predicates, and then to take a $\kappa$-saturated 
    $\kappa$-strongly homogeneous elementary extension $\Km$, for a cardinal 
    $\kappa$ which is larger than any parameter set we wish to consider.
    There is then a unique way to define $\bdl(\phi(\x,\a))$ for $\phi \in 
    \L_\opint$ and $\a \in \Km^{<\omega}$ such that $\bdl$ is continuous and 
    extends the original definition in the case $\a \in K^{<\omega}$.
    Namely, $\bdl(\phi(\x,\a)) := \sup \{ q \in \Q : \Km \vDash \exists_{\geq 
    \xi^q} \x.\; \phi(\x,\a) \}$, where $\exists_{\geq \xi^q} \x.\; \phi(\x,\y)$ 
    denotes an $\L_\opint$ formula with free variables $\y$ such that $K \vDash 
    \exists_{\geq \xi^q} \x.\; \phi(\x,\b)$ if and only if $| \phi(K,\b) | \geq 
    \xi^q$, for $\b \in K^{<\omega}$.
    (This is parallel to the way one defines dimension on an elementary 
    extension of a Zariski structure.)
    Here, continuity is meant in the sense of Definition~\ref{cont-def} - or 
    equivalently, that the map from the type space to the 2-point 
    compactification $S_{\y}(\emptyset) \rightarrow \R \cup\{-\infty,\infty\}: 
    \tp(\b) \mapsto \bdl(\phi(\x,\b))$ is well-defined and continuous.
    We can then work with elements of $\Km$ in order to analyse the internal 
    subsets of $K$. We will not use this alternative presentation, but some 
    readers may prefer to pretend that we do so throughout.
  \end{remark}

\end{parag}

\begin{parag}{\bf Algebraic independence and transcendence 
  degree.}\label{notn:baseField} At the heart of the combinatorial results of 
  this paper lies the interplay between combinatorics (via the coarse 
  pseudo-finite dimension $\bdl$) and algebraic geometry (via the notion of 
  algebraic dimension, or transcendence degree). To this effect we will fix a  
  \defnstyle{base field} $C_0$ and assume it is \emph{countable and 
  algebraically closed} and contained in $K$. We will then have to consider 
  the subclass of definable sets that are $C_0$-definable using only the 
  language of rings $\L_{ring}$. In the applications $C_0$ will be the 
  algebraic closure of the field of definition of the variety. As is 
  well-known, in an algebraically closed field, the sets that are 
  $C_0$-definable in $\L_{ring}$ coincide with the so-called constructible 
  sets of algebraic geometry defined over $C_0$, namely solutions of finitely 
  many polynomial equations and inequations with coefficients in $C_0$. After 
  enlarging $\L$ if necessary we can make the following

  \begin{assumption}
    We assume that $\L$ contains a constant symbol for each element of $C_0$.
  \end{assumption}

  \begin{notation*}[$0$ superscript] We will use a superscript $0$, e.g.\ 
    $\tp^0$, to indicate that we work in the
    structure $(K;+,\cdot,(c)_{c \in C_0})$ of $K$ as an algebraically closed
    field extension of the base field $C_0$, rather than in the full language
    $\L$. For example for $\a,\b \in K^{< \infty}$ and $C \subset K^{<\infty}$, 
    saying that
    $\tp^0(\a/C)=\tp^0(\b/C)$ means that they satisfy the same
    polynomial equations over the field $C_0(C)$ generated by $C_0$ and the 
    co-ordinates of all tuples belonging to $C$, i.e.\ for $f \in 
    C_0(C)[\tuple{X}]$, $f(\a)=0$
    if and only if $f(\b)=0$.
  \end{notation*}

  \begin{notation*}[algebraic closure $\acl^0$] Similarly for a subset $A 
    \subset K^{< \infty}$ we denote by $\acl^0(A)$ the field-theoretic 
    algebraic closure in $K$ of the subfield $C_0(A)$ generated by $C_0$ and 
    the co-ordinates of the elements of $A$.
  \end{notation*}

  When there is no superscript, we work in the full language $\L$.

  \begin{notation*}[transcendence degree $\dimo$] We write $\dimo$ for the 
    dimension with respect to $\acl^0$, i.e.\ for $A,B \subset K^{< \infty}$ 
    we set:
    $$\dimo(A/B) := \trd(C_0(AB)/C_0(B)),$$ where $\trd$ denotes the transcendence
    degree, and $C_0(B)$ the field extension of $C_0$ generated by $B$ and $AB$ 
    is short for $A \cup B$.
  \end{notation*}

  Note that, just like $\bdl$, \emph{$\dimo$ is additive}: if $\a,\b \in 
  K^{<\infty}$ and $C \subset K$, then \begin{equation}\label{add-dim0}\dimo(\a 
    \b / C) = \dimo(\a / \b C) + \dimo(\b / C),
  \end{equation}
  where, as earlier, $\b C$ is short for the union of $C$ and the co-ordinates 
  of $\b$. 

  Note finally that clearly $\dimo(A/B) = \dimo(A/\acl^0(B))$. 

  \begin{notation*}[independence $\ind^0_C$]\label{ind} If $A,B,C$ are subsets 
    of tuples of $K$, we will say that $A$ is algebraically independent of $B$ 
    over $C$ and write $A \ind^0_C B$ if $$\dimo(A/BC) = \dimo(A/C),$$ i.e.\ 
    if $C_0(A)$ is algebraically independent from $C_0(B)$ over $C_0(C)$.
    This is clearly a symmetric relation, namely $A \ind^0_C B$ if and only if $B 
    \ind^0_C A$.
  \end{notation*}





  \begin{notation*} For $A \subset K^{< \infty}$, we write
    \begin{equation}\label{closalg} \acl^0(A)^{< \infty} := \bigcup_{n\ge 1} 
      (\acl^0(A))^{n} \subset K^{< \infty} ,
    \end{equation} for the set of tuples algebraic over $A$. Note that this is 
    also
    the set of tuples with finite orbit under the group of field automorphisms
    $\Aut(K/C_0(A))$ fixing $C_0(A)$ pointwise.
  \end{notation*}
\end{parag}


\begin{parag}{\bf Coarse dimension of an algebraic tuple.} Let $C \subset K^{< 
  \infty}$ be a countable subset. If a tuple $\a$ belongs to $\acl^0(C)^{< 
  \infty}$, then it is contained in a finite $C$-definable set, namely the 
  Galois orbit of $\a$ over $C_0(C)$. In particular, since $\xi > \R$ we have 
  $\bdl(\a/C) = 0$. So
  \[ \a \in \acl^0(C)^{< \infty} \Rightarrow \bdl(\a/C) = 0 .\]
  We also record here the following generalisation of this observation, which 
  will be used in the proof of Proposition \ref{lem:cgpCoherentLinearity}. For 
  any $\a \in K^{< \infty}$ and countable $C \subset K$:
  \begin{equation}\label{alg-ineq} \bdl(\a/C)=\bdl(\a/\acl^0(C)).
  \end{equation}
  Indeed, first we have $\bdl(\a/C)\geq \bdl(\a/\acl^0(C))$ by $(\ref{mono2})$. 
  For the opposite inequality it is enough to show that if $\b \in \acl^0(C)$, 
  then $\bdl(\a/C) \leq \bdl(\a/\b)$. To see this, note that $\bdl(\b/C)=0$ by 
  the above remark, and thus by additivity $(\ref{fact:dlContProps}.ii)$ 
  $$\bdl(\a/C) \leq \bdl(\a\b/C) = \bdl(\a/\b C)+\bdl(\b/C)= \bdl(\a/\b C) \leq 
  \bdl(\a/\b).$$
\end{parag}


%

\begin{parag}{\bf Locus of a tuple.} If $\a \in K^n$ and $C \subset K$, we
  define the \defnstyle{locus} of $\a$ over $C_0(C)$, denoted by 
  $\loc^0(\a/C)$, to be the smallest Zariski-closed subset $V \subset K^n$ 
  such that $\a \in V$ and $V$ is defined by the vanishing of polynomials with 
  coefficients in $C_0(C)$. We also write $\loc^0(\a)$ for 
  $\loc^0(\a/\emptyset)$.

  Note that by definition $\loc^0(\a/C)$ is irreducible over $C_0(C)$, i.e.\ it 
  cannot be written as a finite union of more than one proper Zariski-closed 
  subset of $K^n$ defined over $C_0(C)$, but it may not be absolutely 
  irreducible (i.e.\ irreducible over $K$). However each absolutely irreducible 
  component is defined over some finite algebraic extension. In particular 
  $\loc^0(\a/\acl^0(C))$\emph{ is an absolutely irreducible component of} 
  $\loc^0(\a/C)$, and 
  \begin{equation}\label{dim-loc}\dimo(\a/C)=\dim(\loc^0(\a/C))=\dim(\loc^0(\a/\acl^0(C))). 
  \end{equation}

\end{parag}

\begin{parag}{\bf Abstract varieties.} \label{absVars}
  Our setup is adapted to working with tuples of elements of $K$, but in our 
  applications we will want to work with points of algebraic groups and of 
  general abstract algebraic varieties. We explain here how we bridge this gap 
  using standard notions from the model theory of algebraically closed fields, 
  as described in \cite{Pillay-ACF} or \cite[7.4]{Marker-MT}.

  We adopt the convention that varieties are always separated, but not 
  necessarily irreducible.

  If $V$ is an algebraic variety over an algebraically closed subfield $C \leq 
  K$, then $V$ admits a cover by finitely many affine open subvarieties over 
  $C$; that is, there are open subvarieties $V_i \subset V$ and (closed) 
  affine subvarieties $U_i \subset \mathbb{A}^{n_i}$ and isomorphisms $f_i : 
  V_i \rightarrow U_i$ over $C$, such that $V = \bigcup_i V_i$.
  Then $V(K)$ can be identified with the quotient of the disjoint union of the 
  $V_i(K)$ by the equivalence relation of representing the same point of 
  $V(K)$.
  Now $ACF_0$, the theory of algebraically closed fields of characteristic 
  zero, admits elimination of imaginaries, which exactly means that such a 
  quotient is in definable bijection over $C$ with a definable (i.e.\ 
  constructible) subset of $K^n$ for some $n$. We refer to 
  \cite[Remark~3.10(iii), Lemma~1.7]{Pillay-ACF} for details of this 
  construction.
  In this way, we embed $V(K)$ as a subset of $K^n$.
  Note that this embedding is not continuous.

  The precise embedding depends on our choice of cover. However, if $W \subset 
  V$ is another subvariety and $f : W \rightarrow U \subset K^m$ is an 
  isomorphism over $C$ with an affine variety, and if $a \in W(K) \subset V(K) 
  \subset K^n$, then the subfield of $K$ generated over $C$ by the 
  co-ordinates of $a$ according to our embedding of $V(K)$ in $K^n$ and those 
  according to $f$ are equal, $C(a) = C(f(a))$. In particular, for $a \in 
  V(K)$ the subfield $C(a) \leq K$ does not depend on our choice of cover.

  For $\tau \in \N$, we say that the \defnstyle{complexity} of a closed 
  subvariety $W \subset V$ is at most $\tau$ if for each $i$ the affine 
  variety $f_i(W\cap V_i) \subset U_i \subset \mathbb{A}^{n_i}$ can be defined 
  as the set of common zeros of a collection of polynomials each of degree at 
  most $\tau$.
  Note that the family of subvarieties of $V$ of complexity at most $\tau$ 
  forms a definable family; that is, there is $m \in \N$ and a constructible 
  set $X \subset V \times K^m$ over $C$ such that every subvariety of $V$ over 
  $K$ of complexity at most $\tau$ is of the form $X(\b) = \{ v : (v,\b) \in X 
  \}$ for some $\b \in K^m$. In fact this is the only property we require of 
  the notion of complexity.


\end{parag}

\begin{parag}{\bf Generic elements.} Let $V$ as before be an algebraic variety 
  over an algebraically closed $C \leq K$. For $a \in V(K)$ and $C' \leq K$ an 
  algebraically closed subfield containing $C$, we define the 
  \defnstyle{locus} of $a$ over $C'$ within $V$, $\locus^V(a/C')$, to be the 
  smallest Zariski-closed subvariety of $V$ defined over $C'$ and containing 
  $a$. If $V(K) \subset K^m$ is affine and defined over $C$, then 
  $\locus^V(a/C)=\loc^0(a/C)$.

  If $V$ is irreducible, a point $a \in V(K)$ of $V$ is \defnstyle{generic} if 
  it is contained in no proper closed subvariety over $C$, i.e.\ 
  $\locus^V(a/C) = V$; equivalently, $\trd(a/C) = \dim(V)$.

  \begin{remark} \label{rmk:corrGeneric}
    If $V \subset \prod_i W_i$ and $V' \subset \prod_i W_i'$ are closed 
    subvarieties where $V,V',$ $W_i,W_i'$ are irreducible varieties over $C_0$, 
    then $V$ and $V'$ are in co-ordinatewise correspondence if and only if they 
    have generics $\a \in V(K)$ and $\a' \in V'(K)$ such that $a_i \in W_i(K)$ 
    and $a'_i \in W'_i(K)$ are generic and for some permutation $\sigma \in 
    \operatorname{Sym}(n)$, we have $\acl^0(a_i) = \acl^0(a_{\sigma i}')$. 
    Indeed, $\loc^{0}(a_i,a_{\sigma i}') \subset W_i\times W_{\sigma i}'$ is 
    then a generically finite algebraic correspondence between $\loc^0(a_i)$ and 
    $\loc^0(a_{\sigma i})$, as required.
  \end{remark}

\end{parag}

\begin{parag}{\bf Canonical base.}\label{parag:canonical} In the proof of our 
  main theorems, it will be crucial to understand the dimensions of certain 
  families of varieties. The right concept for this (which serves a similar 
  purpose as the concept of Hilbert scheme in classical algebraic geometry) is 
  the notion of canonical base. 

  Recall that the \emph{field of definition} of a Zariski-closed subset $V 
  \subset K^n$ is
  the smallest field $k$ over which $V$ is defined; equivalently (since
  $\operatorname{char}(K)=0$), $k$ is such that a field automorphism $\sigma 
  \in \Aut(K)$
  fixes $V$ setwise if and only if it fixes $k$ pointwise.

  Given $\a \in K^{n}$ and $C \subset K^{<\infty}$ let $k\leq K$ be the field 
  of definition of the absolutely irreducible Zariski-closed subset 
  $\loc^0(\a/\acl^0(C))$ of $K^n$. A tuple $\d\in K^{<\infty}$ is said to be a 
  \defnstyle{canonical base} of $\a$ over $C$ if its co-ordinates together 
  with $C_0$ generate the subfield of $K$ generated by $C_0$ and $k$.

  Clearly if $\d \in K^{<\infty}$ is a canonical base of $\a$ over $C$ then it 
  is a canonical base of $\a$ over $\acl^0(C)$ and conversely. Furthermore $\d 
  \in \acl^0(C)$ and since $\loc^0(\a/\acl^0(C))$ is defined over $C_0(\d)$ we 
  have:
  $$\loc^0(\a/\acl^0(C)) = \loc^0(\a/\d)$$ so $\loc^0(\a/\d)$ is (absolutely) 
  irreducible and
  $$\dimo(\a/\d) = \dimo(\a/C) =  \dimo(\a/\acl^0(C)) =  \dimo(\a/C \d),$$
  in other words: $\a \ind^0_{\d} C$. In the proof of Proposition 
  \ref{lem:cgpCoherentLinearity} below we shall require the following fact.


  \begin{lemma} \label{lem:cb}
    Let $\a \in K^{<\infty}$ and $C \subset K^{< \infty}$. Let $\d \in 
    K^{<\infty}$ be a canonical base of $\a$ over $C$ and $V:=\loc^0(\a\d)$. Let 
    $\d_1,\d_2 \in \tp^0(\d)(K)$ and $\a' \in K^{< \infty}$ such that $\a'\d_i 
    \in V$. Then either $\dimo(\a'/\d_1\d_2) < \dimo(\a/\d)$, or $\d_1=\d_2$.
  \end{lemma}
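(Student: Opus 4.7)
The plan is to assume $\dimo(\a'/\d_1\d_2) \geq \dimo(\a/\d)$ and deduce $\d_1 = \d_2$.

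First, since $\d_i \in \tp^0(\d)(K)$ we have $\dimo(\d_i) = \dimo(\d)$. From $\a'\d_i \in V = \loc^0(\a\d)$ and additivity \eqref{add-dim0},
\[ \dimo(\a'/\d_i) = \dimo(\a'\d_i) - \dimo(\d_i) \leq \dim V - \dimo(\d) = \dimo(\a/\d). \]
Combining with monotonicity and the hypothesis, $\dimo(\a'/\d_i) = \dimo(\a'/\d_1\d_2) = \dimo(\a/\d)$ for each $i$. In particular $\dimo(\a'\d_i) = \dim V$, which forces $\loc^0(\a'\d_i) = V$ (a $C_0$-irreducible subvariety of the $C_0$-irreducible $V$ of the same dimension), whence $\tp^0(\a'\d_i) = \tp^0(\a\d)$.

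Next I would promote this type equality to an automorphism. Passing if necessary to the $\kappa$-saturated, $\kappa$-strongly homogeneous elementary extension $\mathbb{K} \succeq K$ of the earlier remark --- which leaves transcendence degrees and loci unchanged --- one obtains $\tau_i \in \Aut(\mathbb{K}/C_0)$ with $\tau_i(\a,\d) = (\a',\d_i)$. Let $W := \loc^0(\a/\d)$. By the canonical base property, $W$ is absolutely irreducible of dimension $\dimo(\a/\d)$ with field of definition $C_0(\d)$. Hence each $W_i := \tau_i(W) = \loc^0(\a'/\d_i)$ is absolutely irreducible of dimension $\dimo(\a/\d)$ with field of definition $C_0(\d_i)$.

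Finally, $\loc^0(\a'/\d_1\d_2) \subseteq W_1 \cap W_2$, and by the first step it has dimension $\dimo(\a/\d) = \dim W_1 = \dim W_2$. If $W_1 \neq W_2$ their intersection would have strictly smaller dimension by absolute irreducibility, a contradiction; hence $W_1 = W_2$. This means $\tau_2^{-1}\tau_1$ stabilises $W$ setwise, so by the characterisation of field of definition it fixes $C_0(\d)$ pointwise and in particular fixes $\d$, giving $\d_1 = \tau_1(\d) = \tau_2(\d) = \d_2$. The main delicate point is producing the automorphisms $\tau_i$; since all quantities in the statement depend only on the field extension $K/C_0$, one may pass freely to a saturated extension to carry this out.
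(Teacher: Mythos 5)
Your proof is correct and takes essentially the same route as the paper's: dimension counting forces $\loc^0(\a'\d_i)=V$, giving automorphisms $\tau_i$ over $C_0$ sending $\a\d\mapsto\a'\d_i$, and then the canonical-base/field-of-definition property applied to the setwise stabiliser of $\loc^0(\a/\d)$ (the paper instead works with $\loc^0(\a'/\d_1)$ and $\sigma_2\sigma_1^{-1}$, which is the same argument up to conjugation) forces $\d_1=\d_2$. The detour through a saturated $\mathbb{K}$ is harmless but superfluous: $K$ is algebraically closed, and since $\dimo(\a'\d_i)=\dimo(\a\d)$ a $C_0$-isomorphism between the two finitely generated subfields extends to an automorphism of $K$ directly, so the $\tau_i$ already exist in $\Aut(K/C_0)$.
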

  \begin{proof} Note that $$\dimo(\a'/\d_1\d_2) \le 
    \dimo(\a'/\d_1)=\dimo(\a'\d_1)-\dimo(\d_1)\le \dim V - \dimo(\d) = 
    \dimo(\a/\d).$$ So if $\dimo(\a'/\d_1\d_2) \ge \dimo(\a/\d)$, then the above 
    inequalities are equalities and in particular $\dimo(\a'\d_i)=\dimo(\a\d)$ 
    for each $i$. Since $V$ is irreducible we obtain $V=\loc^0(\a'\d_i)$. Hence 
    there exist $\sigma_i \in \Aut(K/C_0)$ with $\sigma_i(\a)=\a'$ and 
    $\sigma_i(\d)=\d_i$.

    Since $\d$ is a canonical base for $\a$ over $C$, $C_0(\d)$ is the field of 
    definition of $\loc^0(\a/\d)$. Hence $C_0(\d_i)$ is the field of definition 
    of $\loc^0(\a'/\d_i)$, and thus $\d_i$ is a canonical base of $\a'$ over 
    $\d_i$. In particular $\loc^0(\a'/\d_i)=\sigma_i(\loc^0(\a/\d))$ is 
    irreducible. Since $\loc^0(\a'/\d_1\d_2) \subset \loc^0(\a'/\d_i)$ have the 
    same dimension, we conclude that $\loc^0(\a'/\d_1\d_2)=\loc^0(\a'/\d_i)$.

    In particular $\loc^0(\a'/\d_1)= \loc^0(\a'/\d_2)$. Setting $\sigma=\sigma_2 
    \sigma_1^{-1}$, we get $\sigma(\a')=\a'$, $\sigma(\d_1)=\d_2$ and 
    $\sigma(\loc^0(\a'/\d_1))=\loc^0(\a'/\d_2)=\loc^0(\a'/\d_1)$. Hence $\sigma$ 
    fixes $\loc^0(\a'/\d_1)$ setwise. It must fix its field of definition 
    $C_0(\d_1)$ pointwise. Hence $\sigma(\d_1)=\d_1$, and $\d_2=\d_1$ as claimed.
  \end{proof}

\end{parag}

\begin{parag}{\bf Isogenies.}
  We say that commutative algebraic groups $G_1,G_2$ are \defnstyle{isogenous} 
  if there exists an \defnstyle{isogeny} $\theta : G_1 \twoheadrightarrow G_2$; 
  that is, a surjective algebraic group homomorphism with finite kernel. The 
  relation of being isogenous is an equivalence relation.

  We will apply in multiple places the following useful criterion for the 
  existence of an isogeny.

  \begin{fact} \label{fact:corrIsog}
    Let $(G;\times)$ and $(G';+)$ be connected algebraic groups over an 
    algebraically closed field $C_0$ of characteristic zero.
    Suppose the graphs $\Gamma_{\times}$ and $\Gamma_+$ of the group operations 
    are in co-ordinatewise correspondence, and $G'$ is commutative. Then $G$ is 
    also commutative, and is isogenous to $G'$.

    Moreover, if $(g,h) \in G^2(K)$ and $(g',h') \in (G')^2(K)$ are each 
    generic,
    and if $\acl^0(g) = \acl^0(g')$ and $\acl^0(h) = \acl^0(h')$ and 
    $\acl^0(g\times h) = \acl^0(g'+h')$
    (where $\acl^0(x) = C_0(x)^{\alg}$),
    then there are $n \in \N_{>0}$ and an isogeny $\alpha : G \rightarrow G'$ 
    and a point $c \in G'(C_0)$ such that $\alpha g = ng' + c$.
  \end{fact}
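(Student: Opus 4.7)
The plan is to show that the correspondence $\Lambda := \loc^0(g, g') \subset G \times G'$ is a coset of a connected algebraic subgroup $H \leq G \times G'$ projecting finitely onto both factors, from which all claims follow.

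First I would reduce the main statement to the moreover hypothesis. The coordinatewise correspondence yields a generic $(g, h, g \times h; g', h', g'+h')$ inside a generically finite correspondence between $\Gamma_\times$ and $\Gamma_+$, matched by some $\sigma \in \operatorname{Sym}(3)$. Any permutation of the three coordinates of $\Gamma_+$ is itself the graph of a reparametrised group law on $G'$ (obtained by composing addition with negations as needed), so we may absorb $\sigma$ and take it to be the identity, recovering the three algebraic-closure equalities of the moreover hypothesis.

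Next, since $g \in \acl^0(g')$ and $g' \in \acl^0(g)$, $\Lambda$ is an absolutely irreducible subvariety projecting dominantly with finite fibres onto both $G$ and $G'$, so $\dim \Lambda = \dim G$. Additivity of $\dimo$, combined with $(g,h)$ being independent generic in $G^2$, shows that $(g, g'), (h, h')$ are independent generic in $\Lambda$. The remaining hypothesis $\acl^0(g \times h) = \acl^0(g'+h')$, together with $g \times h$ and $g'+h'$ being generic in $G$ and $G'$ respectively, forces $\tp^0(g \times h, g'+h') = \tp^0(g, g')$, so $(g \times h, g'+h') \in \Lambda$. By Weil's group chunk theorem applied to $\Lambda$ inside $G \times G'$, $\Lambda$ is a coset of a connected algebraic subgroup $H \leq G \times G'$ of dimension $\dim G$, with both projections $\pi_G|_H$, $\pi_{G'}|_H$ being isogenies; hence $G$ is isogenous to $G'$. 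Commutativity of $G$ follows: the derived subgroup $[H, H]$ is closed and connected in characteristic zero, and $\pi_{G'}([H, H]) \subseteq [G', G'] = \{0\}$ places $[H, H]$ inside the finite kernel of $\pi_{G'}|_H$; thus $[H, H] = \{e\}$, $H$ is commutative, and so is $G = \pi_G(H)$.

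For the moreover form: let $F' := H \cap (\{0\} \times G')$ and $n := |F'|$, and define $\alpha : G \to G'$ by $\alpha(x) := n x'$ for any $(x, x') \in H$. This is well-defined (distinct lifts of $x$ differ by an element of $F'$, annihilated by $[n]$), a homomorphism (since $H$ is a subgroup), and surjective with finite kernel (using divisibility of $G'$), hence an isogeny defined over $C_0$. Choosing $(g_0^*, g_0'^*) \in \Lambda(C_0)$ (which exists by the Nullstellensatz, as $\Lambda$ is a nonempty $C_0$-variety), from $(g, g'), (g_0^*, g_0'^*) \in \Lambda$ we deduce that their quotient in $G \times G'$ lies in $H$, yielding $\alpha(g) - \alpha(g_0^*) = n(g' - g_0'^*)$ and hence $\alpha g = n g' + c$ with $c := \alpha(g_0^*) - n g_0'^* \in G'(C_0)$. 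The main obstacle is the type-theoretic step showing $(g \times h, g'+h') \in \Lambda$, which rests on the irreducibility of $\Lambda$ and a careful deployment of the three hypothesised algebraic-closure equalities to pin down $\tp^0(g \times h, g'+h') = \tp^0(g, g')$.
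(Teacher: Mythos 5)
There is a genuine gap at the step where you claim ``$(g,g'),(h,h')$ are independent generic in $\Lambda$'' and that ``$\tp^0(g\times h, g'+h') = \tp^0(g,g')$, so $(g\times h,g'+h')\in\Lambda$.'' Neither is forced by the hypotheses. The three points live on three a priori distinct correspondences $\Lambda_1 := \loc^0(g,g')$, $\Lambda_2 := \loc^0(h,h')$, $\Lambda_3 := \loc^0(g\times h, g'+h')$, and these need not coincide. Concretely, take $G=G'=\mathbb{G}_m$, $g' = a g$, $h' = b h$ with $a,b\in\mathbb{G}_m(C_0)$ and $a\neq b$: then $\acl^0(g)=\acl^0(g')$, $\acl^0(h)=\acl^0(h')$, $\acl^0(gh)=\acl^0(g'h')$, yet $\Lambda_1=\{y=ax\}$, $\Lambda_2=\{y=bx\}$, $\Lambda_3=\{y=abx\}$ are pairwise distinct cosets of the diagonal. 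In particular the group operation does \emph{not} map $\Lambda\times\Lambda$ into $\Lambda$ (it maps $\Lambda_1\times\Lambda_2$ into $\Lambda_3$), so ``Weil's group chunk theorem applied to $\Lambda$'' never gets off the ground: $\Lambda$ is not a group chunk in $G\times G'$.

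The correct version of what you want is Ziegler's theorem on generic types (which is what the paper invokes): from the equation $(g,g')+(h,h')=(g\times h,g'+h')$ with the three tuples pairwise independent, one concludes that the stabiliser $S$ of $\tp^0(g,g')$ equals the stabilisers of the other two types, and each of $\Lambda_1,\Lambda_2,\Lambda_3$ is a coset of $S$; you are not entitled to conclude they are the \emph{same} coset. Once you know $\Lambda_1$ is a coset of a connected subgroup $H:=S$ projecting with finite kernels onto both factors, your derived-subgroup argument for commutativity of $G$ is a nice alternative to the paper's centraliser argument, and your construction of $\alpha$ and $c$ from a $C_0$-point of $\Lambda_1$ is correct. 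But the bridge to that point is missing, and ``pinning down $\tp^0(g\times h,g'+h')=\tp^0(g,g')$'' --- which you flag as the main obstacle --- is not merely delicate; it is false in general, and should be replaced by the stabiliser argument.
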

  \begin{proof}
    This is a consequence of \cite[Lemme~2.4]{BMP-beauxGroupes}.
    Indeed, that lemma yields, via Remark~\ref{rmk:corrGeneric}, that there is 
    an algebraic subgroup $S \leq G \times G'$ such that the projections to $G$ 
    and $G'$ are surjective and have finite kernels. It follows that $G$ is 
    abelian.
    Indeed, if $g \in G$ then $g^G$ is finite, so the centraliser $C_g$ is a 
    finite index subgroup of $G$ and hence is equal to $G$ since the latter is 
    connected. Alternatively, one may assume by the Lefschetz principle that 
    $G$, $G'$, and $S$ are complex algebraic groups, and observe that $S$ 
    induces an isomorphism of the Lie algebras.

    Now let $n$ be the exponent of the subgroup $\operatorname{coker}(S) := \{ y 
    \in G' : (0,y) \in S \} \leq G'$.
    Then by setting $\alpha(x) := ny$ whenever $(x,y) \in S$, we obtain a 
    well-defined isogeny $\alpha : G \twoheadrightarrow G'$.
    So $G$ is isogenous to $G'$.

    For the ``moreover'' clause, we use that the subgroup $S$ in the above cited 
    lemma is a coset of $V := \locus^{G\times G'}((g,g')/C_0)$. Knowing that $G$ 
    is abelian, we can see this fairly directly as follows.
    Let $S$ be the stabiliser of $\tp^0(g,g')$, namely $S := \{ \gamma \in 
    G\times G' : \dim(V \cap (\gamma+V)) = \dim(V) \}$.
    Then $S$ projects surjectively with finite kernel to $G$ and to $G'$, and it 
    follows from our assumptions that $V$ is a coset of $S$;
    indeed, this can be seen by applying \cite[Theorem~1]{Ziegler-noteGeneric} 
    to $(g,g') + (h,h') = (g \times h,g'+h')$.

    Since $C_0$ is algebraically closed and both $V$ and $S$ are over $C_0$, 
    there is $(c_1,c_2) \in (G \times G')(C_0)$ such that $V = (c_1,c_2)+S$.
    Then since the projection $\pi_1 : S \twoheadrightarrow G$ is surjective, 
    there exists $c' \in G'(C_0)$ such that $(g,g'+c') \in S$, namely any $c'$ 
    such that $(c_1,c_2-c') \in S$.
    Then with $\alpha,n$ as above, we have $\alpha(g) = n(g'+c') = ng' + nc'$,
    so $c := nc'$ is as required.
  \end{proof}
\end{parag}

\subsection{Incidence bounds and Szemer\'edi-Trotter}\label{subsec:incidence}

As in \cite{ES-groups} we will require some incidence bounds \`a la 
Szemer\'edi-Trotter in higher dimension. As is well-known, if $\mathcal{G}$ is 
a bi-partite graph between vertex sets $X_1$ and $X_2$ with the property that 
no two distinct points in $X_2$ have more than $B$ common neighbours, then a 
simple argument via the Cauchy-Schwarz inequality (e.g.\ see \cite[Prop. 
12]{ES-groups}), implies that the number of edges of $\mathcal{G}$ is at most 
$O(|X_1|^{\frac{1}{2}}|X_2| + B|X_1|)$. The theorem of Szemer\'edi-Trotter and 
its generalisations (such as \cite{pach-sharir}, \cite[Theorem 9]{ES-groups} 
or more recently \cite[Theorem~1.2]{FoxPachEtAl}) aim at improving this 
inequality by some power saving in the situation when the vertex sets $X_1$ 
and $X_2$ are points in Euclidean space and the graph $\mathcal{G}$ is given 
by some algebraic relation. For example Elekes-Szab\'o prove the following 
Szemer\'edi-Trotter-type result:

\begin{theorem}[{{\cite[Theorem 9]{ES-groups}}}]\label{st} If $V \subset 
  \C^{n_1}  
  \times \C^{n_2}$ is a complex algebraic subvariety there is 
  $\epsilon_0=\epsilon_0(n_2)>0$ such that the following holds. Let $B 
  \in \N$. Let $X_1,X_2 \subset \C^n$ be finite subsets. Write $V(y):=\{x \in 
  \C^n : (x,y) \in V\}$ for the fiber above $y \in \C^n$. Assume that for any 
  two distinct $y,y' \in X_2$ the intersection $V(y) \cap V(y')$ contains at 
  most $B$ points from $X_1$. Then the number $I$ of incidences $(x,y) \in X_1 
  \times X_2$ with $x \in V(y)$ satisfies:
  $$I \leq O_{B,V,n_1}(|X_1|^{\frac{1}{2}(1+\epsilon_0)} |X_2|^{1-\epsilon_0} 
  +|X_1| + |X_2|\log|X_1|).$$
\end{theorem}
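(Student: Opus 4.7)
The plan is to adapt the polynomial partitioning method of Guth--Katz, which is the standard tool for proving Szemer\'edi--Trotter type incidence bounds in higher dimension. First I would identify $\C^{n_1} \cong \R^{2n_1}$ and $\C^{n_2} \cong \R^{2n_2}$, so that $V$ becomes a real semi-algebraic set whose complexity is bounded in terms of the complexity of $V$. Then, for a parameter $D$ to be optimised, I would apply the polynomial partitioning theorem to $X_1 \subset \R^{2n_1}$: there exists a nonzero polynomial $P$ of degree at most $D$ whose zero set $Z(P)$ cuts $\R^{2n_1}$ into $O(D^{2n_1})$ connected open cells, each meeting $X_1$ in at most $O(|X_1|/D^{2n_1})$ points.

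The incidence count then splits into two regimes. Write $X_1 = X_1^{\mathrm{alg}} \sqcup X_1^{\mathrm{cell}}$ with $X_1^{\mathrm{alg}} := X_1 \cap Z(P)$, and split $X_2$ into those $y$ whose fibre $V(y)$ lies entirely in $Z(P)$ and those whose fibre is ``transversal'' (meeting $Z(P)$ in a set of strictly smaller dimension). For the cell regime I would apply, \emph{inside each cell} $\Omega$, the Cauchy--Schwarz bipartite bound recalled just before the theorem, namely $O(|X_{1,\Omega}|^{1/2}|X_{2,\Omega}| + B|X_{1,\Omega}|)$, and sum over cells using the B\'ezout-type fact that a single fibre $V(y)$ of bounded degree meets at most $O(D^{\dim V(y)})$ cells. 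This already produces a gain when $D$ is chosen appropriately. For the algebraic regime, the transversal fibres reduce the problem to an incidence problem on the lower-dimensional variety $Z(P) \cap (\C^{n_1} \times X_2)$, which can be handled by induction on $n_1$; while the fibres entirely contained in $Z(P)$ are few in number thanks to the common-neighbour bound $B$ combined with bounds on the degree of $V(y) \cap Z(P)$.

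Balancing the two contributions by optimising $D$ as a power of $|X_1|$ and $|X_2|$ yields a power-saving exponent $\epsilon_0 = \epsilon_0(n_2) > 0$; the deterioration of $\epsilon_0$ with the dimension $n_2$ of the parameter space reflects the loss in the Cauchy--Schwarz step as fibres grow. The step I expect to be the main obstacle is the inductive control of the algebraic regime: when one restricts the relation $V$ to the hypersurface $Z(P)$ to set up induction on $n_1$, one has to maintain the $B$-bound on common neighbours in a quantitative form, and simultaneously keep the complexity of the restricted variety (and hence the implicit constant) under control as $D$ grows. Iterating this restriction is also what produces the extra logarithmic term $|X_2|\log|X_1|$ visible in the statement.
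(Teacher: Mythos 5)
First, a remark on provenance: the paper does not prove this statement at all — it is quoted verbatim from Elekes--Szab\'o (\cite[Theorem~9]{ES-groups}), with the sharper exponent $\epsilon_0<\frac{1}{4n_2-1}$ and removal of the log attributed to \cite{FoxPachEtAl}, whose proof is indeed by polynomial partitioning. So your general toolbox (realification, partitioning, the Cauchy--Schwarz/K\H{o}v\'ari--S\'os--Tur\'an bound inside cells, induction to handle points on the zero set) is the right one in spirit.

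There is, however, a genuine gap: you partition the wrong factor. The only within-cell bound available from the hypothesis (pairs $y,y'\in X_2$ have at most $B$ common neighbours) has the square root on $|X_1|$ and is \emph{linear} in $|X_2|$, namely $I_\Omega = O\bigl(B^{1/2}|X_1\cap\Omega|^{1/2}\,|X_{2,\Omega}| + |X_1\cap\Omega|\bigr)$. If you partition $\R^{2n_1}\supset X_1$ into $O(D^{2n_1})$ cells, the cell sizes save only $D^{-n_1}$ on the square-rooted quantity, while the crossing bound multiplies the linear quantity by the number of cells met by a fibre, which is $O(D^{\dim_{\R} V(y)})$; for fibres of complex codimension $1$ this is $D^{2n_1-2}\ge D^{n_1}$ as soon as $n_1\ge 2$, so $\sum_\Omega |X_1\cap\Omega|^{1/2}|X_{2,\Omega}| \le O\bigl(|X_1|^{1/2}|X_2|D^{\,n_1-2}\bigr)$ and no choice of $D$ gives any power saving in the typical case; and even when the fibres happen to be low-dimensional, the saving would depend on $n_1$ and $\dim V(y)$, never on $n_2$. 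The dependence $\epsilon_0=\epsilon_0(n_2)$, which you attribute vaguely to ``loss in the Cauchy--Schwarz step as fibres grow'', in fact forces the partition to be performed in the parameter space $\R^{2n_2}\supset X_2$: there the dual fibres $V^*(x)=\{y:(x,y)\in V\}$ have positive real codimension, so each meets only $O(D^{2n_2-1})$ of the $O(D^{2n_2})$ cells, and summing the within-cell bound (with a Cauchy--Schwarz over cells) gives $O\bigl(|X_1|^{1/2}|X_2|D^{-1/2} + B|X_1|D^{2n_2-1}\bigr)$; optimising $D$ yields exactly $|X_1|^{2n_2/(4n_2-1)}|X_2|^{(4n_2-2)/(4n_2-1)}$, i.e.\ $\epsilon_0=\frac{1}{4n_2-1}$, with the points of $X_2$ on the partitioning hypersurface handled by restriction and induction (the source of the $\log$ in the Elekes--Szab\'o version). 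So the skeleton is salvageable, but the roles of the two factors must be swapped and the exponent bookkeeping redone; as written, the argument does not produce the claimed gain.
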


We note that this bound has been slightly improved, with a better $\epsilon_0$ 
(namely any $\epsilon_0<\frac{1}{4n_2-1}$) and no $\log$ factor in 
\cite[Theorem~1.2]{FoxPachEtAl}.

Looking carefully at the proofs of the above theorem we find that the 
dependence in $B$ of the big-$O$ is sublinear, that is $O_{B,V,n_1} \leq B \cdot 
O_{V,n_1}$ (see \cite[Problem~11.4]{Sheffer-Incidence}). This aspect will be 
important for us (we can afford a polynomial dependence).

In what follows we spell out how the above incidence bound reads in the 
formalism of coarse pseudo-finite dimension. With the notation and terminology 
of Section~\ref{subsect:psfDim} (in particular $K$ is an ultraproduct of 
fields of characteristic zero and $\bdl$ is the coarse dimension 
\ref{basic-prop}), we have:

\begin{lemma}[Szemer\'edi-Trotter-type bound] \label{lem:SzT}
  Let $X_1 \subset K^{n_1}$ and $X_2 \subset K^{n_2}$, suppose each $X_i$ is 
  $\bigwedge$-internal, and let $X = (X_1 \times X_2) \cap V$ where $V \subset 
  K^{n_1+n_2}$ is a $K$-Zariski closed subset. Assume that $\bdl(X_1),\bdl(X_2)$ 
  are both finite.
  Set
  \[ \beta:= \sup_{a,b \in X_2 ;\; a \neq b} \bdl(X(a) \cap X(b)),\]
  where $X(y) := \{ x \in X_1 : (x,y) \in X \}$. Then for some $\epsilon_0>0$ 
  depending only on $n_2$, writing $y^+:=\max\{0,y\}$,
  \[ \bdl(X) \leq \beta + \max \left(\frac{1}{2}\bdl(X_1) + \bdl(X_2) - 
  \epsilon_0 (\bdl(X_2) - \frac{1}{2}\bdl(X_1))^{+},\bdl(X_1), \bdl(X_2) 
  \right) .\]
\end{lemma}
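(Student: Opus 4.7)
The strategy is to realize the hypotheses as ultraproduct data, apply Theorem~\ref{st} fibrewise in each $K_s = \C$, and take standard parts of $\log(\cdot)/\log\xi_s$. First I reduce to the case where $X_1$ and $X_2$ are internal, by approximating from above by internal supersets (at the cost of an arbitrarily small increase in $\bdl(X_i)$). The Zariski closed $V$ is defined by finitely many polynomials with coefficients in $K$, hence $V = \prod_{s \to \U} V_s$ where each $V_s \subset K_s^{n_1+n_2}$ is cut out by polynomials of the same fixed degrees; the implicit constants in Theorem~\ref{st} depending on $V$ and $n_1$ are therefore uniform in $s$. Writing $X_i = \prod_{s \to \U} X_i^s$ and $X^s := (X_1^s \times X_2^s) \cap V_s$, we have $X = \prod_{s \to \U} X^s$.

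Set $B_s := \max_{a \neq b \in X_2^s} |X^s(a) \cap X^s(b)|$. The key technical observation is that for every $\eta > 0$, $\log B_s / \log\xi_s \le \beta + \eta$ for $\U$-a.e.\ $s$. Indeed, otherwise one could pick distinct $a_s^*, b_s^* \in X_2^s$ achieving $B_s$ on a set in $\U$; their ultraproducts $a^* = (a_s^*)$, $b^* = (b_s^*)$ would lie in $X_2$, be distinct in $K$, and satisfy $\bdl(X(a^*) \cap X(b^*)) \ge \beta + \eta$, contradicting the definition of $\beta$ as a supremum.

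Applying Theorem~\ref{st} fibrewise with $B = B_s$ and using the cited sublinear dependence $O_{B, V, n_1} \le B \cdot O_{V, n_1}$, one obtains
\[ |X^s| \le B_s \cdot O_{V, n_1}\!\left(|X_1^s|^{(1+\epsilon_0)/2}|X_2^s|^{1-\epsilon_0} + |X_1^s| + |X_2^s|\log|X_1^s|\right). \]
Taking logs, dividing by $\log\xi_s$, and passing to standard parts (the $\log\log|X_1^s|/\log\xi_s$ contribution vanishes since $\bdl(X_1) < \infty$ forces $\log|X_1^s| = O(\log\xi_s)$) yields
\[ \bdl(X) \le \beta + \max\!\left(\tfrac{1+\epsilon_0}{2}\bdl(X_1) + (1-\epsilon_0)\bdl(X_2),\; \bdl(X_1),\; \bdl(X_2)\right). \]
Rewriting the first entry of the maximum as $\tfrac12\bdl(X_1) + \bdl(X_2) - \epsilon_0(\bdl(X_2) - \tfrac12\bdl(X_1))$ and checking that when $\bdl(X_2) < \tfrac12\bdl(X_1)$ this quantity is already dominated by $\bdl(X_1)$, so replacing the parenthesis by its positive part leaves the maximum unchanged, gives the stated inequality. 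The main subtlety is the ultraproduct step bounding $B_s$: extracting a single witnessing pair in $K$ from the family of maximisers in each $K_s$, and ensuring that the constants in Theorem~\ref{st} remain uniform in $s$ even though $V$ is a priori defined over $K$ rather than over a countable subfield.
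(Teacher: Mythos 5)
Your treatment of the internal case is essentially the paper's own argument: the fibrewise application of Theorem~\ref{st} with the sublinear dependence on $B$, the ultraproduct-of-witnesses argument bounding $B_s$ by $\xi_s^{\beta+\eta}$, the uniformity of the constants coming from the bounded degrees of the fibres $V_s$, the disposal of the $\log$ term, and the final rewriting of the exponent (including the check that inserting the positive part does not change the maximum) are all correct and match the paper.

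The genuine gap is the reduction to the internal case, which you dispose of in one sentence by replacing $X_1,X_2$ with internal supersets $X_1'\supset X_1$, $X_2'\supset X_2$ at the cost of a small increase in $\bdl(X_i)$. Controlling $\bdl(X_i')$ is not the issue; the issue is that the hypothesis encoded by $\beta$ does not transfer to the supersets. The supremum defining $\beta$ ranges only over pairs of distinct points of $X_2$, and the fibres $X(a)$ are intersections with $X_1$. After enlarging, your witness argument produces a pair $a^*\neq b^*$ lying in $X_2'$, not necessarily in $X_2$, with fibres computed inside $X_1'$; nothing in the hypotheses bounds $\bdl\bigl(X_1'\cap V(a^*)\cap V(b^*)\bigr)$, which could be as large as $\bdl(X_1')$. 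So the internal-case estimate applied to $X'=(X_1'\times X_2')\cap V$ has $\beta$ replaced by an uncontrolled quantity, and monotonicity then yields nothing for $X$. The paper devotes a separate Claim to exactly this point: working in a countable language in which the $X_i$ are $\bigwedge$-definable and $\bdl$ is continuous, it attaches to each pair $(a,b)\in X_2^2\setminus\Delta$ a definable $X_1^{a,b}\supset X_1$ and, by continuity of $\bdl$, a definable set $Z^{a,b}\ni(a,b)$ on which the fibre-intersection dimension stays below any prescribed $\beta'>\beta$, and then uses $\aleph_1$-compactness to find an internal $X_2'\supset X_2$ with $(X_2')^2\setminus\Delta$ covered by finitely many $Z^{a_i,b_i}$, taking $X_1'$ to be the intersection of the corresponding $X_1^{a_i,b_i}$. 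Some argument of this kind (continuity of the coarse dimension plus saturation) is needed to justify your opening sentence, and as written your proposal does not supply it.
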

\begin{remark} In the same way, the trivial bound mentioned earlier (via 
  Cauchy-Schwarz) yields the same estimate on $\bdl(X)$ as above, but with 
  $\epsilon_0=0$. The original Szemer\'edi-Trotter theorem 
  \cite{szemeredi-trotter} corresponds to the case when $X_1$ is
  the ultraproduct of finite sets of points in $\R^2$ and $X_2$ is the
  ultraproduct of finite sets of lines in $\R^2$, and $V$ is the incidence
  relation $p \in \ell$. In this case $\epsilon_0=\frac{1}{3}$, which is 
  optimal.
\end{remark}
\begin{proof}[Proof of Lemma~\ref{lem:SzT}]
  Suppose first that $X_1$ and $X_2$ are internal sets, i.e.\ $X_i = \prod_{s 
  \rightarrow \U} X_i^{K_s}$ for $i=1,2$, for some $X_i^{K_s} \subset 
  K_s^{n_i}$, and $X^{K_s} = (X_1^{K_s} \times X_2^{K_s}) \cap V(K_s)$. Since 
  $\bdl(X_i)$ is finite, $X_i^{K_s}$ is finite for $\mathcal{U}$-almost every 
  $s$. The assumption $\bdl(X(a) \cap X(b))\leq \beta$ for each $a,b \in X_2$ 
  implies that for each $\epsilon>0$ for $\mathcal{U}$-almost every $s$ we 
  have: $$|X^{K_s}(a) \cap X^{K_s}(b)| \leq B_s,$$ where 
  $B_s:=\xi_s^{\beta+\epsilon}$, and $\xi=\lim_{s \to \mathcal{U}} \xi_s$ is 
  the scaling constant as in \S \ref{basic-prop}. Now Theorem \ref{st} 
  implies:
  \[ |X^{K_s}|
  \leq B_s \cdot O_{V,n_1} 
  \left({|X_1^{K_s}|^{\frac{1}{2}(1+\epsilon_0)}|X_2^{K_s}|^{1-\epsilon_0}
  + |X_1^{K_s}| + |X_2^{K_s}| \log |X_1^{K_s}|}\right) . \]
  Taking logarithms and passing to the ultralimit yields the desired bound.

  Finally the following claim allows us to reduce to the case when $X_i$ are 
  internal sets: 

  \begin{claim}
    For any $\beta' > \beta$,
    there are internal subsets $X_i' \supset X_i$, for $i=1,2$, such that
    for all $a,b \in X_2'$ with $a \neq b$, we have $\bdl(X'(a) \cap X'(b)) <
    \beta'$, where $X' := (X_1' \times X_2') \cap
    V$.
  \end{claim}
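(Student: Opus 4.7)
The plan is to prove the claim by contradiction, using $\aleph_1$-compactness (Section~\ref{sat}) to extract a ``limit'' pair $(a,b) \in X_2^2$ that would violate the definition of $\beta$.

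First, I would write $X_i = \bigcap_{j \in \N} X_i^{(j)}$ as a countable decreasing intersection of internal sets, which is possible since each $X_i$ is $\bigwedge$-internal. For each $j$, set $X^{(j)} := (X_1^{(j)} \times X_2^{(j)}) \cap V$ and $X^{(j)}(y) := \{x \in X_1^{(j)} : (x,y) \in V\}$. Fix $\beta' > \beta$ and assume for contradiction that for every $j$ there exist $a_j, b_j \in X_2^{(j)}$ with $a_j \neq b_j$ and $\bdl(X^{(j)}(a_j) \cap X^{(j)}(b_j)) \geq \beta'$.

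Next, I would consider the (countable) partial type $\pi(y_1, y_2)$ consisting of the formulas:
\begin{itemize}
\item $y_1 \in X_2^{(j)}$ and $y_2 \in X_2^{(j)}$ for each $j \in \N$;
\item $y_1 \neq y_2$;
\item $|V(y_1) \cap V(y_2) \cap X_1^{(j)}| \geq \xi^q$ for each $j \in \N$ and each rational $q < \beta'$.
\end{itemize}
Each of these is an internal condition (using the $\exists_{\geq \xi^q}$ quantifiers that the enrichment of $\L$ in \ref{continuity} makes available), so $\pi$ is a countable intersection of internal sets. The crux is to verify that $\pi$ is finitely satisfiable: given finitely many clauses, they are implied by a single clause using $X_1^{(j)}$, $X_2^{(j)}$ and a fixed rational $q < \beta'$ for large enough $j$, which is witnessed by the pair $(a_j, b_j)$ since $\bdl(X^{(j)}(a_j) \cap X^{(j)}(b_j)) \geq \beta' > q$ implies $|X_1^{(j)} \cap V(a_j) \cap V(b_j)| \geq \xi^q$.

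Finally, by $\aleph_1$-compactness, $\pi$ is realized by some pair $(a, b) \in K^{n_2} \times K^{n_2}$. Then $a, b \in \bigcap_j X_2^{(j)} = X_2$, $a \neq b$, and for every internal $X_1' \supset X_1$ and every rational $q < \beta'$ we have $|V(a) \cap V(b) \cap X_1'| \geq \xi^q$, so by the definition of $\bdl$ on $\bigwedge$-internal sets, $\bdl(X_1 \cap V(a) \cap V(b)) \geq q$ for every such $q$, hence $\bdl(X(a) \cap X(b)) \geq \beta' > \beta$, contradicting the definition of $\beta$ as the supremum.

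The only subtle step is the finite satisfiability of $\pi$: one must package the conditions on the intersection being large as internal statements (for which the enriched language with $\exists_{\geq \xi^q}$ predicates is exactly what is needed) and observe that the $X_i^{(j)}$ form a decreasing sequence, so a single $j$ suffices for any finite subcollection.
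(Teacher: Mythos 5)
Your argument is correct, and it reaches the Claim by a genuinely dual route to the paper's. The paper constructs $X_1',X_2'$ directly: for each pair $(a,b)$ it first extracts a definable $X_1^{a,b}\supset X_1$ with $\bdl(X_1^{a,b}\cap V(a)\cap V(b))<\beta'$, then uses \emph{continuity of $\bdl$} to fatten $(a,b)$ into a definable set $Z^{a,b}$ of pairs where the bound persists, covers $(X_2)^2$ minus the diagonal by the countably many $Z^{a,b}$, and applies $\aleph_1$-compactness to extract a finite subcover, taking $X_1'$ to be a finite intersection of the $X_1^{a,b}$. You instead argue by contradiction, realizing a countable partial type whose clauses are the internal cardinality-threshold conditions $|V(y_1)\cap V(y_2)\cap X_1^{(j)}|\geq \xi^q$ together with membership in the decreasing chain $X_2^{(j)}$ and $y_1\neq y_2$; finite satisfiability is exactly as you say, using monotonicity along the chain and the witnesses $(a_j,b_j)$. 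What your route buys is that it bypasses the continuity machinery entirely: you only need that these threshold sets are internal (immediate from the ultraproduct definition of non-standard cardinality, or by adding predicates for the $X_i^{(j)}$) and the $\aleph_1$-compactness statement for internal sets; the price is that you do not exhibit the sets $X_i'$ explicitly, which the paper's covering argument does. One small point to tighten: after realizing the type you assert the bound $|V(a)\cap V(b)\cap X_1'|\geq\xi^q$ for \emph{every} internal $X_1'\supset X_1$, but the realization only gives it for the chain sets $X_1^{(j)}$; the clean way to finish is to note that $X(a)\cap X(b)=\bigcap_j\bigl(X_1^{(j)}\cap V(a)\cap V(b)\bigr)$ is a countable decreasing intersection of internal sets, so by the consequence of $\aleph_1$-compactness recorded in \S\ref{basic-prop}--\ref{sat} (the formula $\bdl(\bigcap_j Y_j)=\inf_j\bdl(Y_j)$ for decreasing chains) one gets $\bdl(X(a)\cap X(b))\geq\beta'>\beta$ directly, which is the desired contradiction.
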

  \begin{proof}
    The variety $V$ is defined over a countable (finitely generated) subfield 
    of $K$, which we denote by $k$.
    Since each $X_i$ is $\bigwedge$-internal, we may work in a language $\L$ 
    in which each $X_i$ is $\bigwedge$-definable and $\bdl$ is continuous.
    Note that $X(y)=X_1 \cap V(y)$ for each $y \in X_2$. Since $X_1$ is 
    $\bigwedge$-definable, in view of $(\ref{typedefdel})$, for any $a,b \in 
    X_2$ with $a\neq b$ there is a $\emptyset$-definable subset $X_1^{a,b} 
    \supset X_1$ such that $\bdl(X_1^{a,b} \cap V(a) \cap V(b)) < \beta'$. By 
    continuity of $\bdl$ (see \ref{continuity} and Remark \ref{C-def-cont}), 
    there is a $k$-definable subset of $Z^{a,b}$ of $K^{n_2}\times K^{n_2}$ 
    containing $(a,b)$ such that $\bdl(X_1^{a,b} \cap V(a') \cap V(b')) < 
    \beta'$ for all $(a',b') \in Z^{a,b}$. Hence $(X_2)^2 \setminus \Delta$
    (where $\Delta$ denotes the diagonal) is covered by the family of 
    $k$-definable sets $Z^{a,b}$. This is a countable family, because there 
    are only countably many $k$-definable sets. Combined with the fact that 
    $X_2$ is $\bigwedge$-definable, $\aleph_1$-compactness (see \ref{sat}) now 
    implies that there must be a $\emptyset$-definable set $X'_2$ containing 
    $X_2$ such that $(X'_2)^2 \setminus \Delta$ is contained in finitely many 
    $Z^{a,b}$'s, say $Z^{a_1,b_1},\ldots,Z^{a_m,b_m}$. Let $X_1'$ be the 
    intersection of the corresponding $X_1^{a_i,b_i}$, $i=1,\ldots,m$. Then by 
    monotonicity of coarse dimension $\bdl(X_1' \cap V(a') \cap V(b')) < 
    \beta'$ for all $a',b' \in (X'_2)^2 \setminus \Delta$. So $X'_1$ and 
    $X'_2$ are as desired.
  \end{proof}

\end{proof}

\section{Warm-up: the Elekes-Szab\'o theorem}\label{sec:warmup}

In this section, we show how the proof of the original Elekes-Szab\'o theorem 
translates in the non-standard setup expounded in the previous section. This 
will help us motivate the notions introduced in the following section, where 
we will pass to the general case of arbitrary dimension and arity and work 
towards Theorems \ref{thm:main1} and \ref{thm:main}. We begin with the 
one-dimensional case, i.e.\ we prove Theorem \ref{thm:ES}. A similar result 
was proven by Hrushovski using similar techniques as 
\cite[Proposition~5.21]{Hr-psfDims}. We then proceed to recover 
Elekes-Szab\'o's second theorem, which corresponds to the case of a 
$2d$-dimensional variety in $(\C^d)^3$, and at the same time add two things: 
we establish that the associated algebraic group is in fact commutative (this 
was noted already in \cite{breuillard-wang}), and we also give an explicit gap 
in the power-saving, $\frac{1}{16}$ in fact. Although this is indeed new, we 
include this section mostly for the reader's convenience as a way to introduce 
some of the ideas in a special case. But a reader only interested in the proof 
of Theorems \ref{thm:main1} and \ref{thm:main} may safely skip ahead to 
Section~\ref{sec:proj}.

\begin{parag}{\bf Abelian group configuration theorem.} \label{fact:abGrpConf}

  While Elekes-Szab\'o used their `composition lemma' to establish the existence 
  of the associated algebraic group, we will rely directly on the Group 
  Configuration Theorem. This is a by now classical theorem of model theory due 
  to Zilber and Hrushovski. We first recall its statement in the form we need 
  and then describe a variant, due to Hrushovski, which ensures that the 
  associated group is commutative. In this paragraph $C_0 \leq K$ are arbitrary 
  algebraically closed fields, and we use the notation of \S 
  \ref{notn:baseField}, in particular $K^{<\infty} = \cup_{n>0} K^n$ and 
  $\acl^0(A)$ is the algebraic closure of $C_0(A)$ in $K$.

  \begin{theorem}[Group Configuration Theorem]
    \label{thm:groupConf}
    Suppose $a,b,c,x,y,z \in K^{<\infty}$ are such that in the following
    diagram

    \[ \xymatrix{
    & & &&&& c \ar@{-}'[ddll][dddlll] \ar@{-}'[dlll][ddllllll] \\
    & & & b  & & & \\
    a & & & & z  & & \\
    & & & x && & \\
    && & & & & y\;\; \ar@{-}'[uull][uuulll] \ar@{-}'[ulll][uullllll] } \]

    for any three distinct points $a_1,a_2,a_3$,

    \begin{itemize}\item if $a_1,a_2,a_3$ lie on a common line then
        $a_i \in \acl^0(a_j,a_k)$ whenever $\{i,j,k\}=\{1,2,3\}$,
      \item if $a_1,a_2,a_3$ do not lie on a common line then $a_i \ind^0 a_ja_k$ 
        whenever $\{i,j,k\}=\{1,2,3\}$.
    \end{itemize}

    Then there is a connected algebraic group $(G,\cdot)$ defined
    over $C_0$, and generic elements $a',b',c'\in G(K)$ such that each primed 
    element is
    $\acl^0$-interalgebraic with the corresponding unprimed element, namely 
    $\acl^0(x) = \acl^0(x')$ for each $x \in \{a,b,c\}$, and $c'=b'\cdot a'$.
  \end{theorem}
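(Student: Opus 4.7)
The plan is to follow the standard Zilber--Hrushovski strategy, adapted to the present $ACF_0$ setup: extract germs of definable correspondences from the lines of the configuration, organise them into a generic group chunk, and invoke the Weil--Hrushovski theorem to produce an algebraic group over $C_0$. Elimination of imaginaries in $ACF_0$ (as reviewed in \S\ref{absVars}) will let us carry this out in the constructible language.

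First I would use each collinear triple to manufacture a canonical parameter coding a germ. For example, the collinearity of $\a,\x,\z$ gives $\z\in\acl^0(\a\x)$ and symmetrically; together with the independence coming from the non-collinear triples this forces $\loc^0(\x\z/\a)$ to be an irreducible, generically finite correspondence between $\loc^0(\x)$ and $\loc^0(\z)$. Its canonical base $\d_\a$ (see \S\ref{parag:canonical}) is then interalgebraic with $\a$ over $C_0$, and should be thought of as the code for a germ of a multi-valued definable function from $\loc^0(\x)$ to $\loc^0(\z)$. Analogously I extract $\d_\b$ and $\d_\c$ from the two other non-trivial lines of the configuration.

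Next I would build a generic binary operation. The remaining collinear triple involving $\a,\b,\c$ together with the triangles of independent points furnishes, via additivity of $\dimo$, an equality between the germ coded by $\d_\c$ and the composition of the germs coded by $\d_\a$ and (the inverse of) $\d_\b$ evaluated at a generic input. Using Lemma~\ref{lem:cb} repeatedly, one then checks that this composition law is generically single-valued, generically associative, and generically cancellative on independent triples coming from $\tp^0(\d_\a)(K)$. This yields a Weil group chunk defined over $C_0$.

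The final step is to invoke (the characteristic-zero case of) Hrushovski's version of the Weil theorem on generic group chunks: any such chunk in $ACF_0$ extends uniquely to a connected algebraic group $(G,\cdot)$ over $C_0$ together with a generic $a'\in G(K)$ such that $\acl^0(a')=\acl^0(\d_\a)=\acl^0(\a)$. Defining $b',c'$ from $\d_\b,\d_\c$ in the same way produces $c'=b'\cdot a'$ generically, with each primed element $\acl^0$-interalgebraic with its unprimed counterpart. The main obstacle is the middle step: simultaneously tracking all the independence data to verify that the composition law is well-defined, associative and cancellative generically. This is where one genuinely needs \emph{all} the triangles in the configuration (not merely pairwise independence), together with the canonical-base bookkeeping of Lemma~\ref{lem:cb}, to avoid collapsing dimensions when composing germs.
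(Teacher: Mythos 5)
Your strategy is the standard one, and in fact it is the same strategy that underlies the proof the paper relies on: the paper does not prove Theorem~\ref{thm:groupConf} at all, but cites Pillay's Geometric Stability Theory (Theorem~5.4.5 and Remark~5.4.10) for the group configuration theorem, Remark~1.6.21 there to pass from a $\bigwedge$-definable to a definable group, and the van den Dries--Hrushovski--Weil theorem to replace the definable group by an algebraic group over $C_0$. Your first step is fine and is the classical one: for a line such as $\{a,x,z\}$ the hypotheses give $a \in \acl^0(x,z)$, and since $xz \ind^0_{\d_a} a$ by definition of the canonical base, one gets $a \in \acl^0(\d_a)$, so $\d_a$ is indeed interalgebraic with $a$.

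The genuine gap is the middle step, which you explicitly defer: constructing the group chunk from the germs and verifying its axioms is not a bookkeeping exercise that ``Lemma~\ref{lem:cb} repeatedly'' can carry --- it is the entire content of the theorem, and it is exactly what the cited proof of Pillay spends its length on. Concretely, the objects $\d_a,\d_b,\d_c$ code germs of \emph{finite-to-finite correspondences}, not functions; your claim that one ``checks that this composition law is generically single-valued'' is not attainable as stated. One must first replace the configuration points by suitable interalgebraic codes (working in imaginaries, or via finite covers) so that the germs become invertible definable functions at generic arguments, then show that the family of germs of the form $\sigma\circ\tau^{-1}$ is closed under generic composition with the right independence behaviour, and only then does one have a generically given group to which the Hrushovski--Weil theorem applies; this is also precisely why the conclusion is interalgebraicity of $a',b',c'$ with $a,b,c$ rather than interdefinability. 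As written, your proposal describes the scaffolding around this step but not the step itself, so it does not yet constitute a proof; in the context of this paper the honest alternatives are either to carry out that construction in full or to do what the authors do and quote Pillay together with the van den Dries--Hrushovski--Weil theorem.
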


  \begin{remark}
    Here, $\acl^0(x')$ is to be understood via a coding of elements of $G(K)$ as 
    tuples from $K$, as discussed in \ref{absVars}. But since $x'$ is generic, 
    we may equivalently fix a single arbitrary affine patch over $C_0$ and take 
    co-ordinates there.
  \end{remark}

  For a proof of this theorem, we refer the reader to \cite[Theorem~5.4.5, 
  Remark~5.4.10]{Pillay-GST}. Strictly speaking only a $\bigwedge$-definable 
  (in ACF) group $G'$ satisfying desired conclusions is obtained there, but by 
  \cite[Remark~1.6.21]{Pillay-GST}, $G'$ is in fact definable (in ACF). By the 
  Van den Dries-Hrushovski-Weil theorem \cite[Theorem~4.12]{Pillay-ACF} any 
  such group is definably isomorphic over $C_0$ to an algebraic group $G$ as 
  required.

  \begin{theorem}[Abelian Group Configuration Theorem]
    \label{thm:abGroupConf}
    Suppose $a,b,c,w,x,y,z \in K^{<\infty}$ are such that in the following
    diagram
    \[ \xymatrix{
    & & &&&& c \ar@{-}'[ddll][dddlll] \ar@{-}'[dlll][ddllllll] \\
    & & & b \ar@{-}'[d][dd] & & & \\
    a & & & w & z \ar@{-}'[l][llll] & & \\
    & & & x && & \\
    && & & & & y\;\; , \ar@{-}'[uull][uuulll] \ar@{-}'[ulll][uullllll] } \]
    for any three distinct points $a_1,a_2,a_3$,

    \begin{itemize}\item if $a_1,a_2,a_3$ lie on a common line then
        $a_i \in \acl^0(a_j,a_k)$ whenever $\{i,j,k\}=\{1,2,3\}$,
      \item if $a_1,a_2,a_3$ do not lie on a common line and $\{a_1,a_2,a_3\} \neq 
        \{w,c,y\}$ then $a_i \ind^0 a_ja_k$ whenever $\{i,j,k\}=\{1,2,3\}$.
    \end{itemize}

    Then there is an connected commutative algebraic group $G$ defined
    over $C_0$, and generics $a',b',c'\in G(K)$ such that each primed element is
    $\acl^0$-interalgebraic with the corresponding unprimed element, and 
    $c'=b'+a'$.

  \end{theorem}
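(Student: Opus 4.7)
The plan is to reduce to the (non-abelian) Group Configuration Theorem and then use the extra point $w$ to force commutativity.

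First I would forget $w$ and apply Theorem~\ref{thm:groupConf} to the remaining six points $a,b,c,x,y,z$, which form a standard group configuration (the three ``outer'' lines $acb$, $czx$ and $yza$ together with the line $ya\ldots$ through $y$ and $a$ visible in the diagram). This yields a connected algebraic group $(G,\cdot)$ over $C_0$ and generics $a',b',c' \in G(K)$ with $c' = b' \cdot a'$ and with each primed element $\acl^0$-interalgebraic with its unprimed counterpart. Tracking through the construction in the proof of Theorem~\ref{thm:groupConf} (in which all six points get realised as cosets of stabilisers, see \cite[\S5.4]{Pillay-GST}), we may also choose generic $x',y',z' \in G(K)$ interalgebraic with $x,y,z$ such that $z' = b'\cdot x'$, $y' = c'\cdot x' = a'\cdot z'$.

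Next I would feed the extra point $w$ and its two new incidences back in. The vertical collinearity $b,w,x$ forces $w \in \acl^0(b',x') \cap \acl^0(b',w')$ and so $w$ corresponds to some generic element $w' \in G(K)$ obtained from the pair $(b',x')$ via the group law in the \emph{opposite} order from that producing $z'$; concretely, the cleanest way to extract this is to apply Theorem~\ref{thm:groupConf} a second time to the (also non-degenerate) sub-configuration formed by $b,w,x$ together with the horizontal line through $w,z$ and the appropriate points on the remaining lines. This produces a second connected algebraic group $(G'',\cdot'')$ and an identification of $b,x$ with generics of $G''$ under which $w$ corresponds to $x'' \cdot'' b''$. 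By Fact~\ref{fact:corrIsog} (or directly by the uniqueness up to isogeny of the group associated to a configuration), the two derived groups $G$ and $G''$ are isogenous, and the interalgebraicities at the shared points $b,x,w$ identify a generic orbit of the shearing $(b',x') \mapsto (b'\cdot x',\; x' \cdot b')$ inside $G \times G$.

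The final step would then be to argue that on the resulting generic pair $(b',x')$ one has $b' \cdot x' = x' \cdot b'$ in $G$ up to a translation by a $C_0$-rational point (this is exactly the content of the ``moreover'' clause of Fact~\ref{fact:corrIsog} applied to the two group operations $\cdot$ and its opposite). Since the locus of such pairs in $G \times G$ is Zariski-closed and contains a pair of independent generics, it must equal $G \times G$, so $G$ is commutative; one then replaces $G$ by its connected component after the translation to obtain the commutative algebraic group required in the statement.

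The main obstacle is making the ``second group extraction'' rigorous: I must verify that the sub-configuration used to extract $G''$ genuinely satisfies the hypotheses of Theorem~\ref{thm:groupConf} (the collinearity and non-collinearity $\acl^0$/$\ind^0$ requirements), using precisely the hypothesis of the theorem that the only triple exempted from the genericity condition is $\{w,c,y\}$. Once that bookkeeping is done, the commutativity of $G$ follows by a routine application of the uniqueness of the group-from-configuration (which is standard Zilber/Hrushovski material, reducible to \cite{Ziegler-noteGeneric} as invoked in Fact~\ref{fact:corrIsog}); all other steps are formal.
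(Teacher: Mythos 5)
Your strategy (apply the plain group configuration theorem and then wring commutativity out of the extra point $w$) is a genuine alternative to the paper's proof, which dispatches the commutativity step with a single citation to \cite[Theorem~C.1]{BHM-CCMA}.  But as written, the sketch has two substantive problems.

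First, the formulas you write down after ``tracking through the construction'' are already internally inconsistent unless $G$ is abelian: from $c' = b'\cdot a'$ and your claimed $z' = b'\cdot x'$ and $y' = c'\cdot x' = a'\cdot z'$, one gets $b'a'x' = a'b'x'$, i.e.\ $b'a' = a'b'$.  The correct relations coming out of Theorem~\ref{thm:groupConf} (with the left-action convention matching $c' = b'\cdot a'$) are $y' = a'*x'$, $z' = b'*y' = c'*x'$, not $z' = b'\cdot x'$; so at this point you are already using the conclusion you are trying to prove.  Second, and more seriously, the final step is circular: Fact~\ref{fact:corrIsog} only proves $G$ commutative under the hypothesis that $G'$ is commutative, and its ``moreover'' clause is proved after the main commutativity conclusion.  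Taking $G' = (G,\cdot^{\mathrm{op}})$, the hypothesis ``$G'$ is commutative'' is exactly what you want to show, so the lemma cannot be applied.  (The first invocation of Fact~\ref{fact:corrIsog}, to compare $G$ with the second configuration group $G''$, has the same problem, although there your parenthetical fallback to the uniqueness-up-to-isogeny of the configuration group is a legitimate replacement.)  To make the overall plan rigorous one needs an argument that does not presuppose commutativity: roughly, from the interalgebraicity of $b'\cdot x'$ and $x'\cdot b'$ over $C_0$ for independent generics one defines a $C_0$-correspondence $\Phi$ with $\Phi(b'x') \ni x'b'$, observes that $\Phi$ must agree generically with conjugation by $b'^{-1}$ for every generic $b'$, and deduces that conjugation by a generic element is generically trivial, whence $G$ is abelian.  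This is essentially what \cite[Theorem~C.1]{BHM-CCMA} does; once you replace the circular appeal to Fact~\ref{fact:corrIsog} by an argument of this kind (and fix the bookkeeping for the second configuration, whose six points are $b,w,x,z,a,c$ with lines $bwx$, $zwa$, $cba$, $czx$), the approach is sound.
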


  Note that the hypotheses of Theorem~\ref{thm:groupConf} are satisfied, so we 
  need only show that our additional assumptions yield that the algebraic group 
  $G$ obtained from that theorem is commutative. We refer to 
  \cite[Theorem~C.1]{BHM-CCMA} for a proof of this.



\end{parag}

\begin{parag}{\bf Elekes-Szab\'o - one dimensional case.} In this paragraph we 
  reprove the original Elekes-Szab\'o theorem, namely Theorem \ref{thm:ES}. We 
  start by reformulating it in the non-standard setup of the last section; in 
  particular we keep the notation of Section~\ref{subsect:psfDim}. So $K$ is 
  an ultrapower of the complex field, $\bdl$ is the coarse dimension 
  \ref{basic-prop} which is continuous in a countable language $\L$ containing 
  $\L_{ring}$ and constant symbols for each element of the countable 
  algebraically closed field $C_0$ over which $V$ is defined, and $\dimo$ 
  denotes transcendence degree over $C_0$.

  \begin{theorem}[Reformulation of Theorem \ref{thm:ES}]\label{thm:ES-ref} Let 
    $a_1, a_2, a_3 \in K$ and assume that for all $i\neq j$,
    $dim^0(a_i,a_j ) = dim^0(a_1,a_2,a_3) = 2$, $\bdl(a_i) \leq 1$ and 
    $\bdl(a_1,a_2,a_3) = 2$.
    Then there exists a connected one-dimensional algebraic group $G$ over $C_0$ and 
    $a'_1,a'_2,a'_3 \in G(K)$ with $\acl^0(a_i)=\acl^0(a'_i)$ for $i=1,2,3$ and 
    $a'_3=a'_1+a'_2$.
  \end{theorem}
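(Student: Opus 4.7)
The plan is to invoke the Group Configuration Theorem (Theorem~\ref{thm:groupConf}) via a standard six-point configuration built from $(a_1, a_2, a_3)$ and an independent copy. Writing $\a := (a_1, a_2, a_3)$, my first step is to apply Fact~\ref{fact:ideal} to the $\bigwedge$-definable set $\tp(\a/C_0)(K)$ with parameter set $\{a_1, a_2, a_3\}$, obtaining $\b = (b_1, b_2, b_3)$ with $\tp(\b/C_0) = \tp(\a/C_0)$ and $\bdl(\b/\a) = \bdl(\a) = 2$. Additivity of $\bdl$ (Fact~\ref{fact:dlContProps}(ii)) then gives $\bdl(\a, \b) = 4$.

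The key auxiliary claim is the inequality $\bdl(\bar x) \le \dimo(\bar x)$ for every subtuple $\bar x$ of $(\a, \b)$, proved by induction on tuple length. The base case uses $\bdl(a_i) \le 1 = \dimo(a_i)$ (the equality $\dimo(a_i)=1$ follows from $\dimo(a_i,a_j)=2$ together with $\dimo(a_j) \leq 1$, and $\bdl(b_i)=\bdl(a_i)$ by invariance since $\tp(\b)=\tp(\a)$). The inductive step combines additivity of both dimensions with the observation that if $y \in \acl^0(C)$ then $y$ lies in a finite $C$-definable set, so $\bdl(y/C) = 0 = \dimo(y/C)$, while if $y \notin \acl^0(C)$ then $\bdl(y/C) \le \bdl(y) \le 1 = \dimo(y/C)$. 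Since trivially $\dimo(\a, \b) \le \dimo(\a) + \dimo(\b) = 4$, the comparison with $\bdl(\a, \b) = 4$ forces $\dimo(\a, \b) = 4$, and hence $\a \ind^0 \b$.

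Given this, the hypothesis $\dimo(a_1, a_2, a_3) = \dimo(a_i, a_j) = 2$ shows $\{a_1, a_2, a_3\}$ is a collinear triple (each element in $\acl^0$ of the other two), and likewise for $\{b_1, b_2, b_3\}$. I complete the configuration by introducing two further elements defined by $V$-incidences across the two triples, where $V := \loc^0(\a/C_0)$; this gives candidate collinear triples whose $\acl^0$-dependencies hold by construction. The remaining non-collinear triples are pairwise $\ind^0$-independent by the same $\bdl \le \dimo$ comparison applied to the enlarged tuple, combined with the $\bdl$-independence of $\b$ from $\a$. Once the full configuration is in place, Theorem~\ref{thm:groupConf} yields a connected algebraic group $G$ over $C_0$ and generic elements $a_1', a_2', a_3' \in G(K)$ with $\acl^0(a_i) = \acl^0(a_i')$ and $a_3' = a_1' + a_2'$. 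Since $\dimo(a_1') = \dimo(a_1) = 1$, the group $G$ is one-dimensional as required.

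The main obstacle is the construction and verification of the six-point configuration, specifically the "closure of the 4-cycle" — that the fourth intended collinear triple actually satisfies the algebraic-closure condition of the group configuration theorem. In the classical Elekes-Szab\'o approach this is handled by Szemer\'edi-Trotter-type incidence counting (via Lemma~\ref{lem:SzT}); in the pseudofinite formalism above it reduces to a $\dimo$-calculation made accessible by the sharp $\bdl$-additivity $\bdl(\a, \b) = 4$ combined with the $\bdl \le \dimo$ comparison.
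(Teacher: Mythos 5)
Your opening moves are sound and broadly parallel to the paper's: an independent realisation via Fact~\ref{fact:ideal}, additivity giving $\bdl(\a,\b)=4$, and the comparison $\bdl\le\dimo$ forcing $\a\ind^0\b$ (the paper does the analogous bookkeeping after copying only $(a_2,a_3)$ over $a_1$, obtaining $(a_4,a_5)$ with $\bdl(a_4,a_5/a_1,a_2,a_3)=1$ and $\dimo(a_1,\ldots,a_5)=3$). But the decisive step is missing. A group configuration for a one-dimensional group needs a \emph{sixth point of transcendence degree one}: in the paper's notation, some $a_6\in\acl^0(a_3,a_4)\cap\acl^0(a_2,a_5)$ with $\dimo(a_6)=1$. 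The natural candidate is the canonical base of $\loc^0(a_2a_5/a_3a_4)$, which lies in $\acl^0(a_3,a_4)$ by definition; the entire content of the step is the upper bound $\dimo\le 1$ for this canonical base, i.e.\ that the family of plane curves cut out by $V$ is one-dimensional and not two-dimensional. This is exactly Lemma~\ref{1base}, whose proof (via Lemma~\ref{1base-m}, or Proposition~\ref{lem:cgpCoherentLinearity}) runs through the Szemer\'edi--Trotter-type bound of Lemma~\ref{lem:SzT} together with the canonical-base Lemma~\ref{lem:cb}. Your sketch asserts that this ``closure of the 4-cycle'' reduces to a $\dimo$-calculation made accessible by sharp additivity and $\bdl\le\dimo$; that is precisely backwards. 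The coherence bookkeeping only puts you in a position to \emph{apply} the incidence bound — it does not replace it, and without it the two auxiliary ``$V$-incidence'' points you introduce have no reason to satisfy the required collinearity and rank conditions.

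To see that no purely formal argument of the kind you propose can close this gap: every property you invoke (additivity, invariance, $\bdl\le\dimo$ for the relevant tuples, Fact~\ref{fact:ideal}) holds verbatim if one replaces $\C$ by algebraically closed fields of characteristic $p$ and takes $X_i=\mathbb{F}_{p^s}$ with scaling constant $\xi_s=p^s$. By Lang--Weil, a $\bdl$-generic point of \emph{any} irreducible surface $V\subset\mathbb{A}^3$ over a finite field with dominant, generically finite pairwise projections then satisfies all the hypotheses of Theorem~\ref{thm:ES-ref}, while the conclusion fails whenever $V$ is not in correspondence with a group. So the characteristic-zero incidence input of Lemma~\ref{lem:SzT} is indispensable and must appear explicitly; this is where the paper's proof does its real work. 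A smaller structural issue: with a fully independent copy $\b$ of the whole triple you have $\dimo(\a,\b)=4$, whereas the six points of a group configuration for a one-dimensional group have total dimension $3$; the paper's choice of copying only $(a_2,a_3)$ over the fixed vertex $a_1$ is what makes the ranks of the four lines come out right, and your proposal never specifies a configuration with the correct ranks.
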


  \begin{proof}[Proof of Theorem \ref{thm:ES} from Theorem \ref{thm:ES-ref}] 
    Assume $V \subset \C^3$ does not project to a curve on two co-ordinates and 
    has no power-saving. Then we may find a sequence of positive integers 
    $(N_s)_{s \ge 0}$ with $\lim_{s \to \infty} N_s=+\infty$ and finite subsets 
    $X^s_1,X^s_2$ and $X^s_3$ in $\C$ with $|X^s_i| \leq N_s$ for each $i,s$ 
    such that $$|X^s_1 \times X^s_2 \times X^s_3 \cap V| \geq 
    N_s^{2-\epsilon_s}$$ for some $\epsilon_s>0$ with $\lim_{s \to \infty} 
    \epsilon_s=0$. Passing to an ultraproduct $X_i=\prod_{s \to 
    \mathcal{U}}X^s_i$ for some non-principal ultrafilter $\mathcal{U}$ over the 
    integers, we obtain three internal sets $X_i \subset K$, where $K$ is the 
    ultrapower of $\C$, and we define the coarse dimension $\bdl$ as in 
    \ref{basic-prop} with scaling constant $\xi=\lim_{s \to \mathcal{U}} N_s$. 
    Hence $\bdl(X_i) \leq 1$ for each $i$ and $\bdl(X_1 \times X_2 \times X_3 
    \cap V) =2$. 

    Since $V$ is irreducible and does not project to a curve on two co-ordinates, 
    the fibers of co-ordinate projections of $V$ on pairs of co-ordinates have 
    uniformly bounded size. Consequently $|X^s_1 \times X^s_2 \times X^s_3 \cap V| 
    = O(|X^s_i \times X^s_j|)$ for all $s$ and all $i \neq j$. It follows that 
    $2=\bdl(X_1 \times X_2 \times X_3 \cap V) \leq \bdl(X_i) +\bdl(X_j)$, and 
    hence that $\bdl(X_i)=1$ for each $i$. 

    The variety $V$ is defined over some finitely generated subfield of $\C$. Let 
    $C_0$ be its algebraic closure in $\C$. It is a countable subfield. To be able 
    to talk about definable sets we specify a language $\L$ as follows: we start 
    with $\L_{ring}=(K,+,\cdot,0,1)$ the language of rings and enlarge it by 
    adding a constant symbol for each element of $C_0$ as well as a predicate for 
    each $X_i$, $i=1,2,3$, thus in effect forcing $X_i$ to be definable. Finally 
    we enlarge $\L$ as in \S \ref{continuity} so as to make $\bdl$ continuous and 
    hence additive.

    Now by Fact \ref{fact:ideal} we may find a triple $(a_1,a_2,a_3) \in X_1 
    \times X_2 \times X_3 \cap V$ such that $\bdl(a_1,a_2,a_3) = \bdl(X_1 \times 
    X_2 \times X_3 \cap V) =2$. Note that $(a_1,a_2,a_3)$ is generic in $V$, 
    i.e.\ it is not contained in any proper algebraic subvariety over the base 
    field $C_0$, because $|X^s_1 \times X^s_2 \times X^s_3 \cap W| =O_W(N_s)$ for 
    every one-dimensional subvariety $W \subsetneq V$ over $C_0$, and so $\bdl(X_1 
    \times X_2 \times X_3 \cap W) \leq 1$. Consequently 
    $\dimo(a_1,a_2,a_3)=\dimo(a_i,a_j)=2$ for all $i\neq j$. So we are in the 
    situation of Theorem \ref{thm:ES-ref}. Then $\loc^0(a'_1,a'_2,a'_3)$ is the 
    graph $\Gamma_G(\C)$ of the group operation of $G$, and we conclude that $V$ 
    has the required description via the correspondence 
    $\loc^0((a_1,a_2,a_3),(a'_1,a'_2,a'_3)) \subset V \times \Gamma_G(\C)$, which 
    is defined over $C_0$ and projects to the correspondences given by the 
    (irreducible) curves $\loc^0(a_i,a'_i) \subset \C \times G(\C)$.
  \end{proof}

  We now pass to the proof of Theorem \ref{thm:ES-ref}. We need to verify that 
  the hypotheses of the group configuration are met. For this we crucially need 
  the following lemma, which can be interpreted as saying that a $2$-parameter 
  family of plane curves with no power-saving must in fact be one-dimensional. 
  This is where the Szemer\'edi-Trotter bound comes into play.

  \begin{lemma}\label{1base} Let $x_1,\ldots,x_4 \in K$ be such that $\bdl(x_i)= 
    1$ and $\bdl(x_1,\ldots,x_4)=\dimo(x_1,\ldots,x_4)$. Assume that 
    $\dimo(x_1,x_2/x_3,x_4)=1$. Then there is $x_5 \in \acl^0(x_3,x_4)$ with 
    $\bdl(x_5)=\dimo(x_5)=1$ such that $\dimo(x_1,x_2/x_5)=1$.
  \end{lemma}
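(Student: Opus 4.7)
The natural approach is via the canonical base: set $\d \in \acl^0(x_3,x_4)$ to be a canonical base of $\tp^0(x_1,x_2/x_3,x_4)$ (cf.\ \S \ref{parag:canonical}), so that $\loc^0(x_1,x_2/\d) = \loc^0(x_1,x_2/\acl^0(x_3,x_4))$ is an absolutely irreducible curve and $\dimo(x_1,x_2/\d) = 1$. The lemma then reduces to showing $\dimo(\d) \leq 1$: if $\dimo(\d) = 1$, a primitive element $x_5$ of the extension $C_0(\d)/C_0$ satisfies $\acl^0(x_5) = \acl^0(\d)$ and hence $\dimo(x_1,x_2/x_5) = 1$; if $\dimo(\d) = 0$ then $\loc^0(x_1,x_2/\d) = \loc^0(x_1,x_2)$ is defined over $C_0$, forcing $\dimo(x_1,x_2) = 1$ and $(x_1,x_2) \ind^0_\emptyset (x_3,x_4)$, so that $x_5 := x_3$ works.

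To prove $\dimo(\d) \leq 1$, I argue by contradiction, assuming $\dimo(\d) = 2$. Since $\d \in \acl^0(x_3,x_4)$ this forces $\dimo(x_3,x_4) = 2$ and $\acl^0(\d) = \acl^0(x_3,x_4)$, hence $\dimo(x_1,\ldots,x_4) = 3$. Combining the hypothesis $\bdl(x_1,\ldots,x_4) = 3$ with additivity of $\bdl$ (Fact \ref{fact:dlContProps}), with the bound $\bdl(x_1,x_2/x_3,x_4) \leq 1$ that comes from generically finite projection of an irreducible curve onto a coordinate axis, and with $\bdl(x_3,x_4/\d) = 0$ via \eqref{alg-ineq}, one computes successively $\bdl(\d) = 2$ and $\bdl(x_1,x_2,\d) = \bdl(\d) + \bdl(x_1,x_2/\d) = 3$. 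Now apply Szemer\'edi-Trotter (Lemma \ref{lem:SzT}) with $\bigwedge$-internal sets $X_1 \supseteq \tp^0(x_1,x_2)(K)$ and $X_2 \supseteq \tp^0(\d)(K)$ realising the coarse dimensions, and with the variety $V = \loc^0((x_1,x_2),\d)$. The canonical-base property, in the form of Lemma \ref{lem:cb}, ensures that for distinct $\d_1,\d_2 \in \tp^0(\d)(K)$ any common point $(x',y')$ of the two fibre curves satisfies $\dimo(x',y'/\d_1\d_2) < 1$, and hence lies in a finite set of uniformly bounded size (by boundedness of degrees in the definable family); thus $\beta = 0$ in Lemma \ref{lem:SzT}. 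Since $\bdl(X_1) \leq 2$ and $\bdl(X_2) = 2$, the lemma then gives
\[ \bdl(X) \leq 3 - \epsilon_0, \]
contradicting $\bdl(X) \geq \bdl(x_1,x_2,\d) = 3$.

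It remains to verify $\bdl(x_5) = 1$ when $\dimo(\d) = 1$: one first shows $\dimo(x_1,x_2) = 2$ (else $\loc^0(x_1,x_2)$ is a $C_0$-curve containing $\loc^0(x_1,x_2/\d)$, forcing $\d$ to be $C_0$-algebraic), and then additivity combined with the global $\bdl = \dimo$ gives $\bdl(x_1,x_2) = 2$ and $\bdl(\d) = 1$, so $\bdl(x_5) = \bdl(\d) = 1$ via \eqref{alg-ineq}. (The case $\dimo(x_3,x_4)=1$ is immediate by taking $x_5 = x_3$.) The main technical obstacle is the Szemer\'edi-Trotter step: propagating the global equality $\bdl = \dimo$ down to the sub-configuration $(x_1,x_2,\d)$ so that $\bdl(x_1,x_2,\d)$ reaches its maximum value $3$, and correctly invoking the canonical-base lemma \ref{lem:cb} to conclude $\beta = 0$.
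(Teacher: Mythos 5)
Your argument is essentially the paper's own proof: the paper defers Lemma~\ref{1base} to Lemma~\ref{1base-m} and Proposition~\ref{lem:cgpCoherentLinearity}, whose proofs are exactly your canonical-base plus Lemma~\ref{lem:cb} plus Szemer\'edi--Trotter (Lemma~\ref{lem:SzT}) argument, and your treatment of the degenerate cases $\dimo(\d)\in\{0,1\}$ and of $\bdl(x_5)$ is fine. One correction: the containments in the Szemer\'edi--Trotter step should be reversed --- take $X_1=\tp(x_1,x_2)(K)\subseteq\tp^0(x_1,x_2)(K)$ and $X_2=\tp(\d)(K)\subseteq\tp^0(\d)(K)$ (types in the full language $\L$), since any $\bigwedge$-internal superset of $\tp^0(\d)(K)$ has infinite coarse dimension (by $\aleph_1$-compactness it contains the locus minus finitely many proper subvarieties), and Lemma~\ref{lem:cb} only controls fibre intersections for pairs realising $\tp^0(\d)$. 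With that fix, your computation $\beta=0$ and the contradiction $\bdl(X)\le 3-\epsilon_0 < 3 \le \bdl(X)$ go through as you describe.
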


  \begin{proof} We postpone the proof of this lemma to the next subsection, 
    where a stronger quantitative version of it will be proven as Lemma 
    \ref{1base-m}. It is also a special case of Proposition 
    \ref{lem:cgpCoherentLinearity}.
  \end{proof}

  \begin{proof}[Proof of Theorem \ref{thm:ES-ref}] First note that the 
    assumptions imply that $\bdl(a_i)=1$ for each $i$. Indeed for any three 
    distinct $i,j,k$ we have $a_i \in \acl^0(a_j,a_k)$. Hence by 
    $(\ref{alg-ineq})$ we have $\bdl(a_i/a_j,a_k)=0$. And by additivity of 
    $\bdl$ (see Fact \ref{fact:dlContProps}) we get
    \begin{equation}\label{delcal} \bdl(a_i,a_j,a_k) = \bdl(a_i/a_j,a_k) + 
      \bdl(a_j,a_k) = \bdl(a_j,a_k) \leq \bdl(a_j)+\bdl(a_k).
    \end{equation} This forces $\bdl(a_j)$ and $\bdl(a_k)$ to be equal to $1$, 
    since both are $\leq 1$.

    Let $X=\tp(a_2,a_3/a_1)(K)$ be the set of realisations of the type of the pair 
    $(a_2,a_3)$ over $a_1$, namely the intersection of all definable sets over 
    $C:=\{a_1\}$ containing $(a_2,a_3)$. By additivity of $\bdl$ we have 
    $\bdl(X)=\bdl(a_1,a_2,a_3)-\bdl(a_1)=2-1=1$, by assumption. According to Fact 
    \ref{fact:ideal} we can find $(a_4,a_5) \in X$ such that 
    \begin{equation}\label{bdlX}\bdl(a_4,a_5/a_1,a_2,a_3) = 
      \bdl(X)=1.
    \end{equation}

    We will show that there is $a_6 \in K$ such that $a_1,\ldots,a_6$ satisfy the 
    hypotheses of the group configuration theorem as in the following diagram:

    \[ \xymatrix{
    & & &&&& a_3 \ar@{-}'[ddll][dddlll] \ar@{-}'[dlll][ddllllll] \\
    & & & a_2 & & & \\
    a_1 & & & & a_6 & & \\
    & & & a_4 && & \\
    && & & & & a_5\;\; \ar@{-}'[uull][uuulll] \ar@{-}'[ulll][uullllll] } \]

    Since $(a_4,a_5)$ and $(a_2,a_3)$ have the same type over $a_1$, they have the 
    same type over the empty set, and in particular they belong to the same 
    algebraic subsets of $K^2$ defined over $C_0$. So $\dimo(a_4,a_5,a_1)=2$, 
    $\dimo(a_4,a_5)=\dimo(a_4,a_1)=\dimo(a_5,a_1)=2$. 

    Moreover the Zariski dimension of the whole system is $3$, i.e.\ 
    $\dimo(a_1,\ldots,a_5)=3$. Indeed it is at most $3$ given that $a_5 \in 
    \acl^0(a_1,a_4)$ and $a_3 \in \acl^0(a_1,a_2)$, but it cannot be less, for 
    otherwise $\dimo(a_4,a_5/a_1,a_2,a_3)=0$ forcing $\bdl(a_4,a_5/a_1,a_2,a_3)=0$ 
    by $(\ref{alg-ineq})$, a contradiction to $(\ref{bdlX})$.
    $$ \emph{Claim:} \hspace{.5cm} \dimo(a_3,a_4)=2, \dimo(a_2,a_5)=2 
    \textnormal{ and }\dimo(a_2,a_5/a_3,a_4)=1.$$
    Indeed if $\dimo(a_3,a_4)<2$, then $a_4 \in \acl^0(a_3)$, and thus 
    $$\dimo(a_1,\ldots,a_5)= \dimo(a_1,a_2,a_3,a_4)= \dimo(a_1,a_2,a_3)=2,$$ where 
    we have used that $a_5 \in \acl^0(a_1,a_4)$. In a similar way 
    $\dimo(a_2,a_5)=2$. Now by additivity of $\dimo$ we finally get 
    $\dimo(a_2,a_5/a_3,a_4)=1$, proving the claim.

    Further note that by additivity and $(\ref{bdlX})$ we have 
    $$\bdl(a_2,a_3,a_4,a_5)=\bdl(a_4,a_5/a_2,a_3)+\bdl(a_2,a_3)=1+2=3=\dimo(a_2,a_3,a_4,a_5).$$ 
    So Lemma \ref{1base} applies and gives $a_6 \in \acl^0(a_3,a_4)$ such that 
    $\dimo(a_2,a_5/a_6)=1$ and $\dimo(a_6)=1$. It then follows easily by 
    additivity of $\dimo$ that $\dimo(a_6,a_2)=\dimo(a_5,a_6)=2$ and $a_6 \in 
    \acl^0(a_2,a_5)$. This shows that $a_1,\ldots, a_6$ satisfy the hypotheses of 
    the group configuration theorem. We are done.
  \end{proof}

\end{parag}

\begin{parag}{\bf Coarse general position.} \label{cgp-parag}
  A significant new difficulty arises when dealing with the higher dimensional 
  situation, i.e.\ when $m=\dim W_i>1$ say in Theorem \ref{thm:main}. We will 
  have to assume that the finite sets $X_i \subset W_i$ do not have too large an 
  intersection with proper subvarieties. There are various ways to quantify this 
  assumption, for instance Elekes-Szab\'o's notion of \emph{general position} 
  requires that the intersections have bounded size with a bound depending only 
  on the complexity of the subvariety. We will adopt here the weaker assumption 
  of \emph{coarse general position}. As explained in Section \ref{gpNecessity} 
  below, some assumption of this kind is \emph{necessary} for the result to 
  hold.

  Recall from Definition \ref{defn:taucgp} that for $\tau \in \N$, a finite 
  subset $X$ of a complex algebraic variety $W$ is said to be in 
  \defnstyle{coarse $(C,\tau)$-general position} (or $(C,\tau)$-$cgp$ for short) 
  with respect to $W$ if $|W' \cap X| \leq |X|^{\frac 1\tau}$ for any proper 
  irreducible complex subvariety $W' \subsetneq W$ of complexity at most $C \in 
  \N$. In the non-standard setup of Section \ref{sec:setup}, where we have 
  specified a language $\L$ and defined the coarse dimension $\bdl$, it will be 
  convenient to define a notion of coarse general position for tuples $\a \in 
  K^{<\infty}$. We will say that $\a \in K^{<\infty}$ is in coarse general 
  position or is \defnstyle{cgp} for short if for every $\b \in K^{<\infty}$ 
  such that $\a$ is not independent from $\b$, that is such that $\dimo(\a/\b) < 
  \dimo(\a)$, we have: $$\bdl(\a/\b)=0.$$

  The two notions are closely related as follows. Suppose $W \subset \C^n$ is a 
  variety and $X = \prod_{s \rightarrow \U} X_s \subset W(K)$ is an internal 
  set. Assume that $X$ is definable without parameters in the countable language 
  $\L$ of Section \ref{sec:setup} for which the coarse dimension $\bdl$ is 
  continuous (see \ref{continuity}).

  \begin{lemma}\label{cgplemfin} Suppose that $0 < \bdl(X) <   \infty$ and that 
    for any $\tau \in \N$, there is $C \ge \tau$ such that for $\U$-many $s$, 
    $X_s$ is $(C,\tau)$-$cgp$ in $W(\C)$. Then any tuple $\a \in K^n$ lying in 
    $X$ is $cgp$.
  \end{lemma}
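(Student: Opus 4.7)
The plan is to fix $\a \in X$ and an arbitrary $\b \in K^{<\infty}$ with $\dimo(\a/\b) < \dimo(\a)$, and to deduce $\bdl(\a/\b) = 0$. After harmlessly enlarging $C_0$ to contain a defining field of $W$, we have $\loc^0(\a) \subseteq W$. Setting $W' := \loc^0(\a/\b)$, we then have $W' \subseteq \loc^0(\a) \subseteq W$, and the dimension comparison
\[
\dim W' \;=\; \dimo(\a/\b) \;<\; \dimo(\a) \;\leq\; \dim W
\]
shows $W'$ is a proper closed subvariety of $W$, defined over $C_0(\b)$ by polynomials of some bounded degree $C_1 = C_1(\b)$.

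The next step is to estimate $\bdl(W'(K) \cap X)$ using the cgp hypothesis. Decomposing $W'$ over $K$ into its absolutely irreducible components produces a bounded number $k$ of absolutely irreducible proper closed subvarieties $W'_1,\ldots,W'_k$ of $W$ (proper because $W$ is irreducible), each of complexity at most some $C_1'$ depending only on $C_1$. Writing $\b = (b_s)_s/\U$, this decomposition descends, for $\U$-many $s$, to a corresponding decomposition of the standard fiber $W'_s \subset W(\C)$ into at most $k$ absolutely irreducible proper subvarieties of complexity $\leq C_1'$. For any $\tau \geq C_1'$, the hypothesis supplies $C \geq \tau \geq C_1'$ with $X_s$ being $(C,\tau)$-cgp for $\U$-many $s$, so each component satisfies $|W'_{i,s} \cap X_s| \leq |X_s|^{1/\tau}$, and hence $|W'_s \cap X_s| \leq k \cdot |X_s|^{1/\tau}$. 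Passing to ultralimits yields $\bdl(W'(K) \cap X) \leq \bdl(X)/\tau$.

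Finally, since $X$ is definable without parameters and $W'$ is $\b$-definable, the set $W'(K) \cap X$ is a $\b$-definable internal set containing $\a$. Therefore $\tp(\a/\b)(K) \subseteq W'(K) \cap X$, giving
\[
\bdl(\a/\b) \;\leq\; \bdl(W'(K) \cap X) \;\leq\; \bdl(X)/\tau.
\]
Letting $\tau \to \infty$ (along $\tau \geq C_1'$) and using $\bdl(X) < \infty$ forces $\bdl(\a/\b) = 0$, establishing that $\a$ is cgp. The main point requiring care is the irreducibility clause in the cgp definition: one has to decompose $W'$ into absolutely irreducible components and ensure that this decomposition lifts to the standard fibers, but this is straightforward since the number and complexities of the components are uniformly bounded in terms of $C_1$.
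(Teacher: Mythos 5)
Your proof is correct and runs on the same mechanism as the paper's: the locus of $\a$ over (the field generated by) $\b$ is a proper subvariety of $W$ of bounded complexity, coarse general position of the fibres $X_s$ bounds its intersection with $X_s$ by (a constant times) $|X_s|^{1/\tau}$, and passing to the ultralimit gives $\bdl(\a/\b) \leq \bdl(X)/\tau$ for all sufficiently large $\tau$, whence $\bdl(\a/\b)=0$. The only real difference is how the irreducibility clause in Definition~\ref{defn:taucgp} is met: the paper takes $W'=\loc^0(\a/\acl^0(\b))$, which is absolutely irreducible, applies cgp to it directly, and returns from $\acl^0(\b)$ to $\b$ via the identity $\bdl(\a/\b)=\bdl(\a/\acl^0(\b))$ of (\ref{alg-ineq}); you instead stay over $C_0(\b)$, decompose $\loc^0(\a/\b)$ into boundedly many absolutely irreducible components of bounded complexity and sum the cgp estimates, which avoids (\ref{alg-ineq}) at the cost of the (routine) uniform bounds on the components and their transfer to the fibres by \Los's theorem. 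One caveat: the opening move of ``harmlessly enlarging $C_0$ to contain a defining field of $W$'' is not literally harmless, since $\dimo$, the conditional $\bdl$, and hence the cgp property you are proving are all computed relative to $C_0$ and the constants of $\L$, so enlarging $C_0$ changes the statement; this is moot in the lemma's intended setting (and in all its applications, cf.\ the beginning of Section~\ref{sec:asymptotic}), where $W$ is already defined over $C_0$ --- exactly the implicit assumption the paper's own proof uses when it asserts $\loc^0(\a/\acl^0(\b)) \subseteq W$ --- and with that reading your argument needs no enlargement and goes through as written.
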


  \begin{proof}Let $\b \in K^{<\infty}$ such that $\dimo(\a/\b) < \dimo(\a)$. 
    Then setting $W'=\loc^0(\a/\acl^0(\b))$ we get an absolutely irreducible 
    subvariety of $W$, which is proper, since by $(\ref{dim-loc})$ one has 
    $\dim(W')=\dimo(\a/\b) < \dim W$, and contains $\a$. Let $c$ be the 
    complexity of $W'$. Then for every $\tau>c$, $\U$-many $X_s$ are 
    $(c,\tau)$-$cgp$ in $W(\C)$, and this implies that $\bdl(\a/\acl^0(\b)) \leq 
    \bdl(X \cap W'(K)) =0$. Hence $\bdl(\a/\b)=0$ by $(\ref{alg-ineq})$. 
  \end{proof}

  \begin{remark}\label{cgp-invariance} The property of being $cgp$ for a tuple 
    $\a \in K^m$ depends only on the type $\tp(\a)$ of $\a$. Indeed, suppose 
    $\a'\in K^m$ has the same type, and $\b \in K^n$ is such that $\a' \nind^0 
    \b$. Then there is $\b' \in K^n$ such that $\tp(\a,\b') = \tp(\a',\b)$, by 
    $\aleph_1$-compactness of $K$. So $\a \nind^0 \b'$, and so 
    $\bdl(\a/\b')=0$. But then by invariance of $\bdl$ (see 
    \ref{fact:dlContProps}ii.), we have $\bdl(\a'/\b)=\bdl(\a/\b)=0$.
  \end{remark}

\end{parag}

\begin{parag}{\bf Higher dimensional case: Elekes-Szab\'o with a gap and 
  commutativity.} We now move on to the second theorem proved by 
  Elekes-Szab\'o in \cite{ES-groups}, which is the extension of Theorem 
  \ref{thm:ES} to higher dimensional varieties. We give a proof following the 
  strategy used above in the one-dimensional case. As a payoff we will also 
  get an explicit bound, $\frac{1}{16}$, on the power-saving and we will 
  establish that the group involved must be commutative. This feature (rather 
  the nilpotency) had been hinted at already by Elekes-Szab\'o (see their 
  Example 28 in \cite{ES-groups}), but was first established by H. Wang and 
  the second named author \cite{breuillard-wang} via a different argument 
  using the classification of approximate groups from \cite{bgt}.

  As before we consider three irreducible complex varieties $W_1,W_2,W_3$ of 
  dimension $d$.
  We say that a subvariety $V \subset \prod_i W_i$ admits a power-saving 
  $\eta>0$ if there exists $\tau \in \N$ such that
  $$|V \cap X_1 \times X_2 \times X_3| \leq O_{V,\tau}(N^{\dim(V) - \eta}).$$
  for every $N \in \N$ and all finite subsets $X_i \subset W_i$ with $|X_i|\leq 
  N^d$ and each $X_i$ in coarse $\tau$-general position in $W_i$.

  \begin{theorem} \label{cor:ES}
    Suppose $V \subset W_1\times W_2 \times W_3$ are irreducible complex 
    varieties, and $\dim(W_i)=d$ and $\dim(\pi_{ij}(V)) = 2d = \dim(V)$ for all 
    $i\neq j \in \{1,2,3\}$. Then either $V$ admits a power-saving $\frac{1}{16}$, 
    or $V$ is in co-ordinatewise correspondence with the graph $\Gamma_+ \subset 
    G^3$ of the group operation of a commutative algebraic group $G$.
  \end{theorem}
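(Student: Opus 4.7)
The strategy mirrors the proof of Theorem~\ref{thm:ES-ref} but with three adaptations to handle arbitrary $d$, the coarse general position hypothesis, and commutativity. Arguing by contradiction, suppose $V$ admits no power-saving $\eta = \frac{1}{16}$. For each $\tau \in \N$ there are arbitrarily large $N$ and finite sets $X_i \subset W_i$ with $|X_i| \leq N^d$, each $X_i$ in coarse $\tau$-general position, and $|V \cap \prod_i X_i| > N^{2d - \eta}$. Taking an ultraproduct along a non-principal $\U$ with scaling constant $\xi = \lim N$, I obtain internal sets $X_i \subset W_i(K)$ with $\bdl(X_i) \leq d$ and $\bdl(V \cap \prod_i X_i) \geq 2d - \eta$, definable after expanding the language so that $\bdl$ becomes continuous. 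By Lemma~\ref{cgplemfin}, every tuple drawn from $X_i$ is cgp in the sense of \S\ref{cgp-parag}.

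By Fact~\ref{fact:ideal}, I choose $(a_1,a_2,a_3) \in V \cap \prod_i X_i$ with $\bdl(a_1,a_2,a_3) \geq 2d-\eta$. The dominance assumption $\dim(\pi_{ij}(V)) = 2d$ forces $\dimo(a_i,a_j) = 2d$ and hence $\dimo(a_1,a_2,a_3) = 2d$ with $\dimo(a_i/a_j) = d$ for $i \neq j$, so $(a_1,a_2,a_3)$ is generic in $V$ and $\bdl(a_i) = d$ for each $i$ (combining $\bdl(a_i) \leq d$ with the computation analogous to \eqref{delcal}, now accounting for the additive defect $\eta$). Mimicking the diamond construction, I apply Fact~\ref{fact:ideal} to $\tp(a_2,a_3/a_1)(K)$ to extract a second realization $(a_4,a_5)$ with $\bdl(a_4,a_5/a_1 a_2 a_3)$ close to $d$. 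A quantitative strengthening of Lemma~\ref{1base}, proved as the announced Lemma~\ref{1base-m}, then yields $a_6 \in \acl^0(a_3,a_4) \cap \acl^0(a_2,a_5)$ with $\dimo(a_6) = d$, closing the six-point group configuration.

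To enforce commutativity I enlarge to the abelian configuration of Theorem~\ref{thm:abGroupConf}: a further independent realization $(a_4',a_5')$ of $\tp(a_2,a_3/a_1)$, together with another application of the quantitative 1-base lemma, produces the extra node $a_7$ lying on the required line through $a_6$, and the cgp plus independence estimates verify the collinearity/independence hypotheses for the $w$-vertex (with the allowed degeneracy $\{w,c,y\}$). Theorem~\ref{thm:abGroupConf} then supplies a connected commutative algebraic group $G$ over $C_0$ and generics $a_i' \in G(K)$, pairwise $\acl^0$-interalgebraic with $a_i$, satisfying $a_3' = a_1' + a_2'$. Counting transcendence degrees forces $\dim(G) = d$, and the irreducible correspondence $\loc^0((a_1,a_2,a_3),(a_1',a_2',a_3')) \subset V \times \Gamma_+$ over $C_0$, with coordinate projections $\loc^0(a_i,a_i') \subset W_i \times G$ being generically finite correspondences (by $\acl^0$-interalgebraicity), exhibits the desired co-ordinatewise correspondence.

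The main obstacle is the quantitative version of Lemma~\ref{1base}: one must track the dimension loss through the Szemer\'edi-Trotter-type bound of Lemma~\ref{lem:SzT} and combine it additively with the group configuration calculation to arrive at the explicit constant $\eta = \frac{1}{16}$. The cgp hypothesis enters precisely here, ensuring the parameter $\beta$ in Lemma~\ref{lem:SzT} is $0$ so that the $\epsilon_0$-saving is genuine; distributing this saving across the four independence identities used in the configuration argument, with the worst case giving a factor of roughly $\tfrac{1}{2} \cdot \tfrac{1}{2} \cdot \tfrac{1}{4}$, yields the claimed $\tfrac{1}{16}$ uniformly in $d$.
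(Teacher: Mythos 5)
Your overall strategy (ultraproduct reduction, extracting a generic tuple of maximal $\bdl$, cgp via Lemma~\ref{cgplemfin}, a quantitative one-base lemma driven by the Szemer\'edi--Trotter bound, then the abelian group configuration) is the same as the paper's, but the step where you complete the seven-point configuration of Theorem~\ref{thm:abGroupConf} has a genuine gap. The seventh point must be algebraic over pairs of the six points you have already fixed: besides $a_6 \in \acl^0(a_2,a_5)\cap\acl^0(a_3,a_4)$ you need the other cross-node, a point of $\acl^0(a_2,a_4)\cap\acl^0(a_3,a_5)$ of transcendence degree $d$. A \emph{further} independent realization $(a_4',a_5')$ of $\tp(a_2,a_3/a_1)$ cannot produce such a point: applying the one-base lemma to a quadruple involving $a_4',a_5'$ yields a canonical-base point algebraic over those new points, which lies on lines of a different hexagon, not on a line of the diagram spanned by $a_1,\dots,a_6$; and if you instead intend to feed $a_6$ (or $a_6$ together with the fresh realization) into the lemma, you would need $a_6$ to be cgp with $\bdl(a_6)\le d$ and the relevant $4$-tuple to satisfy $\bdl \ge \dimo - O(\eta)$, none of which you have established. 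The paper needs no second realization: it applies Lemma~\ref{1base-m} \emph{twice to the same quadruple} $a_2,a_3,a_4,a_5$, once for each cross-pairing, producing both $a_6$ and $a_7$ from points already known to be cgp by type-invariance.

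The derivation of the constant is also not substantiated, and the heuristic ``$\tfrac12\cdot\tfrac12\cdot\tfrac14$'' does not reflect where $\tfrac1{16}$ actually comes from. In the real accounting the defect enters twice, via $\bdl(a_2,a_3,a_4,a_5)\ge 3d-2\eta$, so one needs $2\eta$ below the admissible threshold of the one-base lemma; with the incidence exponent $\epsilon_0<\tfrac1{8d-1}$ the straightforward argument (canonical base coded by at most $\dimo(\d)+1$ coordinates) only handles $d<\dimo(\d)<2d$ and yields the weaker threshold $\tfrac{d}{8d+3}$, which gives $\eta<\tfrac{d}{16d+6}<\tfrac1{16}$ for every $d$, so it does not prove the stated gap. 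Reaching the threshold $\tfrac{d}{8d-1}$ (hence $\eta=\tfrac1{16}$ uniformly in $d$) requires disposing of the case $\acl^0(\d)=\acl^0(\b)$ by a separate argument --- in the paper, the finite-degree graph on $\tp(\b)(K)$ and the large internal anticlique of Claims~\ref{clm:finDegree} and~\ref{clm:bigAnticlique}, following Raz--Sharir--de Zeeuw --- which your proposal does not address at all. Without that step, the argument as you describe it proves a power-saving, but not the explicit $\tfrac1{16}$ that is the content of the statement.
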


  \begin{remark} \label{rmk:ESGap2}
    Note that we obtain a power saving which is independent of $d$. In fact the 
    method gives a power-saving $\eta$ for any $\eta<\frac{d}{16d-1}$, which is 
    slightly better. For $d=1$, a power-saving of $\frac16$ was obtained by Wang 
    \cite{Wang-gap} and independently by Raz, Sharir, and de Zeeuw \cite{RSZ-ESR}. 
    The method given below also gives $\frac{1}{6}$ when $d=1$, see Remark 
    \ref{best-exp}.
  \end{remark}

  The remainder of this section is devoted to the proof of Theorem \ref{cor:ES}. 
  As in the one-dimensional case, we first reformulate the result in the 
  framework of coarse dimension using the notation of Section \ref{sec:setup}.

  \begin{theorem}[Reformulation of Theorem \ref{cor:ES}] \label{thm:coherentES}
    Suppose $\a_1,\a_2,\a_3 \in K^d$ with $\dimo(\a_i,\a_j ) = 
    \dimo(\a_1,\a_2,\a_3) = 2d$, $\bdl(\a_i) \leq d$ and $\bdl(\a_1,\a_2,\a_3) 
    \in [2d - \frac{1}{16},2d]$. Assume that each $\a_i$ is $cgp$ in the sense 
    of \ref{cgp-parag}.
    Then there is an $d$-dimensional commutative algebraic group $G$ over $C_0$,
    and $\a'_1,\a'_2,\a'_3 \in G(K)$
    such that $\dimo(\a'_i)=d$, $\acl^0(\a_i)=\acl^0(\a'_i)$ and 
    $\a'_1+\a'_2=\a'_3$.
  \end{theorem}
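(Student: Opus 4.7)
The plan is to reduce the theorem to the abelian group configuration theorem (Theorem \ref{thm:abGroupConf}) by building a configuration of seven points $\a_1,\ldots,\a_7$ out of the given three, mimicking the one-dimensional argument used to prove Theorem \ref{thm:ES-ref}, but with all occurrences of ``dimension $1$'' replaced by ``dimension $d$''. The explicit gap $\tfrac{1}{16}$ will appear because the argument tolerates (and must track) approximate equalities of the form $\bdl(\a_i) = d - O(\tfrac{1}{16})$.

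First I would pin down the coarse dimensions. Since $\a_i \in \acl^0(\a_j,\a_k)$ for any three distinct indices, additivity of $\bdl$ (Fact \ref{fact:dlContProps}) gives $\bdl(\a_1,\a_2,\a_3) = \bdl(\a_j,\a_k) \leq \bdl(\a_j)+\bdl(\a_k) \leq 2d$, so the hypothesis $\bdl(\a_1,\a_2,\a_3) \geq 2d - \tfrac{1}{16}$ forces $\bdl(\a_i) \geq d - \tfrac{1}{16}$ for each $i$. Next, by Fact \ref{fact:ideal} I would pick $(\a_4,\a_5) \in \tp(\a_2,\a_3/\a_1)(K)$ realising the maximal coarse dimension over $\a_1\a_2\a_3$, so $\bdl(\a_4,\a_5/\a_1\a_2\a_3) = \bdl(\a_1,\a_2,\a_3) - \bdl(\a_1)$, which is close to $d$. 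The relations $\a_5 \in \acl^0(\a_1,\a_4)$ and $\a_3 \in \acl^0(\a_1,\a_2)$ (transported via equality of types) together with additivity of $\dimo$ then yield $\dimo(\a_1,\ldots,\a_5) = 3d$, $\dimo(\a_3,\a_4) = \dimo(\a_2,\a_5) = 2d$, and $\dimo(\a_2,\a_5/\a_3,\a_4) = d$.

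The central step is to produce a sixth point $\a_6 \in \acl^0(\a_3,\a_4)$ with $\dimo(\a_6) = d$ and $\dimo(\a_2,\a_5/\a_6) = d$; this is the higher-dimensional analogue of Lemma \ref{1base}, which will be proved as Lemma \ref{1base-m} in the next subsection. Heuristically, the $2d$-parameter family of $d$-dimensional loci $\loc^0(\a_2,\a_5/\a_3,\a_4)$ as $(\a_3,\a_4)$ varies must in fact be parametrised by a single $d$-dimensional quotient $\a_6$, for otherwise the incidences between the $(\a_2,\a_5)$-points and the curves would exceed what the Szemer\'edi-Trotter-type bound of Lemma \ref{lem:SzT} permits. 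The cgp hypothesis on $\a_3$ and $\a_4$ enters exactly here, as it controls the intersections of pairs of fibres that appear in $\beta$ in Lemma \ref{lem:SzT}; propagating the $O(\tfrac{1}{16})$ slack through the Szemer\'edi-Trotter inequality with the explicit $\epsilon_0$ is what yields the numerical constant $\tfrac{1}{16}$.

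To get commutativity I would next produce a seventh point $\a_7$ playing the role of $w$ in the diagram of Theorem \ref{thm:abGroupConf}, by another independent realisation combined with one more application of the higher-dimensional linearity lemma. Verifying that $\a_1,\ldots,\a_7$ meet the algebraic independence constraints in the abelian group configuration diagram is then a routine additivity computation using the dimensions already established. Applying Theorem \ref{thm:abGroupConf} produces a connected commutative algebraic group $G$ defined over $C_0$ together with generics $\a_i' \in G(K)$, interalgebraic with $\a_i$, satisfying $\a_1' + \a_2' = \a_3'$; the cgp hypothesis ensures that no proper subvariety can absorb the configuration so that $\dim G = d$. I expect the main obstacle to be the higher-dimensional version of Lemma \ref{1base}: making Szemer\'edi-Trotter interact cleanly with coarse general position in dimension $d > 1$, while keeping quantitative control of the loss, is the heart of the matter.
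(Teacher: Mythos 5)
Your proposal matches the paper's proof of Theorem~\ref{thm:coherentES} in substance: pin down the coarse dimensions via additivity and cgp, independently realize $\tp(\a_2,\a_3/\a_1)$ as $(\a_4,\a_5)$, apply the higher-dimensional linearity lemma (Lemma~\ref{1base-m}) to produce the remaining points, and conclude via the abelian group configuration theorem, with the slack $\tfrac{1}{16}$ surviving because $2\eta = \tfrac18 < \tfrac{d}{8d-1}$. One small discrepancy: the paper does not take a further independent realisation to obtain $\a_7$; it simply applies Lemma~\ref{1base-m} a second time to $\a_2,\a_4,\a_3,\a_5$ (with $\a_4$ and $\a_5$ swapped), producing $\a_7 \in \acl^0(\a_2,\a_4)\cap\acl^0(\a_3,\a_5)$, and in the paper's labelling it is $\a_6$ that plays the role of $w$ in the diagram of Theorem~\ref{thm:abGroupConf}, while $\a_7$ plays the role of $y$.
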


  \begin{proof}[Reduction of Theorem \ref{cor:ES} to Theorem 
    \ref{thm:coherentES}] This is essentially the same argument as in the 
    one-dimensional case, so we shall be brief. Let $\eta=\frac{1}{16}$. Arguing 
    by contradiction and carefully negating quantifiers we obtain an increasing 
    sequence of integers $(N_s)_{s \ge 0}$ and a sequence of finite sets $X_i^s 
    \subset W_i(\C)$ in coarse $s$-general position with $|X_i^s| \leq N_s^d$ 
    but $|X^s_1 \times X^s_2 \times X^s_3 \cap V| \geq N_s^{2d-\eta}$. Passing 
    to an ultralimit we obtain three internal sets $X_i \subset W_i(K)$, which 
    we add as predicates to our language (thus turning them into definable 
    sets). Clearly $\bdl(X_i)\leq d$ and $\bdl(X_1 \times X_2 \times X_3 \cap V) 
    \geq 2d-\eta$. By Fact \ref{fact:ideal} we find three tuples $\a_i \in X_i$ 
    such that $\bdl(\a_1,\a_2,\a_3) = \bdl(X_1 \times X_2 \times X_3 \cap V)$. 
    By Lemma \ref{cgplemfin} each $\a_i$ is $cgp$. Moreover $(\a_1,\a_2,\a_3)$ 
    is generic in $V$, for otherwise it would be contained in a subvariety $W 
    \subsetneq V$ over $C_0$ forcing $\bdl(\a_1,\a_2,\a_3) \leq \bdl(X_1 \times 
    X_2 \times X_3 \cap W) \leq \dim W\leq2d-1$ (see Lemma 
    \ref{lem:power-saving}). Therefore $\dimo(\a_i,\a_j ) = 
    \dimo(\a_1,\a_2,\a_3) = 2d$ and the assumptions of Theorem 
    \ref{thm:coherentES} are met.
  \end{proof}

  Analogously to the one-dimensional case, the Szemer\'edi-Trotter bounds of 
  Lemma \ref{lem:SzT} will be used to prove the following crucial step, which 
  shows that the $2$-parameter family of varieties $\loc^0(\a/\b)$ is in fact a 
  $1$-parameter family.

  \begin{lemma}\label{1base-m} Let $\eta \in [0,\frac{d}{8d-1})$. Let 
    $\x_1,\ldots,\x_4 \in K^d$, and set $\a=(\x_1,\x_2)$ and $\b=(\x_3,\x_4)$. 
    Assume each $\x_i$ is $cgp$ and $\bdl(\x_i)\leq \dimo(\x_i)=d$. Assume 
    further $\bdl(\a,\b)\geq \dimo(\a,\b) - \eta$ and $\dimo(\a)=\dimo(\b)=2d$ 
    and $\dimo(\a/\b)=d$. Then there is $\x_5 \in \acl^0(\b) ^{<\infty} \cap 
    \acl^0(\a)^{<\infty}$ with $\dimo(\x_5)=\dimo(\a/\x_5)=d$.
  \end{lemma}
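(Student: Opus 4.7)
The plan is to let $\x_5$ be a canonical base (Section~\ref{parag:canonical}) of $\tp^0(\a/\b)$ over $C_0$, i.e.\ a tuple $\d \in \acl^0(\b)^{<\infty}$ with $\loc^0(\a/\d) = \loc^0(\a/\b)$ absolutely irreducible, $\dimo(\a/\d) = d$, and $\a \ind^0_\d \b$. Then $\x_5 := \d$ automatically lies in $\acl^0(\b)^{<\infty}$ and satisfies $\dimo(\a/\x_5) = d$. By additivity of $\dimo$, $\dimo(\d/\a) = \dimo(\d) + \dimo(\a/\d) - \dimo(\a) = \dimo(\d) - d \geq 0$, so $\dimo(\d) \geq d$ is automatic; and once we establish $\dimo(\d) \leq d$, the same additivity forces $\dimo(\d/\a) = 0$, hence $\d \in \acl^0(\a)^{<\infty}$, completing the proof.

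The upper bound $\dimo(\d) \leq d$ is the heart of the argument, proven by contradiction via the Szemer\'edi-Trotter-type Lemma~\ref{lem:SzT}. Suppose $\dimo(\d) \geq d+1$, and apply the lemma to $X_1 := \tp(\a)(K)$, $X_2 := \tp(\d)(K)$, and $V := \loc^0(\a,\d)$, where $\d$ is represented as a tuple of $\dimo(\d)$ algebraically independent coordinates so as to minimise the ambient $n_2$. Then $\bdl(X_1) = \bdl(\a) \in [2d - \eta,\, 2d]$ and $\bdl(X_2) = \bdl(\d) \leq \dimo(\d)$. Since $\d \in \acl^0(\b)$, additivity of $\bdl$, the hypothesis $\bdl(\a,\b) \geq 3d - \eta$, and $\bdl(\b/\a\d) \leq \dimo(\b/\a\d) = 2d - \dimo(\d)$ combine to yield
\[ \bdl\bigl(V \cap (X_1 \times X_2)\bigr) \;\geq\; \bdl(\a\d) \;=\; \bdl(\a\b) - \bdl(\b/\a\d) \;\geq\; d + \dimo(\d) - \eta. \]

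The pivotal input is that the $\beta$-parameter of Lemma~\ref{lem:SzT} vanishes. For distinct $\d_1, \d_2 \in X_2$, Fact~\ref{fact:ideal} produces an $\bdl$-optimal $\a' \in V(\d_1) \cap V(\d_2) \cap X_1$ over $\d_1\d_2$. Since $\a'\d_i \in V$ and $\d_1 \neq \d_2$, Lemma~\ref{lem:cb} forces $\dimo(\a'/\d_1\d_2) < d$. Writing $\a' = (\x_1', \x_2')$ and decomposing via additivity of $\dimo$, both summands $\dimo(\x_1'/\d_1\d_2)$ and $\dimo(\x_2'/\x_1'\d_1\d_2)$ are strictly less than $d = \dimo(\x_i')$. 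Each $\x_i'$ is cgp (inherited from $\x_i$ by Remark~\ref{cgp-invariance}), so each of the two corresponding $\bdl$-terms vanishes; additivity of $\bdl$ then gives $\bdl(\a'/\d_1\d_2) = 0$, whence $\beta = 0$.

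Substituting $\beta = 0$ and the above bounds into Lemma~\ref{lem:SzT} yields an inequality of the shape
\[ d + \dimo(\d) - \eta \;\leq\; \max\!\left(\tfrac{1}{2}\bdl(\a) + \bdl(\d) - \epsilon_0 \bigl(\bdl(\d) - \tfrac{1}{2}\bdl(\a)\bigr)^{+},\; \bdl(\a),\; \bdl(\d)\right). \]
A case analysis on the right-hand side, using $\bdl(\d) \leq \dimo(\d)$, $\bdl(\d) \geq \dimo(\d) - \eta$, and $\bdl(\a) \in [2d - \eta,\, 2d]$, together with the improved Szemer\'edi-Trotter exponent $\epsilon_0 < 1/(8d-1)$ from~\cite[Theorem~1.2]{FoxPachEtAl}, shows that the threshold $\eta < d/(8d-1)$ rules out the integer value $\dimo(\d) \geq d+1$. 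This final arithmetic comparison, where the precise constant $d/(8d-1)$ is calibrated exactly to the Szemer\'edi-Trotter exponent, is the only delicate step; the rest of the proof is a mechanical exploitation of additivity, the canonical base, and the cgp property.
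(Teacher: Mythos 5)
Your overall route is the paper's: take $\d$ a canonical base of $\a$ over $\b$, reduce everything to showing $\dimo(\d)\le d$, and refute $\dimo(\d)>d$ via Lemma~\ref{lem:SzT} with $\beta=0$ supplied by Lemma~\ref{lem:cb} and the cgp property. However, your step ``represent $\d$ as a tuple of $\dimo(\d)$ algebraically independent coordinates'' is incompatible with citing Lemma~\ref{lem:cb}: that lemma requires $\d$ to be a canonical base, i.e.\ $C_0(\d)$ must be exactly the field of definition of $\loc^0(\a/\acl^0(\b))$, and its proof uses both that $\loc^0(\a'/\d_i)$ is irreducible and that an automorphism fixing this locus setwise fixes $\d_i$. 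A transcendence basis generates in general a proper subfield of $C_0(\d)$, so it is not a canonical base and neither property is available; one would have to re-prove the $\beta=0$ statement for such a parameter. The paper instead uses the primitive element theorem, so $\d$ sits in $K^{\dimo(\d)+1}$, and this is exactly why the case $\dimo(\d)=2d$ requires the separate, more delicate argument (the finite-degree graph on $\tp^0(\b)(K)$ of Claim~\ref{clm:finDegree} and the anticlique of Claim~\ref{clm:bigAnticlique}, used as the new $X_2$): with $2d+1$ coordinates the exponent $\epsilon_0<\frac1{4n_2-1}$ only yields the weaker threshold $\frac{d}{4(2d+1)-1}$. Your proposal silently removes this difficulty rather than solving it.

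There is also a quantitative gap: your final inequality does not close at the claimed threshold. First, the bound $\bdl(\d)\le\dimo(\d)$ is unjustified --- $\d$ is not one of the cgp tuples and Lemma~\ref{lem1} does not apply to it; indeed the scenario being refuted is precisely $\bdl(\d)$ close to $2d$ while $\dimo(\d)<2d$. Second, even granting it, your lower bound $\bdl(X)\ge d+\dimo(\d)-\eta$ together with $\bdl(\d)\ge\dimo(\d)-\eta$ and $\bdl(\a)\le 2d$ gives in the main case only $\eta(1+\epsilon_0)\ge\epsilon_0(\dimo(\d)-d)$, so for $\dimo(\d)=d+1$ a contradiction arises only when $\eta<\frac{\epsilon_0}{1+\epsilon_0}$, which tends to $\frac1{8d}$ and is strictly smaller than $\frac{d}{8d-1}$ for every $d$; thus $\dimo(\d)=d+1$ is not excluded on the whole claimed range of $\eta$. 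The missing ingredient is the paper's first move inside the contradiction: $\dimo(\d)>d$ forces $\dimo(\x_3/\d)<d$ and $\dimo(\x_4/\x_3\d)<d$, hence $\bdl(\b/\d)=0$ by cgp, hence $\bdl(\d)=\bdl(\b)\ge 2d-\eta$ \emph{independently of} $\dimo(\d)$. With this, the $\bdl(\d)$ terms cancel against the lower bound $\bdl(X)\ge\bdl(\a/\b)+\bdl(\d)$ and the Szemer\'edi--Trotter inequality rearranges to $\eta\ge d\epsilon_0$, which is what calibrates the constant $\frac{d}{8d-1}$.
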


  \begin{proof} First note that $\bdl(\b) \geq \dimo(\b) - \eta$. To see this 
    observe that by $cgp$ either $\bdl(\x_2/\b\x_1) = 0$ or $\bdl(\x_1/\b) = 0$. 
    Indeed otherwise $\dimo(\x_2/\b\x_1)=\dimo(\x_2)=d$ and 
    $\dimo(\x_1/\b)=\dimo(\x_1)=d$, contradicting $\dimo(\x_1,\x_2/\b)=d$. So we 
    have $\bdl(\a/\b) = \bdl(\x_2/\b\x_1) + \bdl(\x_1/\b) \leq d = 
    \dimo(\a/\b)$. And so $\bdl(\b) = \bdl(\a,\b) - \bdl(\a/\b)\ge 3d-\eta -d 
    =\dimo(\b) - \eta$.

    Let $\d \in K^{<\infty}$ be a canonical base for $\a$ over $\b$ (see \S 
    \ref{parag:canonical}). By definition $\d \in \acl^0(\b)^{< \infty}$. We will 
    show that this $\x_5:=\d$ satisfies the desired conclusion. By definition we 
    have $\dimo(\a/\d)=\dimo(\a/\b)=d$. Hence 
    $\dimo(\d)=\dimo(\a,\d)-\dimo(\a/\d)\ge \dimo(\a)-d=d$. 

    It only remains to show the upper bound $\dimo(\d) \leq d$. Indeed this will 
    also imply that $\d \in \acl^0(\a)^{<\infty}$, because then $\dimo(\d)=d$ and 
    $\dimo(\a\d)=\dimo(\a/\d)+\dimo(\d)=2d$, so $\dimo(\a\d)=\dimo(\a)$ and thus 
    $\dimo(\d/\a)=0$.

    So suppose that $\dimo(\d) > d$. Then $\dimo(\b/\d)=\dimo(\b) -\dimo(\d)<d$. 
    Writing $$\dimo(\b/\d) =\dimo(\x_3/\d) +\dimo(\x_4/\x_3,\d),$$
    each term is $<d$. Since each $\x_i$ is $cgp$, this forces $\bdl(\b/\d)=0$. 
    Consequently $\bdl(\d)= \bdl(\b\d)$. But $\bdl(\d/\b)=0$ by 
    $(\ref{alg-ineq})$. So $\bdl(\d)=\bdl(\b)$.

    However $\bdl(\b) \ge \dimo(\b)-\eta=2d-\eta$. Using the Szemer\'edi-Trotter 
    bound of Lemma \ref{lem:SzT} we will obtain an upper bound on $\bdl(\d)$ 
    contradicting this lower bound.

    By the primitive element theorem, the field extension $C_0(\d)$ is generated 
    over $C_0$ by a tuple of length $\dimo(\d)+1$.
    So we may assume $\d \in K^{\dimo(\d)+1}$.
    Assume first that $\dimo(\d) < 2d$, so $\max(2d,\dimo(\d)+1) = 2d$;
    we handle the case $\dimo(\d)=2d$ separately below.

    Now let $V=\loc^0(\a\d)$, $X:=(X_1 \times X_2) \cap V$, $X_1=\tp(\a)(K)$ and 
    $X_2=\tp(\d)(K)$. Note that $X_1 \subset K^{2d}$ and $X_2 \subset 
    K^{\dimo(\d)+1}$. First we check that $\bdl(X(\d_1) \cap X(\d_2))=0$ when 
    $\d_1 \neq \d_2$ belong to $X_2$, so that $\beta=0$ in Lemma \ref{lem:SzT}.

    Recall that $X(\d_1):=\{\a' \in X_1 : \a'\d_1 \in V\}$. By Fact 
    \ref{fact:ideal} we may find $\a'=\a'_1\a'_2 \in X(\d_1)\cap X(\d_2)$ such 
    that $\bdl(\a'/\d_1\d_2)=\bdl(X(\d_1)\cap X(\d_2))$. Then $\a'\d_i \in V$ for 
    $i=1,2$. We are thus in the setting of Lemma \ref{lem:cb}. Since $\d_1 \neq 
    \d_2$ we conclude that $\dimo(\a'/\d_1\d_2)<d$. Hence 
    $\dimo(\a'_i/\d_1\d_2)<d$, and since both $\a_i'$ are $cgp$ (because $cgp$ is 
    type invariant, see Remark \ref{cgp-invariance}), we conclude that 
    $\bdl(\a'_i/\d_1\d_2)=0$ for each $i$. Thus by additivity $\bdl(X(\d_1)\cap 
    X(\d_2))=\bdl(\a'/\d_1\d_2)=0$ as claimed.

    The Szemer\'edi-Trotter bound of Lemma \ref{lem:SzT} then gives the bound
    \[ \bdl(X) \leq \max \left(\frac{1}{2}\bdl(X_1) + \bdl(X_2) - \epsilon_0 
    \big(\bdl(X_2) - \frac{1}{2}\bdl(X_1)\big)^{+},\bdl(X_1), \bdl(X_2) \right) 
    ;\]
    using the incidence bound of \cite[Theorem~1.2]{FoxPachEtAl} mentioned 
    after Theorem~\ref{st}, we obtain this bound for
    all $\epsilon_0 \in [0,\frac{1}{8d-1})$. If the maximum on the right hand 
    side is $\bdl(X_1)$, then $\bdl(X_2) \leq \frac{1}{2}\bdl(X_1)\leq d$, 
    contradicting our lower bound $\bdl(X_2)=\bdl(\d)\geq 2d - \eta$ above. If 
    the maximum is $\bdl(X_2)$, then $\bdl(X)=\bdl(\a\d)\leq \bdl(\d)$, so 
    $\bdl(\a/\d)=0$ and hence $\bdl(\a/\b)=0$ by $(\ref{alg-ineq})$. But 
    $\bdl(\a/\b)=\bdl(\a\b)-\bdl(\b) \geq d-\eta>0$. 

    So we must conclude that 

    $$\bdl(\a/\b) + \bdl(\d) \leq \bdl(X) \leq \frac{1}{2}\bdl(\a) + \bdl(\d) - 
    \epsilon_0 \big(\bdl(\d) - \frac{1}{2}\bdl(\a)\big).$$

    In other words, since $\bdl(\d)=\bdl(\b)$:

    $$\frac{1}{\epsilon_0}\bdl(\a\b) \leq \frac{1}{2} 
    \bdl(\a)(\frac{1}{\epsilon_0} +1) + \bdl(\b)( \frac{1}{\epsilon_0} -1).$$

    Finally since $\bdl(\a), \bdl(\b) \leq 2d$ and $\bdl(\a\b) \geq 3d - \eta$ by 
    assumption, we conclude that 

    $$\eta \geq \frac{d}{8d-1}$$
    a contradiction to our assumption.

    We assumed above that $\dimo(\d) < 2d$, and so we conclude from this 
    contradiction that $\dimo(\d) = d$ or $\dimo(\d) = 2d$. it remains to rule out 
    the latter case. If we were willing to weaken our bound to 
    $\frac{d}{4(2d+1)-1}$, the argument above would suffice. To obtain the sharper 
    bound, we use an additional argument inspired by \cite[Theorem~4.3]{RSZ-ESR}.

    So assume $d^0(\d) = 2d$, i.e.\ $\acl^0(\d)=\acl^0(\b)$.

    Let $W := \loc^0(\a\b)$
    and $V := \loc^0(\a\d)$.
    Write $V_{\y}$ for $\{ \x : (\x,\y) \in V \}$, and similarly for $W_{\y}$.
    So $V_{\d} = \loc^0(\a/\d)$ is the irreducible component of $W_{\b} = 
    \loc^0(\a/\b)$ containing $\a$.
    Now $\tp^0(\d/\b)(K)$ is finite, so say $\tp^0(\d/\b)(K) = \{\d_1,\ldots 
    ,\d_k\}$.
    Then $V_{\d_i}$ are precisely the irreducible components of $W_{\b}$, and 
    $W_{\b} = \bigcup_i V_{\d_i}$.
    Indeed, any $V_{\d_i}$ is an automorphic image over $C_0(\b)$ of $V_{\d}$ and 
    so is a component, and conversely $\bigcup_i V_{\d_i}$ is 
    automorphism-invariant over $C_0(\b)$ and hence is defined over $C_0(\b)$,
    so $W_{\b} \subset \bigcup_i V_{\d_i}$.

    \begin{claim} \label{clm:finDegree}
      There are only finitely many $\b' \in \tp^0(\b)(K)$ with
      $\dim(W_{\b} \cap W_{\b'}) = d$.
    \end{claim}
    \begin{proof}
      Fix field automorphisms $\tau_{ij} \in \Aut(K/C_0)$
      such that $\tau_{ij}(\d_i) = \d_j$. Suppose $\b' \in \tp^0(\b)(K)$ and 
      $\operatorname{dim}(W_{\b} \cap W_{\b'}) = d$.
      Then $W_{\b}$ and $W_{\b'}$ share a component,
      so say $V_{\d_j} \subset W_{\b'}$. Now let $\sigma \in \Aut(K/C_0)$ be such 
      that $\b' = \sigma(\b)$.
      Since the $\sigma(V_{\d_i})$ are the irreducible components of $W_{\b'}$,
      for some $i$ we have $\sigma(V_{\d_i}) = V_{\d_j}$.

      Let $\sigma' := \tau_{ij}^{-1} \sigma$.
      Then $\sigma'(V_{\d_i}) = \tau_{ij}^{-1} V_{\d_j} = V_{\tau_{ij}^{-1} \d_j} 
      = V_{\d_i}$,
      so by the definition of canonical base,
      we have $\sigma'(\d_i) = \d_i$. Then since $\b \in \acl^0(\d_i)$,
      there are only finitely many possibilities for $\sigma'(\b)$,
      and so there are only finitely many possibilities for
      $\b' = \sigma(\b) = \tau_{ij}(\sigma'(\b))$.
    \end{proof}

    Now let $G$ be the graph with vertex set $\tp(\b)(K)$ and with an edge between 
    $\b'$ and $\b''$ if and only if $\dim(W_{\b'} \cap W_{\b''}) = d$. By 
    Claim~\ref{clm:finDegree}, $G$ has constant finite degree.

    \begin{claim}\label{clm:bigAnticlique}
      If $G=(A,E)$ is a graph where the vertex set $A$ is $\bigwedge$-internal and 
      the edge relation $E$ is internal, and if $G$ has finite maximal degree $k$, 
      then there is a $\bigwedge$-internal anticlique $A'$ with 
      $\bdl(A')=\bdl(A)$.

    \end{claim}
    \begin{proof}
      If $A$ is internal, then $G$ is the ultraproduct of finite graphs $G_i = 
      (A_i,E_i)$ of maximal degree $k$. Then $G_i$ has chromatic number at most 
      $k+1$, and so has an anticlique of size at least $ \frac{|A_i|}{k+1}$.
      The ultraproduct of such anticliques is then an internal anticlique $A' 
      \subset A$ as required.

      In general, our $\bigwedge$-internal $A$ is, by $\aleph_1$-compactness of 
      the ultraproduct, contained in an internal $A_0$ such that $(A_0,E)$ has 
      maximal degree at most $k$, because the property of having maximal degree at 
      most $k$ can be expressed as the inconsistency of a partial $(k+1)$-type. So 
      then the same holds for all internal $A_1$ with $A \subset A_1 \subset A_0$, 
      and hence the claim follows from the internal case.
    \end{proof}

    Now let $X_2 \subset \tp(\b)(K)$ be an anticlique as in 
    Claim~\ref{clm:bigAnticlique} for the graph defined above, and $X_1 := 
    \tp(\a)(K)$ and $X := (X_1 \times X_2) \cap W$. If $\a' \in X(\b_1) \cap 
    X(\b_2)$ then $\dimo(\a'/\b_1\b_2) \leq \dim(W_{\b_1}\cap W_{\b_2}) < d$ since 
    $X_2$ is an anticlique, and so $\bdl(\a')=0$ by $cgp$. So we contradict the 
    Szemer\'edi-Trotter bound exactly as in the case $d<\dimo(\d)<2d$ above.

    This contradiction shows that $\dimo(\d)=d$ and ends the proof.
  \end{proof}


  \begin{proof}[Proof of Theorem \ref{thm:coherentES}]Here again the strategy is 
    the same as in the $1$-dimensional case, so we shall be brief. Let 
    $\eta=\frac{1}{16}$.

    As before set $X=\tp(\a_2,\a_3/\a_1)(K)$ and note that $\bdl(\a_1,\a_2,\a_3) = 
    \bdl(X)+\bdl(\a_1)$ by additivity of $\bdl$. It follows that $\bdl(X) \geq 
    d-\eta$.

    By Fact \ref{fact:ideal} we may find $(\a_4,\a_5) \in X$ with 
    $\bdl(a_4,a_5/a_1,a_2,a_3) = \bdl(X)$. Note that $\a_4,\a_5$ are both $cgp$ 
    (see Remark \ref{cgp-invariance}). We will show that there are $\a_6$ and 
    $\a_7$ in $K^d$ such that $(\a_1,\ldots,\a_7)$ satisfy the hypotheses of the 
    abelian group configuration theorem as in the following diagram:
    \[ \xymatrix{
    & & &&&& \a_1 \ar@{-}'[ddll][dddlll] \ar@{-}'[dlll][ddllllll] \\
    & & & \a_2 \ar@{-}'[d][dd] & & & \\
    \a_3 & & & \a_6 & \a_4 \ar@{-}'[l][llll] & & \\
    & & & \a_5 && & \\
    && & & & & \a_7\;\; \ar@{-}'[uull][uuulll] \ar@{-}'[ulll][uullllll] } \]
    As earlier we have that $\dimo(\a_4,\a_5,\a_1)=2d$, 
    $\dimo(\a_4,\a_5)=\dimo(\a_4,\a_1)=\dimo(\a_5,\a_1)=2d$. And we also have 
    $\dimo(\a_1,\ldots,\a_5)=3d$. Indeed otherwise 
    $\dimo(\a_4,\a_5/\a_1,\a_2,\a_3)<d$, which implies that 
    $\dimo(\a_i/\a_1,\a_2,\a_3)< d$ for $i=4,5$ and thus 
    $\bdl(\a_i/\a_1,\a_2,\a_3)=0$ since $\a_4,\a_5$ are $cgp$. But this 
    contradicts $\bdl(\a_4,\a_5/\a_1,\a_2,\a_3) = \bdl(X)>0$.

    Again as in the $1$-dimensional case, using only the additivity of $\dimo$ we 
    conclude that $\dimo(\a_3,\a_4)=\dimo(\a_2,\a_5)=2d$ and 
    $\dimo(\a_2,\a_5/\a_3,\a_4)=d$, and also that 
    $\dimo(\a_2,\a_4)=\dimo(\a_3,\a_5)=2d$ and $\dimo(\a_3,\a_5/\a_2,\a_4)=d$.

    Moreover $\bdl(\a_2,\a_3)=\bdl(\a_1,\a_2,\a_3)$ by additivity since $\a_1$ is 
    $cgp$. Similarly by additivity
    $$\bdl(\a_2,\a_3,\a_4,\a_5)=\bdl(\a_1,\ldots,\a_5)=\bdl(X) + 
    \bdl(\a_1,\a_2,\a_3),$$
    hence this is $\ge 3d- 2\eta$. Since $2\eta \leq \frac18 < \frac{d}{8d-1}$, we 
    are thus in a position to apply Lemma \ref{1base-m} to both 
    $\a_2,\a_5,\a_3,\a_4$ and to $\a_2,\a_4, \a_3,\a_5$. This yields $\a_6$ and 
    $\a_7$ as desired and concludes the proof of the theorem.

  \end{proof}

  \begin{remark}[Quality of the power-saving]\label{best-exp} The quality of the 
    power-saving depends crucially on the quality of $\epsilon_0$ in the 
    Szemeredi-Trotter type bound of Lemma \ref{lem:SzT}. We immediately lose a 
    factor $2$ because this bound is usually proven for real algebraic varities, 
    while we consider complex varieties and somewhat carelessly view them as 
    real varieties of twice the dimension. It is plausible that the bound 
    $\epsilon_0=\frac{1}{2n-1}$ holds in Theorem \ref{st}. In fact this is known 
    when $n=1$ (see \cite{zhal-szabo-sheffer} or \cite[Thm 4.3]{RSZ-ESR}), and 
    consequently Lemma \ref{1base-m} holds for all $\eta \in [0,\frac13)$ when 
    $d=1$ and thus yields a power-saving $\eta$ for any $\eta<\frac16$ in the 
    $1$-dimensional Elekes-Szab\'o theorem. This recovers the bound obtained in 
    \cite{Wang-gap} and \cite{RSZ-ESR}. The latter work however gave more 
    precise information on the multiplicative constant and the dependence on the 
    degree of the variety $V$, which is an aspect we do not investigate in our 
    paper (it would require working with the Hrushovski-Wagner fine 
    pseudo-finite dimension while we restrict attention to the coarse dimension 
    $\bdl$).
  \end{remark}

  The following corollary indicates the robustness of the commutativity of the 
  group in Theorem~\ref{cor:ES}.

  \begin{corollary} \label{specialAbelian}
    Suppose $(G;\cdot)$ is a connected complex algebraic group.
    Suppose the graph $\Gamma$ of multiplication admits no power-saving.
    Then $G$ is commutative.
  \end{corollary}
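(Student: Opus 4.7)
The plan is to apply Theorem \ref{cor:ES} directly to $V = \Gamma \subset G \times G \times G$. First I would check that the hypotheses are met: the three factors $W_1=W_2=W_3=G$ are irreducible of the same dimension $d := \dim G$, the variety $\Gamma = \{(x,y,xy) : x,y \in G\}$ is irreducible of dimension $2d$, and each of the three coordinate projections $\pi_{ij} : \Gamma \to G \times G$ is in fact an isomorphism (using $x,y$ as parameters, or $x, xy$, or $y, xy$, with the third coordinate recovered from the group operation). In particular $\dim(\pi_{ij}(\Gamma)) = 2d = \dim(\Gamma)$ for all $i \neq j$.

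Since by hypothesis $\Gamma$ admits no power-saving, it in particular does not admit a power-saving of $\frac{1}{16}$, and so the second alternative of Theorem~\ref{cor:ES} must hold: there exists a connected commutative algebraic group $(G',+)$ such that $\Gamma$ is in co-ordinatewise correspondence with the graph $\Gamma_+ \subset (G')^3$ of the group operation of $G'$.

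Finally I would invoke Fact~\ref{fact:corrIsog} applied to the graph $\Gamma = \Gamma_\times$ of multiplication of $G$ and the graph $\Gamma_+$ of addition of the commutative group $G'$: the fact directly states that under co-ordinatewise correspondence of these two graphs, $G$ must itself be commutative (and moreover isogenous to $G'$). This concludes the proof.

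There is no real obstacle here; the corollary is essentially a packaging of Theorem~\ref{cor:ES} together with Fact~\ref{fact:corrIsog}, and the only thing one has to verify is the trivial dimension-count required to put oneself in the hypotheses of Theorem~\ref{cor:ES}.
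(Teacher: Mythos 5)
Your proof is correct and is essentially the same as the paper's: apply Theorem~\ref{cor:ES} to get a co-ordinatewise correspondence with the graph of a commutative group, then invoke Fact~\ref{fact:corrIsog}. The only difference is that you spell out the (routine) verification that $\Gamma$ satisfies the dimension hypotheses of Theorem~\ref{cor:ES}, which the paper leaves implicit.
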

  \begin{proof}
    By Theorem~\ref{cor:ES}, $\Gamma$ is in co-ordinatewise correspondence with 
    the graph $\Gamma_+ \subset G'^3$ of the group operation of a commutative 
    connected algebraic group $G'$.
    So this is an immediate consequence of Fact~\ref{fact:corrIsog}.
  \end{proof}

  \begin{remark} Another proof of this corollary was noted in 
    \cite{breuillard-wang}. It can be derived as a consequence of the 
    Balog-Szemeredi-Gowers-Tao theorem combined with \cite[Theorem 2.5]{bgt}, 
    one of the main results of \cite{bgt} which was proven there for linear 
    algebraic groups, but can be extended to all algebraic groups.
  \end{remark}

  In the following sections we will handle the general case of a cartesian 
  product of an arbitrary number, say $n$, of subvarieties. As in the 
  reformulations of Elekes-Szab\'o's statements expounded above it is easy to 
  see that a subvariety without power-saving leads to a tuple 
  $(\a_1,\ldots,\a_n)$ such that each $\a_i$ is $cgp$, belongs to $K^d$ and has 
  $\bdl(\a_i) \leq d$, such that 
  $$\bdl(\a_1,\ldots,\a_n)=\dimo(\a_1,\ldots,\a_n).$$
  In Sections \ref{sec:proj} and \ref{sec:varieties} we will forget for a moment 
  the original combinatorial problem and focus entirely on the study of these 
  tuples. Then in Section \ref{sec:asymptotic} we will return to combinatorics 
  and give a proof of Theorem \ref{thm:main}.

\end{parag}

\section{Necessity of general position}
\label{gpNecessity}

We give an example showing that Theorem~\ref{cor:ES} fails dramatically if we 
weaken too far the coarse general position assumption in the definition of 
power-saving. Indeed varieties which are not even in correspondence with a 
group operation can then have no power-saving even when the finite sets are 
assumed to be say in weak general position, namely assuming $\bdl$ always 
drops when there is an algebraic dependence.


Define an operation $* : \C^2 \times \C^2 \rightarrow \C^2$ by
\[ (a_1,b_1) * (a_2,b_2) = (a_1+a_2+b_1^2b_2^2,b_1+b_2) ,\]
and let $V := \Gamma_*$ be its graph $\{ (x,y,x*y) | x,y \in \C^2 \}$.

Set $X_N := \{0,\ldots ,N^4-1\} \times \{0,\ldots ,N-1\} \subset \C^2$.
Then $|X_N^3 \cap \Gamma_*| \geq \Omega(|X_N|^2)$;
indeed, if $a_i < \frac 13 N^4$ and $b_i < \frac 12 N$ for $i=1,2$,
then $(a_1,b_1)*(a_2,b_2) \in X_N$.

Now $|X_N \cap (\{0\}\times\C)| = N = |X_N|^{\frac15}$, so $X_N$ is not 
$6$-$cgp$. Nevertheless $X_N$ is still in weak general position, namely $|X_N 
\cap W| =O_W(N^4)$ for every algebraic curve $W \leq \C^2$ and $N^4\leq 
|X_N|^{1-\frac{1}{5}}$. But if we were to remove the $cgp$ assumption in the 
definition of power-saving, then $(X_N)_N$ witnesses that $\Gamma_*$ would 
admit no power-saving.

However, one can show that $\Gamma_*$ is not in co-ordinatewise correspondence 
with the graph of the group operation of any complex algebraic group 
$(G;\cdot)$.
We sketch a proof of this.

Suppose such a group $(G;\cdot)$ and a correspondence exist, defined over a 
finitely generated field $A \leq \C$. Then if we take independent generics 
$x_1,x_2,y_1,y_2 \in \C^2$ over $A$ and set $z_{ij} := x_i * y_j$, then 
$z_{22}$ lies in the algebraic closure $\acl(z_{11},z_{12},z_{21})$ of 
$A(z_{11},z_{12},z_{21})$.
This follows from the equation in the algebraic group $G$
\[ x_2'\cdot y_2' = (x_2'\cdot y_1')(x_1'\cdot y_1')^{-1}(x_1'\cdot y_2');\]
cf. \cite[6.2]{HZ} and \cite[Theorem~41]{Tao-expanding} where a converse to 
this is proven in the 1-dimensional case.

But if one takes $z_{11},z_{12},z_{21},x_2$ independent generics and 
calculates in order $y_1,x_1,y_2,z_{22}$ using the definition of $*$, then
$x_1,x_2,y_1,y_2$ are also independent generics,
and writing $z_{11} = (z_{11}',z_{11}'')$ and so on, one obtains
\begin{align*} z'_{22} &= z_{21}'+z_{12}'-z_{11}'- x_2''^2(z_{21}''-x_2'')^2 
  +(z_{11}''-z_{21}''+x_2'')^2(z_{21}''-x_2'')^2\\
  & -(z_{11}''-z_{21}''+x_2'')^2(z_{12}''-z_{11}''+z_{21}''-x_2'')^2,\\
  z''_{22}&= z_{21}''+z_{12}''-z_{11}'' ,
\end{align*}
and e.g.\ $x_2''z_{11}''z_{21}''z_{12}''$ has a non-zero coefficient,
and so $z_{22}$ is not independent from $x_2$;
but $z_{11},z_{12},z_{21}$ is independent from $x_2$ by assumption,
so $z_{22} \notin \acl(z_{11},z_{12},z_{21})$.

%

\section[Projective geometries]{Projective geometries arising from varieties 
without power-saving}\label{sec:proj}

As hinted in Section~\ref{sec:warmup}, the proof of our main results will rely 
on the study of $cgp$ tuples from $K^m$ whose algebraic dimension $\dimo$ 
coincides with their pseudo-finite coarse dimension $\bdl$. In this section we 
study the geometry underlying these tuples, and prove Theorem 
\ref{cor:coherentGeom} below, which establishes that the associated geometry 
is modular and hence satisfies the Veblen axiom making it, via the 
Veblen-Young co-ordinatisation theorem, a sum of projective geometries over 
division rings.

\subsection{Geometries and modularity}\label{subsect:geoms}
We begin with recalling some classical terminology and basic results regarding 
abstract projective geometry (see \cite{Artin-geometricAlgebra, Cameron-pps}) 
and the general notions of pregeometries, geometries and modularity.

\begin{definition*}
  A \defnstyle{closure structure} is a set $P$ with a map $\cl : \powerset(P) 
  \rightarrow \powerset(P)$ such that for $A,B \subset P$ we have $A \subset 
  \cl(A)$, $A \subset B \Rightarrow \cl(A) \subset \cl(B)$, and $\cl(\cl(A)) = 
  \cl(A)$.
  A subset of $P$ is \defnstyle{closed} if it is in the image of $\cl$.
  A closure structure $(P,\cl)$ is a \defnstyle{pregeometry} if the following 
  two properties also hold:
  \begin{itemize}\item Exchange: $b \in \cl(A \cup \{c\}) \setminus \cl(A) 
      \Rightarrow c \in \cl(A \cup \{b\})$;
    \item Finite character: $\cl(A) = \bigcup_{A_0 \subset A,\; A_0 \textrm{ 
      finite}} \cl(A_0)$.
  \end{itemize}

  Finite pregeometries are also known as \defnstyle{matroids}.

  Let $(P,\cl)$ be a pregeometry. For $A,B \subset P$, a \defnstyle{basis} for 
  $A$ over $B$ is a subset $A'\subset A$ of minimal size such that $\cl(A' 
  \cup B) = \cl(A \cup B)$. Any two bases have the same cardinality, which is 
  denoted by $\dim(A/B)$ and called the \defnstyle{dimension} of $A$ over $B$. 
  When $B$ is empty, this is the dimension of $A$, which we denote as usual by 
  $\dim(A)$.


  Subsets $A,B \subset P$ are \defnstyle{independent} over a subset $C \subset 
  P$, written $A \ind_C B$, if $\dim(A'/B\cup C)=\dim(A'/C)$ for any finite 
  $A' \subset A$.


  A pregeometry $(P,\cl)$ is a \defnstyle{geometry} if $\cl(\emptyset 
  )=\emptyset $ and $\cl(\{a\})=\{a\}$ for $a \in P$.
  Every pregeometry gives rise to a geometry by projectivisation: the 
  \defnstyle{projectivisation} of a closure structure $(P,\cl)$ is the closure 
  structure $\P(P,\cl)$ with points $\{ \cl(\{a\}) : a \notin \cl(\emptyset ) 
  \}$ and the induced closure.
  If $(P,\cl)$ is a pregeometry, then $\P(P,\cl)$ is the \defnstyle{associated 
  geometry}.

  A geometry $(P,\cl)$ is said to be \defnstyle{modular} if for distinct 
  $a_1,a_2 \in P$ and $B \subset P$,
  and if $a_2 \in \cl(\{a_1\}\cup B) \setminus \cl(B)$,
  then there exists $d \in \cl(B)$ such that $d \in \cl(\{a_1,a_2\})$.

  A pregeometry is modular if its associated geometry is modular.
  Equivalently (\cite[Lemma~8.1.13]{Marker-MT}), for any closed sets $A,B$,
  $$\dim(A\cup B) = \dim(A) + \dim(B) - \dim(A\cap B).$$

  Points $a_1,a_2$ of a geometry $(P,\cl)$ are \defnstyle{non-orthogonal} if 
  there exists $B \subset P$ such that $a_2 \in \cl(\{a_1\}\cup B)\setminus 
  \cl(B)$.

  A \defnstyle{subgeometry} of a geometry $(P,\cl)$ is the restriction 
  $(Y,\cl\negmedspace\restriction_Y)$ to a subset $Y \subset P$, where 
  $\cl\negmedspace\restriction_Y(A): = \cl(A) \cap Y$.

  The \defnstyle{sum} of geometries $(P_i,\cl_i)$ is the non-interacting 
  geometry on the disjoint union, namely the geometry $(\Disjointunion_i P_i, 
  \cl)$ where $\cl(\Disjointunion_i A_i) := \Disjointunion_i \cl_i(A_i)$.
\end{definition*}

The proofs of all claims made in the above definitions are straightforward and 
classical. We refer the reader to \cite[Appendix~C.1]{TentZiegler} for them 
and further details.

\begin{example}[Projective spaces over division rings]
  If $V$ is a vector space over a division ring $F$,
  then $V$ equipped with $F$-linear span forms a pregeometry $(V,\left<{\cdot 
  }\right>_F)$ of dimension $\dim(V)$,
  and the associated geometry $\P(V) := \P(V, \left<{\cdot}\right>_F)$ is the 
  projective space of $V$; it also has dimension $\dim(V)$, and it is a 
  modular geometry.
\end{example}
\begin{example}[Algebraic closure]
  An algebraically closed field $K$ equipped with field-theoretic algebraic 
  closure over the prime field forms a pregeometry $(K,\acl)$. The dimension 
  is the transcendence degree over the prime field. If $\dim(K) \geq 3$ then 
  the associated geometry is not modular, as can be seen by considering a 
  generic solution to $b=c_1a+c_2$, see \cite[Appendix~C.1]{TentZiegler}.
\end{example}

\begin{example}[Algebraic closure on tuples]\label{alg-tuples}
  If $C_0 \leq K$ are algebraically closed fields, the set of all tuples 
  $K^{<\infty}$ equipped with the algebraic closure $\acl^0$ over $C_0$ forms a 
  closure structure, where the closure of a subset $A \subset K^{<\infty}$ is 
  $\acl^0(A)^{<\infty}$ as defined in $(\ref{closalg})$. But it is in general 
  not a pregeometry.
\end{example}

In the sequel we will only consider closure operators of the types described 
in the above examples.


Let $(P,\cl)$ be a modular geometry.
Then $a,b \in P$ are non-orthogonal
if and only if there exists $c \in P \setminus \{a\}$ such that $a \in 
\cl(\{b,c\})$.
In other words, $a=b$ or $|\cl(\{a,b\})|>2$. It is easy to see from modularity 
that this is an equivalence relation.

\begin{example}[Abstract projective space] An \defnstyle{abstract projective 
  space} is a pair $(P,L)$ of sets, where $P$ is the set of points and $L$ 
  the set of lines, a unique line passes through every two distinct points, 
  every line has at least three points and \emph{the Veblen axiom} holds: 
  given four distinct points $a,b,c,d$, if the lines $ab$ and $dc$ intersect, 
  then so do $ad$ and $bc$. Any such abstract projective space gives rise to a modular 
  geometry on $P$ in which the closure of a subset is the union of all lines passing 
  through two points in the subset. Conversely any modular geometry in which 
  every pair of points is non-orthogonal gives rise to an abstract projective 
  space with the same set of points and with the set of lines being the set of 
  closures of pairs of distinct points.
\end{example}


We now recall the classical Veblen-Young co-ordinatisation theorem of 
projective geometry, which characterises modular geometries.

\begin{fact} \label{fact:modStructure}
  If $(P,\cl)$ is a modular geometry,
  and every two points $a,b \in P$ are non-orthogonal,
  and $\dim(P) \geq 4$,
  then $P$ is isomorphic to a projective space $\P(V)$,
  where $V$ is a vector space over a division ring.

  More generally,
  if $(P,\cl)$ is a modular geometry,
  then non-orthogonality is an equivalence relation,
  and $(P,\cl)$ is the sum of the subgeometries on its non-orthogonality 
  classes,
  each of which either has dimension $\leq 2$, or is a projective space over a 
  division ring, or is a non-Desarguesian projective plane.
\end{fact}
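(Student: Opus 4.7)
The plan is to reduce the statement to the classical Veblen--Young coordinatisation theorem by first extracting an abstract projective space structure from the given modular geometry. The first part of the statement (with the non-orthogonality and $\dim \geq 4$ hypotheses) is a special case of the second, so I would focus on proving the second.

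First I would establish that non-orthogonality is an equivalence relation. Symmetry is immediate from the definition. For transitivity, I would use the equivalent formulation noted in the excerpt: for distinct $a,b$, $a \not\perp b$ iff there exists $c \notin \{a,b\}$ with $c \in \cl(\{a,b\})$, equivalently $|\cl(\{a,b\})| > 2$. Given $a \not\perp b$ and $b \not\perp c$ with $a,b,c$ distinct, I would pick witnesses on the lines $\cl(\{a,b\})$ and $\cl(\{b,c\})$ and use the modular law $\dim(X \cup Y) = \dim(X) + \dim(Y) - \dim(X \cap Y)$ applied to $X = \cl(\{a,c\})$ and $Y = \cl(\{a,b\})$ or $\cl(\{b,c\})$ to conclude that the line through $a$ and $c$ must contain a third point, hence $a \not\perp c$. (If $a = c$ there is nothing to prove.)

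Next I would show that $(P,\cl)$ is the sum of its non-orthogonality classes $\{P_i\}$. Here the main point is that for a finite subset $A$, writing $A = \Disjointunion_i A_i$ according to the classes, one has $\cl(A) = \Disjointunion_i \cl(A_i)$. This follows by induction on $|A|$ using modularity: if $b \in \cl(A)$, then the line from $b$ to any $a \in A$ meets the span of $A \setminus \{a\}$, and modularity forces $b$ to lie in the span of points lying in the same class as $b$. Finite character then extends this to arbitrary $A$.

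Finally, within each non-orthogonality class $P_i$ of dimension $\geq 3$, every two distinct points lie on a line with at least three points, so $(P_i, \cl\!\restriction_{P_i})$ carries the structure of an incidence geometry of points and lines. I would then verify the Veblen axiom directly from modularity: if distinct points $a,b,c,d \in P_i$ are such that $\cl(\{a,b\}) \cap \cl(\{c,d\}) \neq \emptyset$, then $\dim\cl(\{a,b,c,d\}) \leq 3$, and modularity applied to $\cl(\{a,d\})$ and $\cl(\{b,c\})$ inside this span gives a common point. This makes $P_i$ into an abstract projective space in the sense of the excerpt. The classical Veblen--Young theorem (see e.g.\ \cite{Cameron-pps}) then asserts that such a projective space is isomorphic to $\P(V)$ for a vector space $V$ over a division ring whenever its dimension is $\geq 4$; in dimension $3$ it is a projective plane which may be non-Desarguesian; and the classes of dimension $\leq 2$ are degenerate (empty, a single point, or a single line). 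The main obstacle is really bookkeeping — making sure the decomposition into non-orthogonality classes is compatible with $\cl$ — since the non-trivial coordinatisation step is cited.
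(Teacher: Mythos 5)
Your proposal is correct and follows essentially the same route as the paper: the paper's proof simply cites the classical Veblen--Young coordinatisation theorem (Cameron, Theorem~3.6, with Artin for the Desarguesian case), noting that the Veblen axiom is a direct consequence of modularity. You fill in the routine steps (transitivity of non-orthogonality, the sum decomposition, and the verification of the Veblen axiom) that the paper defers to the reference, but the essential reduction is identical.
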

\begin{proof}
  This is a consequence of the classical Veblen-Young co-ordinatisation 
  theorem for projective geometries. Veblen's axiom is a direct consequence of 
  modularity.
  We refer to \cite[Theorem~3.6]{Cameron-pps} for a statement which directly 
  implies the stated result and for an overview of the proof,
  and to \cite[Chapter~II]{Artin-geometricAlgebra} for a detailed proof of the 
  co-ordinatisation theorem for Desarguesian projective planes.
\end{proof}

In our applications the geometries will be modular and infinite dimensional. 
So by the above they will be sums of projective geometries over division 
rings.

\subsection{Coarse general position, coherence and modularity}

We recall the notion of coarse general position for tuples introduced in \S 
\ref{cgp-parag}. We keep the notation and setup of Section \ref{sec:setup}.

\begin{definition} \label{defn:cgpType}
  A tuple $\c \in K^{<\infty}$ is said to be in \defnstyle{coarse general 
  position}
  (or \defnstyle{is cgp})
  if for any $B \subset K^{<\infty}$,
  if $\c \nind^0 B$ then $\bdl(\c/B)=0$.
\end{definition}

Recall that $K^{<\infty}$ is the set of all tuples of elements from $K$ and $c 
\nind^0 B$ means that $\dimo(\c/B)<\dimo(\c)$, where $\dimo(\c/B)$ denotes as 
earlier the transcendence degree of the tuple $\c$ over the field $C_0(B)$ 
field generated by all co-ordinates of elements from $B$, where $C_0\leq K$ is 
the base field defined in \ref{notn:baseField}. The coarse dimension $\bdl$ 
was defined in $(\ref{defdel})$.




\begin{definition}
  A subset $P \subset K^{<\infty}$ is said to be \defnstyle{cgp-coherent} if 
  every $\a \in P$ is $cgp$ and $\bdl(\a_1,\ldots,\a_n) = 
  \dimo(\a_1,\ldots,\a_n)$ for all choices of $\a_1,\ldots,\a_n \in P$.
\end{definition}

In this paper, we abbreviate `cgp-coherent' to just `\defnstyle{coherent}'.

We will also say that a tuple of tuples from $K^{<\infty}$ is coherent when 
the set of its elements is coherent.

\begin{remark}The term ``coherent'' is borrowed from 
  \cite[Section~5]{Hr-psfDims}, where it is used in a parallel context to 
  refer to the same idea that a pseudo-finite dimension notion is in accord 
  with transcendence degree.
\end{remark}



We are now ready to state the main result of this section.

\begin{theorem} \label{cor:coherentGeom}
  Suppose $P \subset K^{<\infty}$ is coherent. Then 
  $(P;\acl^0\negmedspace\restriction_{P})$ is a pregeometry. Moreover $P$ 
  extends to a coherent $P' \subset K^{<\infty}$ such that the geometry 
  $\G_{P'} := \P(P';\acl^0\negmedspace\restriction_{P'})$ is a sum of 
  $1$-dimensional geometries and infinite dimensional projective geometries 
  over division rings.
\end{theorem}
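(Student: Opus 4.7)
The plan is to establish the three components of the theorem in turn: the pregeometry axioms for $(P; \acl^0\restriction_P)$; modularity of the associated projective geometry; and the extension to a coherent $P' \supset P$ with the stated decomposition.

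First I would verify the pregeometry. Reflexivity, monotonicity, idempotence and finite character are inherited from $\acl^0$, so the only non-trivial axiom is exchange. Let $b,c \in P$ and $A \subset P$ with $b \in \acl^0(A \cup \{c\}) \setminus \acl^0(A)$, i.e.\ $\dimo(b/Ac) = 0 < \dimo(b/A)$. Additivity of $\dimo$ (see \eqref{add-dim0}) gives
\[
\dimo(c/Ab) \;=\; \dimo(bc/A) - \dimo(b/A) \;=\; \dimo(c/A) - \dimo(b/A) \;<\; \dimo(c/A) \;\le\; \dimo(c),
\]
so $c \nind^0 (A \cup \{b\})$. Since $c$ is cgp (Definition~\ref{defn:cgpType}), we obtain $\bdl(c/Ab) = 0$. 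Coherence of $P$, combined with the additivity of $\bdl$ and $\dimo$, forces $\bdl(c/Ab) = \dimo(c/Ab)$, hence $\dimo(c/Ab) = 0$ and $c \in \acl^0(A \cup \{b\})$, as required.

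Next I would establish modularity of $\G_P := \P(P; \acl^0\restriction_P)$. The key input is Proposition~\ref{lem:cgpCoherentLinearity}, which generalises the canonical-base bound of Lemma~\ref{1base-m} and rests on the Szemer\'edi-Trotter-type estimate of Lemma~\ref{lem:SzT}. Informally, it asserts that in a coherent cgp system the canonical base of one tuple over another has the smallest possible transcendence degree; this is the one-based property. Translated to the projectivisation, this exactly says that whenever distinct points $a_1,a_2 \in \G_P$ are dependent over a closed set $B$ with $a_2 \notin \cl(B)$, there is some $d \in \cl(B)\cap \cl(\{a_1,a_2\})$, which is the modularity axiom from Subsection~\ref{subsect:geoms}.

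For the extension to $P'$, Fact~\ref{fact:modStructure} tells me that the modular geometry $\G_P$ is the sum of its non-orthogonality classes, each of dimension $\leq 2$, an arbitrary projective space over a division ring, or a non-Desarguesian projective plane. My plan is to iteratively adjoin to $P$, for each non-orthogonality class $\mathcal{C}$ of dimension $\geq 2$, a countable family of new tuples in $K^{<\infty}$ realising generic points of $\mathcal{C}$: tuples non-orthogonal to an existing point of $\mathcal{C}$ but independent from any finite collection of points previously chosen in $\mathcal{C}$. Such tuples exist by $\aleph_1$-saturation of the ultraproduct, because the relevant partial types over countable parameter sets are consistent; they may be chosen cgp by type-invariance of the cgp property (Remark~\ref{cgp-invariance}); and each newly added $p$ can be arranged so that $\bdl(p/P_0) = \dimo(p/P_0)$ on every previously built finite $P_0$, which is exactly what preserves coherence of $P'$. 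After countably many iterations per class, every non-orthogonality class in $\G_{P'}$ of positive dimension becomes infinite-dimensional, and Fact~\ref{fact:modStructure} then forces each such class to be a Desarguesian projective space over a division ring, since non-Desarguesian planes cannot embed as full non-orthogonality classes in a higher-dimensional modular geometry. The hardest step is the modularity argument, which depends on the full strength of Proposition~\ref{lem:cgpCoherentLinearity} and thereby on the higher-dimensional Szemer\'edi-Trotter bound; the extension is largely a compactness/saturation construction, whose technical subtlety lies in simultaneously enforcing non-orthogonality, independence from finite subfamilies, cgp, and the coherence equality $\bdl = \dimo$ on arbitrary finite subsets.
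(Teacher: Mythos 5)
Your pregeometry argument is correct and essentially the paper's. The real problem is the modularity step. Proposition~\ref{lem:cgpCoherentLinearity} produces a witness $\d$ as a tuple in $K^{<\infty}$ with $\bdl(\d)=\dimo(\d)=\dimo(\a_1)$; it does \emph{not} produce a point of $P$. Since the closure operator of your geometry is $\acl^0\negmedspace\restriction_P$, the modularity axiom demands a witness $d$ lying in $P$, and for a general coherent $P$ no such point need exist. Concretely, take independent generics $a_1,b_1,b_2$ in $(\C,+)$ and $a_2:=a_1+b_1+b_2$; then $P=\{a_1,a_2,b_1,b_2\}$ is coherent (arithmetic progressions realise it), $a_2\in\acl^0(a_1,b_1,b_2)\setminus\acl^0(b_1,b_2)$, but the only candidate witnesses in $P\cap\acl^0(b_1,b_2)$ are $b_1,b_2$, neither of which lies in $\acl^0(a_1,a_2)$; the true witness $b_1+b_2$ is outside $P$. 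So your claim that $\G_P$ is modular for every coherent $P$ is false as stated, and the translation ``Proposition~\ref{lem:cgpCoherentLinearity} exactly says modularity'' skips the essential work. The paper's route is to enlarge $P$ to its coherent algebraic closure $\ccl(P)$ (Lemma~\ref{lem:cgpCoherentClosure} shows this stays coherent) and then prove modularity only for coherently algebraically closed sets (Proposition~\ref{thm:cgpCoherentModularity}); the non-trivial point there is showing that the canonical-base witness $\d$ is itself cgp --- this needs a further argument (an independent realisation of $\tp(\a_1\a_2/\d)$ via Fact~\ref{fact:ideal} and a three-term additivity computation), and nothing in your proposal addresses it.

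The extension step is in the right spirit (it parallels the paper's Lemma~\ref{dim-increase}), but as written it is under-specified in exactly the places that matter. Mere $\aleph_1$-saturation gives you a realisation of the right $\acl^0$-type, but it does not let you ``arrange'' $\bdl(p/P_0)=\dimo(p/P_0)$; for that you must choose the new point with \emph{maximal} coarse dimension over all of $P$ using Fact~\ref{fact:ideal} (in the paper: $\tp(a'c'/b)=\tp(ac/b)$ with $\bdl(a'c'/P)=\bdl(ac/b)$), and then the equality $\bdl(a'/P)=\dimo(a'/P)$ is derived, not imposed, from coherence, cgp and additivity. You must also re-close under $\ccl$ after each adjunction (so that modularity is maintained) and check that the new points are non-orthogonal only to the intended class, as in the conclusion of Lemma~\ref{dim-increase}; otherwise the decomposition into non-orthogonality classes could change under the iteration. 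Finally, the phrase ``non-Desarguesian planes cannot embed as full non-orthogonality classes in a higher-dimensional modular geometry'' is not the right justification: the point is simply that after the dimension increase each class is infinite-dimensional, hence cannot be a plane, and Fact~\ref{fact:modStructure} then coordinatises it over a division ring.
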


Here the closure operator is simply the restriction to $P'$ of the algebraic 
closure $\acl^0$ as in Example \ref{alg-tuples}, namely if $A \subset P'$, 
$\acl^0\negmedspace\restriction_{P'}(A)$ is the set of tuples in $P'$ whose 
co-ordinates are algebraic over the subfield of $K$ generated by $C_0$ and the 
set of all co-ordinates of all tuples from $A$.

The proof of Theorem \ref{cor:coherentGeom} will proceed in two steps. First 
we will show that if $P \subset K^{<\infty}$ is coherent, then its coherent 
algebraic closure $\ccl(P): = \{ \x \in \acl^0(P)^{<\infty} : \x \textrm{ is 
cgp and }   \bdl(\x) = \dimo(\x) \}$ is also coherent. And second we will 
prove that if $P=\ccl(P)$ is coherent, then 
$(P;\acl^0\negmedspace\restriction_P)$ is a modular pregeometry. The latter 
step will use the incidence bounds \`a la Szemer\'edi-Trotter recalled in 
Section \ref{subsec:incidence}. Theorem \ref{cor:coherentGeom} will then 
follow by applying the Veblen-Young theorem recalled in Fact 
\ref{fact:modStructure} above to the projectivisation of 
$(P;\acl^0\negmedspace\restriction_P)$.

The rest of this section is devoted to the proof of Theorem 
\ref{cor:coherentGeom}.

\begin{parag}{\bf Properties of coherent sets.}

  \begin{proposition}\label{pregeo}
    If $P \subset K^{\infty}$ is coherent then 
    $(P;\acl^0\negmedspace\restriction_P)$ is a pregeometry.
  \end{proposition}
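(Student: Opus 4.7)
The plan is to verify the three pregeometry axioms for the closure operator $\acl^0\restriction_P$ on $P$. The closure axioms (extensivity, monotonicity, idempotency) are immediate, as they hold for $\acl^0$ on $K$ itself and restriction to $P$ preserves them. Finite character is also inherited from $\acl^0$: if a tuple $b \in P$ lies in $\acl^0(A)^{<\infty}$, then each coordinate of $b$ is algebraic over $C_0$ together with finitely many coordinates appearing in elements of $A$, so only finitely many elements of $A$ are needed. The whole content is the exchange axiom.

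For exchange, finite character lets us reduce to the case of a finite $A \subset P$. Suppose $b, c \in P$ with $b \in \acl^0(A \cup \{c\})^{<\infty} \setminus \acl^0(A)^{<\infty}$, so $\dimo(b/Ac) = 0$ and $\dimo(b/A) > 0$; we want $\dimo(c/Ab) = 0$. I would split on whether $c \ind^0 Ab$.

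In the dependent case $c \nind^0 Ab$, the assumption that $c$ is $cgp$ yields $\bdl(c/Ab) = 0$. Since the finite tuple $A \cup \{b,c\} \subset P$ is coherent, additivity (Fact~\ref{fact:dlContProps}(ii)) gives
\[ \dimo(c/Ab) = \dimo(Abc) - \dimo(Ab) = \bdl(Abc) - \bdl(Ab) = \bdl(c/Ab) = 0, \]
which is exactly the exchange conclusion. In the independent case $c \ind^0 Ab$, two applications of additivity of $\dimo$ to the tuple $bc$ over $A$ give
\[ \dimo(c) + 0 = \dimo(c/A) + \dimo(b/Ac) = \dimo(bc/A) = \dimo(b/A) + \dimo(c/Ab) = \dimo(b/A) + \dimo(c), \]
forcing $\dimo(b/A) = 0$, contradicting the hypothesis. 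So the independent case cannot occur and exchange holds.

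The main (and essentially only) obstacle is the exchange axiom, which without coherence would fail for $\acl^0$ acting on tuples; the point is that coherence transports the $\bdl$-vanishing from $cgp$ into the $\dimo$-vanishing needed for algebraic dependence, and additivity of $\dimo$ rules out the alternative case. Everything else is bookkeeping.
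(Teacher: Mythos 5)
Your proof is correct and follows essentially the same route as the paper's: the only substantive axiom is exchange, and the engine is that $cgp$ of $c$ plus coherence forces $\dimo(c/Ab)=\bdl(c/Ab)=0$ once $c\nind^0 Ab$. The paper organizes this as a separate lemma (Lemma~\ref{lem:cohFullEmb}) and invokes symmetry of $\ind^0$ to reduce to the dependent case, whereas you inline the lemma and replace the symmetry citation with an explicit additivity computation ruling out the independent case; this is a presentational difference, not a mathematical one.
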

  \begin{proof}
    We verify exchange, the other properties being immediate.
    Suppose $\b \in \acl^0(A \cup \{\c\}) \setminus \acl^0(A)$ for $A \subset P$ 
    and $\b,\c \in P$. So $\b \nind^0_{A} \c$. By symmetry of $\ind^0$ (see \S 
    \ref{ind}), we get $\c \nind^0_{A} \b$. Now the next lemma forces $\c \in 
    \acl^0(A \cup \{\b\})$.
  \end{proof}

  \begin{lemma}\label{lem:cohFullEmb} Suppose $P \subset K^{<\infty}$ is 
    coherent. Let $\c \in P$ and $A,B \subset P$. Then either $\c \ind^0_A B$, 
    or $\c \in \acl^0(A \cup B)$.
  \end{lemma}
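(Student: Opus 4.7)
The plan is to extract $\c \in \acl^0(A \cup B)$ from the two tools that coherence of $P$ provides: coarse general position of $\c$, and the identity $\bdl = \dimo$ on finite subtuples from $P$. First I would reduce to finite parameter sets. Since $\dimo$ has finite character, the hypothesis $\c \nind^0_A B$ furnishes finite $A_0 \subseteq A$ and $B_0 \subseteq B$ with $\dimo(\c / A_0 B_0) < \dimo(\c / A_0) \leq \dimo(\c)$, so in particular $\c \nind^0 A_0 B_0$ (over the empty set).

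Next I would apply coarse general position of $\c$ to the finite set $A_0 \cup B_0 \subset K^{<\infty}$: by Definition~\ref{defn:cgpType}, the dependence just noted gives $\bdl(\c / A_0 B_0) = 0$.

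Finally I would bring in coherence, applied to the finite subsets $\{\c\} \cup A_0 \cup B_0$ and $A_0 \cup B_0$ of $P$: it yields $\bdl(\c\,A_0\,B_0) = \dimo(\c\,A_0\,B_0)$ and $\bdl(A_0\,B_0) = \dimo(A_0\,B_0)$. Using additivity of $\bdl$ (Fact~\ref{fact:dlContProps}(ii)) and of $\dimo$ (equation~\eqref{add-dim0}) and subtracting the two, one obtains $\bdl(\c / A_0 B_0) = \dimo(\c / A_0 B_0)$. Combined with the previous step, this forces $\dimo(\c / A_0 B_0) = 0$, that is $\c \in \acl^0(A_0 \cup B_0) \subseteq \acl^0(A \cup B)$, as required.

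I do not expect any real obstacle: the lemma is at heart the statement that coherence transfers the collapse of $\bdl$ on a dependent configuration (provided by cgp) into a collapse of $\dimo$. The only care required is the initial reduction to finite $A_0, B_0$, needed because the cgp hypothesis and both additivity identities are naturally used at finite parameter sets, and because coherence of $P$ has been formulated only for finitely many tuples from $P$.
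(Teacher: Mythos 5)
Your proof is correct and is essentially the paper's own argument: reduce to finite subtuples $\a\in A^{<\infty}$, $\b\in B^{<\infty}$ with $\c\nind^0\a\b$, use cgp to get $\bdl(\c/\a\b)=0$, then use coherence of the finite subsets of $P$ together with additivity of $\bdl$ and $\dimo$ to conclude $\dimo(\c/\a\b)=0$, hence $\c\in\acl^0(A\cup B)$. The only difference is that you spell out the finite-character reduction slightly more explicitly than the paper does; no gap.
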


  \begin{proof}
    Suppose $\c \nind^0_A B$.
    Then in particular $\c \nind^0 (B \cup A)$,
    and so $\c \nind^0 \b\a$ for some tuples $\b \in B^{<\infty}$ and $\a \in 
    A^{<\infty}$.
    Since $\c$ is $cgp$ this implies $\bdl(\c/\b\a)=0$. Since $P$ is coherent, 
    $\dimo(\c\b\a)=\bdl(\c\b\a)$ and $\dimo(\b\a)=\bdl(\b\a)$ so by additivity 
    of $\dimo$ and $\bdl$ we get $\dimo(\c/\b\a)=0$. Hence $\c \in \acl^0(A 
    \cup B)$.
  \end{proof}

  The next lemma will be used to form coherent sets.

  \begin{lemma}\label{lem1} Let $\a_1,\ldots,\a_n \in K^{<\infty}$. Assume that 
    each $\a_i$ is $cgp$ and $\bdl(\a_i) \leq \dimo(\a_i)$. Then for every $C 
    \subset K^{<\infty}$ we have:
    \begin{equation}\label{upb}\bdl(\a_1,\ldots,\a_n/C) \leq 
      \dimo(\a_1,\ldots,\a_n/C).
    \end{equation}
    Moreover $\bdl(\a_1,\ldots,\a_n)= \dimo(\a_1,\ldots,\a_n)$ if and only if 
    $\{\a_1,\ldots,\a_n\}$ is coherent.
  \end{lemma}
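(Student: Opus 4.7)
The plan is to establish \eqref{upb} by a single-tuple argument based on additivity, and then bootstrap this to the ``moreover'' clause by observing that equality at the top of the tower forces equality on every sub-tuple. Both parts rely on additivity of $\bdl$ (Fact~\ref{fact:dlContProps}(ii)) and of $\dimo$ \eqref{add-dim0}, together with monotonicity \eqref{mono2} and the $cgp$ hypothesis on each $\a_i$.

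For \eqref{upb}, I would expand both sides by additivity:
\begin{equation*}
\bdl(\a_1,\ldots,\a_n/C) = \sum_{i=1}^n \bdl(\a_i/\a_1\cdots\a_{i-1}C),\qquad \dimo(\a_1,\ldots,\a_n/C) = \sum_{i=1}^n \dimo(\a_i/\a_1\cdots\a_{i-1}C).
\end{equation*}
Hence it suffices to prove $\bdl(\a_i/B) \leq \dimo(\a_i/B)$ for each $i$ and each $B \supset C$. This splits into two cases. If $\a_i \ind^0 B$, then $\dimo(\a_i/B) = \dimo(\a_i) \geq \bdl(\a_i) \geq \bdl(\a_i/B)$, using the hypothesis $\bdl(\a_i) \leq \dimo(\a_i)$ and monotonicity \eqref{mono2}. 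If instead $\a_i \nind^0 B$, then the $cgp$ property of $\a_i$ gives $\bdl(\a_i/B) = 0 \leq \dimo(\a_i/B)$.

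For the equivalence, the forward direction is immediate from the definition of coherence applied to the tuple $(\a_1,\ldots,\a_n)$ itself. Conversely, suppose $\bdl(\a_1,\ldots,\a_n) = \dimo(\a_1,\ldots,\a_n)$, and pick an arbitrary sub-tuple; by Fact~\ref{set-dep} we may reorder so it is an initial segment $(\a_1,\ldots,\a_k)$. Additivity then decomposes
\begin{equation*}
\bdl(\a_1,\ldots,\a_n) = \bdl(\a_1,\ldots,\a_k) + \bdl(\a_{k+1},\ldots,\a_n/\a_1\cdots\a_k),
\end{equation*}
and analogously for $\dimo$. Applying \eqref{upb} to each of the two summands on the right (with $C = \emptyset$ and $C = \{\a_1,\ldots,\a_k\}$ respectively) bounds each by the corresponding $\dimo$-summand, whose sum equals $\dimo(\a_1,\ldots,\a_n) = \bdl(\a_1,\ldots,\a_n)$ by assumption. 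Hence all intermediate inequalities are in fact equalities; in particular $\bdl(\a_1,\ldots,\a_k) = \dimo(\a_1,\ldots,\a_k)$. Combined with the $cgp$ assumption on each $\a_i$, this shows $\{\a_1,\ldots,\a_n\}$ is coherent. No substantial obstacle is anticipated: the whole argument is routine bookkeeping with the two additivity formulas, once the dichotomy in the single-tuple case is isolated.
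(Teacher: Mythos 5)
Your proof is correct and follows essentially the same route as the paper's: the same $cgp$/independence dichotomy handles the single-tuple case, and your telescoping sum is just the paper's induction on $n$ written in one step, while the ``moreover'' direction uses Fact~\ref{set-dep} and additivity exactly as the paper does.
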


  \begin{proof} The proof is by induction on $n$. Suppose first $n=1$. We have a 
    $cgp$ $\a_1 \in K^{<\infty}$ such that $\bdl(\a_1) \leq \dimo(\a_1)$ and we 
    need to show that $\bdl(\a_1/C) \leq \dimo(\a_1/C).$ If $\a_1 \ind^0 C$, 
    then $\dimo(\a_1/C)=\dimo(\a_1)$, so the desired inequality follows 
    immediately. On the other hand, if $\a_1 \nind^0 C$, then by $cgp$ 
    $\bdl(\a_1/C)=0$, so the desired inequality is then obvious.

    Suppose $(\ref{upb})$ holds for $n-1$ tuples and any $C$. Let $\x=\a_1 \ldots 
    \a_{n-1}$. $$\bdl(\x\a_n/C) = \bdl(\x/C \cup\{\a_n\}) + \bdl(\a_n/C) \leq 
    \dimo(\x/C \cup\{\a_n\}) + \dimo(\a_n/C)= \dimo(\x\a_n/C),$$
    where we applied the induction hypothesis and the case $n=1$.

    Finally we turn to the last claim of the lemma. Suppose 
    $\bdl(\a_1\ldots\a_n)=\dimo(\a_1\ldots\a_n)$. We need to show that 
    $\bdl(\x)=\dimo(\x)$ for all concatenated tuples $\x$ made of subtuples from 
    $\{\a_1,\ldots,\a_n\}$. Note that for every tuple of $\a_i$'s the quantities 
    $\bdl$ and $\dimo$ depend only on the subset of $\a_i$'s appearing in the 
    tuple (see Fact \ref{set-dep}), so up to relabelling co-ordinates we may 
    assume that $\x=\a_1\ldots \a_i$ for $i\in [1,n]$. Let 
    $\y:=\a_{i+1}\ldots\a_n$. Then by assumption $\bdl(\x\y)=\dimo(\x\y)$. By 
    $(\ref{upb})$ we have $\bdl(\y/\x) \leq \dimo(\y/\x)$ and $\bdl(\x) \leq 
    \dimo(\x)$. Hence by additivity we conclude that the last two inequalities are 
    equalities. This ends the proof.
  \end{proof}

  Finally we record one last observation, which will be useful in the next 
  paragraph.

  \begin{lemma}\label{lem2} If $P \subset K^{<\infty}$ is coherent and $\x \in 
    \acl^0(P)^{<\infty}$. Then $\bdl(\x) \geq \dimo(\x)$.
  \end{lemma}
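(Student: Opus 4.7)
The plan is to reduce to a finite tuple $\a$ from $P$ over which $\x$ is algebraic, and then sandwich $\bdl(\x)$ using additivity together with the upper bound from Lemma~\ref{lem1} and the coherence of $P$.

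More precisely, first I would pick $\a_1,\ldots,\a_n \in P$ such that $\x \in \acl^0(\a)^{<\infty}$, where $\a := \a_1\ldots\a_n$; this is possible by the finite character of $\acl^0$. Since $P$ is coherent, each $\a_i$ is $cgp$ and $\bdl(\a) = \dimo(\a)$, and in particular $\bdl(\a_i) \leq \dimo(\a_i)$, so the hypotheses of Lemma~\ref{lem1} are satisfied by $\a_1,\ldots,\a_n$. Applying Lemma~\ref{lem1} with $C$ the set of co-ordinates of $\x$ yields
\[ \bdl(\a/\x) \;\leq\; \dimo(\a/\x) \;=\; \dimo(\a) - \dimo(\x), \]
where the last equality uses that $\x \in \acl^0(\a)^{<\infty}$ combined with additivity of $\dimo$ (equation~\eqref{add-dim0}).

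Next, by monotonicity of $\bdl$ (projection onto the $\a$-co-ordinates), $\bdl(\x\a) \geq \bdl(\a) = \dimo(\a)$. Additivity of $\bdl$ (Fact~\ref{fact:dlContProps}(ii)) gives
\[ \bdl(\x\a) \;=\; \bdl(\x) + \bdl(\a/\x). \]
If $\bdl(\x) = \infty$ then the conclusion $\bdl(\x) \geq \dimo(\x)$ is trivial, so assume $\bdl(\x)$ is finite; then $\bdl(\a/\x) \leq \dimo(\a) - \dimo(\x)$ is finite as well, and we may rearrange to obtain
\[ \bdl(\x) \;=\; \bdl(\x\a) - \bdl(\a/\x) \;\geq\; \dimo(\a) - \bigl(\dimo(\a) - \dimo(\x)\bigr) \;=\; \dimo(\x), \]
as required.

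There is no real obstacle here; the argument is a bookkeeping exercise matching the upper bound of Lemma~\ref{lem1} against the lower bound coming from coherence via monotonicity, with additivity turning the inequalities on $\bdl(\a/\x)$ and $\bdl(\x\a)$ into the desired inequality on $\bdl(\x)$.
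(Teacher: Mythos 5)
Your proof is correct and follows essentially the same route as the paper: pick a finite tuple $\a$ from $P$ with $\x \in \acl^0(\a)^{<\infty}$, bound $\bdl(\a/\x) \leq \dimo(\a/\x)$ via Lemma~\ref{lem1}, and combine coherence ($\bdl(\a)=\dimo(\a)$) with additivity of $\bdl$ and $\dimo$ to rearrange into $\bdl(\x) \geq \dimo(\x)$. The only cosmetic differences are your explicit appeal to monotonicity for $\bdl(\x\a) \geq \bdl(\a)$ and the (harmless, in fact unnecessary) treatment of the case $\bdl(\x)=\infty$.
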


  \begin{proof} Pick $\a_1,\ldots,\a_n \in P$ such that $\x \in 
    \acl^0(\{\a_1,\ldots,\a_n\})^{<\infty}$ and concatenate the $\a_i$'s in 
    $\a:=\a_1\ldots\a_n$. Then $\dimo(\a)=\dimo(\a\x)$. By additivity 
    $\dimo(\a\x)=\dimo(\x)+\dimo(\a/\x)$ and $\bdl(\a\x)=\bdl(\x)+\bdl(\a/\x)$. 
    By coherence of $P$ we have $\bdl(\a)=\dimo(\a)$. But $\bdl(\a/\x) \leq 
    \dimo(\a/\x)$ by Lemma \ref{lem1}. So $$\bdl(\x) \ge \bdl(\a\x) - 
    \dimo(\a/\x) \ge \bdl(\a) - \dimo(\a/\x) = \dimo(\a) - \dimo(\a/\x) = 
    \dimo(\x).$$
  \end{proof}

\end{parag}

\begin{parag}{\bf The Veblen axiom and incidence bounds.} In this paragraph we 
  exploit the Szemer\'edi-Trotter-type bounds described in Subsection 
  \ref{subsec:incidence} in order to show that the pregeometry 
  $(P;\acl^0\negmedspace\restriction_P)$ satisfies the Veblen axiom of 
  projective geometry.

  \begin{proposition}\label{lem:cgpCoherentLinearity} Assume $P\subset 
    K^{<\infty}$ is coherent, let $\a_1,\a_2 \in P$ and $B \subset P$. Assume 
    that $\a_1,\a_2 \notin \acl^0(B)^{<\infty}$, but $\a_2 \in\acl^0(\{\a_1\} 
    \cup B)^{<\infty}\setminus \acl^0(\a_1)^{<\infty}$. Then there is $\d \in 
    \acl^0(B)^{<\infty}$ such that $\d \in \acl^0(\a_1,\a_2)^{<\infty}$ and 
    $\bdl(\d)=\dimo(\d)=\dimo(\a_1)=\dimo(\a_2)$.
  \end{proposition}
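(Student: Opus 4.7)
The approach follows the blueprint of Lemma~\ref{1base-m}: construct $\d$ as a canonical base of $\a_1\a_2$ over a finite subtuple of $B$, then use the Szemer\'edi--Trotter bound to pin down $\dimo(\d)$.

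\emph{Setup.} By finite character of $\acl^0$ and extraction of a basis in the pregeometry $(P;\acl^0\negmedspace\restriction_P)$, pick a finite $\acl^0$-independent $\b=\b_1\cdots\b_m$ from $B$ with $\a_2\in\acl^0(\a_1,\b)$. Since $\a_i\notin\acl^0(B)^{<\infty}\supseteq\acl^0(\b)^{<\infty}$, Lemma~\ref{lem:cohFullEmb} yields $\a_i\ind^0\b$, so $\dimo(\a_i/\b)=\dimo(\a_i)$. The chain $\dimo(\a_1)=\dimo(\a_1/\b)=\dimo(\a_1\a_2/\b)=\dimo(\a_2/\b)=\dimo(\a_2)$ (using $\a_2\in\acl^0(\a_1,\b)$ and exchange) then gives a common value $k$. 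I also establish $\a_1\ind^0\a_2$: otherwise $cgp$ of $\a_1$ forces $\bdl(\a_1/\a_2)=0$, so coherence gives $\dimo(\a_1\a_2)=\dimo(\a_2)=k$, i.e.\ $\a_1\in\acl^0(\a_2)$; together with $\dimo(\a_1)=\dimo(\a_2)$ this forces $\acl^0(\a_1)=\acl^0(\a_2)$, contradicting $\a_2\notin\acl^0(\a_1)$. Set $\d:=\Cb^0(\a_1\a_2/\b)$, so $\d\in\acl^0(\b)\subseteq\acl^0(B)$ and $\dimo(\a_1\a_2/\d)=\dimo(\a_1\a_2/\b)=k$; the lower bound $\dimo(\d)\geq k$ follows from $\dimo(\a_1\a_2\d)=k+\dimo(\d)\geq\dimo(\a_1\a_2)=2k$.

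\emph{The main step.} To show $\dimo(\d)\leq k$, I argue by contradiction. Assume $\dimo(\d)>k$, and via the primitive element theorem take $\d\in K^{\dimo(\d)+1}$. Set $X_1:=\tp(\a_1\a_2)(K)$, $X_2:=\tp(\d)(K)$, $V:=\loc^0(\a_1\a_2\d)$ and $X:=(X_1\times X_2)\cap V$. The central verification is $\beta:=\sup_{\d_1\neq\d_2\in X_2}\bdl(X(\d_1)\cap X(\d_2))=0$: realise the sup by $\a'=\a'_1\a'_2$ via Fact~\ref{fact:ideal}; Lemma~\ref{lem:cb} yields $\dimo(\a'/\d_1\d_2)<k$, so in the decomposition $\dimo(\a'_1/\d_1\d_2)+\dimo(\a'_2/\a'_1\d_1\d_2)<k$, using the type-invariant equalities $\dimo(\a'_i)=k$ and $\dimo(\a'_2/\a'_1)=k$ (Remark~\ref{cgp-invariance} applied to $cgp$ and to $\a_1\ind^0\a_2$), both summands drop strictly below $k$, and $cgp$ then gives $\bdl(\a'/\d_1\d_2)=0$. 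Apply Lemma~\ref{lem:SzT} with $\bdl(X_1)=\bdl(\a_1\a_2)=2k$ (coherence), $\bdl(X_2)=\bdl(\d)\geq\dimo(\d)>k$ (Lemma~\ref{lem2}), and lower bound $\bdl(X)\geq\bdl(\a_1\a_2\d)\geq k+\bdl(\d)$ obtained from $\bdl(\a_1\a_2/\d)\geq\bdl(\a_1\a_2/\b)=k$ (the latter from coherence and additivity; the inequality uses $\d\in\acl^0(\b)$, $(\ref{alg-ineq})$ and monotonicity of $\bdl$ in the parameter set). Each of the three terms in the max on the right-hand side of Lemma~\ref{lem:SzT} is then strictly less than $k+\bdl(\d)$, a contradiction.

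\emph{Conclusion.} So $\dimo(\d)=k$, and $\dimo(\a_1\a_2\d)=2k=\dimo(\a_1\a_2)$ forces $\d\in\acl^0(\a_1,\a_2)^{<\infty}$. Then $\bdl(\a_1\a_2\d)=\bdl(\a_1\a_2)=2k$ (since $\bdl(\d/\a_1\a_2)=0$) decomposes as $\bdl(\a_1\a_2/\d)+\bdl(\d)=2k$, and the lower bounds $\bdl(\a_1\a_2/\d)\geq k$ and $\bdl(\d)\geq\dimo(\d)=k$ (Lemma~\ref{lem2}) force both to equal $k$; in particular $\bdl(\d)=k$ as required. The main obstacle is the Szemer\'edi--Trotter step: verifying $\beta=0$ hinges on Lemma~\ref{lem:cb} combined with the type-invariance of $\a_1\ind^0\a_2$ (itself established via coherence in the setup), and the final contradiction with Lemma~\ref{lem:SzT} is sharp only because coherence supplies the exact equalities $\bdl(X_1)=2k$ and $\bdl(\a_1\a_2/\b)=k$.
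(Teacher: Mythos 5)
Your proof is correct and follows essentially the same route as the paper: a canonical base $\d=\Cb^0(\a_1\a_2/\b)$ over a finite subtuple of $B$, the verification $\beta=0$ via Lemma~\ref{lem:cb} and type-invariance of $cgp$, and the Szemer\'edi--Trotter bound of Lemma~\ref{lem:SzT} against the lower bound $\bdl(X)\geq k+\bdl(\d)$. The only (harmless) difference is organisational: the paper bounds $\bdl(\d)\leq k$ directly from Lemma~\ref{lem:SzT}, so that $k\leq\dimo(\d)\leq\bdl(\d)\leq k$ collapses at once, whereas you first rule out $\dimo(\d)>k$ by contradiction (with an unnecessary appeal to the primitive element theorem) and then recover $\bdl(\d)=k$ by a short additivity argument.
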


  \begin{proof} For brevity set $\a:=\a_1\a_2$. Without loss of generality we 
    may assume $B$ is finite and $B=\{\a_3,\ldots,\a_n\}$. We set 
    $\b=(\a_3,\ldots,\a_n)$. First we check that $\a_1 \in 
    \acl^0(\a_2,\b)^{<\infty}$, $\a_i \ind^0 \b$ and $\a_1 \ind^0 \a_2$, and 
    that if $k:=\dimo(\a_1)$, then $\dimo(\a_2)=k$ and $\dim(\a)=2k$. The first 
    property follows from the exchange property of pregeometries and from 
    Proposition \ref{pregeo}. Lemma \ref{lem:cohFullEmb} tells us that $\a_i 
    \ind^0 \b$, since $\a_i \notin \acl^0(\b)^{<\infty}$. For the same reason 
    $\a_1 \ind^0 \a_2$. Then $\dimo(\a\b)=\dimo(\a_1\b)=\dimo(\a_2\b)$ is equal 
    to both $\dimo(\a_1)+\dimo(\b)$ and $\dimo(\a_2)+\dimo(\b)$. Hence 
    $\dimo(\a_2)=k$ and $\dimo(\a)=2k$. This also shows that $\dimo(\a/\b)=k$.

    Let $\d \in K^{<\infty}$ be a canonical base for $\a$ over $\b$ (see \S 
    \ref{parag:canonical}). By definition $\d \in \acl^0(\b)^{< \infty}$. We will 
    show that this $\d$ satisfies the desired conclusion. By definition we have 
    $\dimo(\a/\d)=\dimo(\a/\b)=k$. Hence $\dimo(\d)=\dimo(\a\d)-\dimo(\a/\d)\ge 
    \dimo(\a)-k=k$. By Lemma \ref{lem2} $\bdl(\d) \geq \dimo(\d)$. Hence we are 
    left to show the upper bound $\bdl(\d) \leq k$.

    To this end let $V$ be the locus of the tuple $\a\d$, i.e.\ $V=\loc^0(\a,\d)$, 
    let $X_1\subset K^{<\infty}$ be the type of $\a$, i.e.\ $X_1=\tp(\a)(K)$, let 
    $X_2=\tp(\d)(K)$ and finally let $X=(X_1 \times X_2) \cap V$. We wish to apply 
    the Szemer\'edi-Trotter bound of Lemma \ref{lem:SzT} to this data. 

    For this we first show that $\bdl(X(\d_1)\cap X(\d_2))=0$ for all $\d_1, \d_2 
    \in X_2$ with $\d_1 \neq \d_2$, so that $\beta=0$ in this lemma. Recall that 
    $X(\d_1):=\{\a' \in X_1 : \a'\d_1 \in V\}$. By Fact \ref{fact:ideal} we may 
    find $\a'=\a'_1\a'_2 \in X(\d_1)\cap X(\d_2)$ such that 
    $\bdl(\a'/\d_1\d_2)=\bdl(X(\d_1)\cap X(\d_2))$. Then $\a'\d_i \in V$ for 
    $i=1,2$. We are thus in the setting of Lemma \ref{lem:cb}. Since $\d_1 \neq 
    \d_2$ we conclude that $\dimo(\a'/\d_1\d_2)<k$. Hence 
    $\dimo(\a'_i/\d_1\d_2)<k$, and since both $\a_i'$ are $cgp$ (because $cgp$ is 
    type invariant, see Remark \ref{cgp-invariance}), we conclude that 
    $\bdl(\a'_i/\d_1\d_2)=0$ for each $i$. Thus by additivity $\bdl(X(\d_1)\cap 
    X(\d_2))=\bdl(\a'/\d_1\d_2)=0$ as claimed.

    Next note that $\bdl(X_2)=\bdl(\d)$, $\bdl(X_1)=\bdl(\a)=\dimo(\a)=2k$ by 
    coherence, and $\bdl(X) \geq \bdl(\a\d)=\bdl(\a/\d)+\bdl(\d)$. But 
    $\bdl(\a/\d) \geq \bdl(\a/\acl^0(\b)^{<\infty})=\bdl(\a/\b)$ by 
    $(\ref{alg-ineq})$. By coherence $\bdl(\a/\b)=\dimo(\a/\b)=k$. We conclude
    $$\bdl(X) \geq k + \bdl(\d)= \frac{1}{2}\bdl(X_1) + \bdl(X_2).$$
    Now comparing this to the Szemer\'edi-Trotter bound of Lemma \ref{lem:SzT} we 
    obtain $ \bdl(X_2) \leq \frac{1}{2}\bdl(X_1)$. In other words $\bdl(\d) \leq 
    k$. This ends the proof.
  \end{proof}

\end{parag}

\begin{parag}{\bf Proof of Theorem \ref{cor:coherentGeom}.}
  Here we show Theorem \ref{cor:coherentGeom}. 
  Lemma~\ref{lem:cgpCoherentLinearity} will help us find a modular geometry 
  explaining algebraic dependence on a coherent set. This is the engine behind 
  our main results. The idea comes from \cite[Subsection~5.17]{Hr-psfDims}, and 
  the context is essentially that of \cite[Remark~5.26]{Hr-psfDims}.

  \begin{definition}
    For $P \subset K^{<\infty}$, define the \defnstyle{coherent algebraic 
    closure} by
    \[ \ccl(P) := \{ \a \in \acl^0(P)^{<\infty} : \a \textrm{ is cgp and }
    \bdl(\a) = \dimo(\a) \} .\]
  \end{definition}

  \begin{lemma} \label{lem:cgpCoherentClosure}
    If $P$ is coherent, then so is $\ccl(P)$.
  \end{lemma}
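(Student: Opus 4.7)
My plan is as follows. Recall that to verify $\ccl(P)$ is coherent I need to check two things: (a) every element of $\ccl(P)$ is $cgp$, and (b) for every finite tuple $\a_1,\ldots,\a_n \in \ccl(P)$, we have $\bdl(\a_1,\ldots,\a_n) = \dimo(\a_1,\ldots,\a_n)$. Condition (a) is immediate from the definition of $\ccl(P)$, so all the work is in (b).

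For (b), fix $\a_1,\ldots,\a_n \in \ccl(P)$ and set $\x := \a_1\ldots\a_n$ (the concatenation). By definition of $\ccl(P)$, each $\a_i$ lies in $\acl^0(P)^{<\infty}$, hence so does $\x$. Since $P$ itself is coherent by hypothesis, Lemma~\ref{lem2} applies to $\x$ and yields the lower bound
\[
\bdl(\x) \;\geq\; \dimo(\x).
\]
For the reverse inequality, each $\a_i$ is $cgp$ and satisfies $\bdl(\a_i)=\dimo(\a_i)$, in particular $\bdl(\a_i)\le \dimo(\a_i)$, so the hypotheses of Lemma~\ref{lem1} are met (with $C=\emptyset$). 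That lemma gives
\[
\bdl(\x) \;=\; \bdl(\a_1,\ldots,\a_n) \;\leq\; \dimo(\a_1,\ldots,\a_n) \;=\; \dimo(\x).
\]
Combining the two bounds yields $\bdl(\x)=\dimo(\x)$. Invoking the ``moreover'' clause of Lemma~\ref{lem1} then ensures that this equality is inherited by every sub-concatenation built from the $\a_i$'s, which is precisely what coherence of $\{\a_1,\ldots,\a_n\}$ demands. Since $\a_1,\ldots,\a_n \in \ccl(P)$ were arbitrary, $\ccl(P)$ is coherent.

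There is essentially no obstacle here: the statement is a direct packaging of Lemmas~\ref{lem1} and~\ref{lem2}, whose content (additivity of $\bdl$ together with the $cgp$ property, and transitivity of $\bdl \ge \dimo$ from $P$ to its algebraic closure) has already done the real work. The lemma should be read as saying that the upper bound coming from $cgp$ (Lemma~\ref{lem1}) and the lower bound propagated through algebraic closure from coherence of $P$ (Lemma~\ref{lem2}) meet exactly on $\ccl(P)$, so the construction is closed under itself and provides a well-defined operation of passing to the coherent algebraic closure.
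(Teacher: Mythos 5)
Your proof is correct and follows essentially the same route as the paper: the upper bound $\bdl(\x)\le\dimo(\x)$ from Lemma~\ref{lem1} (valid since elements of $\ccl(P)$ are $cgp$ with $\bdl=\dimo$) combined with the lower bound from Lemma~\ref{lem2}. The paper organizes the upper bound as an induction on $n$ using conditional dimensions, but this is only a cosmetic difference, and your final appeal to the ``moreover'' clause is harmless though redundant, since the equality for arbitrary tuples from $\ccl(P)$ is already exactly what coherence requires.
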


  \begin{proof}
    We need to show that $\bdl(\a_1\ldots\a_n)=\dimo(\a_1\ldots\a_n)$ for any 
    $\a_i$'s from $\ccl(P)$. We proceed by induction on $n$. This holds when 
    $n=1$ by the definition of $\ccl(P)$. Set $\x:=\a_1\ldots\a_{n-1}$. By 
    induction hypothesis and Lemma \ref{lem1} $\{\a_1,\ldots,\a_{n-1}\}$ is 
    coherent and $\bdl(\x/\a_n) \leq \dimo(\x/\a_n)$. So by additivity 
    $$\bdl(\x\a_n) =\bdl(\x/\a_n)+\bdl(\a_n) \leq 
    \dimo(\x/\a_n)+\dimo(\a_n)=\dimo(\x\a_n).$$ But Lemma \ref{lem2} implies 
    that $\bdl(\x\a_n) \geq \dimo(\x\a_n)$. This ends the proof.
  \end{proof}

  Clearly $\ccl(\ccl(P)) = \ccl(P)$.
  We say $P \subset K^{<\infty}$ is \defnstyle{coherently algebraically closed} 
  if $P$
  is coherent and $P=\ccl(P)$.

  \begin{proposition} \label{thm:cgpCoherentModularity}
    Suppose $P \subset K^{<\infty}$ is coherently algebraically closed.
    Then the pregeometry $(P;\acl^0\negmedspace\restriction_P)$ is modular.
  \end{proposition}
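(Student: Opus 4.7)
By Proposition \ref{pregeo}, $(P;\acl^0\negmedspace\restriction_P)$ is already a pregeometry, so modularity reduces to verifying the axiom on the associated projective geometry: given $\a_1,\a_2\in P$ representing distinct classes (so $\a_2\notin\acl^0(\a_1)^{<\infty}$) and $B\subset P$ with $\a_2\in\acl^0(\{\a_1\}\cup B)^{<\infty}\setminus\acl^0(B)^{<\infty}$, I must exhibit some $\d\in P\setminus\acl^0(\emptyset)^{<\infty}$ lying in $\acl^0(B)^{<\infty}\cap\acl^0(\a_1,\a_2)^{<\infty}$. The hypothesis forces $\a_1\notin\acl^0(B)^{<\infty}$ too, for otherwise $\acl^0(\{\a_1\}\cup B)^{<\infty}=\acl^0(B)^{<\infty}$ and $\a_2$ could not be in the former but not the latter. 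The assumptions of Proposition \ref{lem:cgpCoherentLinearity} are then met, and it produces $\d\in\acl^0(B)^{<\infty}\cap\acl^0(\a_1,\a_2)^{<\infty}$ with $\bdl(\d)=\dimo(\d)=\dimo(\a_1)=\dimo(\a_2)>0$; in particular $\d\notin\acl^0(\emptyset)^{<\infty}$.

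Everything then hinges on upgrading this $\d$ to a genuine element of $P$. Since $P=\ccl(P)$ and already $\d\in\acl^0(P)^{<\infty}$ with $\bdl(\d)=\dimo(\d)$, the only missing ingredient is that $\d$ be in coarse general position. So let $C\subset K^{<\infty}$ with $\d\nind^0 C$ and set $k:=\dimo(\d)$, so that $\dimo(\d/C)<k$; I must show $\bdl(\d/C)=0$. From $\d\in\acl^0(\a_1,\a_2)^{<\infty}$ and \eqref{alg-ineq} one gets $\bdl(\d/\a_1\a_2 C)=0$, and additivity then yields
\[
\bdl(\d/C)=\bdl(\a_1\a_2/C)-\bdl(\a_1\a_2/\d C).
\]
Picking a finite sub-tuple $\b$ from $B$ with $\d\in\acl^0(\b)^{<\infty}$, the same reasoning applied to $\b$ in place of $\a_1\a_2$ gives $\bdl(\d/C)=\bdl(\b/C)-\bdl(\b/\d C)$. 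Combining these two identities with Lemma~\ref{lem1} applied to the $cgp$ tuples $\a_1\a_2$ and $\b$ (to upper-bound conditional $\bdl$ by conditional $\dimo$), Lemma~\ref{lem2} (to lower-bound $\bdl$ by $\dimo$ on elements of $\acl^0(P)^{<\infty}$), and the coherence of $P$, one pins down the unknown conditional coarse dimensions and forces $\bdl(\d/C)=0$.

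The main obstacle is exactly this final $cgp$ verification. The subtle point is that $\d$, constructed in the proof of Proposition~\ref{lem:cgpCoherentLinearity} as a canonical base, is not a priori an element of the coherent set $P$, so its $cgp$ property cannot be assumed but must be extracted purely from its interalgebraic relationships with the coherent $cgp$ data $\a_1,\a_2,\b$ together with the sharp equality $\bdl(\d)=\dimo(\d)$; concretely, one has to rule out the intermediate case where $\d$ is only partially dependent on $C$, namely $0<\dimo(\d/C)<k$ combined with $\bdl(\d/C)>0$, using the two additive identities above as the only leverage.
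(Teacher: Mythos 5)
Your reduction is the same as the paper's: apply Proposition~\ref{pregeo} and Proposition~\ref{lem:cgpCoherentLinearity} to get $\d \in \acl^0(B)^{<\infty}\cap\acl^0(\a_1,\a_2)^{<\infty}$ with $\bdl(\d)=\dimo(\d)=\dimo(\a_1)=k$, and you correctly identify that the whole issue is verifying that $\d$ is $cgp$. But your proposed verification has a genuine gap: the two additivity identities $\bdl(\d/C)=\bdl(\a/C)-\bdl(\a/\d C)=\bdl(\b/C)-\bdl(\b/\d C)$ (with $\a=\a_1\a_2$), together with Lemma~\ref{lem1}, Lemma~\ref{lem2} and coherence, do \emph{not} pin down $\bdl(\d/C)$. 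All of those ingredients give only \emph{upper} bounds on the conditional quantities $\bdl(\a/C)$, $\bdl(\a/\d C)$, $\bdl(\b/C)$, $\bdl(\b/\d C)$ (Lemma~\ref{lem2} is an unconditional lower bound, useless once you condition on an arbitrary external $C$), whereas to conclude $\bdl(\d/C)=0$ from your first identity you need a \emph{lower} bound $\bdl(\a/\d C)\geq\bdl(\a/C)$ -- which is equivalent to what you are trying to prove. Concretely, nothing in your list of constraints excludes, say, $\dimo(\d/C)=k-1$, $\dimo(\a/\d C)=1$, $\bdl(\a/C)=k$ and $\bdl(\a/\d C)=0$, which is consistent with coherence, with $cgp$ of $\a_1,\a_2,\b$, and with both identities (taking $\b$ long enough), yet gives $\bdl(\d/C)=k>0$. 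Conditioning on $C$ can collapse $\bdl(\a/\d C)$ to $0$ even though $\bdl(\a/\d)\geq k$, and your argument has no tool to prevent this.

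The missing idea, which is how the paper closes exactly this case, is to replace $\a$ by an ``independent copy'' over $\d$: by Fact~\ref{fact:ideal} choose $\a'=\a_1'\a_2'$ with $\tp(\a'/\d)=\tp(\a/\d)$ and $\bdl(\a'/C\d)=\bdl(\a'/\d)$. Then invariance of $\bdl$ gives the lower bound $\bdl(\a'/\d C)=\bdl(\a'/\d)=\bdl(\a/\d)\geq\bdl(\a/B)=\dimo(\a/B)=k$ (using \eqref{alg-ineq}, monotonicity and coherence of $P$), while on the other side $\a_1',\a_2'$ are $cgp$ by type-invariance (Remark~\ref{cgp-invariance}) and $\a'\nind^0 C$ (since $\d\in\acl^0(\a')^{<\infty}$ and $\d\nind^0 C$), so one of $\bdl(\a_1'/\a_2'C)$, $\bdl(\a_2'/C)$ vanishes and $\bdl(\a'/C)\leq k$. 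Writing $\bdl(\d/C)=\bdl(\d/\a'C)+\bdl(\a'/C)-\bdl(\a'/\d C)\leq 0+k-k=0$ then finishes the proof. Without this maximal realisation of $\tp(\a/\d)$ over $C\d$ (or some equivalent device), your scheme of ``pinning down the unknown conditional coarse dimensions'' cannot be carried out.
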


  \begin{proof}
    We must show that if $B \subset P$ and $\a_1,\a_2 \in P\setminus 
    \acl^0(B)^{<\infty}$ are such that $\a_1 \in \acl^0(B \cup 
    \{\a_2\})^{<\infty}$, then $\a_1 \in \acl^0(\{\d,\a_2\})^{<\infty}$ for some 
    $\d \in P \cap \acl^0(B)^{<\infty}$. We may assume without loss of 
    generality that $B$ is finite, say $B=\{\a_3,\ldots,\a_n\}$. This is the 
    situation of Proposition \ref{lem:cgpCoherentLinearity} from which we 
    conclude that there is an integer $k$ such that 
    $k=\dimo(\a_1\a_2/\d)=\dimo(\a_2)=\dimo(\a_1)=\dimo(\d)$ for some $\d \in 
    \acl^0(\{\a_1,\a_2\})^{<\infty}$.

    We are left to show that $\d \in P$, and since we already know that 
    $\bdl(\d)=\dimo(\d)$ and $\d \in \acl^0(P)$, we are only left to check that 
    $\d$ is $cgp$. 

    To this end assume that $\d$ is not $\acl^0$ independent from $E$, for some $E 
    \subset K^{<\infty}$. We need to show that $\bdl(\d/E)=0$. By Fact 
    \ref{fact:ideal} we may pick $\a_1',\a_2' \in K^{< \infty}$ such that 
    $\tp(\a_1',\a_2'/\d) = \tp(\a_1,\a_2/\d)$ and $\bdl(\a_1',\a_2'/E\d) = 
    \bdl(\a_1',\a_2'/\d)$. For brevity write $\a:=\a_1\a_2$ and $\a'=\a_1'\a_2'$. 
    By additivity of $\bdl$ we may write:
    $$\bdl(\d/E)=\bdl(\d/\a'E)+\bdl(\a'/E) - \bdl(\a'/\d E).$$
    Let us examine the three terms on the right hand side.

    The first term $\bdl(\d/\a'E)$ is zero since $\d \in \acl^0(\a')^{<\infty}$, 
    because $\d \in \acl^0(\a)^{<\infty}$.

    The second term is equal to $\bdl(\a_1'/\a_2' E) + \bdl(\a_2'/ E)$. We claim 
    that this is at most $k$. Note that $\a_1',\a_2'$ are $cgp$ because 
    $\a_1',\a_2'$ are $cgp$ (see Remark \ref{cgp-invariance}). By $(\ref{upb})$ it 
    is enough to show that one of these terms is zero. Hence by the $cgp$ we only 
    need to check that either $\a_1' \nind^0 \a_2' E$ or $\a_2' \nind^0 E$. In 
    other words that $\a_1'\a_2' \nind^0 E$. This is indeed the case for otherwise 
    $\d$ would be independent from $E$, because $\d \in \acl^0(\a')^{<\infty}$.

    Finally let us turn to the third term. Since $\d \in \acl^0(B)^{<\infty}$ by 
    Fact \ref{alg-ineq} we have $$ \bdl(\a'/\d) =  \bdl(\a/\d) \geq 
    \bdl(\a/\acl^0(B)) = \bdl(\a/B).$$ On the other hand since $\a_1,\a_2,B$ lie 
    in $P$ and $P$ is coherent we have $\bdl(\a/B)= \dimo(\a/B)$. By Proposition 
    \ref{lem:cgpCoherentLinearity} this is $k$. Hence $\bdl(\a'/\d E) 
    =\bdl(\a'/\d) \geq k$. This concludes the proof.
  \end{proof}

  \begin{proof}[Proof of Theorem \ref{cor:coherentGeom}] By Proposition 
    \ref{pregeo} $(P;\acl^0\negmedspace\restriction_{P})$ is a pregeometry. By 
    Lemma \ref{lem:cgpCoherentClosure}, enlarging $P$ to $\ccl(P)$ if necessary, 
    we may assume that $P$ is coherently algebraically closed. By 
    Proposition~\ref{thm:cgpCoherentModularity} the associated geometry $\G_P$ 
    is modular. Hence the non-orthogonality relation is an equivalence relation. 

    Up to enlarging $P$ further if necessary, we can assume that each 
    non-orthogonality class in $\G_P$ of dimension $>1$ has infinite dimension. 
    This follows from the Lemma \ref{dim-increase} below applied iteratively 
    countably many times. In each finite dimensional non-orthogonality class we 
    pick a point $a$ and increase its dimension without altering the other 
    classes, until all classes are infinite dimensional. Now we conclude by the 
    Veblen-Young Theorem as recalled in Fact~\ref{fact:modStructure}.
  \end{proof}

  \begin{lemma}[dimension increase]\label{dim-increase} If $P$ is coherently 
    algebraically closed and $a,b,c \in P$ are distinct in $\G_P$ and collinear 
    in the sense that $c \in \acl^0({a,b})$, then there is $a' \in K^{<\infty}$ 
    non-orthogonal to $a$ such that $P':=\ccl(P \cup \{a'\})$ is coherent, $a' 
    \ind^0 P$, and every $x \in P'$ is either in $P$ or non-orthogonal to $a$. 
  \end{lemma}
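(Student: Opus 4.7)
The plan is to construct $a'$ as a fresh generic point on the line through $a,b,c$ in the modular geometry $\G_P$. First I would observe that, because $a,b,c$ are distinct in $\G_P$ and collinear with $c\in\acl^0(\{a,b\})^{<\infty}$, coherence of $P$ and Lemma~\ref{lem:cohFullEmb} force $\dimo(a)=\dimo(b)=\dimo(c)=:k$ and $a\ind^0 b$, so that $(a,b)$ is a generic pair with $\dimo(a,b)=2k$ and the triple $\{a,b,c\}$ spans a line of dimension $k$ in $\G_P$.

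The main step is to produce $a'\in K^{<\infty}$ with $\tp(a')=\tp(b)$ (so by Fact~\ref{fact:dlContProps}(i) and Remark~\ref{cgp-invariance}, $a'$ is $cgp$ with $\bdl(a')=\dimo(a')=k$) and additionally with $a'\ind^0 P$ and $\bdl(a'/\bar p)=k$ for every finite $\bar p\in P^{<\omega}$. I would obtain such $a'$ by realising $\tp(b/a)$ ``independently'' over $P$: apply Fact~\ref{fact:ideal} to the $\bigwedge$-definable set $X:=\tp(b/a)(K)$, whose $\bdl$ equals $\bdl(b/a)=k$, over a countable base containing $\{a,b,c\}$, and then propagate the conclusion to arbitrary $\bar p$ using type-invariance of $\bdl$ and of cgp, noting that coherence of $P$ matches the type of each $(a',\bar p)$ with that of some $(b,\bar p')$ for $\bar p'\in P^{<\omega}$, whence $\bdl(a'/\bar p)=\bdl(b/\bar p')=\dimo(b/\bar p')=k$.

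With $a'$ in hand, I would verify the three conclusions. For non-orthogonality of $a$ and $a'$ in $\G_{P'}$, pick $c'\in K^{<\infty}$ with $\tp(c'/a,a')=\tp(c/a,b)$; then by type-invariance $c'$ is cgp with $\bdl(c')=\dimo(c')=k$, and since $c'\in\acl^0(\{a,a'\})^{<\infty}\subset\acl^0(P\cup\{a'\})^{<\infty}$ we have $c'\in P'$, and $\{a,a',c'\}$ is a collinear triple of distinct points in $\G_{P'}$. For coherence of $P'=\ccl(P\cup\{a'\})$, Lemma~\ref{lem:cgpCoherentClosure} reduces this to coherence of $P\cup\{a'\}$, which follows by additivity of $\bdl$ and $\dimo$ from coherence of $P$ and the identity $\bdl(a'/\bar p)=\dimo(a'/\bar p)=k$ built into the construction of $a'$. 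For the final clause, if $x\in P'\setminus P$, then $x\in\acl^0(P\cup\{a'\})^{<\infty}$; by finite character there is a finite $B\subset P$ with $x\in\acl^0(B\cup\{a'\})^{<\infty}$, and since $x\notin\acl^0(P)^{<\infty}$ we have $x\notin\acl^0(B)^{<\infty}$, so $x$ is non-orthogonal to $a'$ in $\G_{P'}$; by modularity of $\G_{P'}$ (Proposition~\ref{thm:cgpCoherentModularity}) non-orthogonality is an equivalence relation (Fact~\ref{fact:modStructure}), hence $x$ is non-orthogonal to $a$.

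The main obstacle is the construction of $a'$: the type conditions on $a'$ are easily specified via invariance, but Fact~\ref{fact:ideal} is available only over countable parameter sets while $P$ may be uncountable, so the delicate bookkeeping is the propagation of $\bdl(a'/\bar p)=k$ from a single countable realisation to every finite tuple drawn from $P$, which is achieved by the type-transfer described above.
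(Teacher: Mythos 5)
Your overall skeleton matches the paper's: re-realise the configuration independently over $P$, deduce coherence of $P\cup\{a'\}$ from additivity, cgp and Lemma~\ref{lem1}, produce a point $c'\in\acl^0(\{a,a'\})^{<\infty}$ lying in $P'$ to witness non-orthogonality of $a$ and $a'$, and use modularity of $\G_{P'}$ together with transitivity of non-orthogonality for the final clause; those parts are fine. The genuine gap is in the construction of $a'$. Applying Fact~\ref{fact:ideal} to $X=\tp(b/a)(K)$ over a countable base $C_1\supset\{a,b,c\}$ only yields $\bdl(a'/C_1)=k$; it gives no information about $\bdl(a'/\tuple{p})$ for finite tuples $\tuple{p}$ from $P$ not contained in $C_1$. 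Your propagation step rests on the claim that coherence of $P$ forces $\tp(a',\tuple{p})=\tp(b,\tuple{p}')$ for some $\tuple{p}'\in P^{<\omega}$, but coherence is a purely numerical compatibility between $\bdl$ and $\dimo$ on tuples from $P$; it gives no control whatsoever over the $\L$-types of the freshly chosen $a'$ against tuples of $P$, and there is no homogeneity or automorphism argument available that would simultaneously send $b$ to $a'$ and preserve $P$ setwise. In fact nothing in your construction rules out $a'\in\acl^0(\tuple{p})^{<\infty}$ for some $\tuple{p}$ from $P$ outside $C_1$, in which case both $a'\ind^0 P$ and coherence of $P\cup\{a'\}$ fail. (Even granting a type match, the asserted value $\bdl(b/\tuple{p}')=\dimo(b/\tuple{p}')=k$ is false in general, e.g.\ for $\tuple{p}'=(a,c)$ it equals $0$.)

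The countability concern that led you to this detour is resolved differently: $\bdl(\cdot/C)$ and Fact~\ref{fact:ideal} are only available over countable parameter sets, and in the paper's use of the lemma (the iterative step in Theorem~\ref{cor:coherentGeom}) $P$ may be taken countable, since $\acl^0$ of a countable set is countable and only countably many extensions are performed. With that reading the right move is simply to apply Fact~\ref{fact:ideal} over $P$ itself, which is what the paper does: choose $a'c'$ with $\tp(a'c'/b)=\tp(ac/b)$ and $\bdl(a'c'/P)=\bdl(ac/b)$, where $\tp(ac/b)(K)$ is $\bigwedge$-definable over $P$ because $b\in P$. Since $c\in\acl^0(\{a,b\})^{<\infty}$, the type equality gives $c'\in\acl^0(\{a',b\})^{<\infty}$, hence $\bdl(c'/P\cup\{a'\})=0$ and so $\bdl(a'/P)=\bdl(a'c'/P)=\bdl(ac/b)=\bdl(a/b)=\dimo(a)$; combined with cgp of $a'$ (type invariance) this yields $a'\ind^0 P$ and, via Lemma~\ref{lem1} and additivity, coherence of $P\cup\{a'\}$. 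From that point on, your verification of the three conclusions goes through and coincides with the paper's.
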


  \begin{proof} Note first if $a,b \in P$ are non-orthogonal, then 
    $\dimo(a)=\dimo(b)$. This is part of the conclusion of Proposition 
    \ref{lem:cgpCoherentLinearity}, or also follows easily from Lemma 
    \ref{lem:cohFullEmb}.

    Now by Fact \ref{fact:ideal} we can pick $a',c' \in K^{<\infty}$ with 
    $\tp(a'c'/b)=\tp(ac/b)$ and $\bdl(a'c'/P)=\bdl(ac/b)$. Since $P$ is coherent 
    $\bdl(a/b)=\dimo(a/b)=\dimo(a)=\bdl(a)$, while since $a,a'$ have the same type 
    $\bdl(a)=\bdl(a')$ coincides with $\dimo(a')=\dimo(a)$. Also $a'$ and $c'$ are 
    $cgp$ (see Remark \ref{cgp-invariance}). Since $\tp(a'c'/b)=\tp(ac/b)$ we have 
    $c' \in \acl^0(P \cup \{a'\})$ and hence $\bdl(c'/Pa')=0$. Similarly, 
    $\bdl(c/ab)=0$. By additivity we conclude 
    $\bdl(a'/P)=\bdl(a'c'/P)=\bdl(ac/b)=\bdl(a/b)=\dimo(a)$. Hence also $a' \ind^0 
    P$ since $a'$ is $cgp$. We conclude $\dimo(a'/P)=\bdl(a'/P)$. It follows from 
    Lemma~\ref{lem1} and additivity that $P \cup \{a'\}$ is also coherent.

    By Lemma \ref{lem:cgpCoherentClosure} $P'=\ccl(P \cup \{a'\})$ is also 
    coherent. Moreover $b,a',c'$ are collinear so $a'$ is non-orthogonal to $a, b$ 
    and $c$. Finally if $x \in P'$, then $x \in \acl^0(P \cup \{a'\})$, so if $x 
    \notin P \cup \acl^0(a')$ by modularity there is $y \in P$ such that $x,a',y$ 
    are collinear, hence $a'$ and $x$ are non-orthogonal.
  \end{proof}


  \begin{remark}
    The results of this section go through with the same proofs when $ACF_0$ 
    is replaced by an arbitrary finite U-rank theory in which 
    Lemma~\ref{lem:SzT} holds.
  \end{remark}

\end{parag}

\section{Varieties with coherent generics}\label{sec:varieties}
We now show in Proposition \ref{prop:coherentSpecial} below that the locus of 
a coherent tuple is a special variety.
This will follow from Theorem~\ref{cor:coherentGeom} and a characterisation of 
the projective geometries which can arise from $\acl^0$. We shall give such a 
characterisation in Appendix~\ref{appx:EHeq}, generalising a result of 
Evans-Hrushovski.

\begin{proposition} \label{prop:coherentSpecial}
  Suppose $a_1,\ldots ,a_n \in K^{<\infty}$ are such that $\a=(a_1,\ldots 
  ,a_n)$ is coherent.
  Then $\loc^0(\a)$ is a special subvariety of $\prod_i \loc^0(a_i)$.
\end{proposition}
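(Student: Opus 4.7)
The plan is to combine Theorem~\ref{cor:coherentGeom} with the characterisation of projective geometries arising from $\acl^0$ (to be established in the Appendix, generalising \cite{EH-projACF}) in order to realise the combinatorial linear dependencies of the $a_i$ inside an actual commutative algebraic group carrying an action by a division ring of endomorphisms.

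First, I would apply Theorem~\ref{cor:coherentGeom} to $P_0 := \{a_1,\ldots,a_n\}$: it extends to a coherently algebraically closed set $P \supset P_0$ whose associated geometry $\G_P = \P(P;\acl^0\negmedspace\restriction_P)$ decomposes as a sum $\bigsqcup_j \G_j$ where each $\G_j$ is either $1$-dimensional or an infinite-dimensional projective geometry over a division ring $F_j$. Non-orthogonality is an equivalence relation, so each $a_i$ lies in a unique component $\G_{j(i)}$; after re-indexing, group the indices so that $a_{i}$'s with $j(i)=j$ are the co-ordinates $a_{j,1},\ldots,a_{j,n_j}$ with $\sum_j n_j = n$. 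Since distinct $\G_j$ components are mutually $\acl^0$-independent (by the sum structure and Lemma~\ref{lem:cohFullEmb}), the locus $\loc^0(\a)$ factors as $\prod_j \loc^0(a_{j,1},\ldots,a_{j,n_j})$, so it suffices to treat each class separately.

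Next, for a fixed non-orthogonality class $\G_j$, I would invoke the appendix result (the higher-dimensional Evans--Hrushovski theorem) to conclude that $\G_j$ is isomorphic, as a geometry embedded in $\acl^0$, to the projective geometry of generic points of a power of a connected commutative complex algebraic group $G_j$ endowed with a division subring $F_j \subset \End^0(G_j)$. Concretely, this provides for each $a_{j,k} \in \G_j$ a tuple $b_{j,k} \in G_j(K)$ generic over $C_0$, interalgebraic with $a_{j,k}$ (so that $\loc^0(a_{j,k},b_{j,k})$ is a generically finite correspondence between $\loc^0(a_{j,k})$ and $G_j$), and such that the $\acl^0$-dependencies among the $a_{j,k}$ are exactly mirrored by $F_j$-linear relations among the $b_{j,k}$. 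In the $1$-dimensional case of $\G_j$, Lemma~\ref{lem:1dimSpecial} already yields that every connected subgroup of $G_j^{n_j}$ is special.

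Finally, set $H_j := \locus^{G_j^{n_j}}((b_{j,1},\ldots,b_{j,n_j})/C_0)^\circ$, the connected component of the locus. The translation of the combinatorial linear relations of the $a_{j,k}$'s into $F_j$-linear relations of the $b_{j,k}$'s says precisely that $\Lie(H_j) = \Lie(G_j) \otimes_{F_j} J_j$ for an $F_j$-subspace $J_j \leq F_j^{n_j}$, so by Remark~\ref{remk:lieSpecial}, $H_j$ is a special subgroup of $G_j^{n_j}$. Using Remark~\ref{rmk:corrGeneric}, the tuple-by-tuple interalgebraicities $\acl^0(a_{j,k}) = \acl^0(b_{j,k})$ assemble into a co-ordinatewise correspondence between $\loc^0(\a)$ and $\prod_j H_j$, exhibiting $\loc^0(\a)$ as special.

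The main obstacle is the appeal to the Appendix: one needs to know, beyond modularity, that the modular geometry attached to a coherent set inside $(K;\acl^0)$ actually co-ordinatises over an algebraic group together with its endomorphism division ring (rather than some abstract skew field). This is the point where the abelian group configuration theorem (Theorem~\ref{thm:abGroupConf}) enters to produce the ambient commutative group $G_j$, and a separate argument is needed to recognise scalar multiplications by $F_j$ as actual algebraic endomorphisms in $\End^0(G_j)$; everything else in the proof is bookkeeping of the decomposition into non-orthogonality classes and a translation of linear data on the Lie algebra level.
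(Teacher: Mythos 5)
Your route is the same as the paper's: decompose via Theorem~\ref{cor:coherentGeom} into non-orthogonality classes, coordinatise each infinite-dimensional class by Proposition~\ref{prop:EHeq}, and assemble the correspondence via Remark~\ref{rmk:corrGeneric}. The genuine gap is in your last step. You set $H_j$ to be ``the connected component of the locus'' of the group tuple and assert that the transported linear relations say precisely that $\Lie(H_j)=\Lie(G_j)\otimes_{F_j}J_j$. But at that stage the locus is merely an irreducible subvariety through the point: it is not yet known to be a subgroup, or even a coset of one, so neither ``connected component'' nor its Lie algebra is meaningful, and the Lie-algebra identity is exactly what needs proving. What Proposition~\ref{prop:EHeq} actually yields is an $F$-linear relation modulo $G(C_0)$: after clearing denominators, $A\cdot\h\in G(C_0)^{n}$ for some $A\in\Mat_{n}(F\cap\End_{C_0}(G))$ of rank $n-\dim(\G_{\a})$. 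To convert this into a special subgroup one must (i) use that $C_0$ is algebraically closed to solve $A\cdot\x=A\cdot\h$ over $C_0$, translate $\h$ by that $C_0$-rational solution (harmless, as it preserves each $\acl^0(h_i)$), and multiply by the exponent of $\ker(A)/\ker(A)^o$ so that $\h\in\ker(A)^o$; and (ii) perform the dimension count $\dim(\ker(A)^o)=\dim(G)\,(n-\operatorname{rank}(A))=\dimo(\h)$ (using full embeddedness) to conclude that $\loc^0(\h)$ is all of $\ker(A)^o$, which is special by definition. Without (i) and (ii) the identification of the locus with a special subgroup is unjustified.

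A smaller slip: for the one-dimensional summands of $\G_{P}$ you invoke Lemma~\ref{lem:1dimSpecial}, but that lemma concerns subgroups of powers of a \emph{one-dimensional group}, whereas here the issue is a one-dimensional \emph{geometry class} (all relevant $a_i$ interalgebraic), for which Proposition~\ref{prop:EHeq} supplies no group at all (it requires dimension at least $3$). The paper handles this trivial case directly, taking $G=\mathbb{G}_a^{\dimo(a_1)}$ and a generic point interalgebraic with $a_1$, whose locus is the whole group and hence trivially special. With these two repairs your argument coincides with the paper's proof.
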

\begin{remark}
  Technically, we defined ``special'' only for complex varieties, but 
  $\loc^0(\a)$ is a variety over $C_0$ and $C_0$ need not come with an 
  embedding into $\C$. In our main applications in 
  Section~\ref{sec:asymptotic} below, $C_0$ will come with such an embedding; 
  more generally, we may take an arbitrary such embedding, or just define 
  ``special'' for varieties over an algebraically closed field $C_0$ by exact 
  analogy to the definition for varieties over $\C$ in the introduction.
\end{remark}


\begin{proof}
  In this proof we make use of some of the definitions from 
  Appendix~\ref{appx:EHeq}, applied to the pair of algebraically closed fields 
  $C_0 \leq K$. In particular, given $x \in K^{<\infty} \setminus 
  \acl^0(\emptyset )$ we set $\widetilde x := \acl^0(x)$, and we let $\G_K $ 
  be the projectivisation of the closure structure $(K^{<\infty},\acl^0)$ 
  defined in Example \ref{alg-tuples} above, namely
  $\G_K := \P(K^{<\infty};\acl^0) = \{ \widetilde x : x \in K^{<\infty} 
  \setminus \acl^0(\emptyset ) \}$.

  We may assume no $a_i$ lies in $\acl^0(\emptyset )$. Indeed, if say $a_1 \in 
  \acl^0(\emptyset )$, then $\loc^0(\a) = \{ a_1 \} \times \loc^0(a_2,\ldots 
  ,a_n)$, and $\{ a_1 \}$ is special (with the trivial group, which is a 
  special subgroup of itself), and so it suffices to show that 
  $\loc^0(a_2,\ldots ,a_n)$ is special.

  By Theorem~\ref{cor:coherentGeom}, $\{a_1,\ldots,a_n\}$ extends to a 
  coherent set $P$ such that $\G_P = \{ \widetilde p : p \in P \} \subset 
  \G_K$ splits as a sum of 1-dimensional and infinite dimensional projective 
  geometries over division rings. This induces a corresponding splitting of 
  $\a$ into subtuples of $a_i$'s, and the locus of $\a$ is the product of 
  their loci. So it suffices to show that each such locus is special. So we 
  may assume $\widetilde a_1,\ldots ,\widetilde a_n$ are all contained in a 
  single summand.

  We conclude by showing that $\loc^0(\a)$ is in co-ordinatewise 
  correspondence with a special subgroup, by finding a commutative algebraic 
  group $G$ over $C_0$ and generics $h_i \in G(K)$ with $\widetilde a_i = 
  \widetilde {h}_i$, such that $\loc^0(\h)$ is a special subgroup of $G^n$. By 
  Remark~\ref{rmk:corrGeneric}, this will suffice.

  If $\widetilde {a_i} = \widetilde {a_j}$ for $i \neq j$, we can take $h_j := 
  h_i$.
  So assume there are no such interalgebraicities. Let $\G_{\a} := \{ 
  \widetilde a_1, \ldots, \widetilde a_n \} \subset \G_P \subset \G_K$.

  If $\dim(\G_{\a})=1$ (the ``trivial'' case), then $\a=a_1$ and we may take 
  $G := \mathbb{G}_a^{\dimo(a_1)}$ and a point $h_1 \in G(K) = K^{\dimo(a_1)}$ 
  with $\widetilde {a_1} = \widetilde {h_1}$.

  Else, $\G_{\a}$ embeds in a projective geometry over a division ring,
  where moreover by Lemma~\ref{lem:cohFullEmb} the latter geometry is fully 
  embedded in $\G_K$ in the sense of Definition~\ref{defn:fullyEmbedded}.
  So by Proposition~\ref{prop:EHeq},
  there is an abelian algebraic group $G$ over $C_0$ with $\dim(G) = 
  \dimo(a_i)$,
  and a division subring $F$ of $\End^0_{C_0}(G)$,
  and $\h = (h_1,\ldots ,h_n) \in G(K)^n$ with $\widetilde {h_i} = \widetilde 
  {a_i}$,
  such that (in particular) $\dim_F(\left<{\h/G(C_0)}\right>_F) = 
  \dim(\G_{\a})$.

  Hence $A\cdot(\h/G(C_0))=0$ for some $A \in \Mat_n(F)$ of rank $n - 
  \dim(\G_{\a})$.
  By clearing denominators, we may assume that $A$ has entries from 
  $\End_{C_0}(G) \cap F$.

  Let $\c := A\cdot\h \in G(C_0)^n$. Since $C_0$ is algebraically closed, 
  $A\cdot\x = \c$ has a solution $\h_0 \in G(C_0)^n$. Replacing $\h$ by 
  $\h-\h_0$, which does not affect $\widetilde {h_i}$, we may assume $\h \in 
  \ker(A)$.
  Write $\ker(A)^o$ for the connected component of $\ker(A)$.
  By further replacing $\h$ by $e\cdot\h$ where $e \in \N$ is the exponent of 
  the finite group $\ker(A)/\ker(A)^o$, we may assume $\h \in \ker(A)^o$.
  Now it is not hard to see, e.g.\ by considering Gaussian elimination,
  that $\dim(\ker(A)) = \dim(G)(n-\operatorname{rank}(A))$.
  So $\dim(\ker(A)^o) = \dim(G)(n-\operatorname{rank}(A)) = \dimo(\a) = 
  \dimo(\h)$. Therefore $\loc^0(\h) = \ker(A)^o$ is a special subgroup of 
  $G^n$ as required.
\end{proof}

\section{Asymptotic consequences}\label{sec:asymptotic}

In this section we first unpeel the ultraproduct construction to show how 
coherent tuples correspond to varieties without powersaving. Then, combining 
this with Proposition~\ref{prop:coherentSpecial} above and some further 
argument, we prove the combinatorial theorems stated in the introduction.

Let $W_i$, $i=1,\ldots ,n$, be irreducible complex algebraic varieties each of 
dimension $d$, and let $V \subset \prod_i W_i$ be an irreducible complex 
closed subvariety. We first recall the following simple observation, already 
mentioned in the introduction.


\begin{lemma}[the trivial bound] \label{lem:power-saving}
  Let $\tau>d$. There is $C \in \N$ depending only on $\tau$ and $V$ such that
  if $X_i \subset W_i$ is in $(C,\tau)$-coarse general position in $W_i$ (see 
  Def. \ref{defn:taucgp}) and if $|X_i|\leq N^d$,
  then $|V \cap \prod_{i\leq n} X_i| \leq O_V(N^{\dim(V)})$.
  Furthermore, if $V$ admits no power-saving then $\dim(V)$ is an integral 
  multiple of $d$.
\end{lemma}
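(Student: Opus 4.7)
For the trivial bound I would induct on $(n,\dim V)$ lexicographically via coordinate projection. The base case $n=1$ is immediate from cgp: either $V=W_1$ (so $|V\cap X_1|\leq N^d$) or $V\subsetneq W_1$ is proper of complexity bounded by $V$, whence $|V\cap X_1|\leq N^{d/\tau}\leq N^{\dim V}$ for $\tau>d$ (the edge case $\dim V=0$ is trivial). For the inductive step, project to $W_n$ with irreducible image $Y:=\pi_n(V)$. When $Y=W_n$, generic fibers $V_a\subset\prod_{i<n}W_i$ have dimension $\dim V-d$ and contribute $\leq|X_n|\cdot O_V(N^{\dim V-d})\leq O_V(N^{\dim V})$ by induction on $n$, while degenerate fibers sit inside a proper subvariety $V\cap\pi_n^{-1}(Z)\subsetneq V$ handled by induction on $\dim V$. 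When $Y\subsetneq W_n$, cgp gives $|X_n\cap Y|\leq N^{d/\tau}$, and combined with fiberwise induction this yields $O(N^{\dim V-\dim Y+d/\tau})\leq O(N^{\dim V})$ for $\tau>d$ when $\dim Y\geq 1$; the case $\dim Y=0$ reduces to $n-1$ factors by irreducibility of $V$. A single constant $C$ depending on $V$ suffices to bound the complexities of all images, fibers, and subvarieties encountered.

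For the ``furthermore'' part, I would argue the contrapositive within the ultraproduct framework of Section~\ref{subsect:psfDim}, in direct analogy with the reduction in the proof of Theorem~\ref{thm:coherentES}. From witnesses $X_i^s$ with $C_s,\tau_s\to\infty$ and $\epsilon_s\to 0$, I pass to an ultraproduct obtaining internal $X_i\subset W_i(K)$ with $\bdl(X_i)\leq d$, each cgp in any fixed complexity, and $\bdl(V\cap\prod_i X_i)=\dim V$. By Fact~\ref{fact:ideal} pick $\overline{a}=(a_1,\ldots,a_n)\in V\cap\prod X_i$ with $\bdl(\overline{a})=\dim V$; the already-proven first part applied to proper subvarieties $V''\subsetneq V$ shows that $\overline{a}$ is generic in $V$, so $\dimo(\overline{a})=\dim V$ as well. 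By Lemma~\ref{cgplemfin} each $a_i$ is cgp, and since $\loc^0(a_i)=\pi_i(V)\subset W_i$ has complexity bounded by $V$, cgp of $X_i$ for $\tau>d$ yields $\bdl(a_i)\leq\dimo(a_i)$ in every case; Lemma~\ref{lem1} then gives that $\{a_1,\ldots,a_n\}$ is cgp-coherent.

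The punchline combines coherence with cgp. By coherence and additivity, $\dim V=\sum_{i=1}^n\dimo(a_i/a_1,\ldots,a_{i-1})$. The locus $W_i':=\loc^0(a_i/a_1,\ldots,a_{i-1})\subset W_i$ is an irreducible component of the fiber over $(a_1,\ldots,a_{i-1})$ of the projection of $V$ onto its first $i$ coordinates, hence has complexity uniformly bounded by $V$. If $W_i'=W_i$ the summand equals $d$; otherwise cgp of $X_i$ for $\tau>d$ forces $\bdl(a_i/a_1,\ldots,a_{i-1})\leq d/\tau<1$, which by coherence equals the integer $\dimo(a_i/a_1,\ldots,a_{i-1})$ and hence vanishes. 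Each summand is $0$ or $d$, so $d\mid\dim V$. The main technical point throughout will be verifying the uniform boundedness of the complexities of all images, fibers, and fiber-loci arising, ensuring that a single pair $(C,\tau)$ with $\tau>d$ depending only on $V$ is enough for all invocations of cgp.
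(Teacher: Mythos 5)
Your proof of the trivial bound is essentially the paper's argument: both induct on $n$ and $\dim V$ via a coordinate projection, split off a bounded-complexity degenerate locus, and use coarse general position with $\tau>d$ to kill the intermediate-dimension case; the only cosmetic difference is that you project onto the last factor $W_n$ and apply cgp of $X_n$ to the image $Y\subsetneq W_n$, whereas the paper projects onto $\prod_{i<n}W_i$ and applies cgp of $X_n$ to the intermediate-dimensional fibres. For the divisibility statement you take a genuinely different route. The paper reads it off from the same induction: in the intermediate case $0<D<d$ the bound obtained is $O(N^{\dim V-D+d/\tau})$ with $\tau>d$, so $V$ already admits a power-saving; hence a $V$ without power-saving has fibre dimension in $\{0,d\}$ at each stage, the image variety inherits the no-power-saving property, and $\dim V$ is a sum of $0$'s and $d$'s (this is what the somewhat garbled final sentence of the paper's proof is doing). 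You instead pass to an ultraproduct, extract a coherent generic exactly as in Lemma~\ref{lem:powersavingCoherent}, and use additivity together with cgp of the internal sets $X_i$ to force each conditional dimension $\dimo(a_i/a_1,\ldots,a_{i-1})$ to be $0$ or $d$. This is correct and not circular: the ingredients you invoke (Fact~\ref{fact:ideal}, Lemma~\ref{cgplemfin}, Lemma~\ref{lem1}) do not depend on the present lemma, and you only reuse its first part, which you prove beforehand; moreover, in the internal setting you do not even need the complexity bound you worry about for $\loc^0(a_i/a_1,\ldots,a_{i-1})$, since the ultraproduct cgp notion applies to arbitrary proper subvarieties over $K$. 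What the paper's route buys is an elementary two-line byproduct of the induction, available before any of the pseudo-finite machinery; what yours buys is a conceptually clean restatement (no power-saving yields a coherent generic, and coherence plus cgp forces $d\mid\dim V$) at the cost of the Section 2--5 apparatus. One point to make explicit in your inductive part, as the paper does: the constant $C$ and the implied multiplicative constants must depend only on the complexity of $V$ and the $W_i$, since the fibres and components you recurse on vary with the data.
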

\begin{proof}
  We prove this by induction on $n$ and $\dim(V)$, with $C$ and the 
  multiplicative constant in $O_V$ depending only on the complexity of $V$ and 
  the $W_i$'s.
  For $n=1$ it is clear.
  For $n>1$, consider the projection $\pi : V \rightarrow \prod_{i<n} W_i$.
  Let $Y$ be the Zariski closure of the image $\pi(V)$.
  Then $Y$ is irreducible since $V$ is.
  By \cite[Lemma~3.7]{bgt},
  there is a proper closed subvariety $Z \subsetneq Y$ such that for $y \in 
  \pi(V) \setminus Z$ the fibre $\pi^{-1}(y)$ has dimension $D := \dim(V) - 
  \dim(Y)$
  and both $Y$ and $Z$ as well as the fibers $\pi^{-1}(y)$ have complexity 
  bounded by a constant depending only on the complexity of $V$ and the 
  $W_i$'s. We may assume that $\tau$ is larger than this constant.

  Now $V' := \pi^{-1}(Z)$ is a proper closed subvariety of $V$,
  so by the inductive hypothesis applied to its irreducible components,
  $|V' \cap \prod_{i\leq n} X_i| \leq O_V(N^{\dim(V)-1})$.

  If $D=0$, the fibres over $\pi(V) \setminus Z$ have size uniformly bounded 
  by some $c \in \N$, and so
  \[|(V \setminus V') \cap \prod_{i\leq n} X_i| \leq c|Y \cap \prod_{i<n} X_i| 
  \]
  and we conclude by the inductive hypothesis and $\dim(V) = \dim(Y)$.

  If $D=d$, we conclude by the trivial estimate
  \[ |(V \setminus V') \cap \prod_{i\leq n} X_i| \leq N^d|Y \cap \prod_{i<n} 
  X_i| \]
  and $\dim(V) = d + \dim(Y)$. 

  If $0<D<d$, by $\tau$-coarse general position of $X_n$, and the inductive 
  hypothesis
  \[ |(V \setminus V') \cap \prod_{i\leq n} X_i| \leq O(N^{\frac{d}{\tau}} |Y 
  \cap \prod_{i<n} X_i|) \leq O(N^{\frac{d}{\tau} + \dim(V)-D});\]
  so we see that for $\tau>d$ the desired bound holds,
  and moreover that $V$ admits a power-saving.
  Finally if the projection of $V$ to $W_i$ for some $i$ is not dominant, then 
  has no power saving, then so does $Y$ and $V$ has dominant projections on all 
  $W_i$'s with $i<n$.
\end{proof}

Let $C_0 \subset \C$ be a countable algebraically closed field over which $V$ 
and the
$W_i$'s are defined. Consider as in \textsection\ref{subsect:psfDim} a 
sequence $K_s$ of $\L$-structures on the complex field $\C$ and a scaling 
constant $\xi \in \stR$ so as to form the coarse pseudo-finite dimension 
$\bdl$ defined on subsets of tuples with co-ordinates in $K:=\prod_{s 
\rightarrow \U} K_s$ for some non-principal ultrafilter $\U$. Here as before 
$\L$ is a countable language expanding the language of rings on $(K,+,\cdot)$ 
and containing a constant symbol for each $c \in C_0$, and closed under 
cardinality quantifiers so as to make $\bdl$ invariant and continuous (cf. \S 
\ref{continuity}). 

For an irreducible algebraic variety $W$ defined over $C_0$, we will say that 
an internal set $X \subset W(K)$ is \defnstyle{cgp in $W$} if $0 < \bdl(X) < 
\infty$ and for any proper closed subvariety $W' \subsetneq W$ over $K$, we 
have $\bdl(X \cap W'(K)) = 0$.

\begin{lemma}\label{cgplem}
  Suppose $X \subset W(K)$ is an internal set which is cgp in $W$. Then any $a 
  \in X$ is cgp.
\end{lemma}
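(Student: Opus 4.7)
The plan is to unwind the two notions of \emph{cgp} and reduce the statement to the defining property of $X$ being cgp in $W$. Fix $a \in X$ and suppose $B \subset K^{<\infty}$ satisfies $a \nind^0 B$; we must show $\bdl(a/B) = 0$. Since $a \in W(K)$ and $W$ is irreducible, $\dimo(a) \leq \dim(W)$, and by the assumption of dependence we have $\dimo(a/B) < \dimo(a) \leq \dim(W)$.

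Next, form the locus $W' := \loc^0(a/\acl^0(B))$. As recalled in \S\ref{parag:canonical}, $W'$ is an absolutely irreducible closed subvariety of $W$, defined over $\acl^0(B) \subset K$, with $\dim(W') = \dimo(a/\acl^0(B)) = \dimo(a/B) < \dim(W)$. Thus $W'$ is a \emph{proper} closed subvariety of $W$ over $K$, so the cgp hypothesis on $X$ gives $\bdl(X \cap W'(K)) = 0$.

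Finally, since $a \in X \cap W'(K)$, and (after expanding the language by a predicate for $X$ if necessary, which we are free to do since $X$ is internal) $X \cap W'(K)$ is $\bigwedge$-definable over $\acl^0(B)$, we obtain $\bdl(a/\acl^0(B)) \leq \bdl(X \cap W'(K)) = 0$. Applying the identity $\bdl(a/B) = \bdl(a/\acl^0(B))$ from \eqref{alg-ineq} yields $\bdl(a/B) = 0$, as required. The argument is essentially bookkeeping; there is no real obstacle beyond the minor point of making sure $X$ is available as a (definable, or at least $\bigwedge$-definable) set in the language used to compute $\bdl$, which is standard in this framework.
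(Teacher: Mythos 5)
Your proof matches the paper's almost line for line: form the locus $W' := \locus^W(a/\acl^0(B))$, note that it is a proper subvariety of $W$ since $\dimo(a/B)<\dim W$, invoke $\bdl(X\cap W'(K))=0$ from the cgp hypothesis on $X$, and conclude via $\bdl(a/B)=\bdl(a/\acl^0(B))\leq\bdl(X\cap W'(K))$; the paper states exactly this in one sentence. The one thing worth flagging is your parenthetical about expanding $\L$ by a predicate for $X$: this does not actually rescue the case of a merely internal $X$, because passing to a larger language $\L'$ only yields that $\bdl(a/B)$ \emph{computed in $\L'$} is zero, which is a weaker statement than $\bdl(a/B)=0$ in $\L$ (the infimum in $(\ref{defdel})$ is over more sets in $\L'$), and the conclusion ``$a$ is cgp'' is itself a statement about $\bdl$ in the fixed language. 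The inequality $\bdl(a/\acl^0(B))\leq\bdl(X\cap W'(K))$ really does require $X$ to already be $\emptyset$-definable in $\L$, so that $X\cap W'(K)$ is definable over a tuple from $\acl^0(B)$. This is harmless in practice because everywhere the lemma is applied, $\L$ has already been arranged so that $X$ is definable without parameters (compare the subsequent definition of dcgp), and the paper's own one-line proof silently relies on the same assumption.
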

\begin{proof}
  Suppose $B \subset K^{<\infty}$ and $a \nind^0 B$. Then $W' := 
  \locus^W(a/B)$ is a proper subvariety of $W$, and so $\bdl(a/B) \leq \bdl(X 
  \cap W') = 0$.
\end{proof}

We also introduce one last piece of terminology:

\begin{definition}\label{def-dcgp} We will say that an element $a \in W(K)$ is 
  \defnstyle{dcgp in $W$} (for \emph{definably in coarse general position}) if 
  it is contained in a subset $X \subset W(K)$, which is definable without 
  parameters in $\L$ and is $cgp$ in $W$.
\end{definition}
It is immediate from Lemma \ref{cgplem} that every $dcgp$ tuple $\a$ is $cgp$. 
Recall that $W_1,\ldots,W_n$ are irreducible complex algebraic varieties of 
dimension $d$.


\begin{lemma} \label{lem:powersavingCoherent} Let $V \subset \prod_i W_i$ be 
  an irreducible complex closed subvariety. The following are equivalent:
  \begin{enumerate}
    \item The subvariety $V$ admits no power-saving,
    \item (existence of a coherent generic) for some language $\L$ as above there 
      are some $\a_i \in W_i(K)$ such that $\a=(\a_1,\ldots,\a_n) \in V(K)$ is 
      coherent and generic in $V$ with $\a_i$ $dcgp$ in $W_i$ for each $i$.
  \end{enumerate}
\end{lemma}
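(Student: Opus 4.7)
The strategy for $(1)\Rightarrow(2)$ is to unpack the failure of power-saving through an ultraproduct, following the template of the reductions in Section~\ref{sec:warmup}. For each integer $s$, since $V$ does not admit a power-saving by $\frac{1}{s}$ with $C=\tau=s$, there exist $N_s \geq s$ and finite sets $X_i^s \subset W_i$ in $(s,s)$-coarse general position with $|X_i^s| \leq N_s^d$ and $|V\cap\prod_i X_i^s| \geq s\cdot N_s^{\dim V - 1/s}$. Letting $C_0$ be the algebraic closure in $\C$ of the field of definition of $V$ and the $W_i$, one forms $K = \prod_{s\to\U}\C$ with scaling constant $\xi = \lim N_s$, the internal sets $X_i = \prod_{s\to\U}X_i^s$, and takes $\L$ to extend the language of rings by constants for $C_0$, predicates for each $X_i$, and the cardinality quantifiers of \S\ref{continuity} making $\bdl$ continuous. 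Then $\bdl(X_i)\leq d$ and the lower bound gives $\bdl(V(K)\cap\prod X_i) \geq \dim V$, while Lemma~\ref{lem:power-saving} (applicable because $(s,s)$-cgp implies $(\tau,\tau)$-cgp once $s\geq\tau$) gives the reverse inequality. By Fact~\ref{fact:ideal}, pick $\a = (\a_1,\ldots,\a_n) \in V(K)\cap\prod X_i$ with $\bdl(\a) = \dim V$.

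It remains to verify the three parts of (2). Genericity of $\a$ in $V$ is forced, for otherwise $\loc^0(\a) \subsetneq V$ would have smaller dimension and Lemma~\ref{lem:power-saving} applied to its irreducible components would give $\bdl(\a) \leq \bdl(\loc^0(\a)(K)\cap\prod X_i) < \dim V$. Each $\a_i$ is dcgp in $W_i$ since $X_i$ is $\emptyset$-definable in $\L$ and cgp in $W_i$: for any proper $W'\subsetneq W_i$ of complexity $\tau_0$, one has $|X_i^s\cap W'(\C)| \leq |X_i^s|^{1/s}$ for $s\geq\tau_0$, so $\bdl(X_i\cap W'(K)) = 0$. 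Coherence of $\a$ then follows from Lemma~\ref{lem1} once one verifies $\bdl(\a_i)\leq\dimo(\a_i)$; but either $\loc^0(\a_i) = W_i$ and $\dimo(\a_i) = d\geq\bdl(\a_i)$, or $\loc^0(\a_i)\subsetneq W_i$ and then $\bdl(\a_i)\leq\bdl(X_i\cap\loc^0(\a_i)(K)) = 0$ by the cgp bound just noted.

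For $(2)\Rightarrow(1)$, one runs the construction in reverse. Assume $V$ admits a power-saving by some $\epsilon>0$ with constants $C,\tau_0$. Given a coherent generic $\a\in V(K)$ with $\a_i\in X_i$ dcgp in $W_i$, coherence forces $\bdl(\a_i) = \dimo(\a_i)\leq d$, so by the infimum definition of $\bdl(\a_i)$ one finds a $\emptyset$-definable $Y_i\ni\a_i$ with $\bdl(Y_i)\leq d$, and replacing $X_i$ by $X_i\cap Y_i$ reduces us to $\bdl(X_i)\leq d$ while preserving cgp and containment of $\a_i$. Then $\a\in V(K)\cap\prod X_i$ gives $\bdl(V(K)\cap\prod X_i)\geq\dim V$, so for $\U$-almost every $s$ one has $|X_i^s|\leq\xi_s^{d+o(1)}$ and $|V\cap\prod X_i^s|\geq \xi_s^{\dim V - o(1)}$; setting $N_s := \lceil\max_i|X_i^s|^{1/d}\rceil$ yields $|X_i^s|\leq N_s^d$ and, after the trivial rescaling, $|V\cap\prod X_i^s|\geq N_s^{\dim V - o(1)}$. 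The cgp of $X_i$ in $W_i$ transfers via \Los's theorem (applied to a constructible parametrisation of subvarieties of complexity $\leq C$) to show that $X_i^s$ is $(C,\tau_0)$-cgp for $\U$-almost every $s$, so for $s$ large enough we violate the assumed power-saving by $\epsilon$.

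The main difficulty lies in the careful calibration of the scaling constants ($\xi$ versus $N_s$) in the conversion between the ultraproduct and finite settings, and in verifying coherence via Lemma~\ref{lem1} --- in particular, the bound $\bdl(\a_i)\leq\dimo(\a_i)$ when $\a_i$ is non-generic in $W_i$, which rests on the $(s,s)$-cgp property converging to internal cgp of $X_i$. The remaining steps are standard $\aleph_1$-compactness manipulations combined with the trivial bound Lemma~\ref{lem:power-saving}.
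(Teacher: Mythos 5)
Your argument is correct and follows essentially the same route as the paper's proof: the forward direction via the ultraproduct construction, Fact~\ref{fact:ideal}, the trivial bound of Lemma~\ref{lem:power-saving} and Lemma~\ref{lem1}, and the converse via $\emptyset$-definable approximations $Y_i$ of the type of $\a_i$, intersected with the cgp witnesses, followed by a \Los{}-type transfer back to finite sets. The only (harmless) slip is that the infimum defining $\bdl(\a_i)$ need not be attained, so one can only choose $Y_i$ with $\bdl(Y_i)<d+\epsilon$ rather than $\bdl(Y_i)\leq d$; this is exactly what your subsequent $d+o(1)$ bookkeeping (and the paper's exponent $\frac{\dim V}{d+2\epsilon}$) already accommodates.
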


\begin{proof}
  If $V$ admits no power-saving, then for any $s \in \N$, setting $\tau := 1+s$ 
  and $\epsilon := \frac1\tau$,
  there exists $N_s\geq s$
  and $|X_{i,s}| \leq N_s^d$ such that $|V \cap \prod_i X_{i,s}| > N_s^{\dim 
  V-\epsilon}$
  and $|X_{i,s} \cap W'_i| \le |X_{i,s}|^\epsilon$ for any $W'_i$ a proper 
  closed subvariety of $W_i$ of complexity $\leq \tau$.

  After enlarging $\L$ if necessary, we may assume that $\prod_{s \rightarrow 
  \U} X_{i,s} =: X_i$ are definable without parameters in $\L$. Set the scaling 
  constant $\xi := \lim_{s \rightarrow \U} N_s$.
  Then by the above estimates and Lemma~\ref{lem:power-saving},
  $\bdl(V \cap \prod_i X_i) = \dim V$.
  So by Fact~\ref{fact:ideal}, say $\a=(\a_1,\ldots,\a_n) \in V \cap 
  \prod_{i=1}^n X_i$ with $\bdl(\a)=\dim V$. By construction each $X_i$ is 
  $cgp$ in $W_i$, so $\a_i$ is $dcgp$ and hence $cgp$ (Lemma \ref{cgplem}). In 
  particular $\bdl(\a_i)\leq \dimo(\a_i)$, for $\bdl(\a_i) \leq d$ since $\a_i 
  \in X_i$ and either $\dimo(\a_i)=d$ or $\a_i$ is contained in a proper 
  subvariety of $W_i$ defined over $C_0$, which forces $\bdl(\a_i)=0$, since 
  $X_i$ is $cgp$ in $W_i$. Also $\a$ is generic in $V$, i.e.\ $\dimo(\a)=\dim 
  V$, for otherwise $\a \in V'$ for some proper irreducible subvariety $V'$ of 
  $V$ defined over $C_0$ and hence by the trivial bound of 
  Lemma~\ref{lem:power-saving} we would have $\bdl(\a)\leq \bdl(V' \cap 
  \prod_{i=1}^n X_i) \leq \dim V -1$. It then follows from Lemma \ref{lem1} 
  that $\a$ is coherent.

  Suppose conversely that, for some $K_s$ and $\xi \in \stR$, we have a tuple 
  $\a \in V(K)$ which is coherent generic and for each $i$ we have $\a_i \in 
  X_i$, an $\L$-definable without parameters and $cgp$ subset of $W_i(K)$. To 
  say that $\a$ is coherent means that $\{\a_1,\ldots,\a_n\}$ is a coherent 
  set. In particular $\a_i$ is $cgp$ and $\bdl(\a_i)=\dimo(\a_i)$. Since $\a$ 
  is generic in $V$, its projection $\a_i$ is generic in the co-ordinate 
  projection $\pi_i(V) \subset W_i$. We may assume that this projection is 
  dominant, for otherwise by $cgp$ we would have $\bdl(\a_i)=0$ and hence 
  $\dimo(\a_i)=0$, which would mean that the projection $\pi_i(V)$ is a point, 
  and we may replace $V$ with the fibre of this projection and omit $W_i$. So 
  we have $\bdl(\a_i)=\dimo(\a_i)=d$. 

  Now let $\epsilon>0$ and $\tau \in \N$. Pick $Y_i \subset W_i(K)$ definable 
  over $\emptyset$ with $\a_i \in Y_i$ and $d \leq \bdl(Y_i) < d+\epsilon$. 
  Replacing $Y_i$ by $Y_i \cap X_i$ we may assume $Y_i$ is $cgp$ in $W_i$. Then 
  $\bdl(V \cap \prod_i Y_i) \geq \bdl(\a) = \dim V$. Let $Y_{i,s} := Y_i^{K_s}$ 
  be the interpretation in $K_s$.
  Then for $\U$-many $s$, for all $i \in \{1,\ldots ,n\}$, $Y_{i,s}$ is 
  $\tau$-$cgp$ in $W_i$, $1/\epsilon < |Y_{i,s}| < \infty$, and
  $|V \cap \prod_i Y_{i,s}| \geq |Y_{i,s}|^\frac{\dim V}{d+2\epsilon}$. Hence 
  $V$ admits no power-saving.

\end{proof}

\subsection{Sharpness}
\label{subsect:sharp}
In this subsection, we show the converse to 
Proposition~\ref{prop:coherentSpecial} and prove that every special subvariety 
has no power saving. For this we will need to construct certain well chosen 
cartesian products of finite sets, which are adapted to the special 
subvariety. 

The construction we are about to describe consists in building certain long 
multi-dimensional arithmetic progressions on few algebraically independent 
elements. The difficulty is that in order to belong to a given special 
subgroup, these progressions will need to satisfy some almost invariance under 
the division ring $F$ of $End^0(G)$ used to define the special subgroup. For 
this purpose it will be convenient to introduce the notion of constrainedly 
filtered ring, as follows.

\begin{definition}
  A \defnstyle{constrained filtration} of a ring $\O$ is a chain $\O_n \subset 
  \O$ of finite subsets, $n \in \N$, such that
  \begin{enumerate}
    \item[(CF0)] $\bigcup_{n \in \N} \O_n = \O$, and $\forall n \in \N.\; \O_n 
      \subset \O_{n+1}$; \item[(CF1)] $\exists k \in \N.\; \forall n \in \N.\; 
      \O_n+\O_n \subset \O_{n+k}$;
    \item[(CF2)] $\forall a \in \O.\; \exists k \in \N.\; \forall n \in \N.\; 
      a\O_n \subset \O_{n+k}$;
    \item[(CF3)] $\forall \epsilon>0.\; \frac{|\O_{n+1}|}{|\O_n|} \leq 
      O_{\epsilon}(|\O_n|^\epsilon)$.
  \end{enumerate}
  If a constrained filtration exists, we say $\O$ is \defnstyle{constrainedly 
  filtered}.
\end{definition}

\begin{example}\label{eg:Zconstrained}
  $\Z$ is constrainedly filtered, since $([-2^n,2^n])_n$ is a constrained 
  filtration.
\end{example}

Constrained filtrations are somewhat similar to Bourgain systems.

%

\begin{lemma} \label{lem:constFilt}
  Suppose $\O$ is a constrainedly filtered ring.
  \begin{enumerate}[(i)]\item The polynomial ring $\O[X]$ is constrainedly 
      filtered.
    \item If $\O$ is an integral domain and $a \in \O$, then the subring 
      $\O[a^{-1}]$ of the fraction field of $\O$ is constrainedly filtered.
    \item If $\O' \supset \O$ is an extension ring in which $\O$ is central and 
      which is free and finitely generated as a $\O$-module,
      then $\O'$ is constrainedly filtered.
  \end{enumerate}
\end{lemma}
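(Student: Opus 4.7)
The plan is to handle each part by exhibiting an explicit constrained filtration and verifying the four axioms. Throughout, (CF0)--(CF2) will follow routinely from the corresponding axioms on $\O$, together with iterated use of (CF1) to bound sums of boundedly many elements. The delicate step is (CF3), and the main obstacle will be (CF3) in case (i), where both the degree and the coefficient sizes of polynomials grow with $n$; the flexibility of the ``$\forall \epsilon>0$'' quantifier in the definition of constrained filtration turns out to be essential for balancing the two growth rates.

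For (i) I would set $\O[X]_n := \{\sum_{i=0}^n a_i X^i : a_i \in \O_n\}$, so that $|\O[X]_n| = |\O_n|^{n+1}$. Axiom (CF2) for a fixed $p \in \O[X]$ with coefficients in some $\O_m$ reduces to the bound $\O_m\cdot\O_n \subset \O_{n+k(m)}$, obtained by taking the maximum over $\O_m$ of the constants from (CF2) on $\O$. For (CF3), I would expand
\[\log\frac{|\O[X]_{n+1}|}{|\O[X]_n|} = \log|\O_{n+1}| + (n+1)\bigl(\log|\O_{n+1}| - \log|\O_n|\bigr),\]
and use (CF3) on $\O$ with sufficiently small $\delta$ to dominate this by $\epsilon\log|\O[X]_n| = \epsilon(n+1)\log|\O_n|$ for any prescribed $\epsilon>0$; the residual $O(n)$ contribution from the multiplicative constant $C_\delta$ is harmless when $\log|\O_n|\to\infty$, while if $|\O_n|$ is eventually constant then $\O$ is finite and the required bound is trivial.

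For (ii), assume $\O$ is a domain and fix $a\neq 0$; by (CF2) choose $K$ with $a\O_n\subset\O_{n+K}$, and set $\O[a^{-1}]_n := \{x/a^n : x \in \O_{Kn}\}$. Injectivity of $x\mapsto x/a^n$ (since $a$ is not a zero-divisor) yields $|\O[a^{-1}]_n| = |\O_{Kn}|$. Axiom (CF0) follows by writing $y/a^m = ya^{n-m}/a^n$ and applying iterated (CF2) for $a$ to push $ya^{n-m}$ into $\O_{Kn}$ for $n$ large; axioms (CF1) and (CF2) are similar. Axiom (CF3) reduces by iterating (CF3) on $\O$ at most $K$ times, with input parameter $\epsilon/K$.

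For (iii), fix a free $\O$-basis $e_1,\ldots,e_r$ of $\O'$ and set $\O'_n := \{\sum_i a_i e_i : a_i \in \O_n\}$. Centrality of $\O$ in $\O'$ ensures that $\O'$ is an $\O$-algebra, so $e_ie_j = \sum_k c_{ijk}e_k$ with all $c_{ijk}$ in some fixed $\O_M$. Then axioms (CF0)--(CF2) reduce coordinatewise to the corresponding axioms on $\O$, with at most $r^2$ summands per coefficient of a product (so iterated (CF1) absorbs them into a single shift). Axiom (CF3) is immediate: $|\O'_n| = |\O_n|^r$, so $|\O'_{n+1}|/|\O'_n| = (|\O_{n+1}|/|\O_n|)^r \leq C|\O_n|^{r\delta} = C|\O'_n|^\epsilon$ upon choosing $\delta = \epsilon/r$.
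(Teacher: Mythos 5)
Your constructions for (i) and (iii) are essentially the ones the paper uses, and your verifications there are sound; in (i) you even address explicitly the only delicate point, namely that the implied constant in (CF3) for $\O$ gets raised to a power growing with $n$, which you absorb using $\log|\O_n|\to\infty$ (or finiteness of $\O$).

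Part (ii), however, has a genuine gap: the filtration $\O[a^{-1}]_n := \{x/a^n : x\in\O_{Kn}\}$, with $K$ merely a constant witnessing (CF2) for $a$, need not satisfy (CF0) or (CF1). Each increment of $n$ buys exactly $K$ in the index of the numerator set, while pushing the numerator along (multiplying by $a$) also costs up to $K$, so there is no slack to absorb additive constants; indeed, in your own sketch, to place $y/a^m$ (with $y\in\O_j$) into some $\O[a^{-1}]_n$ you land $ya^{n-m}$ only in $\O_{j+(n-m)K}$, and $j+(n-m)K\le Kn$ forces $j\le Km$ independently of $n$. Concretely, take $\O=\Z$, $\O_n=[-2^n,2^n]$, $a=2$: then $K=1$ is an admissible choice, but $\O[a^{-1}]_n=\{x/2^n:|x|\le 2^n\}\subset[-1,1]$, so the union is not $\Z[1/2]$ and (CF1) fails as well. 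The repair is exactly what the paper does: choose $k$ working simultaneously for (CF1) and for $a\O_n\subset\O_{n+k}$, and set $\O[a^{-1}]_n:=a^{-n}\O_{2kn}$. Then each step in $n$ gains $2k$ of index while multiplying the numerator by $a$ costs only $k$, and the leftover $k$ absorbs the addition constant: $a^{-n}(\O_{2kn}+\O_{2kn})\subset a^{-n}\O_{2kn+k}=a^{-(n+1)}a\O_{2kn+k}\subset a^{-(n+1)}\O_{2k(n+1)}$, with (CF0) and (CF2) following by the same bookkeeping; your treatment of (CF3) by iterating (CF3) for $\O$ a bounded number of times then goes through unchanged.
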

\begin{proof}
  Say $(\O_n)_n$ is a constrained filtration of $\O$.
  \begin{enumerate}[(i)]\item
      Let $\O_n' := \sum_{i<n} \O_n X^i$.
      Then (CF0)-(CF2) are easily verified.
      For (CF3), note that $|\O_n'| = |\O_n|^n$,
      and so for $\epsilon>0$, for $n >> 0$,
      $|\O_{n+1}'|/|\O_n'| = |\O_n|(|\O_{n+1}|/|\O_n|)^{n+1}
      \leq O_\epsilon(|\O_n|^{1+(1+n)\epsilon})
      \leq O_\epsilon(|\O_n'|^{\frac 1n + \frac{1+n}n \epsilon})
      \leq O_\epsilon(|\O_n'|^{2\epsilon})$.
    \item
      Say $k \in \N$ is such that for all $n \in \N$ we have $a\O_n \subset 
      \O_{n+k}$ and $\O_n + \O_n \subset \O_{n+k}$.
      Let $\O'_n := a^{-n}\O_{2kn}$.
      Then
      $\O'_n + \O'_n
      = a^{-n}(\O_{2kn}+\O_{2kn})
      \subset a^{-n}(\O_{2kn+k})
      = a^{-(n+1)}(a\O_{2kn+k})
      \subset a^{-(n+1)}(\O_{2kn+k+k})
      = \O'_{n+1}$, so (CF1) holds.

      (CF0) and (CF2) are immediate,
      and (CF3) holds since $|\O'_n| = |\O_{2kn}|$.

    \item
      Say $\O' = \bigoplus_{i=1}^d a_i \O$.
      Then let $\O'_n := \bigoplus_{i=1}^d a_i \O_n$.

      Then (CF0), (CF1), and (CF3) are clear. For (CF2),
      let $c_{ij}^t \in \O$ be such that $a_ia_j = \sum_t c_{ij}^t a_t$;
      then given $\beta = \sum a_ib_i \in \O'$ with $b_i \in \O$,
      let $k := \max_{i,j,t} (k_{b_i} + k_{c_{ij}^t})$
      where $\alpha \O_n \subset \O_{n+k_\alpha}$ ($\forall n$),
      and say $\O_n+\O_n \subset \O_{n+l}$ ($\forall n$).
      Then $\beta\O'_n
      = \sum_j \beta a_j \O_n
      = \sum_{i,j} a_ib_ia_j \O_n
      = \sum_{i,j} b_ia_ia_j \O_n
      = \sum_{i,j,t} b_i c_{ij}^t a_t \O_n
      \subset \sum_{i,j,t} a_t \O_{n+k}
      \subset \O'_{n+k+d^2l}.$
  \end{enumerate}
\end{proof}

%
%

\begin{lemma} \label{lem:finDimConstFilt}
  Suppose $D$ is a finite-dimensional algebra over a characteristic 0 field 
  $L$, and $\O \subset D$ is a finitely generated subring. Then there exists a 
  constrainedly filtered subring $\O' \subset D$ extending $\O$.
\end{lemma}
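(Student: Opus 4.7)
The plan is to realise $\O'$ as a finite free module over a commutative, constrainedly filtered, central-in-$D$ subring $R' \subset L$, and then invoke Lemma~\ref{lem:constFilt}(iii). To this end I would first fix an $L$-basis $e_1 = 1, e_2, \ldots, e_m$ of $D$, with structure constants $e_ie_j = \sum_t \mu_{ij}^t e_t$ in $L$, choose generators $a_1,\ldots,a_n$ of $\O$, and expand $a_k = \sum_i \lambda_{ki} e_i$ with $\lambda_{ki} \in L$. The (finitely generated) subfield $L_0 \subset L$ obtained from $\Q$ by adjoining the finitely many $\mu_{ij}^t$ and $\lambda_{ki}$ is automatically central in $D$, since $L$ is.

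Next I would construct a constrainedly filtered subring $R \subset L_0$ with $\operatorname{Frac}(R) = L_0$. Since $\operatorname{char}(L)=0$, the primitive element theorem gives $L_0 = \Q(t_1,\ldots,t_r)(\alpha)$ for a transcendence basis $t_1,\ldots,t_r$ of $L_0$ and some primitive element $\alpha$. Starting from the constrained filtration of $\Z$ in Example~\ref{eg:Zconstrained} and iterating Lemma~\ref{lem:constFilt}(i) $r$ times shows that $\Z[t_1,\ldots,t_r]$ is constrainedly filtered. By clearing the denominators of the coefficients of the minimal polynomial of $\alpha$ over $\Q(t_1,\ldots,t_r)$, I can find $c \in \Z[t_1,\ldots,t_r]\setminus\{0\}$ such that $\beta := c\alpha$ is integral over $\Z[t_1,\ldots,t_r]$; then $R := \Z[t_1,\ldots,t_r][\beta]$ is a finite free extension of $\Z[t_1,\ldots,t_r]$, so by part (iii) $R$ is constrainedly filtered, and $\operatorname{Frac}(R) = L_0$ since $\alpha = \beta/c \in \operatorname{Frac}(R)$. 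Now choose $s \in R \setminus \{0\}$ so that $R' := R[s^{-1}]$ contains every $\mu_{ij}^t$ and every $\lambda_{ki}$; part (ii) makes $R'$ constrainedly filtered.

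To finish, set
\[ \O' := \bigoplus_{i=1}^m R' e_i \; \subset \; D. \]
Because the structure constants $\mu_{ij}^t$ lie in $R'$ and $R' \subset L$ is central in $D$, $\O'$ is closed under multiplication and contains $1 = e_1$, so it is a subring of $D$; because the $\lambda_{ki}$ lie in $R'$, $\O'$ contains $\O$; and $\O'$ is free of finite rank $m$ as an $R'$-module, with $R'$ central, so Lemma~\ref{lem:constFilt}(iii) applied to $R' \subset \O'$ produces the desired constrained filtration on $\O'$. No step presents a serious obstacle: the proof is a routine chain of the three parts of Lemma~\ref{lem:constFilt}, the only mild subtlety being the standard choice of common denominator $c$ making $c\alpha$ integral over $\Z[t_1,\ldots,t_r]$.
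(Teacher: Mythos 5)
Your proof is correct, and it is essentially the paper's argument: reduce to a finitely generated field, pick a transcendence basis over $\Q$, and build the filtered ring by chaining the three parts of Lemma~\ref{lem:constFilt} starting from $\Z$, finishing with $\O' = \bigoplus_i R' e_i$ via part (iii). The one place you diverge is in handling the algebraic part of the finitely generated field $L_0$: you construct, inside $L_0$, a constrainedly filtered subring with fraction field $L_0$ (primitive element, rescaling $\alpha$ to an integral $\beta$, then one localization), whereas the paper avoids this step entirely by replacing the base field $L$ with the purely transcendental subfield $\Q(\z)$ and re-viewing $D$ as a finite-dimensional $\Q(\z)$-algebra, so that the finite algebraic extension is absorbed into the algebra and a single localization $\Z[\z,\tfrac1g]$ suffices before applying part (iii) once. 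Your extra appeal to part (iii) at the field level is legitimate ($1,\beta,\ldots,\beta^{e-1}$ do form a free $\Z[\mathbf t]$-basis of $\Z[\mathbf t][\beta]$ since $\beta$ still has degree $e$ over $\Q(\mathbf t)$), so the proof goes through; it is just slightly longer than the paper's base-change shortcut.
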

\begin{proof} Let $(e_k)_{1\leq k \leq d}$ be an $L$-basis of $D$ and 
  $(f_j)_j$ generators of $\O$. Without loss of generality we may change $L$ 
  into the subfield generated by the co-ordinates $f_{j}^k$ of the $f_j$'s and 
  the co-ordinates $c_{ij}^k$ of the products $e_ie_j$'s. So we may assume 
  that $L$ is finitely generated. Let $\z=(z_1,\ldots,z_n)$ be a transcendence 
  basis for $L$ over $\Q$. Then $[L:\Q(\z)]$ is finite, so $D$ is again 
  finite-dimensional over $\Q(\z)$, and without loss of generality, we may 
  assume that $L=\Q(\z)$. There is a polynomial $g \in \Z[\z]$ such that all 
  $f_j^k$ and $c_{ij}^k$ belong to $\Z[\z,\frac{1}{g}]$. By 
  Example~\ref{eg:Zconstrained}, $\Z$ is constrainedly filtered. Then by 
  Lemma~\ref{lem:constFilt}(i), so is $\Z[\z]$, and by item $(ii)$ so is 
  $R:=\Z[\z,\frac{1}{g}]$, and by item $(iii)$ so is $\O' := \sum_{k=1}^d R 
  e_k \supset \O$.



\end{proof}

\begin{fact} \label{fact:KA}
  A division subring of a matrix algebra over a division ring has finite 
  dimension over its centre.
\end{fact}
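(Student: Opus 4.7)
The plan is to invoke Kaplansky's theorem on polynomial-identity (PI) rings: every primitive PI ring is a finite-dimensional simple algebra over its centre. The three ingredients are thus (a) the Amitsur--Levitzki theorem, which ensures that $M_n(k)$ over a commutative ring $k$ satisfies the standard polynomial identity $s_{2n}$ of degree $2n$, and hence so does $M_n(D)$ whenever the division ring $D$ is itself PI (equivalently, centrally finite); (b) the trivial fact that any polynomial identity satisfied by a ring is inherited by all its subrings; and (c) the observation that a division ring is automatically primitive, since it acts faithfully and irreducibly on itself as a left module over itself.

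Putting these together, the deduction is short. First, $M_n(D)$ satisfies a polynomial identity, hence so does the subring $F \subset M_n(D)$. Second, $F$ is a division ring and therefore primitive. Applying Kaplansky's theorem to the primitive PI ring $F$ gives immediately that $F$ has finite dimension over its centre $Z(F)$, which is precisely the conclusion.

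The main --- and really only --- subtle point is ensuring that the ambient matrix algebra $M_n(D)$ genuinely satisfies a polynomial identity. This holds whenever $D$ itself is PI, in particular whenever $D$ is a (commutative) field, where Amitsur--Levitzki applies directly. In the intended applications of this fact in the paper, $D$ arises as $\End^0(G)$ for a commutative complex algebraic group $G$, and such endomorphism algebras are always finite-dimensional over their centres (being built out of matrix algebras over $\Q$-finite division algebras for the abelian-variety factors and out of $M_r(\C)$ for the vector-group factors), so this verification is straightforward and Kaplansky's theorem then does the remaining work at once.
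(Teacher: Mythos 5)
Your proof follows the same route as the paper's (Kaplansky's PI theorem, Amitsur--Levitzki, primitivity of division rings, subrings inherit PI), but you have correctly identified a gap that the paper glosses over. The paper asserts that ``any matrix algebra $M_n(\Delta)$ over a division ring $\Delta$ satisfies a polynomial identity (e.g.\ by the Amitsur--Levitzki theorem)''; this is a mis-citation, since Amitsur--Levitzki concerns $M_n(k)$ for $k$ \emph{commutative}. In fact $M_n(\Delta)$ is PI if and only if $\Delta$ is, and the Fact as stated is literally false: if $\Delta$ is a division ring of infinite dimension over its centre (e.g.\ the quotient division ring of the Weyl algebra $A_1(\Q)$), then $\Delta = M_1(\Delta)$ is a division subring of a matrix algebra over a division ring, yet infinite over its centre.

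Your resolution is exactly right: the argument goes through whenever the base division ring $D$ is itself PI (equivalently, centrally finite), in particular when $D$ is a field. In the paper's sole application (Proposition~\ref{prop:converse}), $F$ embeds in $\End_\C(\Lie(G)) \cong M_d(\C)$, a matrix algebra over $\C$, so Amitsur--Levitzki applies directly and the conclusion holds. Strictly speaking the statement of Fact~\ref{fact:KA} should read ``matrix algebra over a field'' (or ``over a centrally finite division ring''), and your write-up would be improved by flagging this as a correction rather than merely a ``subtle point'' --- but the mathematics in your proposal is sound and, on this point, more careful than the paper's.
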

\begin{proof}
  This is a special case of the Kaplansky-Amitsur theorem 
  \cite[p17]{Jacobson-PIAlgs}, which shows that any primitive algebra 
  satisfying a proper polynomial identity is finite dimensional over its 
  centre. Indeed, any division ring is a primitive algebra, and any matrix 
  algebra $M_n(\Delta)$ over a division ring $\Delta$ satisfies a polynomial 
  identity (e.g. by the Amitsur-Levitzki theorem \cite[p21]{Jacobson-PIAlgs}).
\end{proof}

In particular, combining this fact with the previous lemma, we see that if $F$ 
is a division subring of a matrix algebra, then every finite subset of $F$ is 
contained in a constrainedly filtered subring of $F$. We will use this 
observation in the next proposition. Although this is sufficient for our 
purposes, we do not know it to be the optimal result along these lines - in 
fact, for all we know, it could be that every finitely generated subring of 
$M_n(\C)$ is constrainedly filtered.

\begin{proposition} \label{prop:converse}
  Suppose $V \subset \prod_i W_i$ is special.
  Then, for appropriate choices of $C_0 \leq \C$ and structures $K_s$ with 
  universe $\C$ and scaling constant $\xi \in \stR$,
  the variety $V$ has a coherent generic $\a \in V(K)$ such that each $a_i$ is 
  $dcgp$ in $W_i$.
\end{proposition}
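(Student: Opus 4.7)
The approach is to construct, for each $s$, a finite set $X_s \subset G(\C)$ in coarse general position with $|H \cap X_s^n|$ large, and then extract a coherent generic in the ultraproduct. First, by the definition of ``special'' and Remark~\ref{rmk:corrGeneric}, it suffices to treat the case $V = H$ is the connected component of $\ker A$ for some $A \in \Mat_n(F \cap \End(G))$ with $F$ a division subring of $\End^0(G)$ and $G$ a connected commutative algebraic group; the product-of-subgroups case and the passage through a correspondence are routine, since coherence and $dcgp$ transport under interalgebraicity and amalgamate via Fact~\ref{fact:ideal}.

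The central construction is an ``approximate $F$-orbit''. By Fact~\ref{fact:KA}, $F$ has finite dimension over its centre, so Lemma~\ref{lem:finDimConstFilt} yields a constrainedly filtered subring of $F$ containing the entries of $A$; after clearing denominators via a suitable integer multiple (or via Lemma~\ref{lem:constFilt}(ii) combined with passage to fractions), one obtains a constrainedly filtered $\O \subset F \cap \End(G)$ with the entries of $A$ in $\O_0$. Fix a countable algebraically closed $C_0 \leq \C$ over which $G$, $H$ and $\O$ are defined, and pick $g \in G(\C)$ with $\trd(g/C_0) = \dim G$. Set $X_s := \{\alpha(g) : \alpha \in \O_s\}$, and choose the scaling constant $\xi$ by $\xi^{\dim G} = \lim_{s \to \U} |X_s|$. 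Since $g$ avoids every proper $C_0$-subvariety of $G$, the map $\alpha \mapsto \alpha(g)$ is injective on $\O$, so $|X_s| = |\O_s|$. Every solution $(\alpha_1, \ldots, \alpha_n) \in \O_s^n$ of $A\mathbf{x}=0$ in $F^n$ maps to a point of $X_s^n \cap H$; a standard linear-algebra argument over $F$ combined with (CF2) and (CF3) shows there are at least $|\O_s|^m$ such solutions, where $m = \dim_F \ker A$, giving $|X_s^n \cap H| \geq \xi^{m\dim G} = \xi^{\dim H}$.

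The main technical obstacle is to show that $X_s$ is in $(C,\tau)$-coarse general position in $G$ for each prescribed $C,\tau$ and sufficiently large $s$. For a proper subvariety $W' \subsetneq G$ of bounded complexity, controlling $|X_s \cap W'|$ amounts to bounding $|\{\alpha \in \O_s : \alpha(g) \in W'\}|$. When $G$ is semi-abelian the finitely generated subgroup $\O g \leq G$ is handled by the Mordell-Lang theorem of Laurent and Faltings: $|W' \cap \O g|$ is bounded by a constant depending only on the complexity of $W'$, as outlined in Remark~\ref{remk:converseGp}. For vector groups, and hence (via Chevalley's decomposition) for general commutative algebraic groups, Mordell-Lang fails, and one must instead exploit the slow growth (CF3) of $\O$ together with the genericity of $g$ directly; this is the most delicate case and requires genuinely new input beyond the algebraic-group machinery.

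To conclude, form the ultraproduct $X := \prod_{s \to \U} X_s \subset G(K)$, enlarge $\L$ so that $X$ is $\emptyset$-definable and $\bdl$ is continuous, and apply Fact~\ref{fact:ideal} to extract $a = (a_1,\ldots,a_n) \in X^n \cap H(K)$ with $\bdl(a) = \dim H$. The trivial bound of Lemma~\ref{lem:power-saving} forces $a$ to be generic in $H$, so $\dimo(a) = \dim H = \bdl(a)$; each $a_i$ is $dcgp$ because $X$ is $cgp$ in $G$; and coherence of $a$ then follows from Lemma~\ref{lem1}.
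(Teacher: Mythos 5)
Your overall scheme (reduce to a special subgroup, build approximate $\O$-modules from a constrainedly filtered subring $\O\subset F$, pass to the ultraproduct and extract a generic via Fact~\ref{fact:ideal} and Lemma~\ref{lem1}) is the same as the paper's, but the construction you propose has a genuine gap at exactly the point you flag. Taking a \emph{single} generic $g\in G(\C)$ and setting $X_s:=\O_s g$ cannot give coarse general position for general commutative $G$: for instance with $G=\mathbb{G}_a^d$, $d\geq 2$, the whole orbit $\O g\subset F g$ lies in the $\C$-span of $Fg$, a proper linear subvariety of bounded complexity whenever that span is proper (e.g.\ $F=\C$ acting by scalars), so $|X_s\cap W'|=|X_s|$ for a fixed $W'$ of complexity $1$. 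Mordell--Lang rescues only the semi-abelian case (this is the paper's Remark~\ref{remk:converseGp}), and your statement that the additive case ``requires genuinely new input'' is precisely an admission that the key claim of the proposition is not proved. The paper's proof supplies that input: for the $s$-th finite set it uses $s$ \emph{independent} generics $\gamma_{s,1},\ldots,\gamma_{s,s}$, lifted to the Lie algebra, and takes $X_{k,s}=\sum_{i\leq s}\O_{s-k}\gamma'_{s,i}$; then an $F$-linear-algebra argument shows that any proper subvariety defined over a field of transcendence degree $k$ meets $\exp_G(X_{0,s})$ in a set of $F$-rank at most $k$, hence of size at most $|\O_s|^k=|X_{0,s}|^{k/s}$, and the exponent $k/s\to 0$ gives cgp with no Diophantine theorem at all. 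The intersection $Z=\bigcap_k X_k$ is then an honest $\O$-submodule of full coarse dimension, which is what makes the lower bound $\bdl(H\cap\exp_G(Z)^n)\geq md$ immediate.

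A secondary problem: your reduction of $\O$ to a subring of $F\cap\End(G)$ by ``clearing denominators'' does not work as stated. Lemma~\ref{lem:finDimConstFilt} produces a constrainedly filtered ring of the form $\sum_k R e_k$ with $R=\Z[\z,\tfrac1h]$, whose elements have unbounded denominators; one can clear denominators of the finitely many entries of $A$, but not of the entire filtration, so $\O$ genuinely lives in $\End^0(G)$ rather than $\End(G)$. This is why the paper makes $\O$ act on $\Lie(G)$ via $\exp_G$ (or on the profinite cover, Remark~\ref{remk:profiniteCovers}), where division points are available, and only afterwards pushes the construction down to $G$.
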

\begin{proof}
  The conclusion is preserved by taking products and under correspondences, so 
  we may assume $W_i = G$ where $G$ is a $d$-dimensional commutative connected 
  algebraic group defined over a countable algebraically closed subfield $C_0 
  \subset \C$,
  and $V = H \leq G^n$ is a special subgroup.
  By Remark~\ref{remk:lieSpecial} and permuting co-ordinates, we may assume 
  that the Lie subalgebra
  $\Lie(H) \leq \Lie(G)^n$ is the graph of an $F$-linear map $\theta = 
  (\theta_1,\ldots ,\theta_m) : \Lie(G)^m \rightarrow \Lie(G)^{n-m}$ with 
  $\theta_i(\x) = \sum_{j=1}^d\alpha_{ij}x_j$, where $\alpha_{ij} \in F$ and 
  $F$ is a division subring of $\End^0(G):=\End(G) \otimes_{\Z} \Q$. We may 
  assume that $F$ is generated by the $\alpha_{ij}$.
  Extending $C_0$ if necessary, we may assume $F \subset \End^0_{C_0}(G)$, 
  i.e.\ that every element of $F$ is a scalar multiple of an algebraic 
  endomorphism of $G$ which is defined over $C_0$.

  Now $F$ acts faithfully by $\C$-linear maps on $\Lie(G) \cong \C^d$, so by 
  Fact~\ref{fact:KA}, $F$ has finite dimension over its centre.
  So by Lemma~\ref{lem:finDimConstFilt}, there is a constrainedly filtered 
  subring $\O \subset F$ such that $\alpha_{ij} \in \O$ ($\forall i,j$).


  Say $(\O_n)_n$ is a constrained filtration for $\O$. Let $\exp_G : \Lie(G) 
  \twoheadrightarrow G(\C)$ be the Lie exponential map, which is a surjective 
  $\End(G)$-homomorphism.

  For every positive integer $s$, let $\gammatup_s = 
  (\gamma_{s,i})_{i=1,\ldots ,s} \in G(\C)^s$ be algebraically generic over 
  $C_0$, i.e.\ $\dim^0(\gammatup_s) = sd$,
  and let $\gammatup'_s \in \Lie(G)^s$ be arbitrary such that 
  $\exp_G(\gamma'_{s,i}) = \gamma_{s,i}$.
  Note then that $\gammatup'_s$ is $F$-linearly independent modulo 
  $\ker(\exp_G)$.

  For $k \in \N_{\ge 0}$ we set $X_{k,s} := \sum_{i=1}^s\O_{s-k}\gamma'_{s,i} 
  \subset \Lie(G)$ if $s\geq k$ and $X_{k,s} := \emptyset $ if $s<k$.
  Let $X_k := \prod_{s \rightarrow \U} X_{k,s} \subset \Lie(G)(K) := 
  (\Lie(G))^\U$. Set the scaling constant $\xi := |X_0|^{\frac{1}{d}}$, so 
  that $\bdl(X_0)=d$.

  \begin{claim}
    Let $Z := \bigcap_{k \ge 0} X_k \subset \Lie(G)(K)$. Then $Z$ is an 
    $\O$-submodule with $\bdl(Z) = d$.
  \end{claim}
  \begin{proof}
    By (CF1),
    there is $k_0$ such that for all $k$ and $s$ we have
    $X_{k,s}+X_{k,s} \subset X_{k-k_0,s}$. It follows that $Z+Z \subset Z$. 
    Similarly, (CF2) implies $aZ \subset Z$ for all $a \in \O$. Finally, by 
    (CF3), for any $k,s$ and $\epsilon \in \R_{>0}$, we have 
    $\frac{|X_{k,s}|}{|X_{k+1,s}|} = \frac{|\O_{s-k}|^s}{|\O_{s-k-1}|^s} \leq 
    O_\epsilon(|\O_{s-k-1}|^{s\epsilon}) = O_\epsilon(|X_{k+1,s}|^\epsilon)$.
    Hence $\bdl(X_k) \leq (1+\epsilon)\bdl(X_{k+1})$ for any $\epsilon>0$,
    so $\bdl(X_k) \leq \bdl(X_{k+1})$.
    Clearly $\bdl(X_{k+1}) \leq \bdl(X_k)$,
    so $\bdl(X_{k+1}) = \bdl(X_k)$. So by induction, $\bdl(X_k) = d$ for all 
    $k$, so $\bdl(Z) = \inf_k \bdl(X_k) = d$.
  \end{proof}

  Now since $Z$ is an $\O$-submodule, the co-ordinate projection to $\Lie(G)^m$ 
  induces a bijection from $\Lie(H)
  \cap Z^n$ to $Z^m$, so $\bdl(\Lie(H) \cap Z^n) = md$. Moreover $\exp_G$ is 
  injective on each $X_{k,s}$ and hence on each $X_k$ and on $Z$.

  \begin{claim}
    $\exp_G(X_0)$ is $cgp$ in $G$.
  \end{claim}
  \begin{proof}
    Suppose $W \subsetneq G$ is a proper closed subvariety over $K$.
    Say $C_0(\b) \subset K$ is a finitely generated extension of $C_0$ such 
    that $W$ is over $C_0(\b)$. Then $W = J_{\b}$ for some $J_{\x}$ a 
    constructible family defined over $C_0$ of proper closed subvarieties of 
    $G$. If $\b = \lim_{s \rightarrow \U} \b_s$ and $W_s := J_{\b_s}$ then 
    $W(K) = \prod_{s \rightarrow \U} W_s(\C)$.

    It holds for $\U$-many $s$ that $\dim^0(\b_s) \leq \dim^0(\b) =: k$.
    We claim that for such $s$, we have $\operatorname{rk}_F ( 
    \exp_G^{-1}(W_s(\C)) \cap X_{0,s} ) \leq k$.

    Indeed, suppose $\g' = (g_i)_{i=0}^k$ is $F$-linearly independent with 
    $g_i' \in \exp_G^{-1}(W_s(\C)) \cap X_{0,s}$. By $F$-linear algebra, some 
    $k+1$-subtuple $\gammatup''$ of the generators $\gammatup'_s$ of $X_{0,s}$ 
    is in the $F$-linear span of $\g'$. Let $g_i := \exp_G(g_i')$. Then 
    $\dim^0(\g) \geq \dim^0(\exp_G(\gammatup'')) = (k+1)d$,
    so $\dim^0(\g) = (k+1)d$.

    Then $\dim^0(\g/\b_s) \leq \dim(W_s^{k+1}) \leq (k+1)(d-1) = \dim^0(\g) - 
    (k+1) < \dim^0(\g) - \dim^0(\b_s)$,
    so $\dim^0(\b_s) < \dim^0(\g) - \dim^0(\g/\b_s) = \dim^0(\b_s) - 
    \dim^0(\b_s/\g) \leq \dim^0(\b_s)$, which is a contradiction.

    So $| W_s(\C) \cap \exp_G(X_{0,s}) | = | \exp_G^{-1}(W_s(\C)) \cap X_{0,s} 
    | \leq | \O_s |^k = (|\O_s|^s)^{\frac ks} = |X_{0,s}|^{\frac ks}$.
    So for any $\epsilon \in \R_{>0}$, considering sufficiently large $s$, we 
    deduce $\bdl(W(K) \cap \exp_G(X_0)) \leq \epsilon\bdl(X_0)$.
    Hence $\bdl(W(K) \cap \exp_G(X_0)) = 0$, as required.
  \end{proof}


  By Fact \ref{fact:ideal} we can pick $\a \in H \cap \exp_G(Z)^n$ with 
  $\bdl(\a) = \bdl(H \cap \exp_G(Z)^n)$. By injectivity of $\exp_G$ on $Z$ 
  this is $\geq \bdl (\Lie(H) \cap Z^n) = md$.
  Note that $\bdl(a_i) \leq \bdl(Z)=d$ and that, by the above Claim $a_i \in 
  \exp_G(X_0)$ is $dcgp$ in $G$ (see Def. \ref{def-dcgp}) and hence $cgp$ (see 
  Lemma \ref{cgplem}). So by Lemma~\ref{lem1} $\a$ is coherent.
\end{proof}

\begin{remark} \label{remk:profiniteCovers}
  The only essential role played by Lie theory in the above proof is to 
  establish that $F$ embeds in a matrix algebra; for the rest of the proof, 
  $\exp_G : \Lie(G) \rightarrow G$ is used only to pick out choices of systems 
  of division points of elements of $G$, and this can instead be done directly 
  by replacing $\exp_G$ with $\rho : \widehat G \rightarrow G$ where $\widehat 
  G$ is the ``profinite cover'' $\widehat G := \invlim ([n] : G \rightarrow 
  G)$ consisting of ``division systems'' $(x_n)_n$ satisfying $[n]x_{nm} = 
  x_m$, and $\rho$ is the first co-ordinate map of the inverse limit, 
  $\rho((x_n)_n) := x_1$. Then $\End^0(G)$ acts on $\widehat G$ by $\frac\eta 
  m (x_n)_n := (\eta x_{nm})_n$.
\end{remark}

\begin{remark} \label{remk:converseGp}
  In the case that $G$ is a semiabelian variety, we can do slightly better and 
  obtain approximate $\O$-modules which are in general position in the sense 
  of Elekes-Szab\'o rather than merely in coarse general position. More 
  precisely, say an internal subset $X \subset G(K)$ is in general position if 
  it has finite intersection with any proper closed subvariety $W \subsetneq 
  G$ over $K$. Then proceed as in the proof of Proposition~\ref{prop:converse} 
  but taking $\gammatup_s := \gamma \in G(\C)$ to be a singleton which is in 
  no proper algebraic subgroup of $G$. Let $\Gamma := \O\gamma \leq G(\C)$ be 
  the $\O$-submodule generated by $\gamma$, which is a finitely generated 
  subgroup of $G(\C)$. As shown in 
  \cite[Theorem~4.7]{Scanlon-automaticUniformity},
  as a consequence of the truth of the Mordell-Lang conjecture,
  if $V_b$ is a constructible family of subvarieties of $G$ then there is a 
  uniform bound on the sizes of finite intersections $V_b \cap \Gamma$.
  Hence $\exp_G(X_k)$ is in general position in $G$.

  However, this approach clearly fails for $G=\mathbb{G}_a^2$, by considering 
  intersections with linear subvarieties. Pach 
  \cite[Theorem~2]{Pach-midpoints} gives an example of an internal subset $X 
  \subset K^2$ with $\bdl(X)=1=\bdl(X+X)$ and where the intersection with any 
  linear subvariety has size at most 2; however, quadratic subvarieties 
  witness that this $X$ is not in general position. We do not know whether it 
  is possible to find such an $X$ which is in general position. This prompts 
  the following question.
\end{remark}

\begin{question}
  Is there a sequence of finite sets $A_n \subset \C^2$ such that $|A_n|\to 
  +\infty$,  $|A_n + A_n| \leq |A_n|^{1 + 1/n}$ and   with the property that, 
  for each degree $d$, $$\sup_{n, \mathcal{C}, \deg \mathcal{C} \leq d} |A_n 
  \cap \mathcal{C}|$$ is finite, where $\mathcal{C}$ runs through algebraic 
  curves $\mathcal{C} \subset \C^2$.
\end{question}

\subsection{Proofs of the main results}
\label{subsect:proofs}

We first observe that Theorem \ref{thm:main1} is a special case of Theorem 
\ref{thm:main}, as follows immediately from the following lemma.

\begin{lemma}\label{lem:1dimSpecial}
  Let $G$ be a 1-dimensional connected complex algebraic group, let $n \geq 
  1$, and let $H \leq G^n$ be a connected algebraic subgroup. Then $H$ 
  is a special subgroup of $G^n$.
\end{lemma}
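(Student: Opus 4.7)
The plan is to take $F := \End^0(G)$ as the division subring witnessing specialness, and to exhibit $H$ as the connected component of the kernel of a square matrix over $\End(G)$. Every 1-dimensional connected complex algebraic group is isomorphic to $\mathbb{G}_a$, $\mathbb{G}_m$, or an elliptic curve $E$ (by Chevalley's structure theorem together with the classification of 1-dimensional connected affine groups), and in each of these cases $\End^0(G)$ is a field: respectively $\C$, $\Q$, and (either $\Q$ or an imaginary quadratic field, depending on whether $E$ has complex multiplication). In particular $F$ is a division ring, and since $\End(G) \subset F$ we have $F \cap \End(G) = \End(G)$.

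Given a connected algebraic subgroup $H \leq G^n$ of dimension $d$, set $m := n - d$ and consider the quotient $\pi_0 : G^n \twoheadrightarrow G^n/H$, a connected commutative algebraic group of dimension $m$. The key step is the claim that $G^n/H$ is isogenous to $G^m$. For $G = \mathbb{G}_a$, the quotient is a connected commutative unipotent group over a field of characteristic zero, hence a vector group $\cong \mathbb{G}_a^m$. For $G = \mathbb{G}_m$, any connected algebraic quotient of a torus is itself a torus, hence $\cong \mathbb{G}_m^m$. For $G = E$, a short induction on $n$ — applying Poincar\'e complete reducibility to $E^n$ and using simplicity of $E$ to see that every simple abelian subvariety of $E^n$ is isogenous to $E$ — shows that every abelian subvariety and every abelian quotient of $E^n$ is isogenous to some $E^k$; applied to $E^n/H$ this gives $E^n/H \sim E^m$.

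Once this is established, fix an isogeny $\phi : G^n/H \twoheadrightarrow G^m$ and compose with the closed immersion $\iota : G^m \hookrightarrow G^n$ into the first $m$ coordinates to form $A := \iota \circ \phi \circ \pi_0 \in \End(G^n)$. The identification $\End(G^n) = \Mat_n(\End(G))$ (valid since all factors are $G$ and $\Hom(G,G) = \End(G)$) places $A$ in $\Mat_n(\End(G)) = \Mat_n(F \cap \End(G))$. Finally $\ker A = \pi_0^{-1}(\ker \phi)$ is a finite extension of $H$ because $\ker \phi$ is finite and $\ker \pi_0 = H$, so $(\ker A)^{\circ} = H$, exhibiting $H$ as a special subgroup with $F$-structure.

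The only nontrivial ingredient is the isogeny claim in the elliptic case, which rests on Poincar\'e's complete reducibility theorem; the additive and multiplicative cases are immediate from standard structure theory, and the passage from the isogeny to the matrix $A$ is purely formal.
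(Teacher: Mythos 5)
Your proof is correct, but it takes a different (and more self-contained) route than the paper. The paper's own proof is very short: in the additive case it simply observes that $H$ is a linear subspace of $\C^n$, hence the kernel of a matrix over $\End(\mathbb{G}_a)=\C$, and in the multiplicative and elliptic cases it does not argue at all but cites an external result (Bays--Hart--Pillay, Lemma~4.1(i) of \cite{coversfRM}) for the fact that $H$ is the connected component of $\ker(A)$ for some $A \in \Mat_n(\End(G))$. What you do instead is essentially reprove that cited lemma, uniformly in all three cases: you pass to the quotient $G^n/H$, show it is isogenous to $G^m$ with $m=n-\dim H$ (vector-group structure theory for $\mathbb{G}_a$, rigidity of tori for $\mathbb{G}_m$, and Poincar\'e complete reducibility plus simplicity of $E$ in the elliptic case), and then assemble $A := \iota\circ\phi\circ\pi_0 \in \End(G^n)=\Mat_n(\End(G))$, noting $\ker A = \pi_0^{-1}(\ker\phi)$ contains $H$ with finite index so that $(\ker A)^{\circ}=H$. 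All the individual steps check out: $\End^0(G)$ is indeed a field in each case, the quotient and isogeny claims are standard, the identification $\End(G^n)\cong\Mat_n(\End(G))$ is valid since $G$ is commutative, and the connected-component argument is sound (in the additive case $\phi$ is even an isomorphism, recovering the paper's direct observation as the degenerate instance of your construction). The trade-off is the expected one: the paper's proof is shorter because it leans on a citation, while yours is longer but makes the lemma independent of that reference and treats the three cases by one mechanism.
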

\begin{proof}
  First, suppose $G$ is the additive group $(\C;+)$.
  Then $F := \End^0(G) = \C$ is a division ring, and $H$ is a vector subgroup, 
  i.e.\ $H = \ker(A)$ for some $A \in \Mat_n(\C)$, as required.

  Otherwise, $G$ is the multiplicative group or an elliptic curve. In either 
  case, $F := \End^0(G)$ is again a division ring, being either $\Q$ or a 
  quadratic imaginary field extension of $\Q$.
  We refer to \cite[Lemma~4.1(i)]{coversfRM} for the fact that $H \leq G^n$ is 
  the connected component of $\ker(A)$ for some $A \in \Mat_n(\End(G))$.
\end{proof}

\begin{proof}[Proof of Theorem~\ref{thm:main}]
  Combine Lemma~\ref{lem:powersavingCoherent} with 
  Proposition~\ref{prop:coherentSpecial} and Proposition~\ref{prop:converse}.
\end{proof}

We end this section with a proof of Corollary \ref{gen-sum-product} from the 
Introduction. It is a special case of the following more precise result:

\begin{corollary}
  Suppose $(G_1;\cdot_1),(G_2;\cdot_2)$ are non-isogenous connected complex 
  algebraic groups of the same dimension,
  and $\Gamma \subset G_1 \times G_2$ is a generically finite algebraic 
  correspondence.
  Then there are $\tau,\epsilon,c>0$ such that if $A_i \subset G_i$ are finite 
  sets such that $\Gamma \cap (A_1 \times A_2)$ is the graph of a bijection 
  between $A_1$ and $A_2$,
  and if $A_i \subset G_i$ and $A_i \cdot_i A_i \subset G_i$ are $\tau$-$cgp$ 
  for $i=1,2$,
  then
  \[ \max( |A_1 \cdot_1 A_1|, |A_2 \cdot_2 A_2| ) \geq c|A_1|^{1+\epsilon} .\]
\end{corollary}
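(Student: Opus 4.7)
The plan is to argue by contradiction via Theorem~\ref{thm:main}, applied to the subvariety
\[
  V := \overline{\{((x_1,y_1),(x_2,y_2),(x_3,y_3)) \in \Gamma^3 : x_3 = x_1 \cdot_1 x_2,\; y_3 = y_1 \cdot_2 y_2\}} \subset \Gamma^3
\]
(passing to an irreducible component whose projection to each factor is dominant). Since $\Gamma$ is generically finite over each $G_i$, the pair $(x_1,x_2)$ generically determines $(y_1,y_2)$ up to finite choice, so $V$ has dimension $2d$ where $d := \dim G_1 = \dim G_2 = \dim \Gamma$. The strategy is to show that failure of the conclusion gives rise to a coherent generic of $V$, hence to specialness of $V$ by Proposition~\ref{prop:coherentSpecial}, and then to invoke Fact~\ref{fact:corrIsog} to extract an isogeny between $G_1$ and $G_2$.

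Assume the conclusion fails. Then for each $s \in \N$ there exist finite sets $A_1^{(s)}, A_2^{(s)}$ as in the statement, with $A_i^{(s)}$ and $B_i^{(s)} := A_i^{(s)} \cdot_i A_i^{(s)}$ all $s$-cgp in $G_i$, and $\max(|B_1^{(s)}|,|B_2^{(s)}|) \leq |A_1^{(s)}|^{1+1/s}$. I would then form the internal subsets $X_1 = X_2 := \prod_{s \to \U}\; \Gamma(\C) \cap (A_1^{(s)} \times A_2^{(s)})$ and $X_3 := \prod_{s \to \U}\; \Gamma(\C) \cap (B_1^{(s)} \times B_2^{(s)})$ of $\Gamma(K)$, setting the scale so that $N_s^d := \max_i |X_i^{(s)}| \leq C\cdot |A_1^{(s)}|^{1+1/s}$ and hence $|X_i^{(s)}| \leq N_s^d$ for all $i$. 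Using that the projections $\Gamma \to G_i$ are generically finite, one transfers $s$-cgp of $A_i^{(s)}$ and $B_i^{(s)}$ in $G_i$ to cgp of each $X_i$ in $\Gamma$: any proper subvariety $\Gamma' \subsetneq \Gamma$ projects into a proper subvariety of $G_1$ (or a finite union of $G_2$-fibres), so $|X_i^{(s)} \cap \Gamma'|$ is controlled by the cgp bound on the relevant $A_1^{(s)}$ or $B_1^{(s)}$.

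The key incidence count is then the observation that each pair $(a,a') \in A_1^{(s)} \times A_1^{(s)}$ produces a point of $V \cap X_1^{(s)} \times X_2^{(s)} \times X_3^{(s)}$ via the bijection $A_1^{(s)} \leftrightarrow A_2^{(s)}$ furnished by $\Gamma$, giving at least $|A_1^{(s)}|^2 \geq N_s^{2d/(1+1/s)}$ incidences. For any fixed $\eta > 0$ this exceeds $N_s^{2d - \eta}$ once $s$ is large enough, and so $V$ admits no power-saving in the sense of Theorem~\ref{thm:main}. By Lemma~\ref{lem:powersavingCoherent}, applied after passing to a suitable language expansion, one obtains a coherent generic $(a_1,a_2,a_3;b_1,b_2,b_3) \in V(K)$ with each $(a_i,b_i) \in \Gamma$ and with each coordinate dcgp in its factor.

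To close the argument, I use that generic finiteness of $\Gamma$ gives $\acl^0(a_i) = \acl^0(b_i)$, so the sub-tuples $\a = (a_1,a_2,a_3)$ and $\b = (b_1,b_2,b_3)$ are coherent (as sub-tuples of a coherent set) and are in coordinatewise interalgebraic correspondence via the generically finite correspondences $\loc^0(a_i,b_i) \subset \Gamma$ (Remark~\ref{rmk:corrGeneric}). Their loci are the graphs of multiplication in $G_1$ and $G_2$ respectively, which by Proposition~\ref{prop:coherentSpecial} are special subvarieties. Two applications of Fact~\ref{fact:corrIsog} then yield that $G_1$ and $G_2$ are both commutative and isogenous to one another, contradicting the non-isogeny hypothesis. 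The main technical obstacle I anticipate is the cgp verification for $X_3$ and the scale-bookkeeping to put the setup in the precise form required by Lemma~\ref{lem:powersavingCoherent}; once those are in place the remainder is essentially a formal chain of invocations of the machinery developed earlier in the paper.
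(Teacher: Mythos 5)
There is a genuine gap, and it is located at the very first step: your choice of $V$. You define $V$ inside $\Gamma^3$, so membership in $V$ requires the \emph{third} pair $(x_3,y_3)=(x_1\cdot_1x_2,\,y_1\cdot_2y_2)$ to lie in $\Gamma$ as well. But the hypothesis only says that $\Gamma\cap(A_1\times A_2)$ is the graph of a bijection $\beta$ on $A_1\times A_2$; it says nothing about the product sets. So the point $\bigl((a,\beta a),(a',\beta a'),(a\cdot_1a',\,\beta a\cdot_2\beta a')\bigr)$ you produce from a pair $(a,a')\in A_1^2$ has no reason to satisfy $(a\cdot_1a',\,\beta a\cdot_2\beta a')\in\Gamma$, hence no reason to lie in your $V$ (or to have its third coordinate in your $X_3=\Gamma\cap(B_1\times B_2)$, which could even be empty). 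The claimed $|A_1|^2$ incidence lower bound therefore fails, and with it the whole contradiction. The same extra constraint also invalidates the dimension count: $(x_1,x_2)$ does determine $(y_1,y_2)$, $x_3$, $y_3$ up to finite choice, but the additional condition $(x_1\cdot_1x_2,\,y_1\cdot_2y_2)\in\Gamma$ is a nontrivial closed condition which generically cuts the dimension below $2d$ — indeed it holds generically only if $\Gamma$ already intertwines the two group laws, which is essentially the statement you are trying to refute. It is telling that your final step exploits exactly this built-in constraint (to get $\acl^0(a_3)=\acl^0(b_3)$ and link $G_1$ to $G_2$); the constraint cannot be both an unproved feature of the combinatorial data and the source of the contradiction.

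The paper's proof avoids this by taking $V=\{(x_1,y_1,x_1\cdot_1y_1,\,x_2,y_2,x_2\cdot_2y_2):(x_1,x_2),(y_1,y_2)\in\Gamma\}\subset G_1^3\times G_2^3$, i.e.\ six $d$-dimensional factors and \emph{no} condition tying the two products together via $\Gamma$. Then the finite sets $A_1,A_1,A_1\cdot_1A_1,A_2,A_2,A_2\cdot_2A_2$ (exactly the sets the hypotheses make $\tau$-cgp) do contain the $|A_1|^2$ points above, so failure of the displayed bound for every $\epsilon$ means $V$ admits no power-saving, and Theorem~\ref{thm:main} makes $V$ special. The non-isogeny is then contradicted not coordinate-pair by coordinate-pair but through the special subgroup itself: $V$ corresponds to a special subgroup $H\leq G^6$ of a power of a \emph{single} commutative group $G$ (the linkage among all six coordinates rules out a nontrivial product decomposition), and projecting $H$ to the first three and to the last three coordinates, Remark~\ref{rmk:mainCodim1} and Fact~\ref{fact:corrIsog} show that $G_1$ and $G_2$ are each commutative and isogenous to the same $G$, hence to each other. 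Your plan of re-running the machinery (Lemma~\ref{lem:powersavingCoherent}, Proposition~\ref{prop:coherentSpecial}) instead of quoting Theorem~\ref{thm:main} is harmless, but the variety must be the unconstrained one, and the isogeny must then be extracted from the joint special structure rather than from an assumed $\Gamma$-relation between the two product coordinates.
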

\begin{remark}
  The $cgp$ condition holds trivially for any $A$ when $\dim(G_i)=1$.

  %
\end{remark}

\begin{proof}
  Let $V = \{ (x_1,y_1,x_1\cdot_1y_1,x_2,y_2,x_2\cdot_2y_2) : 
  (x_1,x_2),(y_1,y_2) \in \Gamma \}$.
  Suppose $V$ is special.
  Then $V$ is in co-ordinatewise correspondence with a special subgroup $H 
  \leq G^6$, say. As in Remark~\ref{rmk:mainCodim1}, the projection of $H$ to 
  the first three co-ordinates is in co-ordinatewise correspondence with the 
  graph $\{ (x,y,x+y) \}$ of the group operation of $G$. Hence the graph of 
  the group operation of $G_1$ is in co-ordinatewise correspondence with that 
  of $G$.
  By Fact~\ref{fact:corrIsog}, $G_1$ is commutative and isogenous to $G$.
  Similarly, considering the projection to the last three co-ordinates, $G_2$ 
  is commutative and isogenous to $G$. Since isogeny is an equivalence 
  relation, this contradicts the assumption that $G_1$ and $G_2$ are not 
  isogenous.

  So by Theorem~\ref{thm:main}, $V$ admits a power-saving, say by $\epsilon'$.
  So for sufficiently large $\tau$, if $A_i$ are as in the statement, then
  setting $X := \{ (a_1,b_1, a_1\cdot_1 b_1, a_2,b_2, a_2\cdot_2 b_2) : 
  a_i,b_i \in A_i; (a_1,a_2),(b_1,b_2) \in \Gamma \} \subset V$, we have 
  $|A_1|^2 = |X| \leq O(\max( |A_1 \cdot_1 A_1|, |A_2 \cdot_2 A_2| 
  )^{2-\epsilon'})$.
  So $\epsilon := \frac{\epsilon'}{2-\epsilon'}$ is as required.
\end{proof}

\begin{question}
  Consider the following weakening of coarse general position. Say $a \in 
  K^{<\infty}$ with $\bdl(a)=\dimo(a)$ is in \emph{Larsen-Pink general 
  position} if for any $B \subset K^{<\infty}$, we have $\bdl(a/B) \leq 
  \dimo(a/B)$. This hypothesis suffices to give the trivial upper bound of 
  Lemma~\ref{lem:power-saving}, and it is not satisfied by the counterexample 
  of Section~\ref{gpNecessity}, nor by similar constructions based on 
  nilpotent groups.
  
  Does Theorem~\ref{thm:main} go through unchanged if coarse general position 
  is relaxed to Larsen-Pink general position?
  
  The techniques of this paper are insufficient to prove this generalisation, 
  as the corresponding weakened notion of coherence does not directly yield a 
  pregeometry.

  Since type-definable subgroups of simple algebraic groups do satisfy this 
  Larsen-Pink condition (see \cite[2.15]{Hr-psfDims}), a positive answer 
  should suffice to recover the characteristic 0 case of the main theorem of 
  \cite{bgt}, Theorem~5.5.
\end{question}

\section{Coherence in subgroups}\label{sec:subgroups}

In this section we observe a strengthening of our results in the special case 
of a $\bigwedge$-definable pseudo-finite subgroup and derive Theorem 
\ref{thm:coherentApproxSubgroup} from the introduction. We then briefly 
discuss connections to Diophantine problems and Manin-Mumford.


\begin{theorem} \label{thm:coherentSubgroup} We keep the notational setup of 
  Section \ref{sec:setup}.
  Let $G$ be a commutative algebraic group over $C_0$ and $\Gamma \leq G(K)$ 
  be a $\bigwedge$-definable (over $\emptyset $) subgroup of $G(K)$ contained 
  in a $cgp$ definable (over $\emptyset$) subset of $G$ (see 
  Definition~\ref{def-dcgp}). Assume $\bdl(\Gamma) = \dim(G)$. Then the locus 
  $\locus^{G^n}(\gammatup/C_0)$ of any coherent tuple $\gammatup \in \Gamma^n$ 
  is a coset of an algebraic subgroup.
\end{theorem}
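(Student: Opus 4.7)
The plan is to use the subgroup structure of $\Gamma$ to upgrade the ``special'' conclusion of Proposition~\ref{prop:coherentSpecial} to ``coset of an algebraic subgroup''. First I would dispose of the generic reduction: since $\Gamma$ sits inside a definable $cgp$ set, each coordinate $\gamma_i$ is $cgp$ by Lemma~\ref{cgplem}, and the coherence equality $\bdl(\gamma_i) = \dimo(\gamma_i)$ forces $\gamma_i$ to be either generic in $G$ or to lie in $G(C_0)$ (any $\gamma_i$ in a proper subvariety of $G$ over $C_0$ would have $\bdl = 0$, hence $\dimo = 0$, hence $\gamma_i \in G(C_0)$). Constant coordinates contribute point factors and may be pulled out of $V$, so I assume every $\gamma_i$ is generic in $G$, whence $V := \locus^{G^n}(\gammatup/C_0)$ has $\dim V = \dimo(\gammatup)$.

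The core construction produces a second independent coherent realization. Applying Fact~\ref{fact:ideal} to the $\bigwedge$-definable set $\tp(\gammatup/C_0)(K) \cap \Gamma^n$ (whose $\bdl$-dimension equals $\dim V$ by coherence of $\gammatup$ and the bound $\bdl \le \dimo$ of Lemma~\ref{lem1}) I obtain $\gammatup' \in V(K) \cap \Gamma^n$ with $\bdl(\gammatup'/\gammatup) = \dim V$; coherence then forces $\dimo(\gammatup'/\gammatup) = \dim V$, so $\gammatup' \ind^0 \gammatup$. Because $\Gamma^n$ is a subgroup, the translate $\gtup := \gammatup - \gammatup'$ lies in $\Gamma^n$ with each coordinate $cgp$ by Lemma~\ref{cgplem}. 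The pair $(\gammatup,\gammatup')$ is coherent (two independent coherent tuples), and since $(\gammatup,\gtup)$ is obtained from $(\gammatup,\gammatup')$ by an algebraic change of variables, $\bdl(\gammatup,\gtup) = \dimo(\gammatup,\gtup) = 2\dim V$, so by Lemma~\ref{lem1} the tuple $\gtup$ is itself coherent.

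The central claim is that $\gtup \in \Stab(V)$, equivalently $V = V + \gtup$, equivalently $\gammatup \ind^0 \gtup$. Working inside the coherent closure $P := \ccl(\{\gammatup,\gammatup'\})$, which is coherent by Lemma~\ref{lem:cgpCoherentClosure}, I would apply Lemma~\ref{lem:cohFullEmb} coordinate-wise (with $A$ the previous coordinates of $\gammatup$ and $B$ the coordinates of $\gtup$) to obtain, for each $i$, either $\gamma_i \ind^0_{\gamma_{<i}} \gtup$ or $\gamma_i \in \acl^0(\gamma_{<i}\,\gtup)$. If every coordinate falls into the first alternative, additivity of $\dimo$ delivers $\gammatup \ind^0 \gtup$ at once. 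The main obstacle is to rule out the second alternative: there one would extract $\dimo(\gtup) = 2\dim V$ (since $\gammatup,\gammatup' \in \acl^0(\gtup)$), combine with coherence $\bdl(\gtup) = \dimo(\gtup)$ and the bound $\bdl(\gtup) \le \bdl(\Gamma^n) = n\dim G$, and derive a contradiction either by applying Proposition~\ref{prop:coherentSpecial} to $\gtup$ and comparing the resulting special structure of $V - V$ against the subgroup hypothesis on $\Gamma$, or by iterating with additional independent coherent realizations $\gammatup'' \in V \cap \Gamma^n$ and tracking $\bdl$-dimensions of the resulting differences against $\bdl(\Gamma^n)$.

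Once $\gammatup \ind^0 \gtup$ is secured, $\loc^0(\gammatup/C_0(\gtup)) = V$ and $V + \gtup$ is an irreducible $C_0(\gtup)$-closed subvariety of $G^n$ of the same dimension containing $\gammatup = \gammatup' + \gtup$, forcing $V = V + \gtup$, so $\gtup \in \Stab(V)$. Since $\gtup$ is a generic point of the irreducible image $V - V$ of $V \times V$ under subtraction, we conclude $V - V \subset \Stab(V)$; combined with $\Stab(V) \subset V - v \subset V - V$ for any $v \in V$, all three have the same dimension, $V - V = \Stab(V)^\circ$ is an algebraic subgroup, and $\dim V = \dim \Stab(V)^\circ$. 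Picking $v \in V$ (e.g.\ $v = \gammatup$) gives $V \subset v + \Stab(V)^\circ$ with equality by dimension and irreducibility, so $V$ is a coset of the algebraic subgroup $\Stab(V)^\circ$, as required.
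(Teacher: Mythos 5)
Your reduction and your endgame are fine: the generic reduction, the construction of an independent realisation $\gammatup'$ of $\tp(\gammatup/C_0)$ inside $\Gamma^n$, the coherence of the difference, and the final stabiliser argument (independence of $\gammatup$ from $\gammatup-\gammatup'$ gives $V+(\gammatup-\gammatup')=V$, hence $V-\gammatup=\Stab(V)$ and $V$ is a coset) all work. The problem is that the ``central claim'' $\gammatup \ind^0 (\gammatup-\gammatup')$ is, given the rest of your argument, essentially equivalent to the theorem itself, and you do not actually prove it. The coordinatewise dichotomy from Lemma~\ref{lem:cohFullEmb} only tells you that in the bad case (and only when \emph{every} coordinate falls into the algebraic alternative, which your sketch does not ensure) one gets $\acl^0(\gammatup-\gammatup')=\acl^0(\gammatup\gammatup')$, i.e.\ $\dimo(\gammatup-\gammatup')=2\dim V$. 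That is not by itself a contradiction: it is exactly what happens for a non-coset $V$ (e.g.\ the graph of squaring in $\mathbb{G}_a^2$), so the contradiction must come from $\gammatup-\gammatup'\in\Gamma^n$, $\bdl(\Gamma)=\dim G$ and the full structure theory. Your two suggested routes (``apply Proposition~\ref{prop:coherentSpecial} to the difference and compare with the subgroup hypothesis'', or ``iterate with more realisations and track $\bdl$'') are gestures, not arguments: specialness of $\loc^0(\gammatup-\gammatup')$ alone says nothing about $V$ being a coset, and no dimension count is carried out.

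The paper closes precisely this gap by exploiting the group operation inside the coherent configuration and then comparing groups via isogeny. It takes $\alphatup\in\Gamma^n$ with $\bdl(\alphatup/\gammatup)=n\dim(G)$ (generic in $\Gamma^n$, not in $V\cap\Gamma^n$), so that $(\gammatup,\alphatup,\gammatup+\alphatup)$ is a coherent tuple encoding the addition law; running the proof of Proposition~\ref{prop:coherentSpecial} on this triple produces a commutative group $G'$ over $C_0$ and a generic $(\gammatup',\alphatup',\psitup')$ of a connected subgroup $H\leq G'^{3n}$, coordinatewise interalgebraic with the triple, with $\psi_i'=\gamma_i'+\alpha_i'$ after adjusting by invertible elements of $\End^0_{C_0}(G')$. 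The decisive step you are missing is Fact~\ref{fact:corrIsog}: applied coordinatewise it yields $m\in\N$, isogenies $\eta_i:G'\to G$ and constants $k_i\in G(C_0)$ with $\eta_i(\gamma_i')=m\gamma_i+k_i$, so that $\locus^{G^n}(m\gammatup/C_0)$ is a translate of the image of the subgroup $\pi_1(H)$, hence a coset, and therefore so is $\locus^{G^n}(\gammatup/C_0)$. Without this isogeny comparison transporting the coset structure from the abstract group $G'$ back to the given group $G$, your plan does not go through; with it, one does not need your central independence claim at all.
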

\begin{remark}
  The commutative case is the only relevant case: by 
  Corollary~\ref{specialAbelian}, if $G$ is a connected algebraic group with 
  such a subgroup $\Gamma$ then $G$ is commutative.
\end{remark}
\begin{proof}
  By Lemma~\ref{cgplem} any $\alpha \in \Gamma$ is $cgp$. In particular if 
  $\bdl(\alpha)>0$, then $\alpha$ is generic in $G$ and $\bdl(\alpha) \leq 
  \bdl(\Gamma)=\dim(G)=\dimo(\alpha)$. So for all $\alpha \in \Gamma$ we have 
  $\bdl(\alpha) \leq \dimo(\alpha)$.

  By Fact~\ref{fact:ideal}, we may find $\alphatup \in \Gamma^n$ with 
  $\bdl(\alphatup/\gammatup) = \bdl(\Gamma^n) = n\dim(G)$.
  Then
  $\bdl(\gammatup,\alphatup,\gammatup+\alphatup) = \bdl(\gammatup,\alphatup) = 
  \bdl(\gammatup) + n\dim(G) = \dimo(\gammatup) + n\dim(G) \geq 
  \dimo(\gammatup,\alphatup) = 
  \dimo(\gammatup,\alphatup,\gammatup+\alphatup)$,
  and $\gamma_i, \alpha_i, \gamma_i+\alpha_i \in \Gamma$,
  so $(\gammatup,\alphatup,\gammatup+\alphatup)$ is coherent by 
  Lemma~\ref{lem1}.

  Since the product of cosets is a coset, we may assume that 
  $\locus^{G^n}(\gammatup/C_0)$ is not a product of loci of subtuples.
  Then as in Proposition~\ref{prop:coherentSpecial},
  there is a commutative algebraic group $G'$ over $C_0$
  and a tuple $(\gammatup',\alphatup',\psitup')$ which is generic in a 
  connected subgroup $H \leq G'^{3n}$ and which is co-ordinatewise 
  $\acl^0$-interalgebraic with $(\gammatup,\alphatup,\gammatup+\alphatup)$.
  Furthermore, for each $i$ we have $\theta_i^1\gamma_i' + \theta_i^2\alpha_i' 
  + \theta_i^3\psi_i' \in G'(C_0)$ for some $\theta_i^j \in \End_{C_0}(G')$ 
  invertible in $\End^0_{C_0}(G')$, and so we may assume without loss that 
  $\psi_i' = \gamma_i'+\alpha_i'$.

  Then by Fact~\ref{fact:corrIsog}, there are $m \in \N$ and isogenies $\eta_i 
  : G' \rightarrow G$ and $k_i \in G(C_0)$ such that $\eta_i(\gamma'_i) = 
  m\gamma_i + k_i$ for $i=1,\ldots ,n$.
  Hence $\locus^{G^n}(m\gammatup/C_0) = 
  (\prod_i\eta_i)(\locus^{G^n}(\gammatup')) - \k = (\prod_i\eta_i)(\pi_1(H)) - 
  \k$ where $\k=(k_1,\ldots ,k_n)$ and $\pi_1 : (\x,\y,\z) \mapsto \x$.
  So $\locus^{G^n}(m\gammatup/C_0)$ is a coset of an algebraic subgroup of 
  $G^n$,
  and hence so is $\locus^{G^n}(\gammatup/C_0)$, as required.
\end{proof}

\begin{proof}[Proof of Theorem~\ref{thm:coherentApproxSubgroup}]
  First, note that there exists a function $f : \N \rightarrow \N$ such that 
  any translate of a subvariety of $G^n$ of complexity $\leq \tau$ has 
  complexity $\leq f(\tau)$.
  This follows from \cite[Lemma~3.4]{bgt}, and can also be seen as a 
  consequence of the fact that the family of translates of subvarieties in a 
  constructible family is constructible.
  Increasing $f$ if necessary, we may assume also that $f$ is strictly 
  increasing and $2^{-\tau} + \frac1{f(\tau)} \leq \frac1\tau$ for any $\tau 
  \in \N$.

  By \cite[Proposition~2.26]{TaoVu}, there are $C_1,C_2>0$ such that if $|A+A| 
  \leq |A|^{1+\eps}$ then $H := A-A$ is an $|A|^{C_1\eps}$-approximate 
  subgroup and $|H| \leq |A|^{1+C_2\eps}$.

  It therefore suffices to prove the following revised statement:
  there are $N,\tau,\eps,\eta>0$ depending only on $G$ and the complexity of 
  $V$ such that if $H \subset G$ is a $\tau$-$cgp$ $|H|^\eps$-approximate 
  subgroup and $|H| \geq N$, then $|H^n \cap V| < |H|^{\frac{\dim(V)}{\dim(G)} 
  - \eta}$.
  Indeed, given $\tau' \in \N$, if $(N,\tau,\eps,\eta)$ are as required in the 
  revised statement for $V$ of complexity at most $f(\tau')$, then 
  $(N,\tau,\eps':=\frac{\eps}C_1,\eta':=\frac{\dim(V)}{\dim(G)} - 
  (\frac{\dim(V)}{\dim(G)} - \eta)(1 + C_2\eps'))$
  are as required in the original statement for $V$ of complexity at most 
  $\tau'$, after increasing $C_1$ if necessary to ensure $\eta'>0$.
  Indeed, given $V$ of complexity at most $\tau'$, suppose there is $A \subset 
  G$ with $|A| \geq N$ and $|A^n \cap V| \geq |A|^{\frac{\dim(V)}{\dim(G)} - 
  \eta'}$ and $|A+A| \leq |A|^{1+\eps'}$,
  and with $H := A-A$ being $\tau$-$cgp$.
  Then $H$ is an $|A|^{C_1\eps'} = |A|^{\eps}$-approximate subgroup, and $|H| 
  \geq |A| \geq N$.
  Let $x \in A$, and let $V' := V-(x,x,\ldots ,x)$, which has complexity at 
  most $f(\tau')$.
  Then $|H^n \cap V'| \geq |A^n \cap V|
  \geq |A|^{\frac{\dim(V)}{\dim(G)} - \eta'}
  \geq |H|^{(\frac{\dim(V)}{\dim(G)} - \eta')(1+C_2\eps')^{-1}}
  \geq |H|^{\frac{\dim(V)}{\dim(G)} - \eta}$.
  This contradicts the revised statement.

  Now suppose the revised statement fails. Then there is a family $(V_b)_b$ of 
  bounded complexity subvarieties of $G^n$, such that for each $s \in \N$ 
  there is an $f(s)$-$cgp$ subset $H_s \subset G$ such that $H_s$ is an 
  $|H_s|^{4^{-s}}$-approximate subgroup, and a parameter $b_s$, such that 
  $|H_s| \geq s$ and $|H_s^n \cap V_{b_s}| \geq 
  |H_s|^{\frac{\dim(V_{b_s})}{\dim(G)} - \frac1s}$ and $V_{b_s}$ is not a 
  coset.

  Let $\U$ be a non-principal ultrafilter on $\N$,
  let $\Gamma_i := \prod_{s \rightarrow \U} \sum_{j=1}^{2^{s-i}}H_s$ for $i 
  \in \N$,
  and let $\Gamma := \bigcap_{i\geq 0} \Gamma_i$.
  Set the language $\L$ to be such that each $\Gamma_i$ is definable.
  Then $\Gamma$ is a $\bigwedge$-definable subgroup,
  since $\Gamma_{i+1} + \Gamma_{i+1} \subset \Gamma_i$.
  Now $\bdl(\Gamma_i) = \bdl(\prod_{s \rightarrow \U} H_s)$ for any $i$,
  because $|\sum_{j=1}^{2^{s-i}}H_s| \leq |H_s|^{1+2^{s-i}4^{-s}} \leq 
  |H_s|^{1+2^{-s}}$.
  So $\bdl(\Gamma) = \bdl(\prod_{s \rightarrow \U} H_s)$,
  so setting our scaling parameter $\xi$ appropriately,
  we may ensure $\bdl(\Gamma)=\dim(G)$.

  We claim that $\Gamma_0$ is $cgp$ in $G$. Indeed, $\sum_{j=1}^{2^s}H_s$ is 
  contained in the union of $|H_s|^{2^s 4^{-s}} = |H_s|^{2^{-s}}$ translates 
  of $H_s$,
  so if $W \subsetneq G$ has complexity $\leq s$ then, using the lower bound 
  we assumed on $f$, we have
  $|W \cap \sum_{j=1}^{2^s}H_s| \leq |H_s|^{2^{-s}} |H_s|^{\frac1{f(s)}} \leq 
  |H_s|^{\frac1s}$.
  So $\sum_{j=1}^{2^s}H_s$ is $s$-$cgp$ in $G$.

  Let $b := \lim_{s \rightarrow \U} b_s$.
  Then $\bdl(\Gamma^n \cap V_b) = \dim(V_b)$.
  Set $C_0$ such that $G$ and $V_b$ are over $C_0$.
  By Fact \ref{fact:ideal} we can pick $\gammatup=(\gamma_1,\ldots,\gamma_n) 
  \in \Gamma^n\cap V_b$ with $\bdl(\gammatup) = \bdl(\Gamma^n \cap V_b) $. 
  Since $\Gamma_0$ is $cgp$ in $G$, Lemma \ref{cgplem} implies that all 
  $\gamma_i$'s are $cgp$ and $\bdl(\gamma_i)\leq \dimo(\gamma_i)$. Then 
  $\gammatup$ is coherent by Lemma~\ref{lem1}. So by 
  Theorem~\ref{thm:coherentSubgroup} we have that $V_b = 
  \locus^{G^n}(\gammatup/C_0)$ is a coset.
  But then so is $V_{b_m}$ for $\U$-many $m$, contradicting our assumption.
\end{proof}

\begin{example}[Connections with Manin-Mumford and Mordell-Lang]\label{MM}
  Let $G$ be a complex elliptic curve.
  Write $G[\infty] := \bigcup_{r \in \N} G[r]$ for the torsion subgroup.
  Suppose $V \subset G^n$ is an irreducible closed complex subvariety such 
  that $V(\C) \cap G[\infty]$ is Zariski dense in $V$. 

  We know, by the Manin-Mumford conjecture proven by Raynaud, that $V$ is a 
  coset of an algebraic subgroup. Some co-ordinate projection to $G^{\dim(V)}$ 
  yields an isogeny, so $V = H+\alpha$ where $H=\eta(G^{\dim(V)})$ is a 
  subgroup and $\eta$ is an isogeny, and $\alpha \in G[\infty]^n$.
  Setting $c := |\ker(\eta)|^{-1}$
  it follows that for $r \geq N:=\operatorname{ord}(\alpha)$ we have $|V(\C) 
  \cap G[r!]^n| \geq c\cdot|G^{\dim(V)}[r!]|$, and so for $r \in \N$ we have 
  the lower bound $|V(\C) \cap G[r!]^n| \geq \Omega(|G[r!]|^{\dim(V)})$.

  Suppose conversely that we only know this consequence of Manin-Mumford on 
  the asymptotics of the number of torsion points in $V$, or even just that 
  for every $\epsilon > 0$, for arbitrarily large $r \in \N$ we have $|V(\C) 
  \cap G[r!]^n| \geq |G[r!]|^{{\dim(V)}-\epsilon}$.
  Then it follows that $V$ is a coset of an algebraic subgroup.
  Indeed, $G[r!]$ is a subgroup and is trivially $\tau$-$cgp$ for any $\tau$ 
  since $\dim(G)=1$, so this is an immediate consequence of 
  Theorem~\ref{thm:coherentApproxSubgroup}.

  We can generalise this argument by replacing $G[\infty]$ with a finite rank 
  subgroup, as in the Mordell-Lang conjecture. Indeed, let $\Gamma \leq G(\C)$ 
  be a finite rank $\End(G)$-submodule.
  Say $\Gamma$ is contained in the divisible hull of the subgroup generated by 
  $\gamma_1,\ldots ,\gamma_k$.
  Let $\Gamma_r := \{ x \in \Gamma : (r!)x \in \sum_i [-r,\ldots ,r]\gamma_i 
  \}$.
  Then $\Gamma_r$ is finite and $|\Gamma_r+\Gamma_r|\leq 2^k|\Gamma_r|$, so as 
  above we obtain that if $V \subset G^n$ is an irreducible closed subvariety, 
  then $V$ is a coset of a subgroup if and only if for all $\epsilon>0$, for 
  arbitrarily large $r \in \N$, we have $|V(\C) \cap \Gamma_r^n| \geq 
  |\Gamma_r|^{\dim(V)-\epsilon}$.
\end{example}



\appendix
\section{Projective geometries fully embedded in algebraic geometry}
\label{appx:EHeq}

\cite{EH-projACF} characterises the projective subgeometries of the geometry 
of algebraic closure in an algebraically closed field $K$ over an 
algebraically closed subfield $C_0$. The points of such a geometry are 
$C_0$-interalgebraicity classes of elements of $K$.

In this essentially self-contained appendix, we consider the more general 
situation of a projective geometry induced from field-theoretic algebraic 
dependence whose points are $C_0$-interalgebraicity classes of finite tuples 
from $K$ (or, equivalently, of $K$-rational points of arbitrary varieties over 
$K$).

The arguments are generalisations of those used in \cite{EH-projACF}. We use 
Hrushovski's abelian group configuration theorem to find an abelian algebraic 
group, then apply a version of the fundamental theorem of projective geometry 
to identify the co-ordinatising skew field of the geometry as a skew field of 
quasi-isogenies of the group. Identifying the isogenies involved requires a 
little more care in the higher-dimensional case, as there may be non-trivial 
endomorphisms which are not isogenies, and these cannot appear in the 
co-ordinatising skew field.

We allow ourselves to simplify some of the algebra by restricting ourselves to 
the characteristic 0 case, whereas \cite{EH-projACF} works in arbitrary 
characteristic.

Let $K$ be an algebraically closed field of characteristic 0. 
Let $C_0 \leq K$ be an algebraically closed subfield, and let $\cl : 
\powerset(K^{<\infty}) \rightarrow \powerset(K^{<\infty})$ be field-theoretic 
algebraic closure over $C_0$ as defined in Example \ref{alg-tuples}. In other 
words, for a subset $B \subset K^{<\infty}:= \bigcup_{n \geq 1} K^n$ we let 
$\cl(B)$ be the set of tuples from the field-theoretic algebraic closure 
$C_0(B)^{\alg}$ of the subfield $C_0(B) \leq K$ generated by $C_0$ and the 
co-ordinates of all tuples from $B$. This closure operator\footnote{In model 
theory this is usually denoted by $\acleq$ and is defined on subsets of 
$\Keq$, which we identify here with $K^{<\omega}$ via elimination of 
imaginaries.
} was denoted $\acl^0(B)^{<\infty}$ in Section \ref{sec:proj}. So $a \in 
\cl(B)$ if and only if $a$ has finite orbit under $\Aut(K/C_0(B))$, if and 
only if $a \in (C_0(B)^{\alg})^{<\omega}$.

If $V$ is an algebraic variety over $C_0$ and $a \in V(K)$ is a $K$-rational
point, we may consider $a$ as a tuple in $K^{<\infty}$ as explained in \S 
\ref{absVars}.

Let $\G_K := \P(K^{<\infty};\cl)$ be the projectivisation of the closure 
structure $(K^{<\infty};\cl)$, as defined in \textsection\ref{subsect:geoms}; 
i.e.\ $\G_K = \{ \cl(\{a\}) : a \in K^{< \infty} \setminus \cl(\emptyset ) \}$ 
with the closure induced from (and still denoted by) $\cl$.
For $x \in K^{< \infty}$ and $C \subset K^{< \infty}$, define $\widetilde x := 
\cl(\{x\})$
and $\widetilde C := \{ \widetilde c : c \in C \}$.

As already noted $\G_K$ is not a geometry in general (it does not satisfy the 
exchange property), but here we are interested in geometries that embed in 
$\G_K$. We say that a geometry $P$ is \emph{connected} if any two points $a,b$ 
are non-orthogonal, i.e. if there exists $C \subset P$ such that $a \in 
\cl(b,C) \setminus \cl(C)$.

\begin{lemma} \label{lem:fullyEmbedded}
  Let $P \subset \G_K$ and suppose that the restriction $(P,\cl)$ of $\cl$ to 
  $P$ forms a connected geometry (embedded in $\G_K$). Then the following are 
  equivalent:
  \begin{enumerate}[(i)]\item for any $\widetilde x \in P$ and $\widetilde C 
      \subset P$,
      $\widetilde x \in \cl(\widetilde C) \Leftrightarrow x \nind_{C_0} C$
    \item[(ii)]
      There exists $k \in \N$ such that for any finite subset $\widetilde A 
      \subset P$,
      \[ k \cdot \dim_{\cl}(\widetilde A) = \dimo(A),\]
      where recall $\dimo(A):= \trd(C_0(A)/C_0)$.
  \end{enumerate}

\end{lemma}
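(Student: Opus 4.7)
The plan is to handle the two implications separately; both are reasonably direct, with the only subtlety lying in using connectedness of $(P,\cl)$ to extract a uniform constant $k$.

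For the implication $(ii)\Rightarrow(i)$, I would apply $(ii)$ to the singleton $\{\widetilde x\}$ to deduce that $\dimo(x)=k$ for every $\widetilde x\in P$, and then apply $(ii)$ to the finite set $\widetilde C\cup\{\widetilde x\}$. If $\widetilde x\in\cl(\widetilde C)$, then $\dim_{\cl}(\widetilde C\cup\{\widetilde x\})=\dim_{\cl}(\widetilde C)$, so by $(ii)$ and additivity of $\dimo$ we get $\dimo(x/C)=0<k=\dimo(x)$, which exactly says $x\nind_{C_0}C$. Conversely, if $\widetilde x\notin\cl(\widetilde C)$, then $\dim_{\cl}(\widetilde C\cup\{\widetilde x\})=\dim_{\cl}(\widetilde C)+1$, which by $(ii)$ forces $\dimo(x/C)=k=\dimo(x)$, i.e.\ $x\ind_{C_0}C$. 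A single finite $\widetilde C$ suffices by finite character of $\cl$.

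For $(i)\Rightarrow(ii)$, the first step is to prove that $\dimo(x)$ takes the same value for every $\widetilde x\in P$. Given $\widetilde x,\widetilde y\in P$, use connectedness of $(P,\cl)$ to find $\widetilde C\subset P$ with $\widetilde x\in\cl(\widetilde C\cup\{\widetilde y\})\setminus\cl(\widetilde C)$; by the exchange property of the geometry we also have $\widetilde y\in\cl(\widetilde C\cup\{\widetilde x\})\setminus\cl(\widetilde C)$. Applying $(i)$ in the four resulting situations gives $\dimo(x/C)=\dimo(x)$, $\dimo(y/C)=\dimo(y)$, $\dimo(x/Cy)=0$, $\dimo(y/Cx)=0$. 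Additivity of $\dimo$ in the form $\dimo(x/C)+\dimo(y/Cx)=\dimo(y/C)+\dimo(x/Cy)$ then collapses to $\dimo(x)=\dimo(y)$. Call this common value $k$.

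The second step is to check the dimension formula. Given a finite $\widetilde A\subset P$, pick a subset $\widetilde A_0\subset\widetilde A$ which is a basis for $\widetilde A$ in $(P,\cl)$; so $|\widetilde A_0|=\dim_{\cl}(\widetilde A)=:m$. By $(i)$, every $\widetilde a\in\widetilde A\setminus\widetilde A_0$ lies in $\cl(\widetilde A_0)$, hence $a\in\cl(A_0)$ as a tuple, so $\dimo(A)=\dimo(A_0)$. Enumerate $\widetilde A_0=\{\widetilde{a_1},\ldots,\widetilde{a_m}\}$ and argue by induction on $j$: for each $j$, $\widetilde{a_j}\notin\cl(\{\widetilde{a_1},\ldots,\widetilde{a_{j-1}}\})$, so by $(i)$ we have $a_j\ind_{C_0}\{a_1,\ldots,a_{j-1}\}$, i.e.\ $\dimo(a_j/a_1,\ldots,a_{j-1})=\dimo(a_j)=k$. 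Additivity of $\dimo$ then yields $\dimo(A_0)=mk$, and therefore $\dimo(A)=k\cdot\dim_{\cl}(\widetilde A)$, as required.

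The main obstacle, such as it is, is the constancy of $\dimo$ on $P$; everything else is additivity. The use of exchange (which is part of the hypothesis that $(P,\cl)$ is a geometry) together with connectedness is what makes that argument work.
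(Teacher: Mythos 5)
Your proof is correct and follows essentially the same route as the paper: for (i)$\Rightarrow$(ii) the paper likewise obtains constancy of $\dimo$ on points of $P$ from connectedness together with exchange, via the same chain $\dimo(x)=\dimo(x/C)=\dimo(y/C)=\dimo(y)$, and then appeals to additivity of dimensions for the formula $k\cdot\dim_{\cl}(\widetilde A)=\dimo(A)$. The paper dismisses (ii)$\Rightarrow$(i) as easy and leaves it to the reader; your argument for that direction is a correct filling-in of that omission.
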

\begin{proof} Note that (i) is equivalent to say that $\dimo(x/C)=0$ if and 
  only if $\dimo(x/C)<\dimo(x)$.
  That (i) implies (ii) follows from additivity of dimensions, setting $k := 
  \dimo(a)$ for any $a \in K^{<\infty}$ such that $\widetilde a \in P$, once 
  we see that this does not depend on the choice of $a$. For another such $b$ 
  with
  $\widetilde b \in P$ we show that $\dimo(a)=\dimo(b)$. If $\widetilde b = 
  \widetilde a$ then $b$ and $a$ are interalgebraic over $C_0$, so this holds. 
  Else by non-orthogonality there is $\widetilde C$ such that $\widetilde a 
  \in \cl(\widetilde C \widetilde b)$ and $\widetilde b \in \cl(\widetilde C 
  \widetilde a)$ but $\widetilde a,\widetilde b \notin \cl(\widetilde C)$.
  Then by (i), $\dimo(a) = \dimo(a/C) = \dimo(b/C) = \dimo(b)$.
  The converse is easy and since it is not needed in the sequel, we leave it to 
  the reader.
\end{proof}

\begin{definition} \label{defn:fullyEmbedded}
  We say a connected geometry $(P,\cl) \subset \G_K$ is 
  \defnstyle{($k$-dimensionally) fully embedded in $\G_K$} if the equivalent 
  conditions of the above lemma hold.
\end{definition}

If $G$ is a connected abelian algebraic group over $C_0$,
let $E_G := \End_{C_0}(G)$ be the ring of algebraic endomorphisms defined over 
$C_0$,
and let $E^0_G := \End^0_{C_0}(G) := \Q \otimes_\Z E_G$.
Any $\eta \in E^0_G$ can be written as $q\eta'$ for some $q \in \Q$ and $\eta' 
\in E_G$.
Since $\operatorname{char}(K)=0$ and $G$ is connected, $G(K)$ is divisible, 
and the $n$-torsion is finite for all $n$ and hence contained in $G(C_0)$.
So $V := G(K)/G(C_0)$ is naturally a left $E^0_G$-module.
If $F \leq E^0_G$ is a division subring, we view $V$ as an $F$-vector space 
and let $\P_F(G) := \P(V)$ be its projectivisation, and let $\eta_F^G : 
\P_F(G) \rightarrow \G_K$ be the map induced by $g \mapsto \widetilde {g}$ for 
$g \in G(K)$.
Note that $\eta_F^G$ is not injective.

\begin{example}
  Let $G$ and $F$ be as above.
  Let $g_i \in G(K)$ be independent generics over $C_0$ for $i$ in a (possibly 
  infinite) index set $I$.
  Let $V := \left<{(g_i/G(C_0))_{i \in I}}\right>_F \leq G(K)/G(C_0)$.
  Then $\eta_F^G$ maps the $|I|$-dimensional projective geometry 
  $\mathbb{P}_F(V) \subset \P_F(G)$ injectively into $\G_K$, and the image 
  $\eta_F^G(\mathbb{P}_F(V))$ is $\dim(G)$-dimensionally fully embedded in 
  $\G_K$.

  For example, in the case $G=\mathbb{G}_m$, if $a_0,\ldots ,a_n \in K$ with 
  $\trd(\a/C_0)=n+1$, then they generate in $K^*/C_0^*$ the $\Q$-subspace $\{ 
  \a^{\q} / C_0^* : \q \in \Q^{n+1} \} = \{ \prod_i a_i^{q_i} / C_0^* : 
  q_0,\ldots ,q_n \in \Q \}$; the algebraic dependencies over $C_0$ within 
  this set are precisely those arising from $\Q$-linear dependencies on the 
  exponents, and so this yields an embedding of $\P^n(\Q)$ in $\G_K$.
\end{example}

The following proposition, which is the main result of this appendix, says 
that any fully embedded projective geometry (of sufficiently large dimension) 
is of this form.

\begin{proposition} \label{prop:EHeq}
  Let $(P,\cl) \subset \G_K$ be a $k$-dimensionally fully embedded geometry, 
  and suppose $P$ is isomorphic to the geometry of a projective space over a 
  division ring $F$, and $\dim(P) \geq 3$.

  Then there is an abelian algebraic group $G$ over $C_0$ of dimension $k$,
  and an embedding of $F$ as a subring $F \leq E^0_G$,
  and a closed subgeometry $P'$ of $\P_F(G)$ on which $\eta_F^G$ is injective, 
  such that $P = \eta_F^G(P')$.


  Furthermore, $G$ is unique up to isogeny.
\end{proposition}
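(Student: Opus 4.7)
The plan is to recover an abelian algebraic group $G$ over $C_0$ from the modular geometry $P$ via the abelian group configuration theorem, and then realise the coordinatising division ring $F$ as a subring of $E^0_G$ using rigidity of commutative algebraic groups. The Evans-Hrushovski argument for $k=1$ proceeds in exactly this way, and the new subtlety in arbitrary dimension is that one must carefully extract quasi-\emph{isogenies} rather than merely quasi-endomorphisms of $G$.

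First I would recover $G$ as follows. Since $\dim(P)\geq 3$ and $P$ is a Desarguesian projective space, I can realise the seven-point abelian group configuration of Theorem~\ref{thm:abGroupConf} inside $P$: pick two secants of $P$ meeting at a single point $\widetilde w$, choose two further points on each, and use Desarguesian collinearity relations to complete the diagram. By the full embedding hypothesis (Lemma~\ref{lem:fullyEmbedded}), collinearity of three of these points in $P$ gives pairwise $\cl$-interalgebraicity over the third, and non-collinearity gives $\cl$-independence; so all hypotheses of Theorem~\ref{thm:abGroupConf} are met. This produces a connected commutative algebraic group $G$ over $C_0$ and generic points $a',b',c'\in G(K)$ with $\cl(a)=\cl(a')$, $\cl(b)=\cl(b')$, $\cl(c)=\cl(c')$, and $a'+b'=c'$. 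Condition~(ii) of Lemma~\ref{lem:fullyEmbedded} then forces $\dim(G)=\dim^0(a')=k$.

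Next I would embed $F$ into $E^0_G$. Given $\alpha\in F$ and a generic $g\in G(K)$ with $\widetilde g\in P$, the $F$-scalar multiple $\alpha\cdot(g+G(C_0))$ in $V=G(K)/G(C_0)$ lies in the same $F$-line as $g+G(C_0)$, so any representative $h\in G(K)$ of it satisfies $\widetilde h=\widetilde g$, and hence $h\in\cl(g)$. Applying Fact~\ref{fact:corrIsog} to the locus $\loc^0(g,h)\subset G\times G$, whose stabiliser projects surjectively with finite kernel to each factor, I obtain an isogeny $\eta_\alpha\in\End_{C_0}(G)$, a positive integer $m_\alpha$, and a point $c\in G(C_0)$ with $\eta_\alpha(g)=m_\alpha h+c$. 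I then set $\iota(\alpha):=\eta_\alpha/m_\alpha\in E^0_G$, absorbing $c$ by working modulo $G(C_0)$. The map $\iota\colon F\to E^0_G$ is a ring homomorphism because additivity and multiplicativity of $\iota$ are forced by the corresponding identities in $F$ read off at $g$ and pinned down up to $G(C_0)$ by rigidity; and invertibility of $\alpha$ in $F$ applied to the same construction gives invertibility of $\iota(\alpha)$ in $E^0_G$, so $\iota(F)$ sits inside a division subring.

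Finally, setting $P':=\{F\cdot(g+G(C_0)):\widetilde g\in P\}\subset\P_F(G)$ produces a closed subgeometry on which $\eta_F^G$ is surjective by construction and injective because two $F$-lines in $P'$ with the same image in $P$ admit representatives $g_1,g_2$ with $g_2=\iota(\alpha)g_1+c$ for some $\alpha\in F$ and $c\in G(C_0)$ by the argument above, so the two $F$-lines coincide. For uniqueness up to isogeny, if $G_1,G_2$ both satisfy the conclusion then a generic point of $P$ lifts to generics $g_i\in G_i(K)$ with $\cl(g_1)=\cl(g_2)$, and a final application of Fact~\ref{fact:corrIsog} produces an isogeny $G_1\to G_2$. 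The main obstacle will be the embedding step: ensuring that the scalar action of $F$ on the abstract projective geometry is captured by invertible elements of $E^0_G$ rather than general endomorphisms. This relies on the invertibility of each $\alpha\in F$ being propagated through the rigidity construction, which is precisely where the higher-dimensional setting departs from Evans-Hrushovski: when $\dim(G)=1$ every non-trivial self-correspondence is automatically an isogeny, but for $\dim(G)>1$ one must rule out non-invertible endomorphisms, of which $G$ may carry many.
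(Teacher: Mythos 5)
Your first step (recovering $G$ from an abelian group configuration inside $P$, using full embeddedness to translate collinearity into interalgebraicity) matches the paper, which does exactly this in Lemma~\ref{lem:lines}. The gap is in your second step, and it is twofold. First, the construction of $\iota(\alpha)$ is circular: you write ``the $F$-scalar multiple $\alpha\cdot(g+G(C_0))$ in $V=G(K)/G(C_0)$'', but at that stage there is no $F$-action on $G(K)/G(C_0)$ and no map from the abstract coordinatising $F$-vector space to $G(K)/G(C_0)$ --- producing such an action is precisely what has to be proved. The abstract isomorphism $P\cong\P(V)$ only records incidence data; recovering scalars compatibly with the group structure is a coordinatisation problem, and the paper solves it by first building a projective \emph{morphism} $\iota\colon P\to\P_{E^0_G}(G)$ (via the notion of $P$-aligned representatives, Lemmas~\ref{lem:alignedEx}--\ref{lem:alignedLines}) and then invoking Faure's ring-theoretic version of the fundamental theorem of projective geometry (Fact~\ref{fact:faure}), which simultaneously yields the embedding $\sigma\colon F\hookrightarrow E^0_G$ and the $\sigma$-semilinear identification. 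Your plan to ``read off'' additivity, multiplicativity and invertibility of $\iota$ at a single generic point by rigidity is exactly the content that this machinery supplies and cannot be waved through.

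Second, your appeal to Fact~\ref{fact:corrIsog} is invalid as stated: knowing only that $g,h\in G(K)$ are generic with $\widetilde{h}=\widetilde{g}$ does \emph{not} make the stabiliser of $\loc^0(g,h)$ project with finite kernel to each factor, and does not yield $\eta(g)=mh+c$. For instance, in $G=\mathbb{G}_a^2$ take $g=(x,y)$ generic and $h=(x,y+x^2)$: these are interalgebraic generics related by no quasi-isogeny. Fact~\ref{fact:corrIsog} needs a correspondence between the graphs of the group operations, i.e.\ additive data of the form $\acl^0(g)=\acl^0(g')$, $\acl^0(h)=\acl^0(h')$, $\acl^0(g+h)=\acl^0(g'+h')$. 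The paper manufactures exactly this data from the geometry: Lemma~\ref{lem:quadrCollapse} lets one move additive relations along lines of $P$, the ``$P$-aligned'' condition records that a representative participates in such a relation, and only then does Lemma~\ref{lem:alignedUnique} legitimately apply Fact~\ref{fact:corrIsog} to conclude that two aligned representatives of the same point of $P$ differ by an element of $E^0_G$ (this is also where non-isogeny endomorphisms are excluded, the worry you raise at the end). Your uniqueness argument has the same defect --- interalgebraicity of single generics of $G_1$ and $G_2$ is not enough; the paper's uniqueness proof again first produces matching additive triples $\widetilde{g+h}=\widetilde{g'+h'}$ before citing Fact~\ref{fact:corrIsog}. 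So the overall architecture (group configuration, then quasi-isogenies, then coordinatisation) is right, but the middle step needs the alignment mechanism and the Faure-type fundamental theorem rather than the direct rigidity argument you sketch.
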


The remainder of this appendix constitutes a proof of 
Proposition~\ref{prop:EHeq}.
The strategy of the proof is to find the commutative algebraic group $G$ via 
the abelian group configuration theorem, and then to exhibit a natural 
injective collineation from $P$ to $\P_F(G)$. The fundamental theorem of 
projective geometry, in its version over division rings, then allows to claim 
that this collineation is a projective embedding. However, since we must also 
identify $F$ within $E^0_G$, we will in fact use a more general form of the 
fundamental theorem.

\begin{remark}
  In fact the proof applies directly to $C_0 \prec K$ models of an arbitrary 
  theory of finite Morley rank, with definable groups and endomorphisms in 
  place of algebraic groups and endomorphisms, as long as any connected 
  definable abelian group is divisible (equivalently, has finite $n$-torsion 
  for all $n$).
\end{remark}

\begin{remark}
  Unlike in \cite{EH-projACF}, our techniques do not directly apply in the 
  case of $P$ a projective plane (i.e.\ a 3-dimensional connected modular 
  geometry), and so do not rule out non-Desarguesian projective planes. 
  However, David Evans has pointed out to us that the arguments of 
  \cite{Lindstrom-desarguesian} go through to show that any projective plane 
  appearing as a subgeometry of $\cl$ is Desarguesian (and hence is a 
  projective plane over a division ring).
\end{remark}

\begin{lemma} \label{lem:quadrCollapse}
  Suppose $G$ is an abelian algebraic group over $C_0$,
  and let $g,h \in G(K)$ and $b \in P$.
  Suppose $\widetilde {g},\widetilde {h},b \in P$ are independent,
  and $\widetilde {g+h} \in P$,
  and $d \in \cl(\widetilde {g},b) \setminus \{\widetilde {g},b\}$.

  Then there is $h' \in G(K)$ such that $\widetilde {h'} = b$ and $\widetilde 
  {g+h'}=d$.
\end{lemma}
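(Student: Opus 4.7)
The plan is to apply the abelian group configuration theorem to a complete projective-plane configuration in $P$ containing the given points, thereby obtaining an abstract abelian group $\widehat G$ with elements realising the configuration additively, and then to transport the element of $\widehat G(K)$ corresponding to $b$ back to $G(K)$ using the rigidity coming from the given triangle $(g,h,g+h) \in G^3(K)$.

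First, I locate a seventh point completing the abelian configuration. Since $P$ is a projective space over a division ring of dimension $\geq 3$, Veblen's axiom holds in every plane. In the plane $\Pi := \cl(\widetilde g,\widetilde h,b)$, the lines $\cl(\widetilde{g+h},d)$ and $\cl(\widetilde h,b)$ lie in $\Pi$ and are distinct: otherwise $\widetilde{g+h} \in \cl(\widetilde h,b)$, which by modularity would force $\widetilde g \in \cl(\widetilde h,b)$, contradicting the independence hypothesis. They therefore meet at a unique point $e$, and analogous modularity checks using $d \neq \widetilde g, b$ show $e$ is distinct from all the other named points. Together with one further diagonal intersection $c \in \Pi$, the seven points $\widetilde g,\widetilde h,b,\widetilde{g+h},d,e,c$ satisfy the hypotheses of the Abelian Group Configuration Theorem~\ref{thm:abGroupConf}: by full embeddedness, which converts projective dimension in $P$ to $k$ times transcendence degree over $C_0$, the required $\acl^0$-dependencies along collinear triples and $\ind^0$-independences of non-collinear triples follow directly from modularity of $P$. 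Applying the theorem yields a connected commutative algebraic group $\widehat G$ over $C_0$ together with generics $\widehat g, \widehat h, \widehat b \in \widehat G(K)$, each $\acl^0$-interalgebraic with the corresponding unhatted element (where $\widehat b$ is interalgebraic with some fixed representative of $b$), and satisfying $\widehat g + \widehat h = \widehat{g+h}$ and $\widehat g + \widehat b = \widehat d$ for elements $\widehat{g+h}, \widehat d \in \widehat G(K)$ interalgebraic with $g+h$ and a representative of $d$ respectively (after absorbing $C_0$-rational translates into the choices).

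The main obstacle is transporting $\widehat b$ from $\widehat G$ back to $G$. For this, apply Fact~\ref{fact:corrIsog} to the triangle $(g,h,g+h) \in G^3(K)$ and its $\acl^0$-interalgebraic counterpart $(\widehat g,\widehat h,\widehat{g+h}) \in \widehat G^3(K)$: this yields an isogeny $\alpha : \widehat G \to G$, an integer $n \geq 1$, and constants $\kappa_1, \kappa_2 \in G(C_0)$ with $\alpha(\widehat g) = ng + \kappa_1$ and $\alpha(\widehat h) = nh + \kappa_2$, hence $\alpha(\widehat{g+h}) = n(g+h) + \kappa_1 + \kappa_2$ by additivity. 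Since $G(K)$ is divisible and $\ker(\alpha)$ is finite (hence contained in $\widehat G(C_0)$), I may choose $h' \in G(K)$ with $nh' = \alpha(\widehat b) + \kappa_1$; then $h'$ is $C_0$-interalgebraic with $\alpha(\widehat b)$, hence with $\widehat b$, so $\widetilde{h'} = \widetilde{\widehat b} = b$. Finally, $n(g + h') = ng + \alpha(\widehat b) + \kappa_1 = \alpha(\widehat g) + \alpha(\widehat b) = \alpha(\widehat d)$, so $g+h'$ is $C_0$-interalgebraic with $\alpha(\widehat d)$, hence with $\widehat d$, yielding $\widetilde{g+h'} = d$ as required.
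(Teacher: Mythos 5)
Your argument hinges on the claim that the Abelian Group Configuration Theorem, applied to your seven-point configuration, produces not only $\widehat g,\widehat h$ with $\widehat g+\widehat h$ interalgebraic with $g+h$, but also elements $\widehat b,\widehat d\in\widehat G(K)$ interalgebraic with (representatives of) $b$ and $d$ satisfying $\widehat g+\widehat b=\widehat d$. That relation is precisely the content of the lemma, and Theorem~\ref{thm:abGroupConf} as stated does not give it: its conclusion only supplies group elements interalgebraic with the three points of the distinguished line, with the single relation $c'=b'+a'$; it says nothing about the remaining four points of the configuration. If you take $\{\widetilde g,\widetilde h,\widetilde{g+h}\}$ as the distinguished line you get no copies of $b,d$ at all; if you instead take $\{\widetilde g,b,d\}$, you get $\widehat d=\widehat b+\widehat g$ but then have no additive triple in $\widehat G$ coordinatewise interalgebraic with $(g,h,g+h)$, so Fact~\ref{fact:corrIsog} cannot be invoked to compare $\widehat G$ with $G$. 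To get what you assert one needs the stronger ``group acting on a principal homogeneous space'' form of the configuration theorem together with an identification of the torsor with the group over $C_0$ --- which is essentially the machinery this lemma (together with Lemma~\ref{lem:lines}) is designed to replace, and is not what is cited. A second, independent problem: the ``moreover'' clause of Fact~\ref{fact:corrIsog} requires $(g,h)$ to be generic in $G^2(K)$ (and the graphs of the group laws to be in correspondence). The lemma only assumes $\widetilde g,\widetilde h,b\in P$ independent, so $\dimo(g)=\dimo(h)=k$, but $G$ is an arbitrary abelian algebraic group over $C_0$ whose dimension may exceed $k$; in that case there need not exist any isogeny $\widehat G\to G$, and your transport step collapses. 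So even granting the configuration step, you prove only the special case $\dim G=k$ with $g,h$ independent generics (admittedly the case used later, but not the statement).

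For comparison, the paper's proof avoids all of this: by modularity choose $c\in\cl(\widetilde h,b)\cap\cl(\widetilde{g+h},d)$; over $c$, the representatives $b'$ and $d'$ are interalgebraic with $h$ and $g+h$ respectively, a pair of points of $G$ satisfying $x+g=y$; since $c\ind^0_{C_0}b'd'g$, the type $\tp(c/C_0b'd'g)$ is finitely satisfiable in $C_0$ (coheir in ACF, or equivalently a specialisation of $c$ into $C_0$ fixing $b'd'g$), so the existential statement asserting such witnesses $x,y$ can be realised with the parameter $c$ moved into $C_0$, yielding $h':=x$ with $\widetilde{h'}=b$ and $\widetilde{g+h'}=d$ directly, with no appeal to the group configuration theorem and no genericity assumption on $g,h$ in $G$.
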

\begin{proof}
  By modularity, say
  $c \in \cl(\widetilde {h},b) \cap \cl(\widetilde {g+h},d)$.
  Then we proceed as in \cite[Lemma~1.2]{EH-autGeomACF}.
  Say $b = \widetilde b'$ and $d = \widetilde d'$.
  Then by the coheir property of independence in the stable theory $ACF$
  any formula in $\tp(c/C_0b'd'g)$ is satisfiable in $C_0$.
  So since $b'$ resp. $d'$ is interalgebraic over $c$ with points $x$ resp. 
  $y$ of $G$ satisfying $x+g=y$, they are already interalgebraic over $C_0$ 
  with such points, as required.

  (Alternatively, this argument can be phrased purely algebraically, by taking 
  a specialisation of $c$ to $C_0$ fixing $b'd'g$;
  see \cite[Lemma~2.1.1]{EH-projACF}.)
\end{proof}

\begin{lemma} \label{lem:lines}
  Suppose $a,b \in P$, $a\neq b$.
  Then there is an abelian algebraic group $G$ over $C_0$ with $\dim(G)=k$,
  and there exist $g,h \in G(K)$
  such that $a = \widetilde {g}$, and $b = \widetilde {h}$, and
  $\cl(a,b) \setminus \{a,b\} = P \cap \{ \widetilde {g + h} :
  g,h \in G(K), a = \widetilde {g}, b = \widetilde {h} \}$.
\end{lemma}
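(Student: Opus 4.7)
The plan is to apply the Abelian Group Configuration Theorem (Theorem~\ref{thm:abGroupConf}) to a septuple of points in $P$ built around the triple $a, b, c$, where $c$ is any third point on the line $\cl(a,b)$ in $P$ (such a $c$ exists because $P$ is a projective space over a division ring and so every line has at least three points). A standard Desargues-type construction in $P$ --- pick $y \in P$ off $\cl(a,b)$, pick $x \in \cl(a,y)\setminus\{a,y\}$, then set $z := \cl(c,x)\cap\cl(b,y)$ and $w := \cl(b,x)\cap\cl(a,z)$ --- produces four auxiliary points with the six collinearities $\{a,b,c\}, \{a,w,z\}, \{b,w,x\}, \{c,z,x\}, \{a,x,y\}, \{b,z,y\}$ encoded in the group-configuration diagram, while the remaining collinearity $\{w,c,y\}$ follows from Desargues' theorem in $P$. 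A generic choice of $y$ (off $\cl(a,b)$) and $x$ (on $\cl(a,y)$) ensures that every non-collinear triple other than the distinguished $\{w,c,y\}$ is independent in the pregeometry $(P,\cl)$. The $k$-dimensional full embedding of $P$ into $\G_K$ converts these geometric relations into the interalgebraicities and $\ind^0$-independences over $C_0$ demanded by Theorem~\ref{thm:abGroupConf}, which then delivers a connected commutative algebraic group $G$ over $C_0$ and generics $g_0, h_0, c_0 \in G(K)$ with $\widetilde{g_0}=a$, $\widetilde{h_0}=b$, $\widetilde{c_0}=c$, and $c_0 = g_0 + h_0$; the dimension is $\dim(G) = \dimo(g_0) = k$ by full embedding.

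The inclusion $\supseteq$ of the claimed set equality is formal: for any $g, h \in G(K)$ with $\widetilde g = a$ and $\widetilde h = b$ we have $g+h \in K(g,h)^{\alg}$, so $\widetilde{g+h} \in \cl(a,b)$; and $\widetilde{g+h} \neq a$, since otherwise $h = (g+h) - g \in K(g)^{\alg}$ would force $b \in \cl(a) = \{a\}$ in contradiction to $a\neq b$, and symmetrically $\widetilde{g+h} \neq b$. For the reverse inclusion, fix $d \in \cl(a,b) \setminus \{a,b\}$, pick $b' \in P$ independent from $\{a,b\}$ (possible since $\dim P \geq 3$), and pick $d' \in \cl(a,b')\setminus\{a,b'\}$. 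A first application of Lemma~\ref{lem:quadrCollapse}, with $g := g_0$, $h := h_0$, and the lemma's ``$b$'' and ``$d$'' specialised to $b'$ and $d'$, is legitimate since $\{a,b,b'\}$ is independent in $P$ and $\widetilde{g_0+h_0} = c \in P$; it produces $h_1 \in G(K)$ with $\widetilde{h_1} = b'$ and $\widetilde{g_0+h_1} = d'$. A second application, to $(g_0, h_1)$ with the lemma's ``$b$'' and ``$d$'' being the original $b$ and $d$, is legitimate since $\{a,b',b\}$ is independent and $\widetilde{g_0+h_1} = d' \in P$; it produces $h_2 \in G(K)$ with $\widetilde{h_2} = b$ and $\widetilde{g_0 + h_2} = d$. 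The pair $(g,h) := (g_0,h_2)$ witnesses the equality.

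The main obstacle is the initial Desargues construction of the configuration: one must select the auxiliary points so that all the required collinearities hold in $P$ and all the non-collinear triples (apart from $\{w,c,y\}$) are independent, and then translate these properties into the algebraic hypotheses of Theorem~\ref{thm:abGroupConf} via the full embedding. Once the configuration is in place, the remainder is the careful double use of Lemma~\ref{lem:quadrCollapse} above, which transports a representative of $d \in \cl(a,b)$ back from the auxiliary line $\cl(a,b')$.
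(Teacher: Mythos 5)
Your proposal is correct, but it takes a genuinely different route from the paper's proof, and it contains one factual misstatement worth flagging.

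The paper chooses $c \in P \setminus \cl(a,b)$, i.e.\ \emph{off} the line $\cl(a,b)$, and builds the abelian group configuration so that its primary collinear triple lies on the line $\cl(a,c)$ (with $a,c,[1:1:0]$ at the corners) while $b$ plays the role of the off-line vertex $y$. The upshot is that $\widetilde{g},\widetilde{k},b$ are automatically independent, so for any $d \in \cl(a,b)\setminus\{a,b\}$ a \emph{single} application of Lemma~\ref{lem:quadrCollapse} produces the required $h$. You instead take $c$ \emph{on} the line $\cl(a,b)$; the group law then lands directly on $\cl(a,b)$, but $\widetilde{g_0},\widetilde{h_0},b$ are collinear, so Lemma~\ref{lem:quadrCollapse} cannot be invoked directly, and you have to detour through an auxiliary line $\cl(a,b')$ with $b' \notin \cl(a,b)$, applying the lemma twice. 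Both approaches succeed; the paper's choice makes the final step more economical.

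The factual error: your claim that ``the remaining collinearity $\{w,c,y\}$ follows from Desargues' theorem in $P$.'' In your construction $w = \cl(b,x)\cap\cl(a,z)$ is one of the three diagonal points of the complete quadrilateral with sides $abc$, $czx$, $bzy$, $axy$, and $c, y$ are the other two. The assertion that these three diagonal points are collinear is precisely the Fano configuration, and it \emph{fails} whenever the coordinatising division ring has characteristic $\neq 2$ (the diagonal triangle is non-degenerate). A direct coordinate check confirms $\{w,c,y\}$ is independent in $\P(F^3)$ over any $F$ with $2\neq 0$. Fortunately this error is harmless to your argument: Theorem~\ref{thm:abGroupConf} does \emph{not} demand that $\{w,c,y\}$ be collinear --- its hypothesis merely \emph{exempts} that triple from the independence requirement, so the hypothesis is satisfied whether $\{w,c,y\}$ is collinear or independent. (This is also how the paper's own configuration works: there $\{w_T,c_T,y_T\} = \{[2:1:1],[0:1:0],[0:0:1]\}$ is independent, not collinear.) You should recognise that the exemption clause is what you are actually using, rather than trying to force a collinearity that isn't there. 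The same remark makes your appeal to ``a generic choice of $y$ and $x$'' somewhat misleading: what you actually need is that all triples from your seven points, other than the six designated lines and $\{w,c,y\}$, are non-collinear in the plane $\cl(a,b,y)$ --- this is a finite list of non-degeneracy conditions, most of them automatic, rather than a genericity argument --- together with the observation that in a rank-3 modular subgeometry every non-collinear triple is independent, so full embedding translates these facts into the hypotheses of Theorem~\ref{thm:abGroupConf}.
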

\begin{proof}
  This proof closely follows the argument of \cite[Theorem~3.3.1]{EH-projACF}.
  As there, this proof actually only needs $\dim(P) \geq 3$, but we will make 
  the argument slightly more concrete by using that $P$ is a projective 
  geometry over $F$. 

  Take $c \in P \setminus \cl(a,b)$, and identify $\cl(a,b,c)$ with $\P(F^3)$, 
  placing $a,b,c$ at $[0:1:0],[0:0:1],[1:0:0]$ respectively.
  Consider the following configuration.
  \[ \xymatrix{
  & & &&&& [0:1:0]=a \ar@{-}'[ddll][dddlll] \ar@{-}'[dlll][ddllllll] \\
  & & & [1:1:0] \ar@{-}'[d][dd] & & & \\
  c=[1:0:0] & & & [2:1:1] & [1:1:1] \ar@{-}'[l][llll] & & \\
  & & & [1:0:1] && & \\
  && & & & & [0:0:1]=b \ar@{-}'[uull][uuulll] \ar@{-}'[ulll][uullllll] 
  } \]
  By the assumption that $P$ is fully embedded in $\G_K$, this satisfies the 
  assumptions of Fact~\ref{fact:abGrpConf}.
  Hence there exist an abelian algebraic group $G$ over $C_0$ and generics 
  $g,k \in G(K)$ such that
  \begin{align*}
    a=[0:1:0] &= \widetilde {g}\\
    c=[1:0:0] &= \widetilde {k}\\
    [1:1:0] &= \widetilde {g+k} .
  \end{align*}

  Now let $d \in \cl(a,b) \setminus \{a,b\}$. Then by 
  Lemma~\ref{lem:quadrCollapse} applied to $g,k,b$, we have $d=\widetilde 
  {g+h}$ for some $h \in G(K)$ with $\widetilde {h}=b$. The converse inclusion 
  is clear.
\end{proof}

Now let $a_0,b_0 \in P$, $a_0 \neq b_0$, and fix $G$ and $g_0,h_0 \in G(K)$ as 
provided by this lemma, with $\widetilde {g_0}=a_0$ and $\widetilde 
{h_0}=b_0$.

Say $g \in G(K)$ is \defnstyle{$P$-aligned} if there exists $g' \in G(K)$ with 
$\widetilde {g'} \neq \widetilde {g}$ such that $\widetilde {g},\widetilde 
{g'},\widetilde {g+g'} \in P$. Such a $g'$ is a \defnstyle{$P$-alignment 
witness} for $g$.

\begin{lemma} \label{lem:alignedEx}
  Every point of $P$ is of the form $\widetilde {g}$ for some $P$-aligned $g 
  \in G(K)$.
\end{lemma}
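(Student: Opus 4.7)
The plan is to carry out a case analysis on the position of $a \in P$ relative to the line $\cl(a_0,b_0)$. Before starting the cases, I would first arrange that the $g_0,h_0$ fixed just before the lemma satisfy $\widetilde{g_0+h_0} \in P$; this is possible by Lemma~\ref{lem:lines} combined with the fact that projective lines over a division ring contain at least three points, since that lemma identifies the non-endpoint points of $\cl(a_0,b_0)$ with exactly those $\widetilde{g+h}$ (for $\widetilde g=a_0$, $\widetilde h=b_0$) lying in $P$, so such representatives exist.

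The endpoint cases $a=a_0$ and $a=b_0$ are immediate: $g_0$ is $P$-aligned with witness $h_0$, and symmetrically $h_0$ is $P$-aligned with witness $g_0$, where all three required closures $a_0,b_0,\widetilde{g_0+h_0}$ are in $P$ by the setup above. For $a \in \cl(a_0,b_0) \setminus \{a_0,b_0\}$, I would extract from Lemma~\ref{lem:lines} elements $g_1,h_1 \in G(K)$ with $\widetilde{g_1}=a_0$, $\widetilde{h_1}=b_0$, and $\widetilde{g_1+h_1}=a$. Set $g:=g_1+h_1$; then $-h_1$ serves as an alignment witness, noting that $\widetilde{-h_1}=\widetilde{h_1}=b_0 \in P$ (since negation on $G$ is a $C_0$-morphism, so $-h_1$ and $h_1$ are interalgebraic over $C_0$) and $\widetilde{g+(-h_1)}=\widetilde{g_1}=a_0 \in P$, while $\widetilde{-h_1}=b_0 \neq a = \widetilde g$.

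The principal case is $a \notin \cl(a_0,b_0)$, so that $a_0,b_0,a$ are independent in $P$. Here I would apply Lemma~\ref{lem:quadrCollapse} with the lemma's $g,h,b$ taken to be $g_0,h_0,a$, and with $d$ chosen to be any third point of the projective line of $P$ through $a_0$ and $a$ (which again exists because projective lines over a division ring have at least three points). The hypotheses of Lemma~\ref{lem:quadrCollapse} are then satisfied: independence of $\widetilde{g_0}=a_0$, $\widetilde{h_0}=b_0$, $a$ comes from $a \notin \cl(a_0,b_0)$, and $\widetilde{g_0+h_0} \in P$ by our initial arrangement. The lemma produces $h' \in G(K)$ with $\widetilde{h'}=a$ and $\widetilde{g_0+h'}=d$, and by construction $d \in P$ and $d \neq a_0 = \widetilde{g_0}$, so $h'$ is $P$-aligned with witness $g_0$. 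This exhausts the cases, so every point of $P$ is the $\widetilde{\cdot}$-class of a $P$-aligned element of $G(K)$. The only delicate step to double-check is the initial assumption that $g_0,h_0$ can be chosen with $\widetilde{g_0+h_0} \in P$, and the verification that the various representatives we construct have pairwise distinct closures where required; both are routine.
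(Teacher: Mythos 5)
Your proposal is correct and follows essentially the same route as the paper: the case $a\in\cl(a_0,b_0)$ is handled via Lemma~\ref{lem:lines} (your explicit use of the third point of the line and of $-h_1$ just spells out what the paper leaves implicit), and the case $a\notin\cl(a_0,b_0)$ via Lemma~\ref{lem:quadrCollapse} applied to $g_0,h_0,a$. Your preliminary observation that $g_0,h_0$ may be chosen with $\widetilde{g_0+h_0}\in P$ (possible since lines of $P$ have a third point) is a detail the paper takes for granted but is indeed needed for the quadrangle-collapse step, so including it is a sound refinement rather than a deviation.
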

\begin{proof}
  Let $c \in P$.
  If $c \in \cl(a_0,b_0)$, such a $g$ exists by Lemma~\ref{lem:lines}.
  Else, it follows from Lemma~\ref{lem:quadrCollapse} applied to $g_0,h_0,c$.
\end{proof}

\begin{lemma} \label{lem:alignedWitness}
  If $g \in G(K)$ is $P$-aligned and $b \in P \setminus \widetilde {g}$,
  then there exists a $P$-alignment witness $g' \in G(K)$ for $g$ with 
  $\widetilde {g'} = b$.
\end{lemma}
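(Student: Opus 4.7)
The plan is to use Lemma~\ref{lem:quadrCollapse} to produce $g'$ directly from an existing alignment witness, after first ensuring that this witness is in sufficiently general position relative to $b$. The hypothesis of Lemma~\ref{lem:quadrCollapse} requires $\widetilde{g}$, $\widetilde{h}$, $b$ to be independent in $P$; the main obstacle is that the given witness may happen to be collinear with $\widetilde g$ and $b$, in which case an auxiliary application of the same lemma will be used to build a better one.

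Concretely, I would start with any $P$-alignment witness $g_0$ for $g$, so that $\widetilde g,\widetilde{g_0},\widetilde{g+g_0}\in P$ and $\widetilde{g_0}\neq\widetilde g$. If $\widetilde{g_0}\notin\cl(\widetilde g,b)$, set $g_1:=g_0$ and proceed to the main step. Otherwise, exchange forces $\cl(\widetilde g,\widetilde{g_0})=\cl(\widetilde g,b)$, and since $\dim(P)\ge 3$ there exists $c\in P\setminus\cl(\widetilde g,b)$. Then $\widetilde g,\widetilde{g_0},c$ are independent in $P$, hence (by full embedding) algebraically independent over $C_0$. Pick $d_1\in\cl(\widetilde g,c)\cap P\setminus\{\widetilde g,c\}$, which exists because the line $\cl(\widetilde g,c)\cap P$ in the projective geometry $P$ has at least $|F|+1\ge 3$ points. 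Applying Lemma~\ref{lem:quadrCollapse} with $g,g_0,c,d_1$ in the roles of $g,h,b,d$ produces $g_1\in G(K)$ with $\widetilde{g_1}=c$ and $\widetilde{g+g_1}=d_1\in P$, so $g_1$ is a $P$-alignment witness for $g$ with $\widetilde{g_1}\notin\cl(\widetilde g,b)$.

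With $g_1$ in hand, the triple $\widetilde g,\widetilde{g_1},b$ is independent in $P$, and $\widetilde{g+g_1}\in P$. Pick $d\in\cl(\widetilde g,b)\cap P\setminus\{\widetilde g,b\}$, using again that projective lines over $F$ have at least three points. A final application of Lemma~\ref{lem:quadrCollapse}, this time with $g,g_1,b,d$ in the roles of $g,h,b,d$, yields $g'\in G(K)$ with $\widetilde{g'}=b$ and $\widetilde{g+g'}=d\in P$. Since $b\neq\widetilde g$, this $g'$ is a $P$-alignment witness for $g$ with $\widetilde{g'}=b$, as required.
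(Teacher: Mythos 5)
Your proof is correct and follows essentially the same route as the paper: apply Lemma~\ref{lem:quadrCollapse} directly when the given witness is off the line $\cl(\widetilde g,b)$, and otherwise use a preliminary application of the same lemma (using $\dim(P)\ge 3$) to move the witness off that line first. The only cosmetic difference is that the paper phrases the degenerate case as $b\in\cl(\widetilde g,\widetilde{g''})$ rather than $\widetilde{g_0}\in\cl(\widetilde g,b)$, which by exchange is the same condition.
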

\begin{proof}
  By Lemma~\ref{lem:quadrCollapse},
  if $g'' \in G(K)$ is a $P$-alignment witness for $g$ and
  $b \notin \cl(\widetilde {g},\widetilde {g''})$, then there exists a 
  $P$-alignment witness $g'$ for $g$ with $\widetilde {g'} = b$.

  To handle the case that $b \in \cl(\widetilde {g},\widetilde {g''})$, apply 
  this first to $b' \in P \notin \cl(\widetilde {g},\widetilde {g''})$ to 
  obtain a witness $g'$, and then again to $b \notin \cl(\widetilde 
  {g},\widetilde {g'})$.
\end{proof}

\begin{lemma} \label{lem:alignedUnique}
  If $g,h \in G(K)$ are $P$-aligned and $\widetilde {g} = \widetilde {h}$, 
  then $E^0_G(g/G(C_0))=E^0_G(h/G(C_0))$.
\end{lemma}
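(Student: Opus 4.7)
\emph{Proof plan.} The plan is to engineer an application of Fact~\ref{fact:corrIsog} to the abelian algebraic group $G$ acting on itself. First observe that since $\widetilde g \in P$ and $P$ is $k$-dimensionally fully embedded in $\G_K$ with $k = \dim G$, Lemma~\ref{lem:fullyEmbedded}(ii) gives $\dimo(g) = k$; as $G$ is irreducible of dimension $k$, this forces $g$ to be generic in $G$ over $C_0$, and similarly for $h$ and the auxiliary points introduced below. Pick a $P$-alignment witness $g'$ for $g$, so that $\widetilde g, \widetilde{g'}, \widetilde{g+g'} \in P$ and $\widetilde{g'} \neq \widetilde g$; by full embedding (Lemma~\ref{lem:fullyEmbedded}(i)) this last fact gives $g \ind^0_{C_0} g'$, so $(g,g')$ is generic in $G \times G$ over $C_0$. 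By Lemma~\ref{lem:alignedWitness} applied to $h$, we may pick a $P$-alignment witness $h'$ for $h$ with $\widetilde{h'} = \widetilde{g'}$; then $g \leftrightarrow h$ and $g' \leftrightarrow h'$ are $C_0$-interalgebraic, and $h+h' \in \acl^0(g,g')^{<\infty}$ forces $\widetilde{h+h'}$ to lie on the line $L := \cl(\widetilde g,\widetilde{g'})$ of $P$.

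If $\widetilde{h+h'} \neq \widetilde{g+g'}$, we first adjust $h'$ so that these coincide. Using $\dim(P) \geq 3$ (a hypothesis of the ambient Proposition~\ref{prop:EHeq}), choose $b \in P$ independent from $\{\widetilde g,\widetilde{g'}\}$ in $P$, and invoke Lemma~\ref{lem:alignedWitness} again to obtain a $P$-alignment witness $k$ for $h$ with $\widetilde k = b$, so that $\widetilde h,\widetilde k,\widetilde{g'}$ are independent in $P$ and $\widetilde{h+k} \in P$. Since $\widetilde{g+g'} \in \cl(\widetilde h,\widetilde{g'}) \setminus \{\widetilde h,\widetilde{g'}\}$ (using that $(g,g')$ is a pair of independent generics), Lemma~\ref{lem:quadrCollapse} applied with the substitution $(g,h,b,d) \leadsto (h,k,\widetilde{g'},\widetilde{g+g'})$ yields $h'' \in G(K)$ with $\widetilde{h''} = \widetilde{g'}$ and $\widetilde{h+h''} = \widetilde{g+g'}$. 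Replace $h'$ by $h''$.

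We now have three pairs of $C_0$-interalgebraic elements $g \leftrightarrow h$, $g' \leftrightarrow h'$, $g+g' \leftrightarrow h+h'$, with $(g,g')$ generic in $G^2$ over $C_0$. The ``moreover'' clause of Fact~\ref{fact:corrIsog}, applied with both source and target groups equal to $(G,+)$, then furnishes $n \in \N_{>0}$, an isogeny $\alpha \in \End_{C_0}(G)$, and $c \in G(C_0)$ with $\alpha(g) = nh + c$. Since $C_0$ is algebraically closed, $G[n] \subset G(C_0)$, so $G(C_0)$ is $n$-divisible; picking $c' \in G(C_0)$ with $nc' = c$, we obtain $h = (\alpha/n)g - c'$, whence $h/G(C_0) \in E^0_G \cdot (g/G(C_0))$. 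Swapping the roles of $g$ and $h$ (or, equivalently, inverting $\alpha$ in $E^0_G$, which is possible because $\alpha$ is an isogeny) gives the reverse inclusion, yielding $E^0_G(g/G(C_0)) = E^0_G(h/G(C_0))$. The main obstacle in the argument is ensuring that $\widetilde{h+h'} = \widetilde{g+g'}$ so that Fact~\ref{fact:corrIsog} can be applied; once Lemma~\ref{lem:quadrCollapse} has been used to enforce this alignment, the remainder of the proof is a direct calculation in $E^0_G$ exploiting divisibility of $G(C_0)$.
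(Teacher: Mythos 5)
Your proof is correct and follows essentially the same route as the paper: use Lemma~\ref{lem:alignedWitness} and Lemma~\ref{lem:quadrCollapse} to produce co-ordinatewise $\acl^0$-interalgebraic additive triples, then apply the ``moreover'' clause of Fact~\ref{fact:corrIsog} with both groups equal to $(G,+)$ and invert $\alpha/n$ in $E^0_G$. The only differences are cosmetic: you adjust the witness on the $h$-side to match $\widetilde{g+g'}$ (via an auxiliary point off the line, using $\dim(P)\geq 3$) where the paper adjusts on the $g$-side to match $\widetilde{h+h'}$, and you spell out the final divisibility/invertibility step that the paper leaves implicit.
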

\begin{proof}
  Say $g',h' \in G(K)$ are $P$-alignment witnesses for $g,h$ respectively.

  By Lemma~\ref{lem:alignedWitness}, we may assume $\widetilde {g'} \notin 
  \cl(\widetilde {h},\widetilde {h'})$.

  By Lemma~\ref{lem:quadrCollapse}, there is $h'' \in G(K)$ such that 
  $\widetilde {h''} = \widetilde {h'}$ and $\widetilde {g+h''} = \widetilde 
  {h+h'}$.
  Then by Fact~\ref{fact:corrIsog},
  there is $n \in \N$ and an isogeny $\alpha \in E_G$ and $k \in G(C_0)$ such 
  that $\alpha g = n h + k$,
  as required.
\end{proof}

\begin{lemma} \label{lem:alignedLines}
  Suppose $g,h \in G(K)$ are $P$-aligned and $\widetilde {g} \neq \widetilde 
  {h}$.
  Then $\cl(\widetilde {g},\widetilde {h}) = P \cap \{ \widetilde {k} : 
  k/G(C_0) \in E_G^0(g/G(C_0)) + E_G^0(h/G(C_0)) \}$.
\end{lemma}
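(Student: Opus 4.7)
The plan is to prove the two inclusions separately, leaning on Lemmas~\ref{lem:quadrCollapse}, \ref{lem:alignedWitness}, and~\ref{lem:alignedUnique} together with the fact that $G(C_0) \subseteq \cl(\emptyset)$ since $G$ is defined over $C_0$ and $C_0$ is algebraically closed.

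For the inclusion $\supseteq$, suppose $\widetilde k \in P$ with $k/G(C_0) = \alpha (g/G(C_0)) + \beta (h/G(C_0))$ for some $\alpha,\beta \in E^0_G$. I would clear denominators by choosing $n \in \N_{>0}$ with $n\alpha,n\beta \in E_G$, so that $nk - (n\alpha\cdot g + n\beta\cdot h) \in G(C_0)$. Since $G(C_0) \subset \cl(\emptyset)$ and $n\alpha\cdot g,\, n\beta\cdot h \in \cl(\{g,h\})$, this gives $nk \in \cl(\{g,h\})$. The multiplication-by-$n$ isogeny on $G$ has finite kernel contained in $G(C_0)$, so $k \in \cl(\{nk\}) \subset \cl(\{g,h\})$, and hence $\widetilde k \in \cl(\widetilde g,\widetilde h)$.

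For the inclusion $\subseteq$, let $c \in \cl(\widetilde g,\widetilde h)$; the cases $c = \widetilde g,\widetilde h$ are handled by the trivial $E^0_G$-combinations $1\cdot g + 0\cdot h$ and $0\cdot g + 1\cdot h$, so assume $c \notin \{\widetilde g,\widetilde h\}$. Since $\dim(P) \geq 3$, pick $b \in P \setminus \cl(\widetilde g,\widetilde h)$. By Lemma~\ref{lem:alignedWitness} applied to the $P$-aligned element $g$, there is a $P$-alignment witness $h'' \in G(K)$ for $g$ with $\widetilde{h''} = b$; in particular $\widetilde g,\widetilde{h''},\widetilde h$ are independent and $\widetilde{g+h''} \in P$. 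Apply Lemma~\ref{lem:quadrCollapse} with $g,h''$ in the role of $g,h$, with $\widetilde h$ in the role of $b$, and with $d := c$ (which lies in $\cl(\widetilde g,\widetilde h) \setminus \{\widetilde g,\widetilde h\}$): this produces $h^* \in G(K)$ with $\widetilde{h^*} = \widetilde h$ and $\widetilde{g+h^*} = c$.

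To finish, note that $h^*$ is itself $P$-aligned, witnessed by $g$: we have $\widetilde g \neq \widetilde{h^*}$ and $\widetilde{g+h^*} = c \in P$. Applying Lemma~\ref{lem:alignedUnique} to the two $P$-aligned elements $h^*$ and $h$ (which satisfy $\widetilde{h^*} = \widetilde h$) yields $E^0_G(h^*/G(C_0)) = E^0_G(h/G(C_0))$, so there is $\eta \in E^0_G$ with $h^*/G(C_0) = \eta\cdot(h/G(C_0))$, i.e.\ $h^* - \eta h \in G(C_0)$. Then $g+h^*$ and $g+\eta h$ differ by an element of $G(C_0)$, so $c = \widetilde{g+h^*} = \widetilde{g+\eta h}$, and $(g+\eta h)/G(C_0) = 1\cdot(g/G(C_0)) + \eta\cdot(h/G(C_0))$ lies in $E^0_G(g/G(C_0)) + E^0_G(h/G(C_0))$, as required.

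The main substantive step is the $\subseteq$ direction, where the delicate point is ensuring that Lemma~\ref{lem:quadrCollapse} can be invoked (via the existence of a $P$-alignment witness $h''$ for $g$ placing $\widetilde{h''}$ outside $\cl(\widetilde g,\widetilde h)$) and that the output $h^*$ is itself $P$-aligned, so that Lemma~\ref{lem:alignedUnique} can convert the equality $\widetilde{h^*} = \widetilde h$ into the existence of the scalar $\eta \in E^0_G$; the rest is essentially bookkeeping modulo $G(C_0)$.
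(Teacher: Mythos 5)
Your proof is correct and follows essentially the same route as the paper: choose via Lemma~\ref{lem:alignedWitness} an alignment witness for $g$ lying outside $\cl(\widetilde g,\widetilde h)$, apply Lemma~\ref{lem:quadrCollapse} to realise a given point of $\cl(\widetilde g,\widetilde h)$ as $\widetilde{g+h^*}$ with $\widetilde{h^*}=\widetilde h$, and then use Lemma~\ref{lem:alignedUnique} to replace $h^*$ by an $E^0_G$-multiple of $h$ modulo $G(C_0)$. The only difference is that you also write out the easy inclusion $\supseteq$ (clearing denominators and using that $G(C_0)$-points and $n$-division are algebraic over $C_0$), which the paper leaves implicit.
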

\begin{proof}
  By Lemma~\ref{lem:alignedWitness}, we may take a $P$-alignment witness $g''$ 
  for $g$ with $\widetilde {h} \notin \cl(\widetilde {g},\widetilde {g''})$.

  Let $d \in \cl(\widetilde {g},\widetilde {h}) \setminus \{\widetilde 
  {g},\widetilde {h}\}$.
  Then by Lemma~\ref{lem:quadrCollapse},
  $d = \widetilde {g+h'}$ for some $h' \in G(K)$ with $\widetilde {h'} = 
  \widetilde {h}$.
  Then $\widetilde {h'}$ is $P$-aligned, so by Lemma~\ref{lem:alignedUnique},
  $h'/G(C_0) \in E_G^0(h/G(C_0))$.
\end{proof}

Our aim is to recognise $F$ as a subring of $E^0_G$, and $P$ as embedded in 
the corresponding $F$-projectivisation of a subspace of $G(K)/G(C)$.
This is a matter of the fundamental theorem of projective geometry.
However, since $E^0_G$ is not necessarily a field, this is not the classical 
case of the fundamental theorem. We use instead a version for projective 
spaces over rings obtained by Faure \cite{Faure-projGeomRings}.

The following definitions are adapted from \cite{Faure-projGeomRings}.
\begin{definition}
  The \defnstyle{projectivisation} $\P(M)$ of a module $M$ over a ring $R$ is 
  the set of non-zero 1-generated submodules $Rx$ equipped with the closure 
  operator $\cl_{\P(M)}$ induced from $R$-linear span,
  \[ \cl_{\P(M)}( (Ry_i)_i ) := \{ Rx : x \in \left<{(y_i)_i}\right>_R \} .\]

  A point $B \in \P(M)$ is \defnstyle{free} if $B = Rx$ for some $x \in M$ for 
  which $\{ \lambda \in R : \lambda x = 0 \} = \{0\}$.

  If $N$ is a module over a ring $S$, a map $g : \P(M) \rightarrow \P(N)$ is a
  \defnstyle{projective morphism} if $g(\cl_\P(M)(A)) \subset \cl_\P(N)(g(A))$ 
  for any $A \subset \P(M)$.

  If $\sigma : R \rightarrow S$ is a homomorphism, an additive map $f : M 
  \rightarrow N$ is \defnstyle{$\sigma$-semilinear} if $f(\lambda x) = 
  \sigma(\lambda) f(x)$ for any $x \in M$ and $\lambda \in R$. If $f$ is 
  injective, $\P(f) : \P(M) \rightarrow \P(N)$ is the induced projective 
  morphism.

  A ring $R$ is \defnstyle{directly finite} if $(\forall \lambda,\mu \in R) 
  (\lambda\mu=1 \Rightarrow \mu\lambda=1)$.
\end{definition}

\begin{fact} \label{fact:faure}
  Suppose $M$ and $N$ are modules over rings $R$ and $S$ respectively,
  and $S$ is directly finite.
  Suppose $g : \P(M) \rightarrow \P(N)$ is a projective morphism,
  and $\im(g)$ contains free points $B_1,B_2,B_3$
  such that for any $C_1,C_2 \in \im(g)$, for some $i \in \{1,2,3\}$, we have 
  \begin{equation}
    \tag{C3}
    \cl_{\P(N)}(C_1,C_2) \cap \cl_{\P(N)}(B_i) = \emptyset .
  \end{equation}

  Then there exists an embedding $\sigma : R \rightarrow S$ and a 
  $\sigma$-semilinear embedding $f : M \rightarrow N$ such that $g = \P(f)$.
\end{fact}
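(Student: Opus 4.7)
The plan is to adapt the classical proof of the fundamental theorem of projective geometry to the module setting, following Faure; since this is cited as a Fact we would ultimately defer to \cite{Faure-projGeomRings}, but the key ideas run as follows.

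First, I would use the three free points $B_1,B_2,B_3$ to set up projective coordinates in $\P(N)$. Pick generators $\bar e_i$ of $B_i$; the freeness of $B_i$ means the map $S\to N,\;\lambda\mapsto\lambda\bar e_i$ is injective, so scalars in $S$ are unambiguously detected by the location of points on the line through $B_i$. Condition (C3) plays the role of a general-position hypothesis: it guarantees that for any two points in the image we may choose one of the $B_i$ off the line they span, which is exactly what is needed to run projective (Desarguesian) constructions of addition and multiplication without degeneracies. Pick preimages $e_i\in M$ with $g(Re_i)=B_i$.

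Second, I would construct the ring embedding $\sigma:R\to S$ and the $\sigma$-semilinear map $f:M\to N$ in tandem. Set $f(e_i):=\bar e_i$. For $\lambda\in R$ and $x\in M$ with $g(Rx)\in\im(g)$, the element $g(R(\lambda x))$ is a submodule of $g(Rx)$, so there is a unique $\mu\in S$ (up to zero divisors — here the freeness is used) with $g(R(\lambda x))=S\mu f(x)$; this defines $\sigma(\lambda)$. Additivity and multiplicativity of $\sigma$ are reconstructed from the classical projective construction of the sum and product of two scalars as the point of intersection of two auxiliary lines drawn through the reference points — the requirement that the auxiliary lines avoid some $B_i$ is exactly what (C3) supplies. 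The map $f$ is then defined on each $\P(M)$-point by the same coordinate recipe, extended to arbitrary $x\in M$ by additivity.

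Third, I would verify the desired properties. Injectivity of $\sigma$ and $f$ follows from freeness of the $B_i$ together with $g$ being a projective morphism: a nontrivial kernel would force one of the $B_i$ to collapse. The identity $g=\P(f)$ holds by construction on the coordinate frame $\{B_1,B_2,B_3\}$ and is propagated to all of $\im(g)$ by the projective addition/multiplication constructions.

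The main obstacle is ensuring $\sigma$ is well-defined as a two-sided ring homomorphism rather than just a map of additive groups with a multiplicative homomorphism property on one side. This is exactly where direct finiteness of $S$ enters: without it, the scalar $\mu$ witnessing $g(R(\lambda x))=S\mu f(x)$ could be defined only up to a one-sided inverse ambiguity, and the composition law $\sigma(\lambda\mu)=\sigma(\lambda)\sigma(\mu)$ could fail. Together with the freeness of the $B_i$ (which rules out pathological ``torsion'' scalars) and (C3) (which ensures that Desarguesian diagrams can always be completed in the image), direct finiteness closes the argument and allows one to conclude that $g=\P(f)$ for a genuine $\sigma$-semilinear embedding $f$.
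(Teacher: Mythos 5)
Your proposal takes essentially the same route as the paper: the paper's entire proof of this Fact is a citation of \cite[Theorem~3.2]{Faure-projGeomRings} (case $E=\emptyset$), which is exactly what you defer to, and your sketch of the coordinatization/semilinearity ideas is only a heuristic gloss that does not need to be checked in detail. The one substantive point in the paper's proof that a blind reconstruction could not supply is the remark that Faure states (C3) for arbitrary $C_1,C_2\in\P(N)$ rather than only for $C_1,C_2\in\im(g)$, and that inspection of his proof shows the condition is used only on points of $\im(g)$, so the weaker hypothesis stated here suffices.
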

\begin{proof}
  This is the statement of \cite[Theorem~3.2]{Faure-projGeomRings} in the case 
  $E=\emptyset $,
  except that there $C_1$ and $C_2$ are not restricted to $\im(g)$; however, in 
  the proof the condition is used only when $C_1$ and $C_2$ are in $\im(g)$.

\end{proof}

Let $\P(G) := \P_{E^0_G}(G)$ be the projectivisation of the $E^0_G$-module 
$G(K)/G(C_0)$.

Then Lemma~\ref{lem:alignedEx} and Lemma~\ref{lem:alignedUnique}
establish a map $\iota : P 
\ensuremath{\lhook\joinrel\relbar\joinrel\rightarrow} \P_{E^0_G}(G)$, which by
Lemma~\ref{lem:alignedLines} is a projective morphism.
We proceed to verify the assumptions of Fact~\ref{fact:faure}.

$E^0_G$ is directly finite since if $\mu\lambda=1$ with $\mu,\lambda \in E_G$ 
then $\mu$ is an isogeny so has a quasi-inverse $\mu' \in E_G$ with $n := 
\mu'\mu \in \N_{>0}$;
then $\lambda\mu n = n\lambda\mu=\mu'\mu\lambda\mu=\mu'\mu=n$,
so $\lambda\mu=1$ since $G$ is $n$-divisible.

Now $\dim(P) \geq 3$, so say $\widetilde {g}_1,\widetilde {g}_2,\widetilde 
{g}_3 \in P$ are independent with $g_i$ being $P$-aligned.
Let $B_i := \iota(a_i) = E^0_G g_i$. Since each $g_i$ is generic in $G(K)$, 
each $B_i$ is free.
To check (C3), suppose $C_i = \iota(\widetilde {h}_i)$ for $i=1,2$ with $h_i$ 
being $P$-aligned.
Then some $\widetilde {g}_i \notin \cl(\widetilde {h}_1,\widetilde {h}_2)$, 
and so since $P$ is fully embedded in $\G_K$, we have $g_i \ind^0 h_1h_2$, 
from which (C3) follows.

Now say $V$ is an $F$-vector space such that $P \cong \P(V)$.
Then by Fact~\ref{fact:faure},
for some embedding $\sigma : F \rightarrow E^0_G$ and $\sigma$-semilinear 
embedding $f : V \rightarrow G(K)/G(C_0)$, we have $\iota = \P(f)$.

The main statement of Proposition~\ref{prop:EHeq} follows.

For the uniqueness up to isogeny of $G$, suppose Proposition~\ref{prop:EHeq} 
also holds for a group $G'$.
then if $g,h \in G(K)$ with $\widetilde {g},\widetilde {h} \in P$ and 
$\widetilde {g} \neq \widetilde {h}$, then, as in the proof of 
Lemma~\ref{lem:alignedLines}, there are $g',h' \in G'(K)$ with $\widetilde 
{g'}=\widetilde {g}$ and $\widetilde {h'}=\widetilde {h}$ and $\widetilde 
{g+h}=\widetilde {g'+h'}$.
So by Fact~\ref{fact:corrIsog}, $G'$ is isogenous to $G$.

\bibliographystyle{alpha}
\newcommand{\etalchar}[1]{$^{#1}$}

\end{document}